\def\refeq#1{~(\ref{#1})}
\def\ccite#1{~\cite{#1}}
\def\longformule#1#2{
\displaylines{ \qquad{#1} \hfill\cr \hfill {#2} \qquad\cr } }
\def\inte#1{
\displaystyle\mathop{#1\kern0pt}^\circ }
\let\al=\alpha
\let\de=\delta
\let\d=\partial
\let\e=\varepsilon
\let\lam=\lambda
\let\r=\rho
\let\s=\sigma
\let\f=\frac
\let\vf=\varphi
\let\p=\psi
\let\om=\omega
\let\D=\Delta
\let\Om=\Omega
\let\wt=\widetilde
\def\cA{{\mathcal A}}
\def\cB{{\mathcal B}}
\def\y{{\mathfrak y}}
\def\u{{\mathfrak u}}
\def\cC{{\mathcal C}}
\def\cD{{\mathcal D}}
\def\cE{{\mathcal E}}
\def\cF{{\mathcal F}}
\def\cH{{\mathcal H}}
\def\fa{{\frak a}}
\def\la{\lambda}
\def\rp{{\rm p}}
\def\rw{{\rm w}}
\newcommand{\dx}{{\rm d}x}
\newcommand{\rl}{{\rm \rho}^\ell}
\newcommand{\rr}{{\rm \rho}}
\newcommand{\rx}{{\rm X}}
\newcommand{\dt}{{\rm d}t}
\renewcommand{\div}{{\rm div}\,}
\newcommand{\loc}{{\rm loc}\,}
\newcommand\I{\rm{\uppercase\expandafter{\romannumeral1}}}
\newcommand\II{\rm{\uppercase\expandafter{\romannumeral2}}}
\newcommand\III{\rm{\uppercase\expandafter{\romannumeral3}}}
\newcommand\IV{\rm{\uppercase\expandafter{\romannumeral4}}}
\newcommand\V{\rm{\uppercase\expandafter{\romannumeral5}}}
\newcommand\VI{\rm{\uppercase\expandafter{\romannumeral6}}}
\newcommand\VII{\rm{\uppercase\expandafter{\romannumeral7}}}
\newcommand\VIII{\rm{\uppercase\expandafter{\romannumeral8}}}
\newcommand\IX{\rm{\uppercase\expandafter{\romannumeral9}}}
\def\dH{\dot{H}}
\def\dB{\dot{B}}
\def\virgp{\raise 2pt\hbox{,}}
\def\cdotpv{\raise 2pt\hbox{;}}
\def\eqdefa{\buildrel\hbox{\footnotesize def}\over =}
\def\C{\mathop{\mathbb C\kern 0pt}\nolimits}
\def\DD{\mathop{\mathbb D\kern 0pt}\nolimits}
\def\EE{\mathop{{\mathbb E \kern 0pt}}\nolimits}
\def\K{\mathop{\mathbb K\kern 0pt}\nolimits}
\def\N{\mathop{\mathbb N\kern 0pt}\nolimits}
\def\Q{\mathop{\mathbb Q\kern 0pt}\nolimits}
\def\R{\mathop{\mathbb R\kern 0pt}\nolimits}
\def\SS{\mathop{\mathbb S\kern 0pt}\nolimits}
\def\ZZ{\mathop{\mathbb Z\kern 0pt}\nolimits}
\def\TT{\mathop{\mathbb T\kern 0pt}\nolimits}
\def\PP{\mathop{\mathbb P\kern 0pt}\nolimits}
\newcommand{\Z}{{\ZZ}}
\def\dive{\mathop{\rm div}\nolimits}
\def\na{\nabla}
\def\p{\partial}
\def\v{{\rm v}}
\def\th{\theta}
\newcommand{\w}[1]{\langle {#1} \rangle}
\newcommand{\beq}{\begin{equation}}
\newcommand{\eeq}{\end{equation}}
\newcommand{\ben}{\begin{eqnarray}}
\newcommand{\een}{\end{eqnarray}}
\newcommand{\beno}{\begin{eqnarray*}}
\newcommand{\eeno}{\end{eqnarray*}}
\newcommand{\andf}{\quad\hbox{and}\quad}
\newcommand{\with}{\quad\hbox{with}\quad}
\newtheorem{defi}{Definition}[section]
\newtheorem{thm}{Theorem}[section]
\newtheorem{lem}{Lemma}[section]
\newtheorem{rmk}{Remark}[section]
\newtheorem{col}{Corollary}[section]
\newtheorem{prop}{Proposition}[section]
\begin{document}
\title[Global regularity of 2-D density patch]
{Global  regularities of two-dimensional density patch for inhomogeneous
incompressible viscous flow with  general density}
 \author[X. Liao]{Xian Liao}
\address [X. Liao]%
{Academy of Mathematics $\&$ Systems Science, Chinese Academy of Sciences, P.R. China\\
Present address: Mathematisches Institut, Universit\"at Bonn, Endenicher Allee 60, 53115 Bonn, Germany.}
\email{xian.liao@amss.ac.cn}

\author[P. ZHANG]{Ping Zhang}%
\address[P. Zhang]
 {Academy of
Mathematics $\&$ Systems Science and  Hua Loo-Keng Key Laboratory of
Mathematics,  Chinese Academy of Sciences, P.R. China}
\email{zp@amss.ac.cn}

\date{April 26, 2016}
\maketitle
\begin{abstract} Toward the open question proposed by P.-L. Lions in \cite{Lions96}  concerning the propagation of regularities
of density patch for viscous inhomogeneous flow,  we first establish  the global in time well-posedness of
two-dimensional inhomogeneous incompressible Navier-Stokes system with
initial density being of the form: $\eta_1{\bf 1}_{\Om_0}+\eta_2{\bf
1}_{\Om_0^c},$ for any pair of positive  constants $(\eta_1,\eta_2),$ and for any bounded, simply connected $W^{k+2,p}(\R^2)$  domain
$\Om_0.$   We then prove that
the time evolved  domain $\Om(t)$ also belongs to the class of
$W^{k+2,p}$ for any $t>0.$ Thus in some sense, we have solved the aforementioned Lions' question
in the two-dimensional case.
Compared with our previous paper \cite{LZ}, here we remove the smallness condition on the jump, $|\eta_1-\eta_2|,$
moreover,
the techniques used in the present paper are completely different from those in \cite{LZ}.
\end{abstract}

\noindent {\sl Keywords:}  Inhomogeneous incompressible
Navier-Stokes equations, density patch,\par $\qquad\quad\ \ $
striated distributions, Littlewood-Paley theory.

\noindent {\sl AMS Subject Classification (2000):} 35Q30, 76D03  \

 \setcounter{equation}{0}

\section{Introduction}\label{sec1}

We  consider the following two-dimensional inhomogeneous incompressible
Navier-Stokes equations:
\begin{equation}\label{inNS}
 \left\{\begin{array}{l}
\displaystyle \d_t\rho+\dive(\rho v)=0,\qquad (t,x)\in\R^+\times\R^2,\\
\displaystyle\d_t(\rho v)+\dive(\r v\otimes v)-\Delta v+\nabla\pi=0,\\
\displaystyle \dive v=0,\\
\displaystyle  (\r, v)|_{t=0}=(\r_0, v_0).
\end{array}\right.
\end{equation}
Here the unknown~$\rho$ is a positive function\footnote{We do want to avoid vacuum.}
 which represents the density of the fluid at
time~$t$ and point~$x$,  $v=(v_1,v_2)$
 stands for the velocity field   of the fluid, and $\pi$   designates the  pressure  at
time~$t$ and point~$x$,  which ensures the incompressibility of the fluid.

 This system \eqref{inNS} can be used as a model to describe  a viscous fluid that is incompressible but has nonconstant density.
Basic examples are mixtures of incompressible
and non-reactant components, flows with complex structure (e.g. blood
flow or model of rivers), fluids containing a melted substance, etc.
 \smallbreak

Let us notice that in the case where~$\rho_0\equiv 1$, the
system \eqref{inNS} turns out to be the classical incompressible
Navier-Stokes system $(NS).$ We have to keep in mind that the system
\eqref{inNS} is much more complex than $(NS).$

\smallbreak
This system \eqref{inNS} has three major basic  features, and let us state them in general space dimension   $d\geq 2$.
Firstly,
the incompressibility condition on  the convection velocity field in the density transport equation ensures  that
 \beq \label{transformrho}
\|\rho(t)\|_{L^\infty}
=\|\rho_0\|_{L^\infty}
\andf
\mbox{meas}\bigl\{\ x\in\R^d\ |\ \al\leq\rho(t,x)\leq \beta\ \bigr\}\ \
\mbox{is independent of}\ t\geq 0, \eeq
for any pair of non-negative  real numbers $(\al,\beta)$.
Secondly   the
 kinetic energy  is formally conserved \beq \label
{kineticenergy} \frac 12 \int_{\R^d}\rho|v(t)|^2 \textrm{d}x
+\int_{0}^t\|\nabla v(t')\|_{L^2(\R^d)}^2 \textrm{d}t'
=\frac 12
\int_{\R^d}\rho_0(x)|v_0(x)|^2 \textrm{d}x. \eeq
The third  basic feature is
the scaling invariance property.
 Indeed, if~$(\rho,
v ,\pi)$ is a solution of \eqref{inNS} on~$[0,T]\times \R^d$,
then the rescaled triplet~$(\rho, v ,\pi)_\lam$ defined by
\beq
\label{scaling}
 (\rho, v
,\pi)_\lam(t,x) \eqdefa \bigl(\rho(\lam^2t, \lam x) , \lam v
(\lam^2t, \lam x) ,\lam^2 \pi(\lam^2t,\lam x)\bigr),
\quad \lambda\in\R\eeq is also a
solution of \eqref{inNS}  on $[0,T/\la^2]\times \R^d$. This leads to the
notion of critical regularity.

\smallbreak
Based on the energy estimate\refeq {kineticenergy},  J.
Simon constructed in\ccite{Simon} the global weak solutions of \eqref{inNS} with finite energy.
  More generally,
 P.-L. Lions  proved the global existence of weak solutions to the inhomogeneous incompressible Navier-Stokes system
 with variable viscosity
in  the book \cite{Lions96}.

In the
case of smooth initial data with no vacuum, the existence result of strong unique
solutions goes back to the work of O.-A. Ladyzhenskaya and V.-A.
Solonnikov   \cite{LS}. Motivated by \eqref{scaling}, R. Danchin \cite{D1}
established the well-posedness of \eqref{inNS} in the whole
space $\R^d$ in the so-called \emph{critical functional framework}
for small perturbations of some positive constant density. The basic
idea in  \cite{D1} is to use functional spaces (or norms) that have the same
\emph{scaling invariance} as \eqref{scaling}.
 This result
was extended to more general Besov spaces   in
\cite{Abidi, AP, AGZ2, AGZ3, PZ2}.

\smallbreak Given that in all those aforementioned works, the density has to be at
least in the Besov space $\dot B^{\f dp}_{p,\infty}(\R^d),$
  one cannot capture  discontinuities  across some hypersurface. In fact,
  the Besov regularity of the characteristic function of a smooth domain is only $\dot B^{\f1p}_{p,\infty}(\R^d).$
  Therefore, those results do not apply to a  mixture flow composed of two separate fluids with different densities.

The first breakthrough along this line was made by   R. Danchin and P.-B.  Mucha in \cite{DM12}, where they
basically
proved the
global well-posedness of \eqref{inNS} with initial density
allowing  discontinuity across a $C^1$
interface   with a sufficiently small jump.
Later on, even  $\rho_0$ is only bounded and with small fluctuation to some positive constant,
 J. Huang, M. Paicu and the second author in
\cite{HPZ} could show the global existence of solutions to
\eqref{inNS}  in a \emph{critical functional framework}, and the uniqueness was
obtained provided assuming slightly more regularities on the initial velocity
field. R. Danchin
and the second author extended this result to the half-space setting in \cite{DZ14}.

In the general case where $\r_0\in L^\infty(\R^d)$ with a positive lower bound
and $v_0\in H^2(\R^d),$ R. Danchin and P.-B. Mucha \cite{DM13} proved that the
system \eqref{inNS} has a unique local in time solution. Furthermore,
with the initial density fluctuation being sufficiently small, for
any initial velocity $v_0\in B^1_{4,2}(\R^2)\cap L^2(\R^2)$, or $v_0\in B^{2-\f2q}_{q,p}(\R^d)$ with small size for
$1<p<\infty, d<q<\infty$ and $2-\f2p\neq\f1q,$ they also proved the
global well-posedness of \eqref{inNS}.
M. Paicu, Z. Zhang and the second author \ccite {PZZ1} improved the well-posedness results in \cite{DM13} with less regularity
assumptions on the initial velocity.

 \smallbreak
 A natural question to ask is whether it is  possible to
 propagate the boundary regularities of the interface of the fluids.
In particular, P.-L. Lions proposed the following open question in
 \cite{Lions96}: suppose the initial density $\r_0={\bf 1}_{D}$ for some
 smooth domain $D,$ Theorem 2.1 of \cite{Lions96} ensures at least
 one global weak solution $(\r,v)$ of \eqref{inNS} such that for all
 $t\geq 0,$ $\r(t)={\bf 1}_{D(t)}$ for some set $D(t)$ with ${\rm vol}(D(t))={\rm vol}(D).$
 Then whether or not the regularity of $D$ is preserved by  time
 evolution?
  Since this problem is very sophisticated due to the possible
 appearance of vacuum, as a first step toward this question, here we aim at
establishing the global well-posedness of \eqref{inNS} with  the initial
density $\r_0(x)=\eta_1{\bf 1}_{\Om_0}+\eta_2{\bf 1}_{\Om_0^c} $ for any pair of positive constants $(\eta_1,\eta_2),$ and for any
 bounded simply connected  $W^{k+2,p}(\R^2)$ domain $\Om_0.$

More precisely, let  $\Omega_0$ be a simply connected $W^{k+2,p}(\R^2),$
  $k\geq 1,$ $p\in ]2,4[,$ bounded domain. Let $f_0\in
W^{k+2,p}(\R^2)$ such that $ \d\Omega_0=f_0^{-1}(\{0\})$ and $\nabla
f_0 $ does not vanish on $\d\Omega_0. $ Then we can parametrize
$\d\Omega_0$ as
\beq\label{S3eq19} \gamma_0: {\Bbb S}^1\mapsto \d\Omega_0
\hbox{ via } s\mapsto \gamma_0(s) \hbox{ with
}\d_s\gamma_0(s)=\nabla^\bot f_0(\gamma_0(s)). \eeq For any $\eta_1,\eta_2>0,$  we take the
initial density $\r_0$ and the initial velocity $v_0$ of
\eqref{inNS} as follows \beq\label{S3eq18}
\begin{split}
 &  {\rho_0}(x)={\eta_1}{\bf 1}_{\Om_0}(x)+\eta_2{\bf 1}_{\Om_0^c}(x),
  \quad v_0\in L^2\cap\dB^{s_0}_{2,1}
   \andf\\
   & \d_{X_0}^\ell v_0\in
L^2\cap\dB^{s_\ell}_{2,1}\quad\mbox{for}\  \ell=1,\cdots,k, \andf X_0\eqdefa\na^\perp f_0,\
\p_{X_0}v_0\eqdefa X_0\cdot\na v_0,
    \end{split}\eeq
    where  $s_0\in ]0,1[,$ $ s_\ell\eqdefa s_0-\epsilon{ \ell}/{k}$ for some fixed
$\epsilon\in ]0,s_0[,$  and $
p\in
\bigl]2,{2}/{(1-s_k)}\bigr[.
$

\smallbreak
The main ideas for us to solve the above problem come from
J.-Y. Chemin
\cite{Chemin88, Chemin91, Chemin93, Chemin98}.
Indeed,
by using the idea of conormal distributions or striated
distributions, J.-Y. Chemin \cite{Chemin91, Chemin93} (see also
\cite{BC93}) proved the global regularities of the two-dimensional vortex patch
for ideal incompressible flow. One may also check \cite{Danchin97, GS95, Hmidi05, Hmidi06, ZQ}
for some extensions.
 We emphasize
that
  the initial velocity field $v_0$ satisfying \eqref{S3eq18} belongs to some striated distribution
 space. To avoid  technicalities, here we prefer not to present  details of such spaces. Interested readers may check the
 book  \cite{Chemin98} and the references therein.

\smallbreak The main result of this paper states as follows:

\begin{thm}\label{thmmain}
{\sl Let the initial data $(\r_0, v_0)$ be given by \eqref{S3eq19} and \eqref{S3eq18},
for any pair of positive constants $(\eta_1,\eta_2).$ Then \eqref{inNS} has a unique global
solution $(\r, v)$ such that $\r(t,x)={\eta_1}{\bf 1}_{\Om(t)}(x)+\eta_2{\bf
1}_{\Om(t)^c}(x)$,  with $\Om(t)$  being a bounded,  simply connected  $W^{k+2,p}(\R^2)$ domain for any $t>0.$
}
\end{thm}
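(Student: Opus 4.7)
The plan is to combine the bounded-density framework of Danchin--Mucha with Chemin's striated (conormal) calculus. First I would introduce the time-evolved vector field $X(t,\cdot)$: letting $\psi_t$ denote the Lagrangian flow of $v$, define $X(t,\psi_t(y)) \eqdefa D\psi_t(y)\,X_0(y)$, equivalently $(\partial_t + v\cdot\nabla)X = \partial_X v$ with $X|_{t=0} = X_0$. Since $X_0 = \nabla^\perp f_0$ is divergence-free and tangent to $\partial\Omega_0$, and $\dive v = 0$, the field $X(t)$ stays divergence-free and tangent to the transported interface $\partial\Omega(t) \eqdefa \psi_t(\partial\Omega_0)$, and one has the key commutation $[\partial_t + v\cdot\nabla,\,\partial_X] = 0$. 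The $W^{k+2,p}$ regularity of $\partial\Omega(t)$ will follow once $X(t)$ is controlled in a striated Besov scale built from $\partial_X^\ell$-differentiations, via the parameterization $\gamma(t,s) \eqdefa \psi_t(\gamma_0(s))$.

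\textbf{Local theory and striated propagation.} Using the variant of \cite{DM13, PZZ1} adapted to $\rho_0 \in L^\infty$ with $\inf \rho_0 > 0$ and $v_0 \in L^2 \cap \dot B^{s_0}_{2,1}$, I would build a unique local strong solution with $\nabla^2 v,\,\nabla\pi \in L^1_T \dot B^{s_0}_{2,1}$ on a maximal interval $[0,T^\ast)$; the transport equation for $\rho$ then automatically preserves the patch structure $\rho(t,\cdot) = \eta_1 \mathbf{1}_{\Omega(t)} + \eta_2 \mathbf{1}_{\Omega(t)^c}$. The heart of the argument is to propagate inductively the striated regularity $\partial_X^\ell v \in L^\infty_T(L^2 \cap \dot B^{s_\ell}_{2,1})$ for $\ell = 1,\ldots,k$. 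Commuting $\partial_X^\ell$ with the momentum equation yields a variable-coefficient Stokes-type system for $\partial_X^\ell v$ whose right-hand side collects $[\partial_X^\ell,\Delta]v$, $[\partial_X^\ell,\nabla\pi]$, and nonlinear terms polynomial in lower-order $\partial_X^j v$; these are handled by Bony's paradifferential calculus together with the commutation identity above and the divergence-free character of $X$.

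\textbf{Main obstacle: the non-small jump.} The central difficulty --- and precisely the source of the smallness hypothesis on $|\eta_1-\eta_2|$ used in \cite{LZ} --- is to obtain a global-in-time bound on $\|\nabla v\|_{L^1_T L^\infty}$ with no smallness on the jump. Because $\rho$ is genuinely discontinuous across $\partial\Omega(t)$, the maximal $L^1 L^\infty$ regularity of the variable-density Stokes operator is not available in this endpoint. The remedy I would employ is a Chemin-type logarithmic interpolation
\[
\|\nabla v(t)\|_{L^\infty} \lesssim \|v(t)\|_{L^2} + \|\nabla v(t)\|_{L^\infty,X}\,\log\!\Bigl(e + \mathcal{N}_X(v(t))\Bigr),
\]
where $\mathcal{N}_X$ measures striated Besov regularity along the family generated by $X(t)$ and $\|\cdot\|_{L^\infty,X}$ is a directional $L^\infty$ norm along $X$ which remains small in $L^1_T$ by transport estimates \emph{without} any constraint on $|\eta_1-\eta_2|$. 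Coupling this with the $2$-D energy identity \eqref{kineticenergy}, transport estimates for the vorticity $\om$ exploiting $\dive X = 0$, and a Gronwall argument on a double-exponential functional of $\mathcal{N}_X$, I would close all estimates globally in time.

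\textbf{Boundary regularity.} Once $\nabla v \in L^1_T L^\infty$ globally and the striated bounds on $\partial_X^\ell v$ are secured, the Lagrangian flow $\psi_t$ becomes a $W^{k+2,p}$-diffeomorphism on a tubular neighborhood of $\partial\Omega_0$, so the curve $\gamma(t,\cdot) = \psi_t \circ \gamma_0$ provides a $W^{k+2,p}$ parameterization of $\partial\Omega(t)$; incompressibility of $v$ preserves the bounded and simply connected character of $\Omega(t)$, completing the conclusion of the theorem.
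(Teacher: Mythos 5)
Your overall skeleton (the evolved field $X$ with $[\,D_t,\partial_X]=0$, inductive propagation of $\partial_X^\ell v$, and transfer of striated bounds to the flow $\psi_t$ to get $W^{k+2,p}$ regularity of $\partial\Omega(t)$) matches the paper. But the central mechanism you propose for removing the smallness of the jump is not the paper's, and as stated it has a genuine gap. You invoke a Chemin-type logarithmic interpolation $\|\nabla v\|_{L^\infty}\lesssim \|v\|_{L^2}+\|\nabla v\|_{L^\infty,X}\log(e+\mathcal N_X(v))$ and assert that the directional norm stays small in $L^1_T$ "by transport estimates". That is the vortex-patch strategy for the \emph{Euler} equations, where the vorticity is transported and bounded and the velocity is recovered by Biot--Savart; here neither ingredient is available (the vorticity of \eqref{inNS} is not transported when $\rho$ is discontinuous, and no estimate is offered for $\|\nabla v\|_{L^\infty,X}$ or for how the Gronwall closure would avoid $|\eta_1-\eta_2|$). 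In the paper the global Lipschitz control of $v$ does \emph{not} use striated information at all: it follows from \eqref{initial} alone, via (i) the propagation of the $\cB^{s_0}$ regularity obtained by decomposing $v=\sum_q v_q$ with $v_q$ solving \eqref{S1eq15} with data $\Delta_q v_0$ (the characterization of Besov norms with positive index), which is needed precisely because with $\rho$ only bounded one cannot apply $\Delta_j$ to the momentum equation, and (ii) time-weighted energy estimates built on the material derivative $D_t$ (so that $D_t\rho=0$ kills the otherwise uncontrollable $\rho_t$, $v\cdot\nabla\rho$ terms), yielding $\|\sigma^{\frac{1-s_0}{2}}\nabla v\|_{L^2_t(L^\infty)}\leq \cC(v_0,s_0)$ (Corollary \ref{S1col1}) and hence $\|\nabla v\|_{L^1_t(L^\infty)}<\infty$ with constants independent of the jump size. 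The smallness in \cite{LZ} entered through maximal regularity of the time-dependent Stokes operator; the paper replaces that tool by the $D_t$-based weighted estimates, not by a geometric log estimate.

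Two further steps of your plan would not go through as written. First, the claimed local theory with $\nabla^2 v,\nabla\pi\in L^1_T\dot B^{s_0}_{2,1}$ is not available for merely bounded density; the paper only obtains $v\in\wt L^\infty_t(\cB^{s_0})\cap L^2_t(\cB^{1+s_0})$ and time-weighted $L^2$-based bounds, again through the $v=\sum_q v_q$ device. Second, treating the commuted system for $\partial_X^\ell v$ by "Bony's paradifferential calculus" runs into the same obstruction (paraproducts with the rough coefficient $\rho$), which is why the paper instead performs weighted $H^1$ energy estimates on $\v^\ell$ and on $D_t\v^\ell$ (Lemmas \ref{S3lem2} and \ref{S4lem01}), exploits $D_t\rho^\ell=0$ and $\dive X=0$, and couples these with the $W^{2,p}$ estimate of $X$ through a Gronwall loop. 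Without these ingredients your induction on $\ell$, and in particular the $W^{2,p}$ bound on $\partial_{X_0}^{\ell-1}X_0$ propagated to $\rx^{\ell-1}(t)$ that feeds the boundary-regularity conclusion, is not closed.
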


\begin{rmk} For $\Om_0$ given by \eqref{S3eq19}, it follows from \cite{LZ} that the initial velocity $v_0$ with vorticity $\om_0={\bf 1}_{\Om_0}(x)$
satisfies the assumptions of \eqref{S3eq18}.  Hence in particular, Theorem \ref{thmmain} ensures
the global well-posedness of \eqref{inNS} with initial density
$\r_0=\eta_1{\bf 1}_{\Om_0}(x)+\eta_2{\bf 1}_{\Om_0^c}(x)$ and initial vorticity $\om_0={\bf 1}_{\Om_0}(x).$
 \end{rmk}

\begin{rmk} \begin{itemize}

\item[(1)]  Besides the difficulties encountered in \cite{LZ}, here we remove the smallness assumption
on the jump, $|\eta_1-\eta_2|,$ which is crucial in \cite{LZ} to exploit  the maximal regularity estimate
for the time evolutionary  Stokes operator
to handle the $W^{2,p}$ estimate of the velocity field.

\item[(2)] The method of time-weighted energy estimate  in \cite{PZZ1} will play an important role here.
However, in order to propagate the $W^{2,p}$ regularity of the tangential vector field $X_0$ of $\p\Om_0,$ we need to deal with
the energy estimate of $\na\p_t v,$ which we can not go through under the mere bounded density assumption.
We overcome this difficulty
by working with the energy estimate of $\na D_t v$, where $D_t=\p_t+v\cdot\na$ denotes
 the material derivative.

\item[(3)] With initial density being only in the bounded function space, we can not apply the classical ideas  in \cite{D1} to propagate
the Besov regularity of the velocity field as  in the previous papers \cite{DZ14,HPZ,PZZ1}. Namely,   given the initial velocity
field $v_0\in \dB^s_{p,r},$ they did not prove the solution $v$ belonging to $ C([0,\infty[;\dB^s_{p,r}).$
In this paper, motivated by
the characterisation of Besov spaces with positive regularity index, we succeed in establishing the propagation of the Besov regularity of the velocity field.
One may check Theorem \ref{thm1} below for details.

\end{itemize}
\end{rmk}

\setcounter{equation}{0}
\section {Structure and main ideas of the proof}
\label {structureproof}

Before proceeding,
let us first recall
the  definitions of  Besov norms from \cite{BCD} for
instance.

\begin{defi}
\label {S0def1} {\sl  Let us consider a smooth radial function~$\vf $
on~$\R,$ the support of which is included in~$[3/4,8/3]$ such that
$$
\forall
 \tau>0\,,\ \sum_{j\in\Z}\varphi(2^{-j}\tau)=1\andf \chi(\tau)\eqdefa 1 - \sum_{j\geq
0}\varphi(2^{-j}\tau) \in \cD([0,4/3]).
$$
Let us define
$$
\Delta_ja\eqdefa\cF^{-1}(\varphi(2^{-j}|\xi|)\widehat{a}),
 \andf S_ja\eqdefa\cF^{-1}(\chi(2^{-j}|\xi|)\widehat{a}).
$$
Let $(p,r,\kappa)$ be in~$[1,+\infty]^3$ and~$s$ in~$\R$. We define the Besov norms by
$$
\|a\|_{\dB^s_{p,r}}\eqdefa\big\|\big(2^{js}\|\Delta_j
a\|_{L^{p}}\big)_j\bigr\|_{\ell ^{r}(\ZZ)} \andf
\|a\|_{\wt{L}_T^\kappa(\dB^s_{p,r})}\eqdefa\big\|\big(2^{js}\|\Delta_j
a\|_{L^\kappa_t(L^{p})}\big)_j\bigr\|_{\ell ^{r}(\ZZ)}. $$
}
\end{defi}

We remark that in the particular case when $p=r=2,$  the Besov
spaces~$\dB^s_{p,r}$ coincide with the classical homogeneous
Sobolev spaces $\dH^s$.

For notational simplicity, we always denote $\cB^s\eqdefa \dB^s_{2,1}.$

 \smallbreak

 As a matter of fact, we shall prove a much more general version of Theorem \ref{thmmain}. More precisely,
 we assume  the initial data satisfying
\begin{equation}\label{initial}
0<\rho_\ast\leq \rho_0\leq \rho^\ast, \quad v_0\in L^2\cap\cB^{s_0} \,\,
\mbox{for some}\ s_0\in ]0,1[,
\end{equation}
along with the following striated regularity assumptions for
$\ell=1,\cdots,k,$
\begin{equation}\label{initialell}
\begin{split}
&  \d_{X_0}^{\ell-1}X_0\in W^{2,p},\quad
\d_{X_0}^\ell\rho_0\in{L^\infty},
\quad \d_{X_0}^\ell v_0\in
L^2\cap\cB^{s_\ell}, \with
\\
& s_\ell\eqdefa s_0-\epsilon{ \ell}/{k}, \,\textrm{ for some
}\epsilon\in ]0,s_0[,
\andf  p\in
\bigl]2,{2}/{(1-s_k)}\bigr[,
\end{split}
\end{equation}
 for some vector field $X_0=(X_0^1,X_0^2)$ and
  $\p_{X_0}f\eqdefa X_0\cdot\na f$.
Moreover, to avoid cumbersome calculations, without loss of generality, we assume the
divergence free condition on the initial vector field $X_0$:
\begin{equation}\label{divX0}
\dive X_0=0.
\end{equation}
Given the convection velocity field $v,$  we define $X(t)=(X^1(t),X^2(t))$ via \beq\label{eq:X}
 \left\{\begin{array}{l}
\displaystyle \p_tX+v\cdot\na X=X \cdot\na v,\\
\displaystyle X(0,x)=X_0(x).
\end{array}\right. \eeq

 \noindent\textbf{Conventions.}   In this whole context, we shall use the following conventions:
\beq \label{S1eq9}
\begin{split}
& \p_X \eqdefa X\cdot\na,\quad  D_t\eqdefa\p_t+v\cdot\na,
\quad f_t\eqdefa \d_t f,\quad \s(t)\eqdefa \min\left\{1,t\right\},
 \andf \theta_0\eqdefa \epsilon/k,\\\
& {\rm \r}^\ell\eqdefa \p_X^\ell\r,\quad {\v}^\ell\eqdefa \d_X^{\ell}v,
\quad {{\rm \pi}}^\ell\eqdefa \d_X^\ell \pi,
\quad {{\rm X}}^\ell\eqdefa \p_X^\ell X,
\andf \d_X^{-1} f\eqdefa 0,
\\
&\cC(v_0,u_0,s)\eqdefa\|u_0\|_{\cB^s}\exp\left(\cA_0\exp\bigl(\cA_0\|v_0\|_{L^2}^4\bigr)\right) \andf \cC(v_0,s)\eqdefa \cC(v_0,v_0,s),\\
&
 \cH_\ell(t)\eqdefa
 \underbrace{\exp\exp\exp\cdots\exp}_{\ell+1 \hbox{ times}}\left(\cA_\ell \w{t}^2\right)\with \w{t}\eqdefa \bigl(1+t^2\bigr)^{\f12},
\end{split} \eeq
 where the constant $\cA_\ell$, $1\leq\ell\leq k$ depends only on $\r_\ast, \r^\ast, s_0, \theta_0$ and $\sum_{0\leq m\leq \ell}\bigl(\|\p_{X_0}^{m-1}X_0\|_{W^{2,p}}+\|\p_{X_0}^{m}\r_0\|_{L^\infty}+
  \|\p_{X_0}^{m}v_0\|_{L^2\cap \cB^{s_m}}\bigr),$ which  may vary from lines to lines in the whole context.

 \begin{thm}\label{thm1}
 {\sl Let the initial data $(\r_0,v_0,X_0)$ fulfil the assumptions \eqref{initial}, \eqref{initialell}
 and \eqref{divX0}. Then  the coupled system \eqref{inNS} and \eqref{eq:X}
  has a unique global solution $(\r,v,\na\pi, X)$ so that for any $t>0$
  \beq \label{S1eq1} A_0(t)
\leq \cC(v_0,s_0) \with  A_0(t)=A_{01}(t)+A_{02}(t),
\eeq
where
  \beq \label{S1eq1p}
\begin{split}
A_{01}(t)\eqdefa & \|v\|_{L^\infty_t(L^2)\cap L^2_t(\dot{H}^1)}
+\|v\|_{ \wt{L}^\infty_t(\cB^{s_0})\cap  {L}^2_t(\cB^{1+s_0})}
+\bigl\|\s^{\f{1-s_0}2}(v_t, \na^2v,\na\pi)\bigr\|_{L^2_t(L^2)}\\
&+\|\s^{\f{1-s_0}2}v\|_{ \wt{L}^\infty_t(\cB^1)}+\bigl\|\s^{1-\f{s_0}2}(v_t, \na^2v,\na\pi)\bigr\|_{L^\infty_t(L^2)}+\|\s^{1-\f{s_0}2}\na v_t\|_{L^2_t(L^2)},\\
A_{02}(t)\eqdefa &
 \bigl\|\sigma^{ \frac{3-s_0}2}
\bigl(\nabla D_t v, D_t\nabla v, \nabla v_t\bigr) \bigr\|_{L^\infty_t(L^2)}+\bigl\|\sigma^{\frac{3-s_0}2}
 \bigl(D^2_t v, \nabla^2 D_t v,
\nabla D_t \pi\bigr)\bigr\|_{L^2_t(L^2)},
\end{split}
\eeq
 and
   for  $\ell\in \{1,\cdots, k\}$
 \begin{equation}\label{estimate:Jell}
 \begin{split}
&\qquad\quad A_\ell(t)\leq \cH_\ell(t) \with\\
 A_\ell(t)\eqdefa & \|\v^\ell\|_{\wt{L}^\infty_t(\cB^{s_\ell})\cap {L}^2_t(\cB^{1+s_\ell})}+\|\rx^{\ell-1}\|_{L^\infty_t(W^{2,p})}
  + A_{\ell1}(t)+A_{\ell2}(t),
  \end{split}
\end{equation}
where
\begin{equation}\label{Jell}
\begin{split}
A_{\ell1}(t)
\eqdefa &\|\rl\|_{L^\infty_t(L^\infty)}
+\|\v^\ell\|_{L^\infty_t(L^2)\cap L^2_t(\dH^1)}+
 \| \sigma ^{ \frac{1-s_\ell}{2}}
\nabla \v^\ell  \|_{L^\infty_t(L^2)}\\
&  + \bigl\| \sigma ^{ \frac{1-s_\ell}{2}}
  \bigl(\d_t\v^\ell,
 \nabla^2\v^\ell, \nabla{\rm \pi}^\ell) \bigr\|_{L^2_t(L^2)} \andf
  \\
A_{\ell2}(t)
\eqdefa & \bigl\| \sigma ^{1-\frac{s_\ell}{2}}
 (D_t\v^\ell, \nabla^2\v^\ell, \nabla{\rm \pi}^\ell)
 \bigr\|_{L^\infty_t(L^2)}
 + \| \sigma ^{1-\frac{s_\ell}{2}}
   \nabla D_t\v^\ell \|_{L^2_t(L^2)}\\
&+ \| \sigma ^{ \frac{3-s_\ell}{2}}
\nabla D_t\v^\ell\|_{L^\infty_t(L^2)}+\bigl\| \sigma ^{ \frac{3-s_\ell}{2}}
 (D_t^2\v^\ell,
 \nabla^2 D_t\v^\ell,
 \nabla D_t {\rm \pi}^{\ell})\bigr\|_{L^2_t(L^2)}.
\end{split}
\end{equation}
}\end{thm}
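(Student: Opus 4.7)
The plan is to establish the quantitative bounds $A_0(t)\le\mathcal{C}(v_0,s_0)$ and $A_\ell(t)\le\mathcal{H}_\ell(t)$ as uniform a priori estimates on a suitable approximate sequence (smoothed initial density bounded below and mollified velocity, so that \cite{DM13} type existence applies locally), and then pass to the limit using the closedness of the solution class under weak-$*$ convergence in the estimated norms. Uniqueness is a separate perturbation argument in a lower-regularity Lagrangian framework, reusing $A_0$ only. Throughout, the key principle stressed in Remark (2) is that one replaces $\p_t$-based energy identities by $D_t$-based ones so that no spatial regularity on $\rho$ is ever needed in the energy identity, whereas the ``Eulerian'' time derivatives $\p_t v$ are recovered afterwards from the equation $\rho\p_t v=\Delta v-\nabla\pi-\rho v\cdot\nabla v$.

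For $A_{01}(t)$, I would first run the classical kinetic energy identity \eqref{kineticenergy}, then propagate the $\cB^{s_0}$ regularity via a Chemin--Bony style dyadic energy estimate: rather than conjugating $\Delta_j$ through $\rho\p_t v$ (which fails because $\rho\in L^\infty$ only), use the characterization of $\cB^{s_0}$ with $s_0\in\,]0,1[$ by the finite-difference seminorm $\sup_{|y|>0}|y|^{-s_0}\|v(\cdot+y)-v(\cdot)\|_{L^2}$, which is preserved by energy estimates on the translated momentum equation since it only requires $\rho\in L^\infty$. The time-weighted components $\sigma^{(1-s_0)/2}(v_t,\nabla^2 v,\nabla\pi)$ and $\sigma^{1-s_0/2}(\ldots)$ follow by testing the equation against $\sigma^{1-s_0}v_t$ and then against $\sigma^{2-s_0}v_{tt}$ (formally; practically via difference quotients), using the Stokes-pressure decomposition $-\Delta v+\nabla\pi=-\rho D_t v$ to trade pressure for material derivative. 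For $A_{02}(t)$ I apply $D_t$ to the momentum equation, yielding
\[
\rho D_t^2 v-\Delta D_t v+\nabla D_t\pi = [\Delta,v\cdot\nabla]v-\nabla v\cdot\nabla\pi-\dive(\nabla v\otimes\nabla v)+\cdots,
\]
and test against $\sigma^{3-s_0}D_t v$ (and then $\sigma^{3-s_0}D_t^2 v$). The commutator source terms are bilinear in $\nabla v$ and its second derivatives, both already controlled by $A_{01}$, so one closes $A_{02}$ by Gronwall, quadruple-exponential in $\|v_0\|_{L^2}$ as in the definition of $\mathcal{C}(v_0,\cdot)$.

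The striated estimates $A_\ell(t)$ are obtained by induction on $\ell$. Applying $\p_X^\ell$ to the transport equation for $\rho$ and using $\dive X=0$ (propagated by \eqref{eq:X}) gives $D_t\rho^\ell=0$, so $\|\rho^\ell(t)\|_{L^\infty}=\|\p_{X_0}^\ell\rho_0\|_{L^\infty}$ directly. Applying $\p_X^\ell$ to the momentum equation produces an inhomogeneous Stokes--transport system for $(v^\ell,\pi^\ell)$ of the form
\[
\rho D_t v^\ell-\Delta v^\ell+\nabla\pi^\ell=F_\ell(v,\pi,\rho,X,v^1,\ldots,v^{\ell-1},X^1,\ldots,X^{\ell-1}),
\]
where $F_\ell$ is a polynomial in lower-index quantities and their spatial derivatives up to order two. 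For $\mathrm{X}^{\ell-1}$ I use \eqref{eq:X} differentiated $\ell-1$ times along $X$ and close a $W^{2,p}$ estimate via the Osgood/log-Lipschitz regularity of $v$ provided by $\cB^{1+s_\ell}$. The energy identity on $F_\ell$ combined with the already-obtained $A_0,A_1,\ldots,A_{\ell-1}$ yields $A_\ell(t)\le\mathcal{H}_\ell(t)$ after Gronwall, the tower of exponentials arising from the cascade of Gronwalls at each induction level.

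The main obstacle is twofold. First, recovering the $\cB^{s_0}$ propagation at the base level for a genuinely $L^\infty$ density requires carefully verifying that the finite-difference characterization is compatible with the Stokes pressure; this is where the restriction $s_0\in\,]0,1[$ enters. Second, at each level of the induction one needs $L^1_t(L^\infty)$ or $L^2_t(\mathrm{Lip})$ control of $\nabla v^{\ell-1}$ to run the transport estimate for $\mathrm{X}^{\ell-1}$ in $W^{2,p}$, and the choice $s_\ell=s_0-\epsilon\ell/k$ with $p\in\,]2,2/(1-s_k)[$ is precisely what makes Sobolev/Besov embedding $\cB^{1+s_\ell}\hookrightarrow W^{1,\infty}$ available on each dyadic block so that the commutator $[\p_X^\ell,v\cdot\nabla]v$ can be estimated without loss of a full derivative. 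Checking that all nonlinear terms in $F_\ell$ fit into the scale $\cB^{s_\ell}$ with the correct time weights is the most delicate bookkeeping, and replacing $\p_t$ by $D_t$ throughout is essential to avoid the phantom $\nabla\rho$ that would otherwise appear in $\nabla v_t$-type energy estimates.
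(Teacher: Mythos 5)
There is a genuine gap at the base level, in your propagation of the $\cB^{s_0}$ regularity. Your plan is to use the finite-difference characterization of $\cB^{s_0}$ and perform an energy estimate ``on the translated momentum equation'', claiming this only requires $\rho\in L^\infty$. But $v(\cdot+y)$ solves the momentum equation with the translated density $\rho(\cdot+y)$, not with $\rho$; subtracting, the equation for $\delta_y v=v(\cdot+y)-v$ carries the source term $\delta_y\rho\,\bigl(\p_t v+v\cdot\na v\bigr)(\cdot+y)$ (and a similar term in front of $\p_t\delta_y v$). Under the hypothesis \eqref{initial} the density is an arbitrary bounded function with no modulus of continuity, so $\delta_y\rho$ is only $O(1)$ in $L^\infty$ and is in no sense $O(|y|^{s_0})$; the weighted seminorm $\sup_y|y|^{-s_0}\|\delta_y v\|_{L^2}$ therefore cannot be closed. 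This is exactly the same obstruction that forbids commuting $\D_j$ with $\rho\p_t$, merely transplanted from frequency localizations to translations, and the restriction $s_0\in\,]0,1[$ does not help with it. The paper circumvents it by never localizing or translating the equation at all: one decomposes only the initial datum, writing $v=\sum_q v_q$ where $(v_q,\na\pi_q)$ solves the \emph{same} linear system \eqref{S1eq15} with data $\D_q v_0$, performs $L^2$ and $\dH^1$ energy estimates on each piece (which see $\rho$ only through its upper and lower bounds), and recovers $\|v\|_{\wt L^\infty_t(\cB^{s_0})}$ from \eqref{S1eq16} via the characterization of positive-index Besov norms applied to the sum. Note also that this route is where the choice of third index $1$ in $\cB^{s_0}=\dB^{s_0}_{2,1}$ becomes essential (to sum $\|\s^{\f{1-s_0}2}\p_t v_q\|_{L^2_t(L^2)}$ over $q$), a point your scheme does not address.

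Two further steps of your outline would not survive as written. First, you propose to obtain the $\s^{1-\f{s_0}2}$-weighted bounds by ``testing against $\s^{2-s_0}v_{tt}$''; with only bounded density this produces terms of the type $\int\rho_t D_t v\,|\,v_{tt}\,\dx=-\int v\cdot\na\rho\,D_tv\,|\,v_{tt}\,\dx$, which is precisely what the paper identifies as impossible and which motivates replacing $\p_t$ by $D_t$ (Lemma \ref{S4lem01}); the $\na v_t$ bound in $A_{01}$ is instead obtained from Lemma \ref{S2lem2}, i.e.\ by testing the $\p_t$-differentiated equation against $u_t$, never against $v_{tt}$. Second, in the inductive step the system for $\v^\ell$ is \emph{not} divergence free ($\div\v^\ell=\div g_\ell$), and the $W^{2,p}$ propagation of $\rx^{\ell-1}$ is run through $L^p$ Stokes estimates for $\D\v^\ell$ combined with the time-integrability coming from $p<2/(1-s_\ell)$, not through an Osgood/log-Lipschitz argument or an embedding $\cB^{1+s_\ell}\hookrightarrow W^{1,\infty}$; handling the non-solenoidal constraint with negative time weights near $t=0$ (Lemma \ref{S3lem2}) is an ingredient your sketch omits. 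The $D_t$-based estimate for $A_{02}$ and the overall inductive architecture for $A_\ell$ are in the right spirit, but without repairing the base-level Besov propagation the proof does not go through.
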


Admitting Theorem \ref{thm1} for the time being, we now turn to the  proof  Theorem \ref{thmmain}.

\begin{proof}[Proof of Theorem \ref{thmmain}] For $f_0$ given by \eqref{S3eq19}, we take $X_0=\na^{\perp}f_0.$  Then we deduce from
\eqref{S3eq19} and \eqref{S3eq18} that for $\ell=1,\cdots,k,$
\beno
\begin{split}
& \dive X_0=0
\andf \d_{X_0}^{\ell-1}X_0\in W^{2,p} \andf\\
&
0<\min\left\{\eta_1,\eta_2\right\}\leq \r_0
\leq \max\left\{\eta_1,\eta_2\right\} \andf
\p_{X_0}^\ell\r_0\equiv 0. \end{split} \eeno
Thus  by virtue of
Theorem \ref{thm1}, the coupled system \eqref{inNS} with \eqref{eq:X} has a unique global solution $(\r, v, \na\pi, X)$
so that the inequalities \eqref{S1eq1} and  \eqref{estimate:Jell} are valid for any $t>0.$

Let us denote by $\psi(t,\cdot)$  the flow associated with the
 vector field $v,$ that is
\begin{equation}\label{S1eq10}
 \left\{\begin{array}{l}
\displaystyle \f{d}{dt}\psi(t,x)=v(t,\psi(t,x)),\\
\displaystyle \psi(0,x)=x,
\end{array}\right.
\end{equation}
from which and Corollary \ref{S1col1} (see below), we infer for any $t>0$,
\beq\label{S1eq19}
\begin{split}
\|\na\psi(t)-Id\|_{L^\infty}
&\leq \exp\bigl(\|\na v\|_{L^1_t( L^\infty)}\bigr)
\leq \exp\bigl(\w{t}\|\s^{\f{1-s_0}2}\na v\|_{L^2_t(L^\infty)}\bigr)
\leq \exp\left(\cA_0\w{t}\right);\end{split}
\eeq
and
\beq\label{S1eq20}
\begin{split}
\|\na^2\psi(t)\|_{L^p}\leq &\|\na\psi\|_{L^\infty_t(L^\infty)}^2\|\na^2 v\|_{L^1_t(L^p)}\exp\bigl(\|\na v\|_{L^1_t(L^\infty)}\bigr)\\
\leq&
\exp\left(\cA_0\w{t}\right)\|\s^{-\left(1-\f1p-\f{s_0}2\right)}\|_{L^2_t}
\|\s^{\left(1-\f1p-\f{s_0}2\right)}\na^2v\|_{L^2_t(L^p)}
\leq \exp\left(\cA_0\w{t}\right).
\end{split}
\eeq
Here we used the fact that $2<p<2/{(1-s_k)}$ so that $1-1/p-{s_0}/2<1/2.$

Let $\Om(t)\eqdefa \psi(t,\Omega_0).$ Due to \eqref{S3eq18}, we deduce from the transport equation of \eqref{inNS} that
 \beq \label{density} \r(t,x)={\eta_1}{\bf 1}_{\Om(t)}(x)+\eta_2{\bf
1}_{\Om(t)^c}(x). \eeq
   Next we are going to prove that
$\Omega(t)$ is
 of class $W^{3,p}.$ Indeed
 let $\d\Omega(t)$
 be the level surface of $f(t,\cdot).$ Then $f$ solves
\beno
 \left\{\begin{array}{l}
\displaystyle \p_tf+v\cdot\na f=0,\\
\displaystyle f(0,x)=f_0(x),
\end{array}\right. \eeno
which implies that $X(t,x)\eqdefa \na^\perp f(t,x)$ solves
\eqref{eq:X}. We thus deduce from \eqref{eq:X} and \eqref{S1eq10}
that  the tangential vector field $X(t,x)=\na^\perp f(t,x)$ satisfies
\beq \label{S1eq11}
\begin{split} &X(t,\psi(t,x))=X_0(x)\cdot\nabla {\psi(t,x)} .
\end{split} \eeq
Moreover,   in view of  \eqref{estimate:Jell}, we have
\beq\label{S1eq13}
 \rx^{\ell-1}\in
L^\infty_\loc(\R^+; W^{2,p}),\quad \mbox{for}\ \ \ell=1,\cdots, k.
\eeq
Then it follows then from \eqref{S1eq19}, \eqref{S1eq20} and \eqref{S1eq13} that
\beno
\begin{split} \d_{i}\p_j\bigl(X(t,\psi(t,x))\bigr)
&=\d_{i}\p_j\psi(t,x)\cdot\nabla X(t,\psi(t,x))\\
&+(\d_i\psi\otimes\d_j\psi)(t,x):\nabla^2 X(t,\psi(t,x)) \in
L^\infty_\loc(\R^+;L^p),
\quad\forall\ 1\leq i,j\leq 2.
\end{split} \eeno
Therefore we deduce from \eqref{S3eq19} and \eqref{S1eq11} that \beq\label{S1eq12a}
 \d_s(\psi(t,\gamma_0(s)))=X_0(\gamma_0(s))
 \cdot\nabla{ \psi(t,\gamma_0(s))}
= X(t,\psi(t,\gamma_0(s)))\in L^\infty_\loc(\R^+; W^{2,p}),\eeq that
is, $\Omega(t)$ belongs to the class of $W^{3,p}$ for any $t>0.$

\smallbreak

Along the same line to the derivation of \eqref{S1eq12a}, we write
$$
\d_s^{\ell}(\psi(t,\gamma_0(s))) =\d_s^{\ell-1}\bigl(
(\d_{X_0}\psi)(t,\gamma_0(s)) \bigr) =\cdots
=(\d_{X_0}^{\ell}\psi)(t,\gamma_0(s)), \quad\forall\ \ell\geq 1 .
$$
Thus in order to prove that $\p\Om(t)=\psi(t,\gamma_0(\p\Om_0))$  belongs to the class of $
W^{k+2,p}$ for any $t>0$, it suffices to show that $\d_{X_0}^\ell\psi \in
L^\infty_\loc(\R^+; W^{2,p})$ for any  $\ell\in \{1,2,\cdots, k\ \}$.

To this end, we notice that
 \eqref{S1eq11} implies that
$$
X(t,x)=(\d_{X_0}\psi)(t,\psi^{-1}(t,x)),
$$
so that for any smooth function $f,$ we write
\begin{align*}
\d_{X} \bigl(f(\psi^{-1}(t,x))\bigr)
&=\sum_{i=1}^2\sum_{\al=1}^2
X^i(t,x)\p_\al f(\psi^{-1}(t,x))\f{\p(\psi^{-1})^\al(t,x)}{\p x_i}
\\
&=\sum_{i=1}^2\sum_{\al=1}^2\sum_{j=1}^2
(X_0^j\p_j\psi^i)(t,\psi^{-1}(t,x)) \f{\p(\psi^{-1})^\al(t,x)}{\p x_i} \p_\al f(\psi^{-1}(t,x))
\\
&=(\d_{X_0} f)(\psi^{-1}(t,x)).
\end{align*}
Then an inductive argument leads to
\begin{align*}
{\rx}^{\ell-1}(t,x)
=\d_{X}^{\ell-1}\bigl((\d_{X_0}\psi)(t,\psi^{-1}(t,x))\bigr)
=(\d_{X_0}^{\ell}\psi)(t,\psi^{-1}(t,x)), \quad
\forall\ \ell\geq 1,
\end{align*}
which together with \eqref{S1eq19}, \eqref{S1eq20} and \eqref{S1eq13} implies $ \d_{X_0}^\ell\psi\in L^\infty_\loc(\R^+; W^{2,p}) $ for any $\ell\in \{1,2,\cdots, k\ \}.$
 This
completes the proof of Theorem \ref{thmmain}.
\end{proof}

The rest of this section is devoted to the outline of the proof to Theorem \ref{thm1}.
Let us begin with the notations we are going to use
in the whole context.

 \noindent{\bf Notations:}  We denote $(a | b)\eqdefa \int_{\R^2}a|b\,\dx$ to be the $L^2(\R^2)$ inner product of $a$ and $b,$ and
$[A;B]=AB-BA$ to be the commutator of the operators $A$ and $B.$
For
$a\lesssim b$, we mean that there is a uniform constant $C,$ which
may be different on different lines, such that $a\leq Cb$. Finally we denote
$(d_q)_{q\in\Z}$ to be a generic element of $\ell^1(\Z)$ so that $\sum_{q\in\Z}d_q=1.$

\smallbreak
We shall first prove in Section \ref{sec:J0} that there holds \eqref{S1eq1} under the assumption \eqref{initial}.

\begin{prop}\label{S1prop1}
{\sl Let $(\r,v, \na\pi)$ be a smooth enough solution of \eqref{inNS} and $A_{01}(t)$ be the functional given by \eqref{S1eq1p}. Then under the hypothesis of \eqref{initial}, the
following inequality is valid for any $t\geq 0$ \beno
A_{01}(t)\leq \cC(v_0,s_0).\eeno
}
\end{prop}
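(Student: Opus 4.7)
The plan is to climb a hierarchy of weighted energy estimates, each rung exploiting the previous one, while coping with the mere $L^\infty$ regularity of $\rho$ through Littlewood--Paley analysis and time weights $\sigma(t)^\alpha$ that compensate the possible singularity of $\partial_t v$ at $t=0$. First, the kinetic energy identity \eqref{kineticenergy} together with the two-sided bound $\rho_\ast\leq\rho\leq\rho^\ast$ immediately yields $\|v\|_{L^\infty_t(L^2)\cap L^2_t(\dot H^1)}\lesssim\|v_0\|_{L^2}$, which controls the first summand of $A_{01}$.

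To bound $\sigma^{(1-s_0)/2}(v_t,\nabla^2 v,\nabla\pi)$ in $L^2_t(L^2)$, I would differentiate the momentum equation in time, test against $v_t$, and use $\partial_t\rho=-\dv(\rho v)$ to eliminate the density time derivative. The resulting identity
\[
\tfrac12\tfrac{d}{dt}\int\rho|v_t|^2\,dx+\|\nabla v_t\|_{L^2}^2 = -\int\rho v_t\cdot\nabla v\cdot v_t\,dx + (\text{lower order})
\]
is closed via the two-dimensional interpolation $\|f\|_{L^4}^2\lesssim\|f\|_{L^2}\|\nabla f\|_{L^2}$ and the Step~1 bound. Multiplying by $\sigma^{1-s_0}$ absorbs the singular behaviour of $v_t(0)$ caused by the mere $\mathcal B^{s_0}$ data. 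The stationary Stokes problem $-\Delta v+\nabla\pi=-\rho(v_t+v\cdot\nabla v)$, inverted by the Helmholtz projector on $\R^2$, then converts this bound into matching $L^2_t(L^2)$ estimates for $\sigma^{(1-s_0)/2}(\nabla^2 v,\nabla\pi)$.

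For the Besov regularity $v\in\widetilde L^\infty_t(\cB^{s_0})\cap L^2_t(\cB^{1+s_0})$, which is the novelty highlighted in Remark 1.2(3), I would apply the dyadic block $\Delta_q$ to the heat-like formulation
\[
\partial_t v-\tfrac{1}{\rho}\Delta v = -v\cdot\nabla v-\tfrac{1}{\rho}\nabla\pi
\]
and test against $\rho\,\Delta_q v$. Bernstein's inequality provides the parabolic gain $\|\nabla\Delta_q v\|_{L^2}^2\gtrsim 2^{2q}\|\Delta_q v\|_{L^2}^2$, while the right-hand side decomposes into a transport contribution (handled by a Bony-type commutator estimate on $[\Delta_q,v\cdot\nabla]$) and a pressure contribution (absorbed by the Step~2 bound on $\nabla\pi$). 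Multiplying by $2^{qs_0}$ and summing in $q\in\ZZ$ closes the estimate, since the constraint $s_0\in(0,1)$ allows any commutator involving multiplication by $\rho$ to be treated using only $\|\rho\|_{L^\infty}$, never asking for derivatives on $\rho$. The weighted Besov norm $\|\sigma^{(1-s_0)/2}v\|_{\widetilde L^\infty_t(\cB^1)}$ then follows by interpolating between this $\widetilde L^\infty_t(\cB^{s_0})$ bound and the $\sigma^{(1-s_0)/2}$-weighted $L^\infty_t(\dot H^1)$ bound derived in the last step.

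Finally, the $L^\infty_t(L^2)$ estimates with weight $\sigma^{1-s_0/2}$ on $(v_t,\nabla^2 v,\nabla\pi)$ and the $L^2_t(L^2)$ bound on $\sigma^{1-s_0/2}\nabla v_t$ come from rerunning the Step~2 computation with the heavier weight $\sigma^{2-s_0}$: the extra power of $\sigma$ is admissible precisely because the Step~2 output already provides integrable-in-$t$ control of $v_t$ near the origin, and $\sigma'\leq 1$ makes the extra error term harmless. The \textbf{main obstacle} is Step~3: since $\rho$ lives only in $L^\infty$, the classical Danchin-type critical scheme of \cite{D1,DZ14,HPZ,PZZ1} is unavailable, and one has to exploit the characterisation of $\cB^{s_0}$ with positive index through dyadic blocks in a way that never places a derivative on $\rho$; the delicate part is matching the time integrability of the forcing terms with the parabolic gain coming from Bernstein's inequality, which is what dictates the final choice of the time weights appearing in $A_{01}$.
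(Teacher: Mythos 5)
Your Steps 1 and 4 are consistent with the paper, but the core of the argument (your Steps 2--3) contains a genuine gap, and it is exactly the obstruction the paper is built to avoid. In Step 3 you propose to apply $\D_q$ to the equation written as $\p_t v-\rho^{-1}\D v=\dots$ and to test against $\rho\,\D_q v$, asserting that "any commutator involving multiplication by $\rho$ can be treated using only $\|\rho\|_{L^\infty}$". This is false: the gain in commutator estimates of the type $\|[\D_q,a]f\|_{L^2}\lesssim 2^{-q}\|\na a\|_{L^\infty}\|f\|_{L^2}$ comes precisely from putting a derivative (or some positive regularity) on the coefficient $a$; with $a=\rho$ or $\rho^{-1}$ merely bounded, $[\D_q,\rho]\p_t v$ (equivalently $[\D_q,\rho^{-1}]\D v$) is of the same size as the terms you are trying to absorb and destroys the parabolic gain, so the scheme does not close. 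This is exactly what the paper states cannot be done ("we can not apply the operator $\D_j$ to the momentum equation ... and then perform energy estimate for $\D_j v$"). The paper's actual device is different: it never localizes the equation in frequency, but instead localizes the \emph{data}, solving for each $q$ the full rough-coefficient system \eqref{S2eq20a} with initial datum $\D_q u_0$, obtaining $L^2$, $\dot H^1$ and $\s$-weighted bounds for each piece $u_q$ from Proposition \ref{S2col0} and Proposition \ref{S2colu}, and then reassembling $v=\sum_q u_q$ through the characterization of positive-index Besov norms ($\|\D_j u\|\leq\sum_{q>j}\|\D_j u_q\|+2^{-j}\sum_{q\leq j}\|\D_j\na u_q\|$), as in Proposition \ref{S2prop1}.

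As a consequence, your Step 2 is also circular in its ordering. A direct time-weighted energy estimate (multiplying the $H^1$ or $v_t$ energy identity by $\s^{1-s_0}$ or $\s^{2-s_0}$) produces on the right-hand side the borderline quantity $\int_0^t\s^{-s_0}\|\na v\|_{L^2}^2\,dt'$ (respectively $\int_0^t\s^{-s_0}\|v_t\|_{L^2}^2\,dt'$), which for generic data of Sobolev regularity $s_0$ behaves like $\int_0^t (t')^{-1}dt'$ and is \emph{not} controlled by Gronwall; it is finite only because of the $\ell^1$ summation over dyadic blocks, which is why the paper works with $\cB^{s_0}=\dB^{s_0}_{2,1}$ rather than $\dH^{s_0}$ and obtains $\|\s^{\f{1-s_0}2}(v_t,\na^2v,\na\pi)\|_{L^2_t(L^2)}$ in \eqref{S2eq20} by interpolating, for each piece, $\|\p_t u_q\|_{L^2_t(L^2)}$ against $\|\s^{\f12}\p_t u_q\|_{L^2_t(L^2)}$ and then summing in $q$. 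The heavier weights $\s^{1-\f{s_0}2}$ of $A_{01}$ are then obtained (Lemma \ref{S2lem2}, Corollary \ref{S2col2}) \emph{using} $\|\s^{\f{1-s_0}2}v_t\|_{L^2_t(L^2)}$ as input, so this Besov-decomposition step must come first; your plan derives the $\s^{\f{1-s_0}2}$ bounds before, and independently of, any dyadic structure, and that step cannot be justified as written.
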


Let us remark that the essence of the proof of Proposition \ref{S1prop1} is to derive the {\it a priori} estimates
for $\|v\|_{\wt{L}^\infty_t(\cB^{s_0})}$ and $\|\s^{\f{1-s_0}2}v_t\|_{L^2_t(L^2)},$ which is completely new compared with the
previous references \cite{DZ14,HPZ,PZZ1}. Due to the fact that the density function only belongs to the bounded function space, we can not use the classical ideas
 in \cite{D1}, namely, we can not apply the operator $\D_j$ to the momentum equation of \eqref{inNS} and then perform energy
estimate for $\D_j v.$

Here  the main idea of the proof  to the propagation of the Besov regularities of the velocity field is motivated by the characterization of Besov norms with positive regularity index. More
precisely, we write
\beq\label{S1eq14}
v=\sum_{q\in\Z} v_q \andf \na\pi=\sum_{q\in\Z} \na\pi_q,
\eeq
with $(v_q, \na\pi_q)$ solving
\begin{equation}\label{S1eq15}
 \left\{\begin{array}{l}
\displaystyle\rho \p_t v_q+\r v\cdot\na v_q-\Delta v_q+\nabla \pi_q=0,\\
\displaystyle \div v_q=0,\\
\displaystyle   v_q|_{t=0}=\D_q v_0.
\end{array}\right.
\end{equation}
Then by performing $H^1$ energy estimate to \eqref{S1eq15}, one has
\beq \label{S1eq16}
\|v_q\|_{L^\infty_t(L^2)}+\|\s^{\f12}\p_tv_q\|_{L^2_t(L^2)}
+2^{-q}\bigl(\|\na v_q\|_{L^\infty_t(L^2)}+\|\p_tv_q\|_{L^2_t(L^2)}\bigr)\lesssim d_q 2^{-qs_0}\|v_0\|_{\cB^{s_0}}.
\eeq
We thus deduce from Bernstein-type lemma (see Lemma 2.1 of \cite{BCD} for instance) that
\beno
\begin{split}
\|\D_j v\|_{L^\infty_t(L^2)}\leq
&\sum_{q>j}\|\D_j v_q\|_{L^\infty_t(L^2)}
+2^{-j}\sum_{q\leq j}
\|\D_j\na v_q\|_{L^\infty_t(L^2)}
\\
\lesssim &\sum_{q>j}
\| v_q\|_{L^\infty_t(L^2)}
+2^{-j}\sum_{q\leq j}
\|\na v_q\|_{L^\infty_t(L^2)}
\lesssim d_j
2^{-j s_0}\|v_0\|_{\cB^{s_0}},
\end{split}
\eeno
which together with Definition \ref{S0def1} ensures that $v\in \wt{L}^\infty_t(\cB^{s_0}).$
Indeed the same procedure would imply that $v\in \wt{L}^\infty_t(\dB^{s_0}_{2,r})$ for any $r\in [1,\infty]$ provided
that $v_0\in \dB^{s_0}_{2,r}.$ However in this case, \eqref{S1eq16} would become
\beno
\|\s^{\f12}\p_tv_q\|_{L^2_t(L^2)}
+2^{-q}\|\p_tv_q\|_{L^2_t(L^2)}\lesssim c_{q,r} 2^{-qs_0}\|v_0\|_{\dB^{s_0}_{2,r}},
\eeno
where $(c_{q,r})_{q\in\Z}$ is a generic element of $\ell^r(\Z)$ so that $\bigl\|(c_{q,r})_{q\in\Z}\|_{\ell^r(\Z)}=1.$
As a result, we infer
\beno
\|\s^{\f{1-s_0}2}\d_t v\|_{L^2_t(L^2)}\leq \sum_{q\in\Z}\|\p_tv_q\|_{L^2_t(L^2)}^{s_0}\|\s^{\f12}\p_tv_q\|_{L^2_t(L^2)}^{1-s_0}
\lesssim  \|v_0\|_{\dB^{s_0}_{2,r}}\sum_{q\in\Z}c_{q,r}.
\eeno
In order to guarantee the series, $\sum_{q\in\Z}c_{q,r},$ to be  convergent, the only choice is $r=1.$ That is the reason why
we choose to work with the initial velocity in the Besov space, $\cB^{s_0},$ instead of the classical Sobolev space $\dH^{s_0}.$

With Proposition \ref{S1prop1},  we shall exploit   the 2-D interpolation inequality
\beq \label{S2eq6} \|a\|_{L^r}\leq
C\|a\|_{L^2}^{\f2r}\|\na a\|_{L^2}^{1-\f2r}\quad \forall\ r\in
]2,\infty[, \eeq to prove that

\begin{col}\label{S1col1}
{\sl Let $r\in [2,\infty[.$ Then under the same assumptions of
Proposition \ref{S1prop1}, we have for any $t\geq 0$,
\begin{equation}\label{S1eq2}
\begin{split}
 &\bigl\|\sigma^{\frac{1-s_0}{2}}v\bigr\|_{L^\infty_t(L^\infty)}
+\bigl\|\sigma^{\frac{1-s_0}{2}\left(1-\frac
2r\right)}v\bigr\|_{L^\infty_t(L^r)}
+\bigl\|\sigma^{\frac{1-s_0}{2}}\nabla
v\bigr\|_{L^{\frac{2r}{r-2}}_t(L^r)}
\\
& +\bigl\|\sigma^{\bigl(1-\frac 1r-\frac{s_0}{2}\bigr)}\nabla
v\bigr\|_{L^{\infty}_t(L^r)}
+\bigl\| \sigma^{\bigl(1-\frac
1r-\frac{s_0}{2}\bigr)} \bigl(v_t, \na^2 v, \nabla \pi \bigr)
\bigr\|_{L^2_t(L^{r})}
\\
& +\bigl\|\sigma^{\bigl(1-\frac{s_0}2\bigr)}\bigl(v_t, \na^2 v, \nabla
\pi\bigr) \bigr\|_{L^{\frac{2r}{r-2}}_t(L^r)}
+\bigl\| \sigma^{\frac{1-s_0}2} \nabla v\bigr\|_{L^{2}_t(L^\infty)} \leq
\cC(v_0,s_0).
\end{split}
\end{equation}
}
\end{col}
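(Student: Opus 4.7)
The plan is to derive each of the seven weighted bounds by applying the 2D Gagliardo--Nirenberg interpolation \eqref{S2eq6}, coupled with the critical embedding $\cB^1 = \dot B^1_{2,1}(\R^2) \hookrightarrow L^\infty$ and a steady Stokes estimate for the momentum equation. The first bound $\|\sigma^{(1-s_0)/2} v\|_{L^\infty_t(L^\infty)}$ is immediate from $\cB^1\hookrightarrow L^\infty$ applied to the $\wt L^\infty_t(\cB^1)$ norm in Proposition \ref{S1prop1}; the intermediate $L^r$ bound on $v$ follows from \eqref{S2eq6} interpolating $\|v\|_{L^\infty_t(L^2)}$ against $\|\sigma^{(1-s_0)/2}\nabla v\|_{L^\infty_t(L^2)}$ (the latter obtained via $\cB^1 \hookrightarrow \dH^1$), the weight $(1-s_0)(1-2/r)/2$ being the resulting convex combination.

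For the $L^r$ estimates on $\nabla v$ I apply \eqref{S2eq6} with $a=\nabla v$, interpolating $\|\sigma^{(1-s_0)/2}\nabla v\|_{L^\infty_t(L^2)}$ against either $\|\sigma^{(1-s_0)/2}\nabla^2 v\|_{L^2_t(L^2)}$ or $\|\sigma^{1-s_0/2}\nabla^2 v\|_{L^\infty_t(L^2)}$ (all from Proposition \ref{S1prop1}); this reproduces respectively the weight $(1-s_0)/2$ in $L^{2r/(r-2)}_t(L^r)$ and the weight $1-1/r-s_0/2$ in $L^\infty_t(L^r)$. The analogous application with $a=v_t$, feeding in $\|\sigma^{(1-s_0)/2} v_t\|_{L^2_t(L^2)}$, $\|\sigma^{1-s_0/2} v_t\|_{L^\infty_t(L^2)}$ and $\|\sigma^{1-s_0/2}\nabla v_t\|_{L^2_t(L^2)}$, yields the $L^r$ bounds on $v_t$ in the two stated time norms after matching weights by simple arithmetic.

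The $L^r$ estimates for $\nabla^2 v$ and $\nabla\pi$ come from the Stokes estimate: rewriting the momentum equation as $-\Delta v + \nabla\pi = -\rho(v_t + v\cdot\nabla v)$ and using $\dive v=0$ gives $\|\nabla^2 v\|_{L^r}+\|\nabla\pi\|_{L^r}\lesssim \|v_t\|_{L^r}+\|v\|_{L^\infty}\|\nabla v\|_{L^r}$ in $\R^2$. The $v_t$ contribution is controlled by the previous step; for the convective term, the weight $\sigma^{(1-s_0)/2}$ is placed on $v$ (using the $L^\infty_t(L^\infty)$ bound above) and the residual weight on $\nabla v$ is absorbed by a further instance of \eqref{S2eq6} applied to $\nabla v$, combined with the elementary observation $\sigma\le 1$ that allows replacement of $\sigma^\alpha$ by $\sigma^\beta$ whenever $\alpha\ge\beta$. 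Finally the borderline estimate $\|\sigma^{(1-s_0)/2}\nabla v\|_{L^2_t(L^\infty)}$ requires a further step, since $\dH^1\not\hookrightarrow L^\infty$ in 2D: fix $p>2$ and apply the 2D Gagliardo--Nirenberg $\|\nabla v\|_{L^\infty}\leq C\|\nabla v\|_{L^p}^{1-2/p}\|\nabla^2 v\|_{L^p}^{2/p}$, then H\"older in time with exponents $p/(p-2)$ and $p/2$, reducing to the $L^2_t(L^p)$ bounds on $\sigma^{(1-s_0)/2}\nabla v$ and $\sigma^{(1-s_0)/2}\nabla^2 v$ just produced.

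The hardest part will be the arithmetic of matching $\sigma$-weights and time exponents: every weight on the left must be realized as a convex combination of weights on the right for which Proposition \ref{S1prop1} provides a bound, while the H\"older time exponent must simultaneously come out correctly. The device $\sigma\le1$ is used repeatedly to bridge small weight mismatches and is especially important for the Stokes treatment of $\nabla^2 v$ and for the subcritical Sobolev reduction of the last $L^2_t(L^\infty)$ estimate.
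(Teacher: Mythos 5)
Most of your scheme coincides with the paper's: the first five norms are obtained exactly as in the text by applying \eqref{S2eq6} to $v$, $\na v$ and $v_t$ with the weighted norms of Proposition \ref{S1prop1}, and your Stokes treatment of $(\na^2v,\na\pi)$ (splitting $\r v\cdot\na v$ as $\|v\|_{L^\infty}\|\na v\|_{L^r}$, putting $\s^{\f{1-s_0}2}$ on $v$ and recovering the residual weight on $\na v$ by one more use of \eqref{S2eq6} against $\|\na v\|_{L^2_t(L^2)}$ and $\|\s^{\f12}\na^2 v\|_{L^2_t(L^2)}$) can be made to close without time-growing factors, which is what matters since $\cC(v_0,s_0)$ must be independent of $t$; the paper does the same thing with $\|v\|_{L^{2r}}\|\na v\|_{L^{2r}}$ instead.

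The genuine gap is in your last step, the borderline bound $\|\s^{\f{1-s_0}2}\na v\|_{L^2_t(L^\infty)}$. You invoke $\|\na v\|_{L^\infty}\le C\|\na v\|_{L^p}^{1-2/p}\|\na^2 v\|_{L^p}^{2/p}$ and claim a reduction to ``the $L^2_t(L^p)$ bounds on $\s^{\f{1-s_0}2}\na v$ and $\s^{\f{1-s_0}2}\na^2 v$ just produced.'' No such bound on $\na^2 v$ was produced: the $L^2_t(L^p)$ estimate available from the Stokes step is \eqref{S2eq34}, whose weight is $\s^{\left(1-\f1p-\f{s_0}2\right)}$, a \emph{larger} exponent than $\f{1-s_0}2$ when $p>2$. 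Your device $\s\le1$ only lets you replace a needed weight by an available one with a \emph{smaller} exponent, so it goes the wrong way here; and in fact $\|\s^{\f{1-s_0}2}\na^2 v\|_{L^2_t(L^p)}$ is false in general for data only in $L^2\cap\cB^{s_0}$ (already for the heat flow, $\|\na^2 e^{t\D}v_0\|_{L^p}\sim t^{-\left(\f32-\f1p-\f{s_0}2\right)}$, so with the weight $\s^{\f{1-s_0}2}$ the square is $\sim t^{-2+\f2p}$, not integrable near $t=0$ for $p>2$; the critical admissible weight is exactly $1-\f1p-\f{s_0}2$). The repair, which is the paper's argument, is to choose $p=\f2{1-s_0}$, put \emph{no} weight on the first factor by bounding $\|\na v\|_{L^{2/(1-s_0)}}\lesssim\|\na v\|_{\dH^{s_0}}$ (controlled in $L^2_t$ without weight through $\|v\|_{L^2_t(\cB^{1+s_0})}$ of Proposition \ref{S1prop1}), and put all the weight on the second factor, where \eqref{S2eq34} with $r=\f2{1-s_0}$ gives $\|\s^{\f12}\na^2 v\|_{L^2_t(L^{2/(1-s_0)})}\leq\cC(v_0,s_0)$; since $1-\f2p=s_0$ and $\f2p=1-s_0$, the weights recombine as $(\s^{\f12})^{1-s_0}=\s^{\f{1-s_0}2}$ and the time-H\"older closes in $L^2_t$ with no $\w{t}$ loss.
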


In order to derive the {\it a priori} estimate of $\|\na X\|_{L^\infty_t(W^{1,p})},$ not only we need to perform the
energy estimate for $v_t$ but also the energy estimate of $\na v_t.$
To this end, we get, by applying $\p_t$ to the momentum equation of
\eqref{inNS}, that
\beno \label{S1eq18}
\rho\p_tv_t+\r v\cdot\na v_t -\Delta v_t +\nabla{\pi}_t
=-\r_t(v_t+v\cdot\na v)-\r v_t\cdot\na  v.
\eeno
To perform the energy estimate for $\na v_t,$ we need to deal with such terms as
$$\int_{\R^2} \r_t D_tv | v_{tt}\,\textrm{d}x
=-\int_{\R^2} v\cdot\na\r D_tv | v_{tt}\,\textrm{d}x,$$
which is impossible to go through
with non-Lipschitz density function $\r$.
This  is nevertheless the case here.

The idea to overcome this difficulty  is to
apply the material derivative $D_t$ instead of $\p_t$ to the momentum equation of \eqref{inNS}, which gives
\begin{equation}\label{eq:Dv}
\begin{split}
\rho D^2_t v
-\Delta D_t v+\nabla D_t \pi
=&F_D(v,\pi) \with\\
F_D(u,\Pi)\eqdefa&   -2\nabla v_\al\cdot \p_\al\nabla u-\Delta v\cdot\nabla u
+\nabla v_\al\p_\al \Pi.
 \end{split}
\end{equation}
Here and in what follows, repeated indices of $\al$   means summation of $\al$  from $1$ to $2.$ We remark that the advantage of exploiting  the operator $D_t$ is that $D_t\rr=0$, so that difficult terms mentioned before do not appear anymore.

Note that due to $\dive v=0,$ one has
\beq\label{S4eq7p}
\begin{split}
&\dive D_tv=\dive (v\cdot\nabla v)=\p_\al v\cdot\na v_\al \andf\\
&\div D_t^2 v
=
\div \frak{b}_0 \with \frak{b}_0
\eqdefa v\cdot(\nabla  v_t+   D_t\nabla v)+D_t v\cdot\nabla v.
\end{split} \eeq

In Subsection \ref{subs:Dv'}, we shall use the following lemma
to perform $\dH^1$ energy estimate for $D_tv:$

\begin{lem}\label{S4lem01}
{\sl Let  $(\rw,\na{\rm q})$ be a smooth enough solution of the following system:
\begin{equation}\label{S8eq1}
 \left\{\begin{array}{l}
\displaystyle\rho D_t^2 \rw-\Delta D_t \rw+\nabla \rm{q}=\rm{F},\\
\displaystyle \div D_t\rw=\dive \fa \andf \dive D_t^2\rw=\dive\frak{b}.
\end{array}\right.
\end{equation}
Then for any $s\in]0,1[,$ we have
\beq\label{S8eq2}
\begin{split}
 \|\sigma^{1-\frac{s}2}& D_t \rw \|_{L^\infty_t(L^2)\cap L^2_t(\dH^1)}^2
+\|\sigma^{\frac{3-s}2} \na D_t \rw \|_{L^\infty_t(L^2)}^2
+  \|\sigma^{\frac{3-s}2} (D_t^2 \rw,\na^2 D_t\rw,\na{\rm q})\|_{L^2_t(L^2)}^2\\
&\leq C\exp\left(C\|v_0\|_{L^2}^2\right)\bigl(\|\s^{\f{1-s}2}(D_t\rw,\fa)\|_{L^2_t(L^2)}^2+\|\s^{1-\f{s}2}\na \fa\|_{L^2_t(L^2)}^2\\
&\qquad\qquad\qquad\qquad\qquad\qquad\qquad\qquad+\|\s^{\f{3-s}2}(\na\dive\fa, \frak{b},{\rm F})\|_{L^2_t(L^2)}^2\bigr).
\end{split} \eeq
}
\end{lem}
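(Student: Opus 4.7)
My plan is to run a two-tier energy scheme, closed simultaneously with a stationary Stokes bound. Throughout the argument I shall rely on the fundamental identity
\[
\frac{d}{dt}\int_{\R^2}\rho\,|f|^2\,\dx = 2\int_{\R^2}\rho\,f\cdot D_tf\,\dx,
\]
valid because $\div v=0$ and $D_t\rho=0$. I shall also freely invoke Corollary \ref{S1col1}, in particular $\|v\|_{L^\infty_t(L^\infty)}+\|\nabla v\|_{L^2_t(L^\infty)}\le \cC(v_0,s_0)$ on any finite time interval.

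\textbf{Tier 1 (first energy).} Set $\tilde{\rw}\eqdefa D_t\rw - \cQ(\div\fa)$, where $\cQ\phi\eqdefa\nabla(-\Delta)^{-1}\phi$, so that $\div\tilde{\rw}=0$. Testing \eqref{S8eq1} against $\sigma^{2-s}\tilde{\rw}$ annihilates the pressure term. The inertial piece $\int\rho D_t^2\rw\cdot\sigma^{2-s}\tilde{\rw}\,\dx$ is reassembled via the fundamental identity into $\tfrac12\tfrac{d}{dt}\bigl(\sigma^{2-s}\|\sqrt\rho\,D_t\rw\|_{L^2}^2\bigr)$ up to a time-weight error of size $\sigma^{1-s}\|D_t\rw\|_{L^2}^2$ and cross terms involving $\cQ(\div\fa)$ and $D_t\cQ(\div\fa)$, which are bounded by $\|\fa\|_{L^2}$ and $\|\nabla\fa\|_{L^2}$ thanks to the $L^2$-boundedness of $\cQ$. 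The viscous term provides $\sigma^{2-s}\|\nabla D_t\rw\|_{L^2}^2$, and a Grönwall argument yields the $\|\sigma^{1-s/2}D_t\rw\|_{L^\infty_t(L^2)\cap L^2_t(\dH^1)}$-bound on the left of \eqref{S8eq2}, with a right-hand side matching $\|\sigma^{(1-s)/2}(D_t\rw,\fa)\|_{L^2_t(L^2)}^2+\|\sigma^{1-s/2}\nabla\fa\|_{L^2_t(L^2)}^2+\|\sigma^{(3-s)/2}{\rm F}\|_{L^2_t(L^2)}^2$.

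\textbf{Tier 2 (second energy).} Next I test \eqref{S8eq1} with $\sigma^{3-s}D_t^2\rw$, expanding $D_t^2\rw=\partial_tD_t\rw+v\cdot\nabla D_t\rw$. The inertial term directly yields $\sigma^{3-s}\|\sqrt\rho\,D_t^2\rw\|_{L^2}^2$, while the viscous contribution produces the good time-derivative $-\tfrac12\tfrac{d}{dt}\bigl(\sigma^{3-s}\|\nabla D_t\rw\|_{L^2}^2\bigr)$, a harmless weight error $\sigma^{2-s}\sigma'\|\nabla D_t\rw\|_{L^2}^2$, and the commutator $-\int\partial_iv_j\,\partial_jD_t\rw\cdot\partial_iD_t\rw\,\dx$, controlled by $\|\nabla v\|_{L^\infty}\|\nabla D_t\rw\|_{L^2}^2$, which is $L^1_t$ by Corollary \ref{S1col1}. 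The pressure contribution $-\sigma^{3-s}\int{\rm q}\,\div\frak{b}\,\dx$ must be paired with Tier 3 below via $\varepsilon$-Young absorption against $\|\sigma^{(3-s)/2}\nabla{\rm q}\|_{L^2_t(L^2)}$ and $\|\sigma^{(3-s)/2}\frak{b}\|_{L^2_t(L^2)}$.

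\textbf{Tier 3 and principal obstacle.} At each fixed $t$, \eqref{S8eq1} is a stationary Stokes system $-\Delta D_t\rw+\nabla{\rm q}={\rm F}-\rho D_t^2\rw$ with $\div D_t\rw=\div\fa$. Splitting $D_t\rw=\cQ(\div\fa)+u_\perp$ with $\div u_\perp=0$ reduces it to a homogeneous Stokes problem for $(u_\perp,{\rm q})$; since $\rho_\ast\le\rho\le\rho^\ast$ enters only multiplicatively on the right-hand side, the classical $L^2$ Stokes estimate gives $\|\nabla^2D_t\rw\|_{L^2}+\|\nabla{\rm q}\|_{L^2}\lesssim\|{\rm F}\|_{L^2}+\|D_t^2\rw\|_{L^2}+\|\nabla\div\fa\|_{L^2}$. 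Multiplied by $\sigma^{(3-s)/2}$ and squared-integrated in time, this supplies the remaining terms of \eqref{S8eq2} once the Tier 2 bound on $\sigma^{(3-s)/2}D_t^2\rw$ is inserted. The principal obstacle is precisely this circularity: Tier 2 needs $\nabla{\rm q}$ from Tier 3, while Tier 3 needs $D_t^2\rw$ from Tier 2, so both must be closed simultaneously. The ensuing Grönwall loop against $\|\nabla v\|_{L^\infty}\in L^2_t$, whose $L^2_t$-norm is $\le\cC(v_0,s_0)$ by Corollary \ref{S1col1}, is exactly what generates the exponential factor $\exp(C\|v_0\|_{L^2}^2)$ on the right-hand side of \eqref{S8eq2}.
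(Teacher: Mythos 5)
Your three-tier scheme is structurally the same as the paper's proof (an energy estimate for $D_t\rw$ with weight $\s^{2-s}$, an energy estimate for $D_t^2\rw$ with weight $\s^{3-s}$, a stationary Stokes bound for $(\na^2D_t\rw,\na{\rm q})$, and a simultaneous absorption/Gronwall closure), but two steps, as written, do not prove the lemma as stated. In Tier 1, after splitting off $\cQ(\div\fa)$ you transfer a material derivative onto it and claim that $D_t\cQ(\div\fa)$ is bounded by $\|\na\fa\|_{L^2}$. It is not: $D_t\cQ(\div\fa)=\cQ(\div\p_t\fa)+v\cdot\na\cQ(\div\fa)$ contains $\p_t\fa$, and no norm of $\fa_t$ (nor of $D_t\fa$) appears on the admissible right-hand side of \eqref{S8eq2}. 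The cure is to avoid the time integration by parts altogether: as in the paper, use $\div D_t\rw=\div\fa$ to rewrite the pressure contribution as $\int_0^t\int_{\R^2}\s^{2-s}\,(\rho D_t^2\rw-\Delta D_t\rw-{\rm F})\cdot\fa\,\dx\,\dt'$ and bound the inertial piece by $\e\|\s^{\f{3-s}2}\sqrt{\rho}\,D_t^2\rw\|_{L^2_t(L^2)}^2+C_\e\|\s^{\f{1-s}2}\fa\|_{L^2_t(L^2)}^2$, carrying the $\e$-term until it is absorbed by the Tier 2/Tier 3 block; note that your claimed Tier 1 right-hand side omits this $\e$-contribution.

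Second, the constant. The lemma asserts the factor $C\exp\bigl(C\|v_0\|_{L^2}^2\bigr)$, an energy-only constant, and your Tier 2 argument cannot produce it: you estimate the convection commutator by $\|\na v\|_{L^\infty}\|\na D_t\rw\|_{L^2}^2$ and Gronwall against $\|\na v\|_{L^\infty}^2\in L^1_t$. Quite apart from the fact that Corollary \ref{S1col1} gives only the weighted bounds $\|\s^{\f{1-s_0}2}v\|_{L^\infty_t(L^\infty)}+\|\s^{\f{1-s_0}2}\na v\|_{L^2_t(L^\infty)}\leq\cC(v_0,s_0)$ (not the unweighted ones you quote), this route yields a constant of size $\exp\bigl(C\,\cC(v_0,s_0)^2\bigr)$, a quantity involving $\|v_0\|_{\cB^{s_0}}$ and iterated exponentials, not $\exp\bigl(C\|v_0\|_{L^2}^2\bigr)$. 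The paper's point is to use only the energy-level quantity: $\bigl|\int_{\R^2}\na D_t\rw:(\na v_\al\p_\al D_t\rw)\,\dx\bigr|\leq\|\na v\|_{L^2}\|\na D_t\rw\|_{L^4}^2\leq C\|\na v\|_{L^2}\|\na D_t\rw\|_{L^2}\|\na^2 D_t\rw\|_{L^2}$, then absorb $\|\na^2D_t\rw\|_{L^2}$ through the Stokes bound with a small parameter, and apply Gronwall against $\|\na v\|_{L^2}^2$, whose time integral is $\leq\f12\|v_0\|_{L^2}^2$ by \eqref{S2eq4}. With these two corrections your argument coincides with the paper's.
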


The main result of  Subsection \ref{subs:Dv'} states as follows:

\begin{prop}\label{S1prop2}
{\sl Let $A_{02}(t)$ be the functional given by \eqref{S1eq1p}. Then under the assumptions of Proposition \ref{S1prop1}, the following estimate
is valid for any $t\geq 0$
\beno
A_{02}(t)\leq \cC(v_0,s_0). \eeno}
\end{prop}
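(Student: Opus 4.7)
The strategy is to apply the abstract energy estimate of Lemma~\ref{S4lem01} directly to equation \eqref{eq:Dv}, with the identifications $\rw=v$, ${\rm q}=D_t\pi$, ${\rm F}=F_D(v,\pi)$. By the identities in \eqref{S4eq7p}, the divergence constraints in \eqref{S8eq1} are fulfilled with $\fa=v\cdot\na v$ (so that $\div\fa=\p_\al v\cdot\na v_\al=\div D_tv$) and $\frak{b}=\frak{b}_0$. Specialising \eqref{S8eq2} to $s=s_0$ yields control of $\s^{1-s_0/2}D_tv$ in $L^\infty_t(L^2)\cap L^2_t(\dH^1)$, of $\s^{(3-s_0)/2}\na D_tv$ in $L^\infty_t(L^2)$, and of $\s^{(3-s_0)/2}(D_t^2 v,\na^2 D_tv,\na D_t\pi)$ in $L^2_t(L^2)$. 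The two remaining ingredients of $A_{02}$, namely the $L^\infty_t(L^2)$ norms of $\s^{(3-s_0)/2}\na v_t$ and $\s^{(3-s_0)/2}D_t\na v$, will be recovered at the end from the pointwise identities $\na v_t=\na D_tv-\na(v\cdot\na v)$ and $D_t\na v=\na D_tv-\na v\cdot\na v$.

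The task therefore reduces to showing that the right-hand side of \eqref{S8eq2} is bounded by $\cC(v_0,s_0)$. The low-order pieces $\|\s^{(1-s_0)/2}(D_tv,\fa)\|_{L^2_t(L^2)}$ and $\|\s^{1-s_0/2}\na\fa\|_{L^2_t(L^2)}$ follow from $A_{01}$ and Corollary~\ref{S1col1} after writing $D_tv=v_t+v\cdot\na v$ and $\fa=v\cdot\na v$, applying H\"older with the $L^\infty_t(L^\infty)$ bound on $\s^{(1-s_0)/2}v$ together with the space-time bounds on $\na v$ and $\na^2v$. The substantial work lies in the $\s^{(3-s_0)/2}$-weighted $L^2_t(L^2)$ estimate of $F_D(v,\pi)$, $\frak{b}_0$ and $\na\div(v\cdot\na v)$. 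Each such term is a product of two factors drawn from $\{v,\na v,\na^2 v,\na\pi,\na v_t,D_t\na v\}$, and I would split the weight as $\s^{(3-s_0)/2}=\s^{(1-s_0)/2}\cdot\s$ or $\s^{(3-s_0)/2}=\s^{1-s_0/2}\cdot\s^{1/2}$ so as to place the soft factor into an $L^\infty_t(L^\infty)$ or $L^\infty_t(L^r)$ bound from Corollary~\ref{S1col1} and the rough factor into an $L^2_t$-type bound from $A_{01}$; for each term the Lebesgue exponent $r\in(2,\infty)$ in \eqref{S2eq6} is chosen so that the two weighted norms match.

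The main obstacle I anticipate is the bookkeeping of the $\s$-weights across the many cubic products in $F_D$ and $\frak{b}_0$. The most delicate one is $v\cdot\na v_t$ inside $\frak{b}_0$, for which I would write $\s^{(3-s_0)/2}v\cdot\na v_t=(\s^{(1-s_0)/2}v)\cdot(\s\na v_t)$ and invoke $\|\s\na v_t\|_{L^2_t(L^2)}\leq\|\s^{1-s_0/2}\na v_t\|_{L^2_t(L^2)}\leq A_{01}(t)$ together with the $L^\infty_t(L^\infty)$ bound on $\s^{(1-s_0)/2}v$ from Corollary~\ref{S1col1}. Once all the products are bounded, Lemma~\ref{S4lem01} closes the $D_tv$-part of $A_{02}$, and then the identities $\na v_t=\na D_tv-\na(v\cdot\na v)$ and $D_t\na v=\na D_tv-\na v\cdot\na v$, combined with the already established $L^\infty_t(L^2)$ bound on $\s^{(3-s_0)/2}\na D_tv$ and weighted H\"older estimates on the quadratic remainders (which again reduce to $A_{01}$ and Corollary~\ref{S1col1}), yield the missing $L^\infty_t(L^2)$ estimates and complete the proof.
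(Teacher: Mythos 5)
Your proposal is correct and follows essentially the same route as the paper: apply Lemma \ref{S4lem01} to \eqref{eq:Dv} with $\fa=v\cdot\na v$, $\frak{b}=\frak{b}_0$, ${\rm F}=F_D(v,\pi)$ and $s=s_0$, then bound the quadratic/cubic source terms by weighted H\"older splittings of $\s^{(3-s_0)/2}$ using Proposition \ref{S1prop1} and Corollary \ref{S1col1}, and finally recover $\s^{(3-s_0)/2}(\na v_t, D_t\na v)$ in $L^\infty_t(L^2)$ from $\na D_t v$ plus quadratic remainders. The only cosmetic difference is that the paper packages these product estimates in Lemma \ref{lem:Dv} before invoking Lemma \ref{S4lem01}, whereas you carry them out inline.
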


This verifies the Estimate  \eqref{S1eq1}. Next let us turn to the proof of \eqref{estimate:Jell} for $\ell=1.$ To this end, let us first derive
the equations satisfied by $({\rm\r}^1, \v^1, \na{\rm \pi}^1).$
It is easy to observe  from \eqref{eq:X} and \eqref{S1eq9} that
 \beq\label{S3eq-1}
 [\d_X; D_t]=0,
 \hbox{ that is, }
  \p_X
D_tf=D_t\p_Xf.
\eeq
Then we get, by applying $\p_X^\ell$ to the transport equation
of \eqref{inNS}, that
$$
D_t\rho^\ell=\d_t\rr^\ell+v\cdot\nabla\rr^\ell=0, \quad
\ell=1,\cdots,k,
$$
which implies
\begin{equation}\label{brho}
\|\rr^\ell\|_{L^\infty(\R^+;L^\infty)} \leq
\|\d_{X_0}^\ell\rho_0\|_{L^\infty}, \quad \ell=1,\cdots,k.
\end{equation}

While due to $\dive X_0=0=\dive v,$ we deduce from  \eqref{eq:X} that
 \beq\label{S1eq12}
 \p_t(\dive X)+v\cdot\na (\dive X)=0 \with \dive X_0=0.
 \eeq
 Hence   $\div v=\div X=0,$  a straightforward calculation shows   that
\begin{equation}\label{S3eq1a}
\div\v^1=\div (\p_X v)=\p_\al X\cdot\nabla v_\al =\div(v\cdot\nabla X).
\end{equation}
It is also easy to observe that \beq\label{S3eq0}
\begin{split}
&[\p_X; \p_i]f=-\p_iX\cdot\na f \andf [\p_X;\p_i^2]f=-\p_i^2X\cdot\na f-2\p_iX\cdot\na \p_i f,
\quad i=1,2.
\end{split}
\eeq
In view of \eqref{S3eq-1}, \eqref{S3eq1a} and \eqref{S3eq0}, by taking $\p_X$ to the momentum equation of \eqref{inNS},  we find that $(\v^1, \na\rm{\pi}^1)$ solves
\begin{equation}\label{S1eq3}
 \left\{\begin{array}{l}
\displaystyle \rho\d_t \v^1+\r v\cdot\nabla \v^1 -\Delta \v^1 +\nabla{\rm{\pi}}^1
=F_1(v,\pi),\\
\displaystyle \div  \v^1=\div(v\cdot\nabla X),\\
\displaystyle  \v^1|_{t=0}=\p_{X_0}v_0,
\end{array}\right.
\end{equation}
where the source term $F_1(v,\pi)$ is given by
 \beq \label{S3eq1}
 F_1(v,\pi)\eqdefa
 -\rr^1 D_t v
-(\Delta X\cdot\nabla v+2\p_\al X\cdot\nabla\p_\al v) +\nabla
X^\al\p_\al\pi. \eeq

In Section \ref{sec:J1}, we shall use the following lemma to work with the $H^1$ energy estimate of $\v^1$:

\begin{lem}\label{S3lem2}
{\sl Let $(\r, v, \na \pi)$ be a smooth enough solution of
\eqref{inNS} and  $(u,\na \Pi)$ be determined by
\begin{equation}\label{S2eq1}
 \left\{\begin{array}{l}
\displaystyle\rho \p_t u+\r v\cdot\na u-\Delta u+\nabla \Pi=f, \quad (t,x)\in \R^+\times\R^2;\\
\displaystyle \div u=\dive g,
\end{array}\right.
\end{equation}
 with initial data $u_0=0.$ Then for any $s\in ]0,1[$ and $\de\in ]s,1[,$ one has \beq
\label{lemeq1}
\begin{split}
\bigl\|\s^{\f{1-{s}}2}&\na
u\bigr\|_{L^\infty_t(L^2)}^2+\bigl\|\s^{\f{1-{s}}2}(
u_t,\na^2 u,
\na \Pi)\bigr\|_{L^2_t(L^2)}^2\\
\leq& \cA_0\Bigl(\f{\w{t}}{\de-s}\|g(0)\|_{\cB^{\de}}^2+\|\s^{-\f{1-(s_0-s)}2}\na g\|_{L^1_t(L^2)}^2
\\
&+\bigl\|\s^{-\f{s}2}(f,g_t)\bigr\|_{L^1_t(L^2)}^2+\|\s^{-\f{s}2}\na g\|_{L^2_t(L^2)}^2
+
\bigl\|\sigma^{\frac{1-{s}}{2}}(g_t,\na\dive g,f)\bigr\|_{L^2_t(L^2)}^2\Bigr).
\end{split} \eeq }
\end{lem}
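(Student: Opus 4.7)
The plan is to reduce~\eqref{S2eq1} to a divergence-free system via the substitution $w \eqdefa u - g$, which yields $\div w = 0$, $w(0) = -g(0)$, and
\beno
\rho\p_t w + \rho v\cdot\na w - \Delta w + \na\Pi = f - \rho\p_t g - \rho v\cdot\na g + \Delta g.
\eeno
I would then split $w = w_I + w_S$, where $w_I$ solves the associated \emph{homogeneous} (zero-source) variable-density Stokes-type system with initial datum $-g(0)$, and $w_S$ solves the analogous system with the displayed source but zero initial datum. Roughly, the contribution of $w_I$ will produce the term $\w{t}\|g(0)\|_{\cB^\de}^2/(\de-s)$ in~\eqref{lemeq1}, while $w_S$ (together with $g$ itself) will account for the remaining source-type terms.

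For $w_S$, I would carry out a time-weighted energy estimate in the spirit of Lemma~\ref{S4lem01}: test the momentum equation against $\s^{1-s}\p_t w_S$, exploit $\rho_\ast\leq\rho\leq\rho^\ast$ together with the kinetic-energy bound $\|v\|_{L^\infty_t(L^2)}\leq\|v_0\|_{L^2}$, and control the convection $\rho v\cdot\na w_S$ via the two-dimensional interpolation~\eqref{S2eq6}. A Gronwall argument should then produce the bounds involving the source norms with weights $\s^{(1-s)/2}$, $\s^{-s/2}$, and $\s^{-(1-(s_0-s))/2}$ on the right-hand side of~\eqref{lemeq1}; the negative-exponent weights are integrable at $t=0$ since $s\in ]0,1[$ and $s_0>s$.

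For $w_I$, I would rely on the smoothing of the variable-density Stokes semigroup combined with the Littlewood--Paley characterization of $\cB^\de$. Writing $g(0)=\sum_q\dq g(0)$, each dyadic block enjoys a heat-type decay $\|\na\dq w_I(t)\|_{L^2}\lesssim 2^q e^{-c t 4^q}\|\dq g(0)\|_{L^2}$; multiplying by $\s^{(1-s)/2}$ and using the elementary bound $\s^{(1-s)/2}2^q e^{-c t 4^q}\lesssim 2^{qs}$, the extra regularity $\de>s$ produces the summable factor $\sum_q 2^{-q(\de-s)}\lesssim(\de-s)^{-1}$. The $\w{t}$-factor arises from integrating the weighted $L^2$ quantities in time.

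The main obstacle is the estimate of $w_I$: since $g(0)$ is assumed only in $\cB^\de$ with $\de$ barely larger than $s$, the Stokes smoothing has to be exploited quantitatively in the frequency variable, and this is precisely where the singular constant $(\de-s)^{-1}$ arises. A secondary technical point is ensuring that the variable-density nature of the underlying Stokes operator does not spoil the heat-type smoothing used for $w_I$; this can be handled by freezing the density to a constant value and absorbing the $(\rho-\rho^\ast)$ error via the time-weighted estimate for $w_S$.
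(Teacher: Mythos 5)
Your skeleton (subtract $g$, split $u-g=w_I+w_S$ into an initial-datum part and a source part, charge the $\cB^{\de}$-norm of $g(0)$ with the singular factor $(\de-s)^{-1}$ to the datum part) is indeed the one used in the paper, where $u-g=\y_1+\y_2$; but your treatment of $w_I$ has a genuine gap. The dyadic decay $\|\na\dq w_I(t)\|_{L^2}\lesssim 2^{q}e^{-ct4^q}\|\dq g(0)\|_{L^2}$ presupposes that frequency localization is propagated, which fails for the variable-coefficient operator $\r\p_t+\r v\cdot\na-\D$ with $\r$ merely bounded ($\dq$ does not commute with it, which is exactly why the paper refuses to apply $\D_j$ to the momentum equation in the first place). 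Your proposed repair, freezing $\r$ at a constant and absorbing the $(\r-\r^\ast)$ error into the $w_S$-estimate, requires $\|\r-\r^\ast\|_{L^\infty}$ to be small; that is precisely the small-jump hypothesis of \cite{LZ} that this paper is written to remove, so it is not available. The paper instead obtains $\|\s^{\f{1-\de}2}\na\y_1\|_{L^\infty_t(L^2)}\leq\cA_0\|g(0)\|_{\cB^{\de}}$ from Proposition \ref{S2prop1}, proved by solving the full variable-density system for each block $\D_q g(0)$ and using only time-weighted energy estimates (no semigroup smoothing, no smallness); the factor $\w{t}/(\de-s)$ then arises from the time integral $\|\s^{-1+\de-s}\|_{L^1_t}$, not from a frequency sum (your $\sum_q2^{-q(\de-s)}$ would in any case diverge over $q\in\ZZ$).

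The $w_S$ part is also not closable as described. Testing against $\s^{1-s}\p_tw_S$ produces, upon differentiating the weight, the term $(1-s)\int_0^t\s^{-s}\s'\|\na w_S\|_{L^2}^2\,\dt'$, whose coefficient relative to the quantity $\s^{1-s}\|\na w_S\|_{L^2}^2$ is $\s^{-1}\s'$, which is not time-integrable near $t=0$; Gronwall therefore does not close, and one needs an independent bound on $\int_0^t\s^{-s}\|\na w_S\|_{L^2}^2\,\dt'$ — which is exactly what the decomposition is for in the paper. Moreover the source for $w=u-g$ contains $\D g$, and at the $H^1$ level (pairing with $\p_t w_S$) this forces $\|\D g\|_{L^2}$ (or, after integrating by parts in time, $\na g_t$) into the estimate; neither is among the admissible right-hand quantities of \eqref{lemeq1}, which allows only $\na g$, $\na\dive g$ and $g_t$; similarly, reassembling $u=w+g$ at the level of $\na^2u$ would require $\na^2 g$. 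The paper's route avoids both issues: the top-order bound is obtained by applying Lemma \ref{S2lem1} to $u$ itself with the weight $\s^{1-s}$ (so the constraint $\dive u=\dive g$ enters only through $g_t$ and $\na\dive g$ via the pressure), and the splitting $u-g=\y_1+\y_2$ is used solely to control the low-order term $\int_0^t\s^{-s}\|\na u\|_{L^2}^2\,\dt'$, with $\y_2$ estimated at the $L^2$ level (test function $\y_2$, truncated weight $\s_{t_0}^{-s}$) so that $\D g$ can be integrated by parts onto $\na\y_2$ and only $\na g$ is needed. To make your proposal correct you would have to adopt this structure (or supply a genuinely different argument for $\int_0^t\s^{-s}\|\na u\|_{L^2}^2\,\dt'$ and for the initial-datum part that does not rely on frozen coefficients).
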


The main result concerning the $H^1$ energy estimate of $\v^1$ lists as follows:

\begin{prop}\label{prop:B1}
{\sl Suppose that  the initial data, $(\r_0,v_0, X_0),$ satisfies the assumption  \eqref{initial},
\eqref{divX0} and $\d_{X_0}\rho_0\in L^\infty,$
$ \d_{X_0} v_0\in L^2\cap
\cB^{s_1}.$
Let $(\rho, v, \nabla\pi, X)$ be a smooth enough solution of the
coupled  system \eqref{inNS} and \eqref{eq:X}.
Then one has
\beq\label{S3eq51}
  A_{11}^2(t)\leq
 \exp\bigl(\cA_0 \w{t}^2\bigr) \Bigl(\cA_1+\int_0^t\bigl(\frak{B}_0(t')
 +\s(t')^{-1+\frac{\theta_0}2}\bigr)
\|\na X(t')\|_{W^{1,p}}^2\,dt'\Bigr),
\eeq
where $\th_0$ is defined in \eqref{S1eq9},
  the functional $A_{11}(t)$ is determined by \eqref{Jell} and
\beq \label{S1eq25}
\begin{split}
\frak{B}_0(t)&\eqdefa\|\na v(t)\|_{L^2}^2
+\bigl\|\s^{ \left(1-\f1r-\f{s_0}2\right) }(v_t,\na^2v,\na\pi, D_t v, v\otimes\nabla v)(t)\bigr\|_{L^r}^2
\\
&+\|\s^{1-\f{s_0}2}\na v_t(t)\|_{L^2}^2
+\|\s^{\f{1-s_0}2}\na v(t)\|_{L^\infty\cap L^{\frac{2p}{p-2}}}^2+\bigl\|\sigma^{1-\frac{s_0}2}(D_t v, v\otimes\nabla v, v_t)(t)\bigr\|_{L^\infty}^2\\
&+\bigl\|\sigma^{\frac{3-s_0}2}
 \bigl(D^2_t v, \nabla^2 D_t v, D_t\nabla^2 v,
\nabla D_t \pi,
D_t\nabla \pi\bigr)(t) \bigr\|_{L^2}^2
\\
&
+\bigl\|\sigma^{\left(\frac 32-\frac 1r-\frac{s_0}2\right)}
(\nabla D_t v, D_t \nabla v, \nabla (v\otimes\nabla v),  \nabla v_t)(t)\bigr\|_{L^r}^2+\|\s^{\f{3-s_0}2}\frak{b}_0(t)\|_{L^2}^2,
\end{split}
\eeq
for any  $r\in ]2,\infty[$,  and where $\frak{b}_0$ is given by \eqref{S4eq7p}.
 }
\end{prop}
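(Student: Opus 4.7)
The proof proceeds term by term through the four quantities making up $A_{11}(t)$ in \eqref{Jell}. The density bound $\|\rr^1\|_{L^\infty_t(L^\infty)}$ is immediate: applying $\p_X$ to the transport equation for $\rho$ and using the commutator identity \eqref{S3eq-1} yields $D_t\rr^1=0$, so that $\|\rr^1(t)\|_{L^\infty}\leq\|\p_{X_0}\rho_0\|_{L^\infty}$ and this piece contributes only to the constant $\cA_1$ on the right-hand side of \eqref{S3eq51}. The remaining three pieces come from the perturbed Stokes system \eqref{S1eq3}, and the entire strategy is to isolate, in every nonlinear term, one factor of $X$ or $\nabla X$ (bounded via the Sobolev embedding $W^{1,p}\hookrightarrow L^\infty$, which holds since $p>2$) and to pair the remaining velocity/pressure factor with the weighted bounds already established in Proposition \ref{S1prop1}, Proposition \ref{S1prop2} and Corollary \ref{S1col1}.

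The basic energy bound $\|\v^1\|_{L^\infty_t(L^2)\cap L^2_t(\dH^1)}$ is obtained by taking the $L^2$ inner product of \eqref{S1eq3} with $\v^1$. Because $\div X_0=0$ is propagated to $\div X(t)=0$ via \eqref{S1eq12}, the divergence constraint reduces to the gradient-product $\div\v^1=\p_\alpha v\cdot\nabla X_\alpha$, and the pressure coupling $(\pi^1|\div\v^1)$ is controlled either by a Stokes decomposition $\v^1=\tw+v\cdot\nabla X$ with $\div\tw=0$, or directly by integration against $\p_\alpha v\cdot\nabla X_\alpha$. The source $(F_1|\v^1)$ is split according to \eqref{S3eq1}: each of $\rr^1 D_t v$, $\Delta X\cdot\nabla v+2\p_\alpha X\cdot\nabla\p_\alpha v$, $\nabla X^\alpha\p_\alpha\pi$ carries a $\nabla X$ or $\nabla^2 X$ factor bounded by $\|\nabla X\|_{W^{1,p}}$ through Sobolev, and a velocity/pressure factor absorbed into $\frak{B}_0$. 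Gronwall applied to the resulting differential inequality then produces the prefactor $\exp(\cA_0\w{t}^2)$, the integrated velocity norm coming from $\|\cdot\|_{L^2_t(\dH^1)}\leq\cC(v_0,s_0)$.

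For the two weighted $H^1$ pieces, the plan is to apply Lemma \ref{S3lem2} with $s=s_1$, $\de=s_0$, $u=\v^1$, $f=F_1(v,\pi)$, $g=v\cdot\nabla X$, after absorbing the nonzero initial datum $\p_{X_0}v_0\in L^2\cap\cB^{s_1}$ into $\cA_1$ (either by subtracting off $e^{t\Delta}\p_{X_0}v_0$, or by reading the lemma with a trivial additive correction $\|u_0\|_{\cB^{s_1}}^2$ on the right). The gap $\de-s=\theta_0$ guaranteed by \eqref{initialell} is consumed by the factor $1/(\de-s)$ in \eqref{lemeq1}, and absorbed into $\cA_1$. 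The terms on the right-hand side of \eqref{lemeq1} are then expanded using \eqref{eq:X}, which rewrites $\p_t X=X\cdot\nabla v-v\cdot\nabla X$, so that $g_t$ and $\nabla\div g$ reduce to algebraic combinations of $(v,\nabla v,\nabla^2 v,X,\nabla X,\nabla^2 X)$. Each such product is organized as one factor of $X$ (in its $W^{1,p}$ norm) times a time-weighted velocity/pressure factor, and H\"older in time distributes every contribution into one of the admissible shapes $\frak{B}_0(t')\|\nabla X(t')\|_{W^{1,p}}^2$ or $\s(t')^{-1+\theta_0/2}\|\nabla X(t')\|_{W^{1,p}}^2$.

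The main obstacle I expect is the pressure contribution $\nabla X^\alpha\p_\alpha\pi$ in $F_1$, together with the divergence-data term $\|\s^{-(1-\theta_0)/2}\nabla g\|_{L^1_t(L^2)}$ in \eqref{lemeq1}. The only pressure control at $L^2_x$ with a subcritical time weight supplied by Corollary \ref{S1col1} is $\|\s^{1-1/r-s_0/2}\nabla\pi\|_{L^2_t(L^r)}$ for arbitrary $r\in{}]2,\infty[$; pairing it with $\|\nabla X\|_{L^\infty}\leq C\|\nabla X\|_{W^{1,p}}$ and comparing to the target weight $\s^{(1-s_1)/2}$ produces a weight mismatch $\s^{-1/2+1/r+\theta_0/2}$, which upon choosing $r$ sufficiently close to $2$ becomes exactly the singular but integrable weight $\s^{-1+\theta_0/2}$ recorded in \eqref{S3eq51}. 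The integrability $\int_0^1\s^{-1+\theta_0/2}\,d\s<\infty$ is precisely what the strict positivity of $\theta_0=\e/k$ buys through the choice of $s_\ell$ in \eqref{initialell}; the same mechanism handles the divergence-data term, and assembling the estimates yields the integral inequality \eqref{S3eq51}.
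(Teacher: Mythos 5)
Your overall strategy does coincide with the paper's (Lemma \ref{S3lem2} with $(s,\de)=(s_1,s_0)$, $g=v\cdot\na X$, $f=F_1(v,\pi)$, one factor of $\na X$ measured in $W^{1,p}$, the velocity/pressure factors taken from Propositions \ref{S1prop1}--\ref{S1prop2} and Corollary \ref{S1col1}, integrability bought by $\theta_0>0$, then Gronwall), but two steps are genuinely missing. First, the nonzero initial datum: Lemma \ref{S3lem2} requires $u_0=0$, and neither of your fixes works as stated. Subtracting $e^{t\Delta}\p_{X_0}v_0$ is incompatible with the variable-density operator: the remainder picks up the source $(\rho-1)\Delta e^{t\Delta}\p_{X_0}v_0+\rho v\cdot\na e^{t\Delta}\p_{X_0}v_0$, and with $\p_{X_0}v_0$ only in $\cB^{s_1}$ the first piece is of size $t^{-1+s_1/2}$ in $L^2$, so the norm $\|\s^{-s_1/2}f\|_{L^1_t(L^2)}$ demanded by \eqref{lemeq1} diverges; there is no smallness of the density jump to rescue this. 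And the ``trivial additive correction'' is not trivial: the correct implementation is exactly the paper's splitting $\v^1=\v_{11}+\v_{12}$ of \eqref{S3eq3}--\eqref{S3eq5}, where the data part $\v_{11}$ is estimated by Proposition \ref{S2prop1} (i.e.\ the full Littlewood--Paley argument needed because $\rho$ is only bounded), giving \eqref{S3eq6}--\eqref{S3eq7}; that input has to be invoked explicitly.

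Second, and more seriously, your claim that $g_t=\p_t(v\cdot\na X)$ ``reduces to algebraic combinations of $(v,\na v,\na^2 v,X,\na X,\na^2 X)$'' each of the admissible shapes is false for the key term. Since $X_t=\v^1-v\cdot\na X$ by \eqref{eq:X}, $g_t$ contains $v\cdot\na\v^1$ (this is \eqref{S3eq35}): the unknown reappears on the right at top order. If you keep it, you must absorb it, which is precisely what the paper does by splitting $\na\v^1=\na\v_{11}+\na\v_{12}$ in \eqref{S3eq26} and running Gronwall on the $\s^{-(1-s_1)}\|\s^{\f{1-s_1}2}\na\v_{12}\|_{L^2}^2$ contribution inside Proposition \ref{S3prop1}; this absorption step is absent from your outline. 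If instead you expand it, $v\cdot\na(X\cdot\na v)$ produces $v\otimes X\otimes\na^2 v$ with a \emph{bare} factor $X$, which is not controlled by $\|\na X\|_{W^{1,p}}$, so the contribution does not fit either of your two admissible shapes; it can be repaired only by adding a separate transport bound for $\|X\|_{L^\infty_t(L^\infty)}$, which you never state. Relatedly, your unweighted $L^2$ energy bound cannot be run first as written: after using $\div\v^1=\div(v\cdot\na X)$ the pressure coupling requires either $\pi^1\in L^2$ (not available) or the substitution of the equation for $\na\rp_{12}$ as in Lemma \ref{S3prop2}, which consumes the weighted bound $\|\s^{\f{1-s_1}2}(\p_t\v_{12},\na^2\v_{12})\|_{L^2_t(L^2)}$ from Proposition \ref{S3prop1}, so the weighted $\dH^1$ step must precede it. A smaller slip: with $r$ close to $2$ the pressure term $\na X^\al\p_\al\pi$ is not where the singular weight comes from (your deficit $-\f12+\f1r+\f{\theta_0}2$ tends to $\f{\theta_0}2\geq0$); the weight $\s^{-1+\theta_0/2}$ in \eqref{S3eq51} originates from the low-weight norms $\|\s^{-\f{s_1}2}\na g\|_{L^2_t(L^2)}$, $\|\s^{-\f{1-\theta_0}2}\na g\|_{L^1_t(L^2)}$ and $\|\s^{-\f{s_1}2}(f,g_t)\|_{L^1_t(L^2)}$ in \eqref{lemeq1}.
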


\smallbreak
To derive the equation of $(D_t\v^1,\na D_t{\rm \pi}^1),$ we get, by
applying the operator $D_t$ to the $\v^1$ equation of \eqref{S1eq3} and using $D_t\r=0,$ that
\begin{equation}\label{eq:Dv1}
\begin{split}
&\rho D_t^2 \v^1-\Delta D_t \v^1
+ \nabla D_t {\rm \pi}^1
=F_D(\v^1,{\rm \pi}^1)+D_t F_1(v,\pi)
\eqdefa F_{1D} ,
\end{split}
\end{equation}
where $F_D$ and $F_1$ are given by \eqref{eq:Dv}  and \eqref{S3eq1} respectively,  and we thus obtain
\beq\label{S4eq14p} \begin{split}
F_{1D}
&=-2\p_\al v\cdot\nabla\p_\al \v^1-\Delta v\cdot\nabla \v^1
+\nabla v_\al\p_\al\rm{\pi}^1
\\
&\quad-\rr^1D_t^2 v
-D_t(\Delta X\cdot\nabla v+2\p_\al X\cdot\nabla\p_\al v)
+D_t(\nabla X^\al\p_\al\pi).
\end{split} \eeq

In Section \ref{sectdtv}, we shall apply  Lemma \ref{S4lem01} to deal with the time-weighted $H^1$ energy estimate of $D_t\v^1.$
This together with Proposition \ref{prop:B1} leads to the energy estimate of $D_t\v^1$ and $\na D_t\v^1,$ namely

\begin{prop} \label{prop:Dv1}
{\sl
Let $(\rho, v, \nabla\pi, X)$ be a smooth solution of the coupled system \eqref{inNS} and \eqref{eq:X}.
Then under the hypothesis of \eqref{initial}, \eqref{initialell} for $\ell=1$ and \eqref{divX0}, the Estimate \eqref{estimate:Jell} is valid for $\ell=1.$  }
\end{prop}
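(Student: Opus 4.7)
The plan is to close the estimate $A_1(t)\le\cH_1(t)$ by a Gronwall bootstrap among four coupled quantities: $\|\rx^0\|_{W^{2,p}}$, $A_{11}(t)$, $A_{12}(t)$, and the Besov norm $\|\v^1\|_{\wt{L}^\infty_t(\cB^{s_1})\cap L^2_t(\cB^{1+s_1})}$. The first step is to use the identity $X\cdot\na v=\v^1$ to rewrite \eqref{eq:X} as $\p_tX+v\cdot\na X=\v^1$, and then apply the standard $W^{2,p}$ transport estimate
\begin{equation*}
\|\rx^0(t)\|_{W^{2,p}}\le\Bigl(\|X_0\|_{W^{2,p}}+\int_0^t\|\v^1(\tau)\|_{W^{2,p}}\,d\tau\Bigr)\exp\Bigl(C\int_0^t\bigl(\|\na v\|_{L^\infty}+\|\na^2v\|_{L^p}\bigr)\,d\tau\Bigr).
\end{equation*}
By Corollary \ref{S1col1} the exponent is bounded by $\cC(v_0,s_0)\w{t}$, while $\|\v^1\|_{W^{2,p}}$ is controlled by interpolating the bounds on $\nabla^2\v^1$ in weighted $L^2_t(L^2)$ and $L^\infty_t(L^2)$ furnished by $A_{11}$, $A_{12}$ against the Besov regularity $\v^1\in L^2_t(\cB^{1+s_1})$, with the condition $p<2/(1-s_k)$ ensuring integrability of the weights near $t=0$.

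Second, I invoke Proposition \ref{prop:B1}, which gives
\begin{equation*}
A_{11}(t)^2\le\exp(\cA_0\w{t}^2)\Bigl(\cA_1+\int_0^t\bigl(\frak{B}_0(\tau)+\s(\tau)^{-1+\theta_0/2}\bigr)\|\na X(\tau)\|_{W^{1,p}}^2\,d\tau\Bigr),
\end{equation*}
where $\frak{B}_0\in L^1_t$ by Proposition \ref{S1prop2} and Corollary \ref{S1col1}. Since $\|\na X\|_{W^{1,p}}\le\|\rx^0\|_{W^{2,p}}$ has already been bounded by a single exponential in $\w{t}$ from the previous step, resolving this Gronwall inequality produces a double-exponential bound for $A_{11}(t)$.

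Third, for $A_{12}(t)$ I apply Lemma \ref{S4lem01} to the system \eqref{eq:Dv1} with $\rw=\v^1$, ${\rm q}={\rm\pi}^1$, source ${\rm F}=F_{1D}$ given by \eqref{S4eq14p}, and divergence conditions $\div D_t\v^1=\dive\fa$, $\div D_t^2\v^1=\dive\frak{b}$ computed from \eqref{S3eq1a} by commuting $D_t$ past $\dive$ (using $\dive v=0$). Each term of $F_{1D}$ is then estimated in the weighted $L^2_t(L^2)$ norms demanded by \eqref{S8eq2}: the quadratic pieces $\p_\alpha v\cdot\na\p_\alpha\v^1$, $\Delta v\cdot\na\v^1$, $\na v_\alpha\p_\alpha{\rm\pi}^1$ are treated by H\"older against $A_{11}$ and Corollary \ref{S1col1}; the term $\rr^1 D_t^2v$ uses \eqref{brho} together with the $\s^{(3-s_0)/2}$-weighted control on $D_t^2v$ from $A_{02}$; and the material-derivative expressions $D_t(\Delta X\cdot\na v+2\p_\alpha X\cdot\na\p_\alpha v)$ and $D_t(\na X^\alpha\p_\alpha\pi)$ expand via $[D_t,\na]=-\na v\cdot\na$ and $D_tX=\v^1$ into products already controlled by $A_{11}$, $\|\rx^0\|_{W^{2,p}}$, $A_{02}$ and $\frak{B}_0$.

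Finally, the Besov estimate $\v^1\in\wt L^\infty_t(\cB^{s_1})\cap L^2_t(\cB^{1+s_1})$ is propagated by the frequency-decomposition strategy used for Proposition \ref{S1prop1}: write $\v^1=\sum_q\v^1_q$ where each $\v^1_q$ solves the Littlewood--Paley-localised version of \eqref{S1eq3} with initial datum $\Delta_q\p_{X_0}v_0\in\cB^{s_1}$ and suitably-split source $F_1(v,\pi)$, perform the $H^1$ energy estimate on each block, and reassemble by Bernstein as before \eqref{S1eq16}. The main obstacle will be Step 3: the terms $D_t(\Delta X\cdot\na v)$ and $D_t(\na X^\alpha\p_\alpha\pi)$ carry the highest combined order of derivatives in $v$, $X$ and $\pi$, and the heavier weights $\s^{(3-s_1)/2}$ supplied by $A_{12}$ and $\s^{(3-s_0)/2}$ supplied by $A_{02}$ must exactly match what \eqref{S8eq2} demands so that no power of $\s$ is lost at $t=0$. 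Once this bookkeeping is complete, composing the Gronwall inequalities on $\|\rx^0\|_{W^{2,p}}$, $A_{11}$ and $A_{12}$ yields the iterated exponential $\cH_1(t)$ announced in \eqref{estimate:Jell}.
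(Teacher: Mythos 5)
Your overall architecture matches the paper's (Proposition \ref{prop:B1} for $A_{11}$, Lemma \ref{S4lem01} applied to \eqref{eq:Dv1} with the divergence data $\fa_1,\frak{b}_1$ for $A_{12}$, a transport estimate for $\|X\|_{W^{2,p}}$, and a Littlewood--Paley argument for the $\cB^{s_1}$ propagation), but the mechanism you propose for closing the $W^{2,p}$ bound on $X$ has a genuine gap. You claim $\|\v^1\|_{W^{2,p}}$ is controlled ``by interpolating the bounds on $\nabla^2\v^1$ in weighted $L^2_t(L^2)$ and $L^\infty_t(L^2)$ against the Besov regularity $\v^1\in L^2_t(\cB^{1+s_1})$.'' This cannot work: all of those norms live at the level of two derivatives measured in $L^2$ (indeed $\cB^{1+s_1}=\dot B^{1+s_1}_{2,1}$ places $\nabla^2\v^1$ only in $\dot B^{s_1-1}_{2,1}$, a negative-index space), and no interpolation among $L^2$-based norms of $\nabla^2\v^1$ of regularity index $\le 0$ can reach $\nabla^2\v^1\in L^p$ with $p>2$, which in 2-D requires positive regularity $1-2/p$ on top of $L^2$. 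Moreover the Besov bound $\v^1\in L^2_t(\cB^{1+s_1})$ is itself established in the paper only after $\|X\|_{L^\infty_t(W^{2,p})}$ is under control, so feeding it into the $X$-estimate as an a priori input is circular. The paper avoids both problems: in \eqref{S4eq15p} the term $\|\Delta\v^1\|_{L^p}$ is converted, via the $L^p$ Stokes estimate \eqref{Stokes:v1}, into $\|(D_t\v^1,D_tv)\|_{L^p}$ plus lower-order quantities multiplied by $\|\na X\|_{W^{1,p}}$; the quantity $\|D_t\v^1\|_{L^1_t(L^p)}$ \emph{is} reachable from the $A_{12}$-type norms by the 2-D interpolation \eqref{S2eq6} applied to $D_t\v^1$ (see \eqref{bound:Dv1,q}), with the hypothesis $p<2/(1-s_1)$ exactly ensuring the time weight $\s^{-(1-s_1/2)}$ is $L^{2p/(p+2)}$-integrable, and the remaining $\|\na X\|_{W^{1,p}}$-terms are absorbed by Gronwall in \eqref{S4eq15p1}. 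Without this detour through $D_t\v^1$ your bootstrap does not close.

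A secondary, smaller issue is your Step 4: ``suitably-split source'' hides the real point. The source $F_1(v,\pi)$ and the divergence datum $v\cdot\na X$ are not frequency-localized, so localizing only the initial datum $\D_q\p_{X_0}v_0$ of \eqref{S1eq3} does not produce the factor $d_q2^{-qs_1}$ in the inhomogeneous part. The paper's splitting of $\v_{12}$ is induced by the decomposition $v=\sum_q v_q$ of the velocity itself: each block $\u_q$ solves \eqref{S4eq1} with source $F_{1,q}$ and divergence $\dive(v_q\cdot\na X)$ built from $v_q$ of \eqref{S1eq15}, and the dyadic gain comes from the smoothing estimates for $v_q$ in Lemma \ref{S4lem1}. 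Your plan should be amended to use this induced decomposition (or an equivalent device) rather than an unspecified splitting of $F_1$.
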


To handle the estimate \eqref{estimate:Jell} for $\ell$ varying from $2$ to $k$ when $k\geq 2$, we need to derive the equations satisfied by $(\v^\ell,\na{\rm \pi}^\ell)$
and $(D_t\v^\ell,\na D_t{\rm \pi}^\ell)$ respectively. Indeed by  taking
$\d_X^{\ell-1}$ with $\ell\geq 2$ to \eqref{S1eq3}, we write
\begin{align}\label{eq:vell}
\rho\d_t\v^\ell+\rho v\cdot\nabla\v^\ell
-  \Delta\v^\ell+\nabla{\rm{\pi}}^\ell
=F_\ell(v,\pi),
\end{align}
where the source term $F_\ell(v,\pi)$ is determined inductively by
\begin{equation*}\label{Fell}
F_\ell(v,\pi)
=F_1(\v^{\ell-1}, {\rm{\pi}}^{\ell-1})
+\d_X F_{\ell-1}(v,\pi),
\quad \ell\geq 2,
\end{equation*}
with the function $F_1(\cdot,\cdot)$ being given by \eqref{S3eq1}.  We thus get by induction that
\beno\begin{split}
F_\ell(v,\pi)
&=\sum_{i=0}^{\ell-1}
\d_X^{i}F_1(\v^{\ell-1-i},{\rm{\pi}}^{\ell-1-i})
\\
&=\sum_{i=0}^{\ell-1} \d_X^{i}\left(
-\rr^1 D_t \v^{\ell-1-i}
-  2\p_\al X\cdot\nabla\p_\al\v^{\ell-1-i}
- \Delta X\cdot\nabla \v^{\ell-1-i}
+\nabla X\cdot\nabla{\rm{\pi}}^{\ell-1-i}
\right).
\end{split}
\eeno
Taking into account of the fact \eqref{S3eq-1} that $[\d_X; D_t]=0,$ we obtain
\beq\label{S1eq5}
\begin{split}
F_\ell(v,\pi)=\sum_{i=0}^{\ell-1}\sum_{j=0}^{i}
C^j_{i}\Bigl(& -\rr^{j+1}
D_t\v^{\ell-1-j}
-2\d_X^j\p_\al X \cdot\d_X^{i-j}\nabla\p_\al\v^{\ell-1-i}
\\
&-\d_X^j \Delta X\cdot\d_X^{i-j}\nabla\v^{\ell-1-i}
+\d_X^j\nabla X\cdot\d_X^{i-j}\nabla{\rm{\pi}}^{\ell-1-i}\Bigr) .
\end{split} \eeq

On the other hand, by applying the operator $D_t$ to  \eqref{eq:vell} and using $D_t\r=0$ once again,  we  find
  \begin{equation}\label{eq:Dvell}
  \rho D_t^2 \v^\ell
  -\Delta D_t \v^\ell+\nabla D_t {\rm{\pi}}^\ell
  =D_t F_\ell(v,\pi)+F_D(\v^\ell,{\rm{\pi}}^\ell)\eqdefa F_{\ell D},
  \end{equation}
  where $F_\ell(\cdot,\cdot)$ and $F_D(\cdot,\cdot)$ are given respectively by  \eqref{S1eq5} and \eqref{eq:Dv}.
 Again thanks to $[\d_X; D_t]=0$, we  have
  \beq\label{S1eq18}
  \begin{split}
  &F_{\ell D}
  =-2\p_\al v\cdot\nabla\p_\al \v^\ell-\Delta v\cdot\nabla \v^\ell +\nabla v\cdot\nabla{\rm{\pi}}^\ell
  +\sum_{i=0}^{\ell-1}\sum_{j=0}^i
  C_i^j
  \Bigl( -\rr^{j+1} D_t^2 \v^{\ell-1-j}
  \\
  &\, -D_t\bigl(2\d_X^j\p_\al X\cdot\d_X^{i-j}\na\p_\al \v^{\ell-1-i}
  +\d_X^j\Delta X\cdot\d_X^{i-j}\nabla \v^{\ell-1-i}
  -\d_X^j\nabla X\cdot\d_X^{i-j}\nabla {\rm{\pi}}^{\ell-1-i}\bigr)\Bigr).
  \end{split} \eeq

With \eqref{eq:vell}  and \eqref{eq:Dvell},  by repeating the proof of Proposition \ref{prop:Dv1} and through an inductive argument, we shall prove in Sections \ref{Sect6} and  \ref{sec:Dvell}
that

\begin{prop}\label{prop:Jell}
{\sl
Let $(\rho, v, \nabla\pi, X$) be the smooth solution to the coupled system \eqref{inNS} and\eqref{eq:X}.
Then under the assumptions of  \eqref{initial}, \eqref{initialell} and \eqref{divX0} with $k\geq 2$,
 the Estimate \eqref{estimate:Jell} is valid for $\ell=2,\cdots,k$.
}
\end{prop}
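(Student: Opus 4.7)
The proof proceeds by induction on $\ell$, the base case $\ell=1$ being Proposition \ref{prop:Dv1}. Fix $\ell \in \{2,\dots,k\}$ and assume the estimates \eqref{estimate:Jell} hold up to index $\ell-1$ with bound $\cH_{\ell-1}(t)$. Two ingredients are immediate. First, applying $\p_X^\ell$ to the density equation of \eqref{inNS} and using $[\p_X; D_t] = 0$ from \eqref{S3eq-1}, we find $D_t \rr^\ell = 0$, which via the flow of $v$ yields the $L^\infty_t(L^\infty)$ bound \eqref{brho} for $\rr^\ell$. Second, the equation for $\rx^{\ell-1} = \p_X^{\ell-1} X$ is obtained by applying $\p_X^{\ell-1}$ to \eqref{eq:X} and commuting; its right-hand side is a polynomial in $\rx^j$ for $j \le \ell-2$, $\p_X^j v$ for $j \le \ell-1$, and their first derivatives, all of which are bounded by $\cH_{\ell-1}(t)$ by induction, except for the $\na v$ factor acting on $\rx^{\ell-1}$ itself. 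A standard $W^{2,p}$ transport estimate and a Gr\"onwall argument against $\|\na v\|_{L^1_t(W^{1,p})}$, bounded via Corollary \ref{S1col1}, will produce the desired $L^\infty_t(W^{2,p})$ bound, modulo a coupling to $\|\na \rx^{\ell-1}\|_{L^\infty_t(W^{1,p})}$ that is closed together with the remaining estimates.

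Next I would apply Lemma \ref{S3lem2} to \eqref{eq:vell}. The divergence datum $g_\ell$ arises by iterating \eqref{S3eq1a} and consists of products of $\p_X^j v$ and $\p_X^{i-j} \rx^{\ell-1-i}$-type factors; its required norms are directly controlled by the induction hypothesis. The source $F_\ell(v,\pi)$ given by \eqref{S1eq5} is treated term by term: in each summand exactly one factor of highest tangential order appears, and all other factors can be absorbed into $\cH_{\ell-1}(t)$, with the remaining factor placed in the appropriate $L^2$, $L^p$, or $L^\infty$ norm via H\"older and the 2-D interpolation \eqref{S2eq6}. This yields the bound on $A_{\ell 1}(t)$. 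Analogously, Lemma \ref{S4lem01} applied to \eqref{eq:Dvell} with source $F_{\ell D}$ from \eqref{S1eq18} produces the bound on $A_{\ell 2}(t)$; here the extra $D_t$'s are handled by the estimates on $D_t v$ and $D_t \v^j$ for $j \le \ell-1$ already in hand. Finally, the Besov estimate $\v^\ell \in \wt{L}^\infty_t(\cB^{s_\ell}) \cap L^2_t(\cB^{1+s_\ell})$ is obtained by the frequency-splitting device sketched for Proposition \ref{S1prop1}: writing $\v^\ell = \sum_q \v^\ell_q$ with $\v^\ell_q$ solving the frequency-localized analogue of \eqref{eq:vell}, the estimates on $\|\v^\ell_q\|_{L^\infty_t(L^2)}$ and $\|\sigma^{1/2} \p_t \v^\ell_q\|_{L^2_t(L^2)}$ combined with Bernstein inequalities reconstruct the Besov norm, with $\ell^1(\ZZ)$ summability forcing the space $\cB^{s_\ell}$ rather than $\dB^{s_\ell}_{2,r}$ for $r>1$.

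The main obstacle is the combinatorial bookkeeping of the multilinear source terms \eqref{S1eq5} and \eqref{S1eq18}, which mix tangential derivatives of $v$, $\pi$, $\rho$, and $X$ whose orders sum to $\ell-1$, and which in $F_{\ell D}$ further interact with one material derivative. The guiding principle is uniform: in every product, dispatch every factor but one to the induction hypothesis (providing $\cH_{\ell-1}(t)$-type bounds in suitable $L^q$ norms), and place the remaining highest-order factor in the $L^2$ or $L^p$ norm that is precisely being controlled on the left-hand side by $A_{\ell 1}$ or $A_{\ell 2}$. What remains is a Gr\"onwall-type inequality analogous to \eqref{S3eq51}, now involving $\|\na \rx^{\ell-1}\|_{L^\infty_t(W^{1,p})}^2$ coupled to the functional $A_\ell(t)$; closing this at height $\ell$ on top of the $\cH_{\ell-1}$-bounds produces one additional exponentiation, yielding the tower-of-exponentials bound $\cH_\ell(t)$ and completing the induction.
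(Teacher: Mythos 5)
Your overall architecture is the paper's own: induction on $\ell$, the bound \eqref{brho} for $\rr^\ell$ from $D_t\rr^\ell=0$, Lemma \ref{S3lem2} applied to \eqref{eq:vell} (with the iterated divergence datum $g_\ell$) for $A_{\ell 1}$, Lemma \ref{S4lem01} applied to \eqref{eq:Dvell} for $A_{\ell 2}$, a Gr\"onwall loop coupling these to $\|\na\rx^{\ell-1}\|_{L^\infty_t(W^{1,p})}$, and the frequency-splitting device of Propositions \ref{S2prop1} and \ref{S4prop3} for the $\cB^{s_\ell}$ propagation (where, to be precise, the decomposition is performed on the data and on the building blocks $v_q$ of $v$, never by applying $\D_q$ to the equation, since $\rho$ is only bounded).

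There is, however, one genuine gap in your treatment of the tangential vector field. Applying $\p_X^{\ell-1}$ to \eqref{eq:X} and using $[\p_X;D_t]=0$ gives exactly $D_t\rx^{\ell-1}=\v^\ell$: the source is the top-order quantity $\p_X^{\ell}v$, not a polynomial in quantities of index at most $\ell-1$ plus a $\na v\cdot\na\rx^{\ell-1}$ correction, as you assert. Consequently the $W^{2,p}$ transport estimate for $\rx^{\ell-1}$ requires $\|\Delta\v^\ell\|_{L^1_t(L^p)}$, which the induction hypothesis does not provide, and a Gr\"onwall argument against $\|\na v\|_{L^1_t(W^{1,p})}$ alone cannot close. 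The paper's mechanism (Proposition \ref{S9prop1}) is to bound $\|\Delta\v^\ell\|_{L^p}$ through the Stokes estimate in \eqref{est:Fell,p} by $\|D_t\v^\ell\|_{L^p}+\cH_{\ell-1}(t)\bigl(1+\|\na\rx^{\ell-1}\|_{W^{1,p}}\bigr)$, and to make $\|D_t\v^\ell\|_{L^1_t(L^p)}$ finite by interpolating the time-weighted bounds on $\sigma^{1-\frac{s_\ell}2}D_t\v^\ell$ and $\sigma^{1-\frac{s_\ell}2}\na D_t\v^\ell$; it is precisely here that the hypothesis $p<2/(1-s_\ell)$, i.e. $\frac{2p}{p+2}\bigl(1-\frac{s_\ell}2\bigr)<1$, is used so that the singular weight is time-integrable. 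Hence the order of the argument matters: the $D_t\v^\ell$ energy estimates — which in turn need the divergence data $\fa_\ell$ and $\frak{b}_\ell$ of Lemmas \ref{lem:Dvell} and \ref{S9lem1}, including the absorption of the top-order contribution $\e\|\sigma^{\frac{3-s_\ell}2}D_t^2\v^\ell\|_{L^2_t(L^2)}^2$, a point your sketch passes over — must be established first, still expressed in terms of $\|\na\rx^{\ell-1}\|_{W^{1,p}}$, and only then can the $W^{2,p}$ bound for $\rx^{\ell-1}$ be closed by Gr\"onwall. Your closing paragraph does gesture at this coupling, but the explicit claim that the source of the $\rx^{\ell-1}$ equation is controlled by $\cH_{\ell-1}(t)$ would, if followed literally, make the proof collapse at this crucial step.
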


Now we are in the position to complete the proof of Theorem \ref{thm1}.

\begin{proof}[Proof of Theorem \ref{thm1}] By mollifying the initial data satisfying the assumptions \eqref{initial}, \eqref{initialell} and \eqref{divX0},  we first construct the global approximate solutions to \eqref{inNS}.
Then along the same lines to the proof of the above propositions, we obtain similar uniform estimates  as \eqref{S1eq1} and \eqref{estimate:Jell} 
for such approximate solutions. Finally a standard compactness argument as that used in the proof of Theorem 1.1 of \cite{HPZ}
completes the existence part of Theorem \ref{thm1}. The uniqueness of such solutions has been proved in \cite{PZZ1}.
We skip the details here. \end{proof}

\setcounter{equation}{0}
\section{The propagation of Besov regularities of the velocity field}\label{sec:J0}

 The goal of this section is to prove the Estimate \eqref{S1eq1}. Toward this, let us first present some preliminary estimates for the
linearized system \eqref{S2eq1}.

\subsection{Some preliminary energy estimates}\label{subs:Eu}
Let $(\r,v,\na\pi)$ be a smooth enough solution of \eqref{inNS}.
We first deduce from \eqref{inNS} and \eqref{initial} that
 \beq \label{S2eq4} \r_\ast\leq \r(t,x)\leq
\r^\ast,\andf \f12\|\sqrt{\r} v(t)\|_{L^2}^2+\|\na
v\|_{L^2_t(L^2)}^2=\f12\|\sqrt{\r_0} v_0\|_{L^2}^2,
\quad\forall\ t\geq 0,\eeq
so that we deduce from  \eqref{S2eq6} that \beq \begin{split}
\|v\|_{L^4_t(L^4)}\leq C \|v\|_{L^\infty_t(L^2)}^{\f12}\|\na v\|_{L^2_t(L^2)}^{\f12}\leq C \|v_0\|_{L^2},
\quad\forall\ t\geq 0. \end{split}\label{S2eq5}\eeq

\begin{lem}\label{S2lem1}
{\sl   Let  $(u,\na\Pi)$ be a smooth enough
solution of \eqref{S2eq1}. Then we have
\beq \label{S2eq8}
\begin{split}
\f{d}{dt}\|\na u(t)\|_{L^2}^2+\|(u_t, \na^2u, \na\Pi)\|_{L^2}^2
\leq
C\bigl(\|v\|_{L^4}^4\|\na
u\|_{L^2}^2+\|(g_t,\na\dive g,f)\|_{L^2}^2\bigr).
\end{split}
\eeq}
\end{lem}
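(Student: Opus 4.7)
My plan is to perform an $\dH^1$ energy estimate on $u$ by testing the momentum equation of \eqref{S2eq1} against $u_t$, and then recover the full norm on $(\na^2 u,\na\Pi)$ from a stationary Stokes-type estimate. Concretely, taking the $L^2$ inner product of the equation with $u_t$ and integrating the Laplacian by parts gives
\begin{equation*}
\|\sqrt{\r}\,u_t\|_{L^2}^2+\tfrac12\tfrac{d}{dt}\|\na u\|_{L^2}^2
=(f\,|\,u_t)-(\r v\cdot\na u\,|\,u_t)-(\na\Pi\,|\,u_t).
\end{equation*}
For the pressure term I would use the constraint $\dive u=\dive g$, which entails $\dive u_t=\dive g_t$, so an integration by parts converts $(\na\Pi\,|\,u_t)$ into $(\na\Pi\,|\,g_t)$, a term that can be absorbed once $\|\na\Pi\|_{L^2}$ is controlled.

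Next I would bound the right-hand side. The terms $(f|u_t)$ and $(\na\Pi|g_t)$ are handled by Young's inequality, throwing a small fraction of $\|\sqrt{\r}\,u_t\|_{L^2}^2$ and of $\|\na\Pi\|_{L^2}^2$ back into the left, and leaving $C\|f\|_{L^2}^2+C\|g_t\|_{L^2}^2$. For the convection term I would use $\r\leq \r^\ast$ together with the 2-D interpolation \eqref{S2eq6} in the form $\|\na u\|_{L^4}\lesssim \|\na u\|_{L^2}^{1/2}\|\na^2 u\|_{L^2}^{1/2}$, so that
\begin{equation*}
\|v\cdot\na u\|_{L^2}^2\leq \|v\|_{L^4}^2\|\na u\|_{L^4}^2
\leq \eta\|\na^2 u\|_{L^2}^2+C_\eta\|v\|_{L^4}^4\|\na u\|_{L^2}^2.
\end{equation*}

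To close the estimate I still need a bound on $\|\na^2 u\|_{L^2}$ and $\|\na\Pi\|_{L^2}$. For this I would view the momentum equation as the stationary Stokes system
\begin{equation*}
-\D u+\na\Pi=f-\r u_t-\r v\cdot\na u,\qquad \dive u=\dive g,
\end{equation*}
and apply the classical Stokes regularity estimate on $\R^2$ (obtained by taking the divergence, solving $\D\Pi=\dive F+\D\dive g$, and using the boundedness of Riesz transforms), which yields
\begin{equation*}
\|\na^2 u\|_{L^2}+\|\na\Pi\|_{L^2}\lesssim \|f\|_{L^2}+\|u_t\|_{L^2}+\|v\cdot\na u\|_{L^2}+\|\na\dive g\|_{L^2}.
\end{equation*}
Substituting the Gagliardo--Nirenberg bound on $\|v\cdot\na u\|_{L^2}$ and choosing $\eta$ small enough lets me absorb $\|\na^2 u\|_{L^2}$ on the left.

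Finally, adding a small multiple of the squared Stokes estimate to the energy identity, using $\r_\ast\leq\r\leq\r^\ast$ to replace $\|\sqrt{\r}\,u_t\|_{L^2}$ by $\|u_t\|_{L^2}$, and collecting the remainders produces exactly \eqref{S2eq8}. The main technical point I expect is the tuning of the absorption constants: the convection term contributes both $\|\na^2 u\|_{L^2}^{1/2}$ (needing the Stokes estimate) and, via the Stokes estimate itself, feeds back into $\|\na^2 u\|_{L^2}$, so the small parameters in Young's inequality and in the multiplier for the Stokes bound have to be chosen in the correct order to close the argument cleanly.
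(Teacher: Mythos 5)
Your proposal is correct and follows essentially the same route as the paper: test against $u_t$, convert $(\na\Pi\,|\,u_t)$ into $(\na\Pi\,|\,g_t)$ via $\dive u_t=\dive g_t$, bound the convection term with the 2-D interpolation \eqref{S2eq6}, and close with the Stokes-type estimate \eqref{S2eq7} obtained by taking the divergence of the equation, absorbing the small terms by Young's inequality. No gaps.
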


\begin{proof}
By taking $L^2(\R^2)$ inner product of the $u$ equation of
\eqref{S2eq1} with $u_t,$ we obtain \beno
\int_{\R^2}\r|u_t|^2\,\dx+\f12\f{d}{dt}\|\na
u(t)\|_{L^2}^2=\int_{\R^2}\bigl(f-\na\Pi-\r v\cdot\na u\bigr) | u_t \,\dx. \eeno
 It follows from the 2-D interpolation
inequality \eqref{S2eq6} that \beno
\begin{split}
\bigl|\int_{\R^2}\r v\cdot\na u | u_t \,\dx\bigr| \leq
&C\|v\|_{L^4}\|\na
u\|_{L^4}\|\sqrt{\r}u_t\|_{L^2}\\
\leq &C\|v\|_{L^4}\|\na u\|_{L^2}^{\f12}\|\na^2
u\|_{L^2}^{\f12}\|\sqrt{\r}u_t\|_{L^2}.
\end{split}
\eeno On the other hand, we get, by taking the space divergence operator to the $u$ equation of \eqref{S2eq1}, that
\beno
\D \Pi=\dive\bigl(f-\r u_t-\r v\cdot\na u\bigr)+ \D \dive u,
\eeno
which together with  the condition  $\dive u=\dive g$ yields
\beno
\|\na\Pi\|_{L^2}\leq C\bigl(\|\na\dive g\|_{L^2}+\|\sqrt{\r} u_t\|_{L^2}+\|v\|_{L^4}\|\na u\|_{L^4}+\|f\|_{L^2}\bigr),
\eeno
from which,  and the $u$ equation of \eqref{S2eq1}, we deduce that $\|\na^2 u\|_{L^2}$ satisfies the same estimate.
This together with \eqref{S2eq6}  ensures
 \beq \label{S2eq7}
 \|\na^2
u\|_{L^2}+\|\na\Pi\|_{L^2}\leq
C\bigl(\|\na\dive g\|_{L^2}+\|\sqrt{\r}u_t\|_{L^2}+\|v\|_{L^4}^2\|\na u\|_{L^2}+\|f\|_{L^2}\bigr).
\eeq
Hence we get, by using Young's inequality, that \beno
\begin{split} \bigl|\int_{\R^2}\r v\cdot\na u | u_t
\,\dx\bigr|\leq  C\bigl(\|v\|_{L^4}^4\|\na
u\|_{L^2}^2+\|\na\dive g\|_{L^2}^2+\|f\|_{L^2}^2\bigr)+\f16\|\sqrt{\r}u_t\|_{L^2}^2.
\end{split}\eeno
While we get, by using integration by parts and \eqref{S2eq7}, that
\beno
\begin{split}
\bigl|\int_{\R^2}\na\Pi | u_t\,\dx\bigr|
=&\bigl|\int_{\R^2}\na\Pi | g_t\,\dx\bigr|\leq \|\na\Pi\|_{L^2}\|g_t\|_{L^2}\\
\leq & C\bigl(\|v\|_{L^4}^4\|\na
u\|_{L^2}^2+\|(g_t,\na\dive g,f)\|_{L^2}^2\bigr)+\f16\|\sqrt{\r}u_t\|_{L^2}^2.
\end{split}\eeno
As a result,   it comes out
$$
\f{d}{dt}\|\na u(t)\|_{L^2}^2+\|\sqrt{\r}u_t\|_{L^2}^2\leq
C\bigl(\|v\|_{L^4}^4\|\na
u\|_{L^2}^2+\|(g_t,\na\dive g,f)\|_{L^2}^2\bigr).  $$
This together with \eqref{S2eq7} leads to \eqref{S2eq8}.
\end{proof}

\begin{prop}\label{S2col0}
{\sl Let   $(u,\na\Pi)$ be a smooth enough
solution of \eqref{S2eq1} with $g=0.$ Then  there hold for any $t\geq 0$,
\beq
\begin{split}
&\|\sqrt{\r} u\|_{L^\infty_t(L^2)}^2+\|\na
u\|_{L^2_t(L^2)}^2\leq C\bigl(\|\sqrt{\r_0}
u_0\|_{L^2}^2+\|f\|_{L^1_t(L^2)}^2\bigr),\\
&\|\na u\|_{L^\infty_t(L^2)}^2+\|(u_t,\na^2 u,\na
\Pi)\|_{L^2_t(L^2)}^2\leq C\bigl(\|\na
u_0\|_{L^2}^2+\|f\|_{L^2_t(L^2)}^2\bigr)\exp\bigl(C\|v_0\|_{L^2}^4\bigr),
\end{split} \label{S2eq2}\eeq
and
\beq\begin{split} \|\s^{\f12}\na
u\|_{L^\infty_t(L^2)}^2+&\bigl\|\s^{\f12}(u_t,\na^2 u,\na
\Pi)\bigr\|_{L^2_t(L^2)}^2\\
 \leq &
C\bigl(\|u_0\|_{L^2}^2+\|f\|_{L^1_t(L^2)}^2+\|\s^{\f12}f\|_{L^2_t(L^2)}^2\bigr)\exp\bigl(C\|v_0\|_{L^2}^4\bigr).\end{split}
\label{S2eq3} \eeq}
\end{prop}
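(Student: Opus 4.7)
The plan is to derive the three bounds sequentially by exploiting Lemma \ref{S2lem1} together with the transport identities $\p_t\r+\dive(\r v)=0$ and $\dive v=0$ to handle the advection term. First, for the $L^2$ energy estimate (first line of \eqref{S2eq2}), I would take the $L^2$ inner product of the $u$-equation against $u$ itself. The pressure drops thanks to $\dive u=0$, and a standard manipulation using the transport identity yields
\[
\f12\f{d}{dt}\|\sqrt{\r}u\|_{L^2}^2+\|\na u\|_{L^2}^2=(f|u).
\]
Bounding the right-hand side by $\r_\ast^{-1/2}\|f\|_{L^2}\|\sqrt{\r}u\|_{L^2}$, integrating over $[0,t]$, and applying the standard $\sup_t$-vs-$L^1_t$ device gives the first line of \eqref{S2eq2}.

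For the $\dot H^1$ estimate (second line of \eqref{S2eq2}), I would simply specialize Lemma \ref{S2lem1} with $g=0$ to obtain
\[
\f{d}{dt}\|\na u\|_{L^2}^2+\|(u_t,\na^2u,\na\Pi)\|_{L^2}^2\lesssim\|v\|_{L^4}^4\|\na u\|_{L^2}^2+\|f\|_{L^2}^2,
\]
apply Gr\"onwall's lemma in time, and invoke \eqref{S2eq5} to bound $\|v\|_{L^4_t(L^4)}^4\lesssim\|v_0\|_{L^2}^4$. The exponential factor $\exp(C\|v_0\|_{L^2}^4)$ appearing in \eqref{S2eq2} arises precisely from this Gr\"onwall step.

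For the time-weighted estimate \eqref{S2eq3}, which is designed to accommodate $u_0$ merely in $L^2$ rather than in $\dot H^1$, I would multiply the same differential inequality by $\s(t)=\min\{1,t\}$ and rearrange as
\[
\f{d}{dt}\bigl(\s\|\na u\|_{L^2}^2\bigr)+\s\|(u_t,\na^2u,\na\Pi)\|_{L^2}^2\leq\s'\|\na u\|_{L^2}^2+C\s\bigl(\|v\|_{L^4}^4\|\na u\|_{L^2}^2+\|f\|_{L^2}^2\bigr).
\]
Since $|\s'|\leq 1$, the residual term $\int_0^t\s'\|\na u\|_{L^2}^2\,dt'$ is dominated by $\|\na u\|_{L^2_t(L^2)}^2$, which by the first line of \eqref{S2eq2} is already controlled by $\|\sqrt{\r_0}u_0\|_{L^2}^2+\|f\|_{L^1_t(L^2)}^2\lesssim\|u_0\|_{L^2}^2+\|f\|_{L^1_t(L^2)}^2$. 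A second Gr\"onwall application, once more invoking \eqref{S2eq5}, then delivers \eqref{S2eq3}. The only delicate point is this last absorption: one must use the sharp $L^2$ energy bound rather than any $\dot H^1$ information on $u_0$, which is exactly what permits the weaker norm $\|u_0\|_{L^2}$ (in place of $\|\na u_0\|_{L^2}$) to appear on the right-hand side of \eqref{S2eq3}. I do not anticipate any genuine obstacle beyond this bookkeeping.
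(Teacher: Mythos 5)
Your proposal is correct and follows essentially the same route as the paper: the $L^2$ energy identity using the transport equation for the first line of \eqref{S2eq2}, Lemma \ref{S2lem1} with $g=0$ plus Gr\"onwall and the bound \eqref{S2eq5} for the second line, and multiplication of \eqref{S2eq8} by $\s(t)$ with the residual term $\int_0^t\|\na u\|_{L^2}^2\,dt'$ absorbed via the first line of \eqref{S2eq2} before a second Gr\"onwall step for \eqref{S2eq3}. In particular, your observation that only the $L^2$ energy bound on $u_0$ (not $\|\na u_0\|_{L^2}$) is needed in the last absorption is exactly the point of the paper's argument.
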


\begin{proof}
By taking $L^2(\R^2)$ inner product of the $u$ equation of \eqref{S2eq1}
with $u$ and making use of the transport equation of \eqref{inNS}, we get
\beq\label{S2eq3t} \f12\f{d}{dt}\int_{\R^2}\r|u(t)|^2\,\dx+\|\na u\|_{L^2}^2=\int_{\R^2} f | u\,dx\leq\|f\|_{L^2}\|u\|_{L^2}.
\eeq
Integrating the above inequality over $[0,t]$ yields
\beno
\f12\|\sqrt{\rho}u\|_{L^\infty_t(L^2)}^2
+\|\na u\|_{L^2_t(L^2)}^2
\leq \f12\|\sqrt{\rho_0}u_0\|_{L^2}^2+C\|f\|_{L^1_t(L^2)}^2
+\f14\|\sqrt{\rho}u\|_{L^\infty_t(L^2)}^2,
\eeno
which leads to the first inequality of \eqref{S2eq2}.

In view of  \eqref{S2eq5}, the second inequality of \eqref{S2eq2} can be achieved by applying Gronwall's inequality to \eqref{S2eq8}.

While we get, by multiplying \eqref{S2eq8} by $\s(t)$ and then integrating the resulting inequality over $[0,t],$  that
\beq\label{S2eq9}\begin{split} \s(t)\|\na
u(t)\|_{L^2}^2&+\int_0^t\s(t')\bigl\|(u_t,\na^2
u,\na\Pi)\bigr\|_{L^2}\,\dt'\\
\leq &\int_0^t\|\na
u(t')\|_{L^2}^2\,\dt'+C\int_0^t\bigl(\|v\|_{L^4}^4\s(t')\|\na
u\|_{L^2}^2+\s(t')\|f(t')\|_{L^2}^2\bigr)\,\dt'.
\end{split} \eeq
Then in view of \eqref{S2eq5} and the first inequality of   \eqref{S2eq2}, we achieve \eqref{S2eq3} by applying
Gronwall's inequality.
\end{proof}

In particular, taking $u=v$ and $f=g=0$ in \eqref{S2eq3} gives rise to
\begin{col}\label{S2col1}
{\sl Let $(\r,v,\na\pi)$ be a smooth enough solution of
\eqref{inNS}. Then there holds \beq\begin{split} \|\s^{\f12}\na
v\|_{L^\infty_t(L^2)}^2+\bigl\|\s^{\f12}(v_t,\na^2 v,\na
\pi)\bigr\|_{L^2_t(L^2)}^2\leq
C\|v_0\|_{L^2}^2\exp\bigl(C\|v_0\|_{L^2}^4\bigr).\end{split}
\label{S2eq11} \eeq} \end{col}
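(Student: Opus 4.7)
The plan is to derive Corollary \ref{S2col1} as an immediate specialization of estimate \eqref{S2eq3} in Proposition \ref{S2col0}. First I would verify that the momentum equation of \eqref{inNS}, rewritten as
$$\rho\,\partial_t v + \rho\, v\cdot\nabla v - \Delta v + \nabla\pi = 0, \qquad \dive v = 0,$$
coincides with the linearized system \eqref{S2eq1} under the identification $u = v$, $\Pi = \pi$, $f\equiv 0$, $g\equiv 0$, with initial data $u_0 = v_0$. This matching is legitimate because the convection velocity appearing in \eqref{S2eq1} is by assumption the same velocity $v$ of the ambient inhomogeneous Navier--Stokes system, the uniform bounds $\rho_\ast \le \rho \le \rho^\ast$ needed in the background come from \eqref{S2eq4}, and the basic energy identity together with \eqref{S2eq5} supplies $\|v\|_{L^4_t(L^4)} \lesssim \|v_0\|_{L^2}$, which is precisely what produces the $\exp\bigl(C\|v_0\|_{L^2}^4\bigr)$ factor on the right-hand side of \eqref{S2eq3}.

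Next I would apply \eqref{S2eq3} with these choices. Since the forcing terms $f$ and $g$ vanish, both $\|f\|_{L^1_t(L^2)}^2$ and $\|\sigma^{1/2}f\|_{L^2_t(L^2)}^2$ drop out, and the right-hand side of \eqref{S2eq3} collapses to
$$C\,\|v_0\|_{L^2}^2\,\exp\bigl(C\|v_0\|_{L^2}^4\bigr),$$
which is exactly the bound claimed in \eqref{S2eq11}. There is no genuine obstacle here: the only delicate point, namely the use of the $\sigma(t)$ weight to compensate for the lack of $H^1$ control on $v_0$ and to absorb the coefficient $\|v\|_{L^4}^4$ in the Gronwall step, has already been handled inside Lemma \ref{S2lem1} and Proposition \ref{S2col0}. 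The corollary merely packages the resulting inequality in the form that will be fed into the Besov-regularity argument of Section \ref{sec:J0}.
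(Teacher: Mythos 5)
Your proposal is correct and is essentially the paper's own proof: the paper obtains Corollary \ref{S2col1} precisely by taking $u=v$, $\Pi=\pi$ and $f=g=0$ in \eqref{S2eq3} of Proposition \ref{S2col0}, so that the forcing terms vanish and the right-hand side reduces to $C\|v_0\|_{L^2}^2\exp\bigl(C\|v_0\|_{L^2}^4\bigr)$. Your additional remarks on the role of \eqref{S2eq4}, \eqref{S2eq5} and the $\sigma(t)$ weight are accurate but already built into Lemma \ref{S2lem1} and Proposition \ref{S2col0}, exactly as you say.
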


\begin{lem}\label{S2lem2} {\sl Let $(u,\na\Pi)$ be a smooth enough solution of \eqref{S2eq1} with $g=0$ and $\tau(t)$ be a non-negative Lipschitz
function. Then we have
\beq \label{S2eq11q}
\begin{split}
\|\tau^{\f12}&u_t\|_{L^\infty_t(L^2)}^2+\|\tau^{\f12}\na
u_t\|_{L^2_t(L^2)}^2\leq
\exp\left(C\exp(C\|v_0\|_{L^2}^4)\right)\Bigl( \|\tau^{\f12}f_t\|_{L^1_t(L^2)}^2
\\
&
+\|\tau^{\frac 12}\sigma^{-\frac 12}f\|_{L^2_t(L^2)}^2
 + \|\tau^{\f12}\s^{-\f12}\na u\|_{L^\infty_t(L^2)}^2
 +\int_0^t\bigl(\tau'+\tau\s^{-1}\bigr)\|u_t\|_{L^2}^2\,\dt'\Bigr) .
\end{split}
\eeq}
\end{lem}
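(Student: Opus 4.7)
The plan is to derive an equation for $u_t$ by differentiating the momentum equation of \eqref{S2eq1} (with $g=0$) in time, and then perform an $L^2$-energy estimate on $u_t$ with the weight $\tau$. Applying $\partial_t$ and using the mass conservation $\rho_t=-\dive(\rho v)=-v\cdot\nabla\rho$ (since $\dive\,v=0$), one obtains
$$\rho u_{tt}+\rho v\cdot\nabla u_t-\Delta u_t+\nabla\Pi_t=f_t-\rho_t u_t-\rho_t\,v\cdot\nabla u-\rho\,v_t\cdot\nabla u.$$
Taking the $L^2$ inner product with $u_t$ and using $\dive\,u_t=0$, the convective pieces combine as $\int\rho u_{tt}\cdot u_t+\int\rho v\cdot\nabla u_t\cdot u_t=\tfrac12\tfrac{d}{dt}\|\sqrt\rho u_t\|_{L^2}^2$ thanks to the mass conservation, which yields
$$\tfrac12\tfrac{d}{dt}\|\sqrt\rho u_t\|_{L^2}^2+\|\nabla u_t\|_{L^2}^2=\int\bigl(f_t-\rho_t u_t-\rho_t\,v\cdot\nabla u-\rho\,v_t\cdot\nabla u\bigr)\cdot u_t\,\dx.$$

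The main obstacle is controlling the two $\rho_t$ source terms without using any regularity of $\rho$ beyond $L^\infty$. The key is to rewrite $\rho_t=-\dive(\rho v)$ and integrate by parts, moving the derivatives onto $u$ and $u_t$:
$$\int\rho_t |u_t|^2\,\dx=2\int\rho\,v\cdot\nabla u_t\cdot u_t\,\dx,\quad \int\rho_t(v\cdot\nabla u)\cdot u_t\,\dx=\int\rho\,v\cdot\nabla\bigl((v\cdot\nabla u)\cdot u_t\bigr)\,\dx.$$
Each resulting term is then bounded by combinations of $\|v\|_{L^4}$, $\|v_t\|_{L^2}$, $\|\nabla u\|_{L^2}$, $\|\nabla^2 u\|_{L^2}$ (the latter controlled via \eqref{S2eq7}), paired with $\|u_t\|_{L^4}$ or $\|\nabla u_t\|_{L^2}$, by means of the 2-D interpolation \eqref{S2eq6} and the bounds of Proposition \ref{S2col0}. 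Young's inequality then absorbs $\tfrac12\|\nabla u_t\|_{L^2}^2$ on the left, leaving a differential inequality of schematic form
$$\tfrac{d}{dt}\|\sqrt\rho u_t\|_{L^2}^2+\|\nabla u_t\|_{L^2}^2\lesssim G(t)\|u_t\|_{L^2}^2+\sigma^{-1}\|\nabla u\|_{L^2}^2+H(t),$$
with $G\in L^1_{\loc}(\dt)$ bounded in terms of $\|v\|_{L^4}^4$ and the energy appearing in \eqref{S2eq2}, and $H$ collecting source contributions estimated by $\|\sigma^{-1/2}f\|_{L^2_t(L^2)}^2$.

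The last step is a time-weighted Gronwall argument: multiplying by $\tau$ and writing $\tau\tfrac{d}{dt}\|\sqrt\rho u_t\|_{L^2}^2=\tfrac{d}{dt}\bigl(\tau\|\sqrt\rho u_t\|_{L^2}^2\bigr)-\tau'\|\sqrt\rho u_t\|_{L^2}^2$, then integrating in time, produces the $\int_0^t\tau'\|u_t\|_{L^2}^2$ contribution on the right-hand side of \eqref{S2eq11q}; the companion $\tau\sigma^{-1}\|u_t\|_{L^2}^2$ piece arises from the non-absorbable portion of $\int\rho v_t\cdot\nabla u\cdot u_t$ after trading $\|\nabla u\|_{L^4}^2$ against $\sigma^{-1}\|\nabla u\|_{L^2}^2\cdot\sigma\|\nabla^2 u\|_{L^2}^2$ through \eqref{S2eq6}. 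Gronwall applied to the $G(t)\|u_t\|_{L^2}^2$ coupling introduces a factor $\exp(C\|v_0\|_{L^2}^4)$ via \eqref{S2eq5}, which combined with the already-exponential bounds on $\nabla^2 u,\nabla\Pi$ from \eqref{S2eq2}-\eqref{S2eq3} yields the double exponential $\exp(C\exp(C\|v_0\|_{L^2}^4))$ in front. Finally, $\int_0^t\int f_t\cdot u_t\,\dx\,\dt'$ is handled by duality as $\|\tau^{1/2}f_t\|_{L^1_t(L^2)}\|\tau^{1/2}u_t\|_{L^\infty_t(L^2)}$ followed by Young's inequality, producing the $\|\tau^{1/2}f_t\|_{L^1_t(L^2)}^2$ contribution in \eqref{S2eq11q}.
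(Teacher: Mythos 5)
Your proposal is correct and follows essentially the same route as the paper: differentiate the momentum equation in time, do a weighted $L^2$ energy estimate on $u_t$, handle the $\rho_t$ terms by writing $\rho_t=-\dive(\rho v)$ and integrating by parts so derivatives fall on $u$ and $u_t$, control the resulting terms via \eqref{S2eq6}, \eqref{S2eq7} and the bounds of \eqref{S2eq2}--\eqref{S2eq11}, and close with a $\tau$-weighted Gronwall argument that produces exactly the $\tau'+\tau\sigma^{-1}$, $\tau^{1/2}\sigma^{-1/2}\nabla u$, $\tau^{1/2}\sigma^{-1/2}f$ and $\tau^{1/2}f_t$ contributions and the double exponential constant. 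No gaps worth noting.
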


\begin{proof} We first get, by taking $\p_t$ to the $u$ equation of
\eqref{S2eq1}, that \beno \r \p_t^2u+\r v\cdot\na u_t-\D u_t+\na
\Pi_t=-\r_t(u_t+v\cdot\na u)-\r v_t\cdot\na u+f_t. \eeno Taking $L^2(\R^2)$
inner product of the above equation with $u_t$ gives
\beq \label{S2eq12}\f12\f{d}{dt}\|\sqrt{\r}u_t(t)\|_{L^2}^2+\|\na
u_t\|_{L^2}^2
=-\int_{\R^2}\bigl(\r_t(u_t+v\cdot\na u)+
 \r v_t\cdot\na u\bigr) | u_t\,\dx
 +\int_{\R^2}f_t | u_t\,\dx.
\eeq
By using the transport equation of \eqref{inNS} and
integrating by parts, one has \beno
\begin{split}
\bigl|\int_{\R^2}\r_tu_t | u_t\,\dx\bigr|=&2\bigl|\int_{\R^2}\r v
\cdot\na u_t | u_t\,\dx\bigr|
 \leq  C\|v\|_{L^4}\|\na u_t\|_{L^2}\|u_t\|_{L^4},
 \end{split}
 \eeno
 so that by applying the 2-D inequality \eqref{S2eq6} and Young's
 inequality, we obtain
 \beq \label{S2eq13}
\bigl|\int_{\R^2}\r_tu_t | u_t\,\dx\bigr|\leq \f16 \|\na
u_t\|_{L^2}^2+C\|v\|_{L^4}^4\| u_t\|_{L^2}^2. \eeq
Similarly, we get, by using integration by parts, that \beno
\begin{split}
\int_{\R^2}\r_t v\cdot\na u | u_t\,\dx=&\int_{\R^2}\r (v\cdot\na)
v\cdot\na u | u_t\,\dx\\
&+\int_{\R^2}\r (v\otimes v) : \na^2 u | u_t\,\dx +\int_{\R^2}
v\cdot\na u |\r v\cdot\na u_t\,\dx.
\end{split}
\eeno Applying 2-D interpolation inequality \eqref{S2eq6} and the
Estimate \eqref{S2eq7} gives \beno
\begin{split}
\bigl|\int_{\R^2}\r (v\cdot\na) v\cdot\na u | u_t\,\dx\bigr|\lesssim
&\|v\|_{L^4}\|\na v\|_{L^4}\|\na u\|_{L^4}\|u_t\|_{L^4}\\
\lesssim & \|v\|_{L^4}\|\na
v\|_{L^2}^{\f12}\left(\|v_t\|_{L^2}^{\f12}+\|v\|_{L^4}\|\na
v\|_{L^2}^{\f12}\right) \|\na
u\|_{L^2}^{\f12}\\
&\times\left(\| u_t\|_{L^2}^{\f12}+\|v\|_{L^4}\|\na
u\|_{L^2}^{\f12}+\|f\|_{L^2}^{\f12}\right)\|u_t\|_{L^2}^{\f12}\|\na u_t\|_{L^2}^{\f12},
\end{split}
\eeno from which and Young's inequality, we infer
$$\longformule{
\bigl|\int_{\R^2}\r (v\cdot\na v)\cdot\na u | u_t\,\dx\bigr|\leq
 \f1{18}\|\na u_t\|_{L^2}^2+C\bigl(\|v\|_{L^4}^4+\|\na
v\|_{L^2}^2\bigr)\| u_t\|_{L^2}^2}{{}+\|\na v\|_{L^2}^2\|f\|_{L^2}^2+C\left(\|v_t\|_{L^2}^2+\|v\|_{L^4}^4\|\na
v\|_{L^2}^2\right)\|\na u\|_{L^2}^2.} $$
Exactly along the same line, one has \beno
\begin{split}
\bigl|\int_{\R^2}\r (v\otimes v)& : \na^2 u | u_t\,\dx\bigr|\leq
C\|v\|_{L^8}^2\|\na^2 u\|_{L^2}\|u_t\|_{L^4}\\
\leq &C\|v\|_{L^2}^{\f12}\|\na
v\|_{L^2}^{\f32}\left(\| u_t\|_{L^2}+\|v\|_{L^4}^2\|\na
u\|_{L^2}+\|f\|_{L^2}\right)\|u_t\|_{L^2}^{\f12}\|\na u_t\|_{L^2}^{\f12}\\
\leq &\f1{18}\|\na u_t\|_{L^2}^2+C\|v\|_{L^2}^2\|\na
v\|_{L^2}^4\|\na
u\|_{L^2}^2\\
&\qquad\qquad
+ \|\na v\|_{L^2}^2\|f\|_{L^2}^2
+C\left(1+\|v\|_{L^2}^{2}\right)\|\na
v\|_{L^2}^2\| u_t\|_{L^2}^2,
\end{split}
\eeno and \beno
\begin{split}
\bigl|\int_{\R^2} v\cdot\na u | &\r v\cdot\na u_t\,\dx\bigr|\leq
C\|v\|_{L^8}^2\|\na u\|_{L^4}\|\na u_t\|_{L^2}\\
\leq &C\|v\|_{L^2}^{\f12}\|\na v\|_{L^2}^{\f32}\|\na
u\|_{L^2}^{\f12}\left(\|\sqrt{\r}u_t\|_{L^2}^{\f12}+\|v\|_{L^4}\|\na
u\|_{L^2}^{\f12}+\|f\|_{L^2}^{\f12}\right)\|\na u_t\|_{L^2}\\
\leq &\f1{18}\|\na u_t\|_{L^2}^2+C\left(\|v\|_{L^2}^2\|\na
v\|_{L^2}^4\|\na u\|_{L^2}^2+\|\na
v\|_{L^2}^2\bigl(\| u_t\|_{L^2}^2+\|f\|_{L^2}^2\bigr)\right).
\end{split}
\eeno
Hence it comes out
\beq\label{S2eq14}
\begin{split}
\bigl|\int_{\R^2}\r_t v\cdot\na u | u_t\,\dx\bigr|\leq & \f1{6}\|\na
u_t\|_{L^2}^2+C \bigl(1+\|v\|_{L^2}^{2}\bigr)\|\na
v\|_{L^2}^2 \| u_t\|_{L^2}^2
\\
&+C\left(\|v_t\|_{L^2}^2
+  \|v\|_{L^2}^2 \|\na
v\|_{L^2}^4\right)\|\na u\|_{L^2}^2+ C\|\na v\|_{L^2}^2\|f\|_{L^2}^2.
\end{split}
\eeq
Finally it is easy to observe that \beno
\begin{split}
\bigl|\int_{\R^2}\r v_t&\cdot\na u | u_t\,\dx\bigr|\leq
C\|v_t\|_{L^2}\|\na u\|_{L^4}\|u_t\|_{L^4}\\
\leq & C\| v_t\|_{L^2}\|\na
u\|_{L^2}^{\f12}\left(\| u_t\|_{L^2}^{\f12}+\|v\|_{L^4}\|\na
u\|_{L^2}^{\f12}+\|f\|_{L^2}^{\f12}\right)\| u_t\|_{L^2}^{\f12}\|\na
u_t\|_{L^2}^{\f12}.
\end{split}
\eeno Applying Young's inequality gives rise to \beq\label{S2eq15}
\begin{split}
\bigl|\int_{\R^2}\r v_t&\cdot\na u | u_t\,\dx\bigr|\leq
\f1{6}\|\na u_t\|_{L^2}^2
+\|v_t\|_{L^2}^2\|\na u\|_{L^2}^2\\
&+\s^{-1}(t)\bigl(\| u_t\|_{L^2}^2+\|f\|_{L^2}^2\bigr)
+C\left(\|v\|_{L^4}^4+\s(t)\|v_t\|_{L^2}^2\right)\| u_t\|_{L^2}^2.
\end{split}
\eeq
Inserting the Estimates \eqref{S2eq13}, \eqref{S2eq14} and
\eqref{S2eq15} into \eqref{S2eq12} gives rise to
 \beno \label{S2eq16}
\begin{split}
\f{d}{dt}\|\sqrt{\r}u_t(t)\|_{L^2}^2&+\|\na
u_t\|_{L^2}^2
\leq C\left( \bigl(1+\|v\|_{L^2}^{2}\bigr)\|\na
v\|_{L^2}^2+\s(t)\|v_t\|_{L^2}^2\right)\| u_t\|_{L^2}^2\\
&+C\left(\|v_t\|_{L^2}^2 +\|v\|_{L^2}^2\|\na
v\|_{L^2}^4\right)\|\na u\|_{L^2}^2\\
&+\s^{-1}(t)\| u_t\|_{L^2}^2
+C\bigl(\|\sigma^{\frac 12}\na v\|_{L^2}^2+1\bigr)\s^{-1}(t)\|f\|_{L^2}^2+\|f_t\|_{L^2}\|u_t\|_{L^2}.
\end{split}
\eeno
Multiplying the above inequality by $\tau(t)$ and then
applying Gronwall's inequality and Young's inequality to  the resulting inequality leads to
\beno
\begin{split}
\|\tau^{\f12}&u_t\|_{L^\infty_t(L^2)}^2+\|\tau^{\f12}\na
u_t\|_{L^2_t(L^2)}^2\leq C\Bigl(\|\tau^{\f12}f_t\|_{L^1_t(L^2)}^2+
\bigl(\|\sigma^{\frac 12}\nabla v\|_{L^\infty_t(L^2)}^2+1\bigr)
\|\tau^{\frac 12}\sigma^{-\frac 12}f\|_{L^2_t(L^2)}^2
\\
&+\bigl(\|\s^{\f12}v_t\|_{L^2_t(L^2)}^2
+
  \|v\|_{L^\infty_t(L^2)}^2
  \|\s^{\f12}\na v\|_{L^\infty_t(L^2)}^2
\|\na v\|_{L^2_t(L^2)}^2 \bigr)\|\tau^{\f12}\s^{-\f12}\na u\|_{L^\infty_t(L^2)}^2\\
&+\int_0^t\bigl(\tau'+\tau\s^{-1}\bigr)\|u_t\|_{L^2}^2\,\dt'
\Bigr)\times\exp\Bigl(C\int_0^t\bigl( \bigl(1+\|v\|_{L^2}^{2}\bigr)\|\na
v\|_{L^2}^2+\s \|v_t\|_{L^2}^2\bigr)\,\dt'\Bigr),
\end{split}
\eeno
from which, \eqref{S2eq4} and \eqref{S2eq11}, we conclude the proof of \eqref{S2eq11q}.
\end{proof}

\begin{prop}\label{S2colu}
{\sl Let $(u,\na\Pi)$ be a smooth enough solution of \eqref{S2eq1} with   $f=g=0.$ Then one has \beq
\label{S2eq10} \begin{split} &\bigl\|\s^{\f12}(u_t,\na^2 u,\na
\Pi)\bigr\|_{L^\infty_t(L^2)}^2+\|\s^{\f12}\na
u_t\|_{L^2}^2\leq  \|\na
u_0\|_{L^2}^2\exp\left(C\exp\bigl(C\|v_0\|_{L^2}^4\bigr)\right),
\end{split} \eeq
and
\beq\label{S2eq10.1}
 \begin{split}
 \bigl\|\sigma(u_t,\na^2 u,\na\Pi)\bigr\|_{L^\infty_t(L^2)}^2
 +\|\s \na u_t\|_{L^2_t(L^2)}^2
 \leq
 \|u_0\|_{L^2}^2\exp\left(C\exp\bigl(C\|v_0\|_{L^2}^4\bigr)\right).
\end{split} \eeq} \end{prop}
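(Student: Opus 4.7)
The plan is to deduce Proposition \ref{S2colu} directly from Lemma \ref{S2lem2} by making two careful choices of the weight $\tau$, combining the outputs with Proposition \ref{S2col0} and Corollary \ref{S2col1}, and then recovering the $\nabla^2 u$ and $\nabla\Pi$ pieces from the elliptic estimate \eqref{S2eq7}.

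To establish \eqref{S2eq10}, I would apply Lemma \ref{S2lem2} with $\tau(t)=\sigma(t)$. Since $f=g=0$, the right-hand side of \eqref{S2eq11q} collapses to
$$
\exp\bigl(C\exp(C\|v_0\|_{L^2}^4)\bigr)\Bigl(\|\tau^{1/2}\sigma^{-1/2}\nabla u\|_{L^\infty_t(L^2)}^2+\int_0^t(\tau'+\tau\sigma^{-1})\|u_t\|_{L^2}^2\,dt'\Bigr).
$$
With $\tau=\sigma$ one has $\tau^{1/2}\sigma^{-1/2}=1$ and $\tau'+\tau\sigma^{-1}\leq 2$, so both terms are controlled by $\|\nabla u\|_{L^\infty_t(L^2)}^2+\|u_t\|_{L^2_t(L^2)}^2$, which is precisely bounded in the second inequality of \eqref{S2eq2} (with $f=0$). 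This immediately yields the desired bound on $\|\sigma^{1/2}u_t\|_{L^\infty_t(L^2)}^2+\|\sigma^{1/2}\nabla u_t\|_{L^2_t(L^2)}^2$.

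To establish \eqref{S2eq10.1}, I would instead take $\tau(t)=\sigma^2(t)$, which is still Lipschitz. Then $\tau^{1/2}\sigma^{-1/2}=\sigma^{1/2}$ and $\tau'+\tau\sigma^{-1}\leq 3\sigma$, so the right-hand side of \eqref{S2eq11q} is now controlled by $\|\sigma^{1/2}\nabla u\|_{L^\infty_t(L^2)}^2+\|\sigma^{1/2}u_t\|_{L^2_t(L^2)}^2$, which is exactly the content of the estimate \eqref{S2eq3} with $f=0$. This produces the $u_t$ and $\nabla u_t$ parts of \eqref{S2eq10.1}.

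Finally, to pick up the $\nabla^2 u$ and $\nabla\Pi$ contributions in both \eqref{S2eq10} and \eqref{S2eq10.1}, I would invoke the elliptic estimate \eqref{S2eq7} with $f=g=0$, namely
$$
\|\nabla^2 u\|_{L^2}+\|\nabla\Pi\|_{L^2}\leq C\bigl(\|\sqrt{\rho}u_t\|_{L^2}+\|v\|_{L^4}^2\|\nabla u\|_{L^2}\bigr),
$$
and multiply by $\sigma^{1/2}$, respectively $\sigma$. The first term on the right is handled by the bounds already obtained above. For the second term I would use the interpolation $\|v\|_{L^4}^2\leq C\|v\|_{L^2}\|\nabla v\|_{L^2}\leq C\|v_0\|_{L^2}\|\nabla v\|_{L^2}$, together with $\|\sigma^{1/2}\nabla v\|_{L^\infty_t(L^2)}^2\lesssim\|v_0\|_{L^2}^2\exp(C\|v_0\|_{L^2}^4)$ from Corollary \ref{S2col1}, and the elementary inequality $\sigma\leq\sigma^{1/2}$. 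Combined with the bounds on $\|\nabla u\|_{L^\infty_t(L^2)}$ and $\|\sigma^{1/2}\nabla u\|_{L^\infty_t(L^2)}$ from Proposition \ref{S2col0}, this closes both estimates. No genuine obstacle is anticipated; the main care is bookkeeping the time weights so that each weighted norm appearing on the right-hand side of Lemma \ref{S2lem2} has already been controlled in the preceding propositions.
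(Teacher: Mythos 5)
Your proposal is correct and follows essentially the same route as the paper: apply Lemma \ref{S2lem2} with $\tau=\sigma$ (resp. $\tau=\sigma^2$) and $f=0$, control the resulting right-hand side by \eqref{S2eq2} (resp. \eqref{S2eq3}) from Proposition \ref{S2col0}, and then recover the $\nabla^2 u$ and $\nabla\Pi$ pieces from the Stokes estimate \eqref{S2eq7} together with Corollary \ref{S2col1}. The weight bookkeeping you indicate (pairing $\sigma^{1/2}$ with $\nabla v$ and, for \eqref{S2eq10.1}, the remaining $\sigma^{1/2}$ with $\nabla u$ so that only $\|u_0\|_{L^2}$ appears) is exactly what the paper does.
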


\begin{proof}
Taking $f=0$ and $\tau(t)=\s(t)$ in \eqref{S2eq11q} and using \eqref{S2eq2} gives
 \beq \label{S2eq17}
\|\s^{\f12}u_t\|_{L^\infty_t(L^2)}^2+\int_0^t\s(t')\|\na
u_t\|_{L^2}^2\,\dt'\leq \|\na
u_0\|_{L^2}^2\exp\left(C\exp\bigl(C\|v_0\|_{L^2}^4\bigr)\right).
 \eeq
Resuming the Estimate \eqref{S2eq17} into \eqref{S2eq7} and using
\eqref{S2eq4}, \eqref{S2eq2} and \eqref{S2eq11} yields \beno \begin{split}
\bigl\|\s^{\f12}(\na^2u,\na\Pi)\bigr\|_{L^\infty_t(L^2)}^2 \leq &
C\left(\|\s^{\f12}u_t\|_{L^\infty_t(L^2)}^2+\|v\|_{L^\infty_t(L^2)}^2\|\s^{\f12}\na
v\|_{L^\infty_t(L^2)}^2\|\na u\|_{L^\infty_t(L^2)}^2\right)\\ \leq&
C\|\na
u_0\|_{L^2}^2\exp\left(C\exp\bigl(C\|v_0\|_{L^2}^4\bigr)\right).
\end{split}
 \eeno
This achieves \eqref{S2eq10}.

While by taking $f=0$ and $\tau(t)=\s^2(t)$ in \eqref{S2eq11q} and using \eqref{S2eq3}   leads to
 \beno
\|\s u_t\|_{L^\infty_t(L^2)}^2
+\int_0^t\sigma^2\|\na u_t\|_{L^2}^2\,\dt'
\leq
\| u_0\|_{L^2}^2\exp\left(C\exp\bigl(C\|v_0\|_{L^2}^4\bigr)\right).
 \eeno
And hence thanks to   \eqref{S2eq7} and \eqref{S2eq4}, \eqref{S2eq3}, \eqref{S2eq11}, we obtain
  \beno \begin{split}
\bigl\|\s (\na^2u,\na\Pi)\bigr\|_{L^\infty_t(L^2)}^2 \leq &
C\left(\|\s u_t\|_{L^\infty_t(L^2)}^2
+\|v\|_{L^\infty_t(L^2)}^2
\|\s^{\f12}\na v\|_{L^\infty_t(L^2)}^2
\|\sigma^{\frac 12}\na u\|_{L^\infty_t(L^2)}^2\right)
\\ \leq&
 \| u_0\|_{L^2}^2\exp\left(C\exp\bigl(C\|v_0\|_{L^2}^4\bigr)\right).
\end{split}
 \eeno
This yields \eqref{S2eq10.1}, and we  complete the proof of the
proposition.
\end{proof}

In particular, taking $u=v$ in \eqref{S2eq10.1} gives rise to
\begin{col}\label{S2col0}
{\sl
The following estimate is valid for  smooth enough solution $(\r,v,\na\pi)$   of
\eqref{inNS}
\beq\label{S2eq11.1}\begin{split}
 \|\s(v_t, \nabla^2 v, \nabla \pi)\|_{L^\infty_t(L^2)}^2
 +\|\s\nabla v_t\bigr\|_{L^2_t(L^2)}^2
&\leq
 \exp\left(C\exp\bigl(C\|v_0\|_{L^2}^4\bigr)\right).
\end{split}
 \eeq}
\end{col}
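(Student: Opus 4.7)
The plan is to deduce Corollary \ref{S2col0} as an immediate specialization of Proposition \ref{S2colu} to the case where the linearized problem \eqref{S2eq1} coincides with \eqref{inNS} itself.

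First I would verify that $(v,\nabla\pi)$ solves the linear system \eqref{S2eq1} with $f=0$, $g=0$, and initial datum $u_0=v_0$. Starting from the conservative form of the momentum equation in \eqref{inNS} and using the transport equation for $\rho$ together with $\dive v=0$, one expands
\[
\p_t(\rho v)+\dive(\rho v\otimes v)=\bigl(\p_t\rho+\dive(\rho v)\bigr)v+\rho(\p_t v+v\cdot\nabla v)=\rho(\p_t v+v\cdot\nabla v),
\]
so that \eqref{inNS} can be rewritten in the non-conservative form
\[
\rho\p_t v+\rho v\cdot\nabla v-\Delta v+\nabla\pi=0,\qquad \dive v=0.
\]
This is precisely \eqref{S2eq1} with vanishing source terms, and the transport velocity matches the ambient $v$ used in the hypotheses of Proposition \ref{S2colu}.

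Next, I would apply the Estimate \eqref{S2eq10.1} from Proposition \ref{S2colu} with the choice $u=v$, $\nabla\Pi=\nabla\pi$. This directly yields
\[
\|\s(v_t,\nabla^2 v,\nabla\pi)\|_{L^\infty_t(L^2)}^2+\|\s\nabla v_t\|_{L^2_t(L^2)}^2\leq \|v_0\|_{L^2}^2\exp\bigl(C\exp(C\|v_0\|_{L^2}^4)\bigr).
\]
The polynomial prefactor $\|v_0\|_{L^2}^2$ is then absorbed into the outer double exponential via the trivial bound $\|v_0\|_{L^2}^2\leq\exp(C\|v_0\|_{L^2}^2)\leq\exp\bigl(C(1+\|v_0\|_{L^2}^4)\bigr)$, at the cost of enlarging the universal constant $C$. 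This produces exactly the bound \eqref{S2eq11.1}.

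There is no substantive obstacle at this step: the real work has already been done upstream in Lemma \ref{S2lem2} and Proposition \ref{S2colu}, where the delicate point was the $\sigma$-weighted $H^1$ estimate for $u_t$, which required careful treatment of the commutator terms involving $\rho_t$ without assuming any Lipschitz regularity on $\rho$. Once those estimates are in place, Corollary \ref{S2col0} is just the observation that \eqref{inNS} fits the template of the linearized system \eqref{S2eq1} with trivial data $f=g=0$.
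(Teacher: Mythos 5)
Your proposal is correct and coincides with the paper's own argument: the paper obtains \eqref{S2eq11.1} precisely by taking $u=v$ (with $f=g=0$, $u_0=v_0$) in the Estimate \eqref{S2eq10.1} of Proposition \ref{S2colu}, the rewriting of \eqref{inNS} in non-conservative form and the absorption of the harmless factor $\|v_0\|_{L^2}^2$ into the double exponential being exactly the trivial steps you describe.
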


\subsection{Time-weighted $\cB^s$ estimate of $u$}\label{subs:Bu}

\begin{prop}\label{S2prop1}
{\sl Let $(\r,v,\na\pi)$ be a smooth enough solution of \eqref{inNS}
and $(u,\na\Pi)$ solve \eqref{S2eq1} with $f=g=0$ and with initial data $u_0\in \cB^s$ for some  $s\in ]0,1[$.
Then
there holds \ben &&\|u\|_{\wt{L}^\infty_t(\cB^s)}+\|\na
u\|_{\wt{L}^2_t(\cB^s)}\leq
C\|u_0\|_{\cB^s}\exp\bigl(C\|v_0\|_{L^2}^4\bigr), \label{S2eq18}\\
&&\|{\s}^{\f12}\na u\|_{\wt{L}^\infty_t(\cB^s)}+\|{\s}^{\f12}
u_t\|_{\wt{L}^2_T(\cB^s)}\leq
\cC(v_0,u_0,s),\label{S2eq19}\\
&&\|\s^{\f{1-s}2}
u\|_{\wt{L}^\infty_t(\cB^1)}+\bigl\|\s^{\f{1-s}2}(u_t,\na^2
u,\na\Pi)\bigr\|_{L^2_t(L^2)}\leq
\cC(v_0,u_0,s),\label{S2eq20}
\een where $\cC(v_0,u_0,s)$ is given by \eqref{S1eq9}.}
\end{prop}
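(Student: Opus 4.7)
The plan is to follow the scheme the authors sketched near~\eqref{S1eq14}--\eqref{S1eq16}: exploit the linearity of the system~\eqref{S2eq1} (with $f=g=0$) by splitting the initial datum dyadically. I would set $u=\sum_{q\in\Z}u_q$ and $\nabla\Pi=\sum_{q\in\Z}\nabla\Pi_q$, where each block $(u_q,\nabla\Pi_q)$ solves~\eqref{S2eq1} with $f=g=0$ and with initial datum $\Delta_q u_0$. Applying the estimates~\eqref{S2eq2}--\eqref{S2eq3} and~\eqref{S2eq10}--\eqref{S2eq10.1} block-by-block, together with Bernstein's inequality and the characterisation $\|\Delta_q u_0\|_{L^2}\leq d_q 2^{-qs}\|u_0\|_{\cB^s}$, yields sharp dyadic bounds
\begin{equation*}
\|u_q\|_{L^\infty_t(L^2)}+\|\nabla u_q\|_{L^2_t(L^2)}\lesssim d_q 2^{-qs}\|u_0\|_{\cB^s},\quad \|\nabla u_q\|_{L^\infty_t(L^2)}\lesssim d_q 2^{q(1-s)}\|u_0\|_{\cB^s}\exp\bigl(C\|v_0\|_{L^2}^4\bigr),
\end{equation*}
together with their $\sigma^{1/2}$-weighted counterparts, for instance $\|\sigma^{1/2}\nabla u_q\|_{L^\infty_t(L^2)}+\|\sigma^{1/2}(u_{q,t},\nabla^2 u_q,\nabla\Pi_q)\|_{L^2_t(L^2)}\lesssim d_q 2^{-qs}\cC(v_0,u_0,s)$.

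For~\eqref{S2eq18} I would then estimate $\|\Delta_j u\|_{L^\infty_t(L^2)}$ through the Bernstein-type splitting
\begin{equation*}
\|\Delta_j u\|_{L^\infty_t(L^2)}\leq \sum_{q>j}\|u_q\|_{L^\infty_t(L^2)}+2^{-j}\sum_{q\leq j}\|\nabla u_q\|_{L^\infty_t(L^2)},
\end{equation*}
substitute the block bounds, and apply a discrete convolution argument (Young's inequality in $\ell^1$ with the geometric tails $(2^{-qs})_q$ and $(2^{q(1-s)})_q$) to extract $\|\Delta_j u\|_{L^\infty_t(L^2)}\lesssim d_j 2^{-js}\|u_0\|_{\cB^s}\exp(C\|v_0\|_{L^2}^4)$. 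The same treatment of $\|\nabla u\|_{\wt L^2_t(\cB^s)}$ completes~\eqref{S2eq18}, and running the identical machinery on the $\sigma^{1/2}$-weighted blocks delivers~\eqref{S2eq19}.

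The most delicate point is~\eqref{S2eq20}, which demands one full derivative higher than $\cB^s$ with the critical weight $\sigma^{(1-s)/2}$. The crucial tool is the pointwise algebraic identity $\sigma^{(1-s)/2}\|a\|_{L^2}=\|a\|_{L^2}^s\bigl(\sigma^{1/2}\|a\|_{L^2}\bigr)^{1-s}$. Applied with $a=\Delta_j u(t)$ followed by $\sup_t$, and combined with the reverse Bernstein $\|\sigma^{1/2}\Delta_j u\|_{L^\infty_t(L^2)}\lesssim 2^{-j}\|\sigma^{1/2}\nabla\Delta_j u\|_{L^\infty_t(L^2)}$, the bounds from~\eqref{S2eq18} and~\eqref{S2eq19} combine as
\begin{equation*}
\|\sigma^{(1-s)/2}\Delta_j u\|_{L^\infty_t(L^2)}\lesssim (d_j 2^{-js})^s(d_j 2^{-j(1+s)})^{1-s}\cC=d_j\,2^{-j}\,\cC,
\end{equation*}
so that $\sum_j 2^j\cdot d_j 2^{-j}\cC\lesssim\cC$ delivers the $\wt L^\infty_t(\cB^1)$ bound. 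The $L^2_t(L^2)$ part is obtained from the same identity integrated in $t$ (i.e. H\"older with exponents $1/s$ and $1/(1-s)$) applied block-by-block to $a_q\in\{u_{q,t},\nabla^2 u_q,\nabla\Pi_q\}$: the exponents pair off as $(d_q 2^{q(1-s)})^s(d_q 2^{-qs})^{1-s}=d_q$, which is summable over $q$.

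The main obstacle is arranging the interpolation in~\eqref{S2eq20} so that the dyadic powers cancel exactly and the $\ell^1$-summability in $q$ is preserved. The detour through block-by-block solutions is unavoidable because the density is only bounded: one cannot simply apply $\Delta_j$ to the momentum equation of~\eqref{inNS}, as the commutator with $\rho$ is not controllable. This is precisely the reason, stated in the discussion around~\eqref{S1eq16}, for working in the $\ell^1$-indexed Besov space $\cB^s=\dot B^s_{2,1}$ rather than the Sobolev space $\dH^s$.
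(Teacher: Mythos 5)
Your proposal is correct and follows essentially the same route as the paper: the same dyadic splitting of the initial datum into blocks $(u_q,\nabla\Pi_q)$, the same block-by-block use of the energy estimates \eqref{S2eq2}, \eqref{S2eq3}, \eqref{S2eq10}, the same Bernstein-type resummation for \eqref{S2eq18}--\eqref{S2eq19}, and the same $\sigma^{1-s}\le \sigma^{(1-s)}$-type H\"older/interpolation pairing for \eqref{S2eq20}. Your only deviation is cosmetic: you perform the interpolation for the $\wt{L}^\infty_t(\cB^1)$ bound block-by-block with the $d_j$ factors, whereas the paper invokes the corresponding interpolation inequality between $\wt{L}^\infty_t(\cB^s)$ norms directly; the two are equivalent.
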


\begin{proof} Let $(u_q, \na\Pi_q)$ be determined by
\begin{equation}\label{S2eq20a}
 \left\{\begin{array}{l}
\displaystyle\rho \p_t u_q+\r v\cdot\na u_q-\Delta u_q+\nabla \Pi_q=0,\\
\displaystyle \div u_q=0,\\
\displaystyle   u_q|_{t=0}=\D_q u_0.
\end{array}\right.
\end{equation}
Then by the uniqueness of solution to \eqref{S2eq1} with $f=g=0,$ we have
\beno\label{S2eq21}
u=\sum_{q\in\Z} u_q\andf \na\Pi=\sum_{q\in\Z}\na\Pi_q.
\eeno
While it follows from Definition \ref{S0def1}, Bernstein type inequality
(see \cite{BCD}) and \eqref{S2eq2} that
\beq\label{S2eq21a}
\|u_q\|_{L^\infty_t(L^2)}^2+\|\na u_q\|_{L^2_t(L^2)}^2
\leq C\|\D_q u_0\|_{L^2}^2
\lesssim d_q^2 2^{-2qs}\|u_0\|_{\cB^s}^2,
\eeq
 and
\beq\label{S2eq22}
\begin{split}
\|\na u_q\|_{L^\infty_t(L^2)}^2
+\bigl\|(\p_tu_q,\na^2 u_q,\na\Pi_q)\bigr\|_{L^2_t(L^2)}^2
\leq & C\|\D_q u_0\|_{\dH^1}^2\exp\bigl(C\|v_0\|_{L^2}^4\bigr)
\\
\lesssim
&d_q^2 2^{2q(1-s)}\|u_0\|_{\cB^s}^2\exp\bigl(C\|v_0\|_{L^2}^4\bigr).
\end{split} \eeq
Exploiting the idea used in the proof of the characterization of Besov
space with positive index (see Lemma 2.88 of \cite{BCD} for
instance) gives for any $j\in\Z$ that \beno
\begin{split}
\|\D_j u\|_{L^\infty_t(L^2)}+\|\D_j\na u\|_{L^2_t(L^2)}\leq
&\sum_{q>j}\left(\|\D_j u_q\|_{L^\infty_t(L^2)}
+\|\D_j\na u_q\|_{L^2_t(L^2)}\right)
\\
&+2^{-j}\sum_{q\leq j}
\left(\|\D_j\na u_q\|_{L^\infty_t(L^2)}
+\|\D_j\na^2 u_q\|_{L^2_t(L^2)}\right)
\\
\lesssim &\sum_{q>j}
\left(\| u_q\|_{L^\infty_t(L^2)}+\|\na u_q\|_{L^2_t(L^2)}\right)
\\
&+2^{-j}\sum_{q\leq j}
\left(\|\na u_q\|_{L^\infty_t(L^2)}+\|\na^2 u_q\|_{L^2_t(L^2)}\right)
\\
\lesssim &d_j
2^{-j s}\|u_0\|_{\cB^s}\exp\bigl(C\|v_0\|_{L^2}^4\bigr),
\end{split}
\eeno which together with Definition \ref{S0def1} ensures the Inequality \eqref{S2eq18}.

Similarly, we deduce from \eqref{S2eq3} that
\beq\label{S2eq23}\begin{split}
 \|\s^{\f12}\na u_q\|_{L^\infty_t(L^2)}^2
+\bigl\|\s^{\f12}(\p_tu_q,\na^2u_q,\na
\Pi_q)\bigr\|_{L^2_t(L^2)}^2
&\leq
C\|\D_q u_0\|_{L^2}^2\exp\bigl(C\|v_0\|_{L^2}^4\bigr)
\\
&\lesssim
d_q^22^{-2qs}\|u_0\|_{\cB^s}^2\exp\bigl(C\|v_0\|_{L^2}^4\bigr).\end{split}
 \eeq
 And it follows from \eqref{S2eq10} that
 \beq\label{S2eq23a} \begin{split}
 \|\s^{\f12}\na^2
u_q\|_{L^\infty_t(L^2)}^2+\|\s^{\f12}\na \p_tu_q\|_{L^2_t(L^2)}^2
&\leq \|\na\D_q u_0\|_{L^2}^2\exp\left(C\exp\bigl(C\|v_0\|_{L^2}^4\bigr)\right)\\
&\lesssim
d_q^22^{2q(1-s)}\cC^2(v_0,u_0,s).
\end{split} \eeq
Then exactly along the same line of the proof to the  Inequality
\eqref{S2eq18}, we achieve  the Inequality \eqref{S2eq19}.

Finally we deduce from  the following interpolation inequality in Besov spaces:
  \beno \|\s^{\f{1-s}2} u\|_{\wt{L}^\infty_t(\cB^1)}\lesssim
\|u\|_{\wt{L}^\infty_t(\cB^s)}^{s}\|\s^{\f12}\na
u\|_{\wt{L}^\infty_t(\cB^s)}^{1-s}, \eeno
 and
\eqref{S2eq18} and \eqref{S2eq19}, that \beq\label{S2eq24}
\|\s^{\f{1-s}2} u\|_{\wt{L}^\infty_t(\cB^1)} \leq
\cC(v_0,u_0,s).
\eeq
Whereas in view of \eqref{S2eq22} and \eqref{S2eq23}, we have
\beno
\begin{split}
\int_{\R^+}\s(t)^{1-s}\|\p_tu_q\|_{L^2}^2\,\dt
\leq &
\Bigl(\int_{\R^+}\s(t)\|\p_tu_q(t)\|_{L^2}^2\,\dt\Bigr)^{1-s}
\Bigl(\int_{\R^+}\|\p_tu_q(t)\|_{L^2}^2\,\dt\Bigr)^s
\\
\lesssim &
d_q^2\cC^2(v_0,u_0,s),
\end{split}
\eeno
which implies \beno
\begin{split}
\Bigl(\int_{\R^+}\s(t)^{1-s}\|u_t\|_{L^2}^2\,\dt\Bigr)^{\f12}\leq &
\sum_{q\in\Z}
\Bigl(\int_{\R^+}\s(t)^{1-s}\|\p_tu_q\|_{L^2}^2\,\dt\Bigr)^{\f12}
\lesssim \cC(v_0,u_0,s).
\end{split}
\eeno The same estimate holds for
$\bigl\|\s^{\f{1-s}2}(\na^2 u,\na\Pi)\bigr\|_{L^2_t(L^2)}.$
This together with \eqref{S2eq24} ensures \eqref{S2eq20}. We thus
complete the proof of the proposition.
\end{proof}

Let us remark that the estimate of $\|\s^{\f{1-s}2}u_t\|_{L^2_t(L^2)}$ presented in Proposition \ref{S2prop1} will be crucial for us to deal with the energy
estimate of $u_t.$

\begin{col}\label{S2col2}
{\sl Under the same assumptions of Proposition \ref{S2prop1}, one
has  \beq \label{S2eq25}
\begin{split}
\bigl\|\s^{1-\f{s}2}(u_t,\na^2 u,\na
\Pi)\bigr\|_{L^\infty_t(L^2)}
+\|\s^{1-\f{s}2}\na u_t\|_{L^2_t(L^2)}
\leq \cC(v_0,u_0,s).
\end{split}
\eeq
}
\end{col}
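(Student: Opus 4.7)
The plan is to apply Lemma \ref{S2lem2} directly to $u$ with the weight $\tau(t)=\sigma(t)^{2-s}$, chosen so that $\tau^{1/2}=\sigma^{1-s/2}$ is exactly the weight appearing on the left-hand side of the corollary. A direct computation gives $\tau^{1/2}\sigma^{-1/2}=\sigma^{(1-s)/2}$ and, since $\sigma'(t)\leq 1$ with $\sigma'(t)=0$ for $t\geq 1$, one has $\tau'(t)+\tau(t)\sigma(t)^{-1}\leq (3-s)\sigma(t)^{1-s}$. With $f=g=0$, Lemma \ref{S2lem2} then specialises to
\begin{equation*}
\|\s^{1-s/2}u_t\|_{L^\infty_t(L^2)}^2+\|\s^{1-s/2}\na u_t\|_{L^2_t(L^2)}^2
\leq e^{C\exp(C\|v_0\|_{L^2}^4)}\Bigl(\|\s^{(1-s)/2}\na u\|_{L^\infty_t(L^2)}^2+\int_0^t\s^{1-s}\|u_t\|_{L^2}^2\,dt'\Bigr).
\end{equation*}

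Both terms on the right are already furnished by Proposition \ref{S2prop1}. The second is exactly $\|\s^{(1-s)/2}u_t\|_{L^2_t(L^2)}^2\leq \cC(v_0,u_0,s)^2$ by \eqref{S2eq20}. For the first, Bernstein's inequality together with the continuous embedding $\cB^1\hookrightarrow\dH^1$ and the definition of the Chemin--Lerner space give
\[
\|\s^{(1-s)/2}\na u\|_{L^\infty_t(L^2)}\lesssim \sum_{j\in\Z}2^j\|\s^{(1-s)/2}\Delta_j u\|_{L^\infty_t(L^2)}=\|\s^{(1-s)/2}u\|_{\wt L^\infty_t(\cB^1)}\leq \cC(v_0,u_0,s),
\]
again by \eqref{S2eq20}. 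This yields the $u_t$ and $\na u_t$ parts of the claimed bound.

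To upgrade the estimate to $\na^2 u$ and $\na\Pi$, the plan is to invoke the pointwise-in-time elliptic estimate \eqref{S2eq7} with $f=g=0$, which reads
\[
\|\na^2 u(t)\|_{L^2}+\|\na\Pi(t)\|_{L^2}\leq C\bigl(\|\sqrt{\r}\,u_t(t)\|_{L^2}+\|v(t)\|_{L^4}^2\|\na u(t)\|_{L^2}\bigr).
\]
Multiplying by $\s(t)^{1-s/2}$ and taking $L^\infty_t$, the first term on the right is controlled by the step above. For the convection term we exploit the algebraic splitting $\s^{1-s/2}=\s^{1/2}\cdot\s^{(1-s)/2}$ together with the two-dimensional Ladyzhenskaya inequality $\|v\|_{L^4}^2\lesssim\|v\|_{L^2}\|\na v\|_{L^2}$:
\[
\s^{1-s/2}\|v\|_{L^4}^2\|\na u\|_{L^2}\lesssim \|v\|_{L^2}\cdot\bigl(\s^{1/2}\|\na v\|_{L^2}\bigr)\cdot\bigl(\s^{(1-s)/2}\|\na u\|_{L^2}\bigr),
\]
and each factor is uniformly controlled in $t$ by \eqref{S2eq4}, Corollary \ref{S2col1} and the bound just established, respectively.

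The argument is essentially a bookkeeping exercise on time weights; the one subtle point to watch is precisely this last redistribution. The half-power $\s^{1/2}$ must be extracted from $\s^{1-s/2}$ in order to absorb the $t^{-1/2}$ singularity of $\|\na v(t)\|_{L^2}$ as $t\to 0^+$, which leaves exactly the weaker weight $\s^{(1-s)/2}$ for $\na u$ -- and this weaker weight is the one already controlled by the Chemin--Lerner estimate in \eqref{S2eq20}. No new energy identity is required beyond Lemma \ref{S2lem2} and Proposition \ref{S2prop1}.
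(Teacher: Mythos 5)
Your proof is correct and follows essentially the same route as the paper: apply Lemma \ref{S2lem2} with $f=0$ and $\tau=\s^{2-s}$, bound the resulting right-hand side by $\|\s^{\f{1-s}2}\na u\|_{L^\infty_t(L^2)}$ and $\|\s^{\f{1-s}2}u_t\|_{L^2_t(L^2)}$ from Proposition \ref{S2prop1}, and then recover $\na^2u$ and $\na\Pi$ from \eqref{S2eq7} via the splitting $\s^{1-\f{s}2}=\s^{\f12}\cdot\s^{\f{1-s}2}$ together with $\|v\|_{L^4}^2\lesssim\|v\|_{L^2}\|\na v\|_{L^2}$ and Corollary \ref{S2col1}. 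The only difference is that you spell out the Bernstein/$\cB^1\hookrightarrow\dH^1$ step to extract $\|\s^{\f{1-s}2}\na u\|_{L^\infty_t(L^2)}$ from \eqref{S2eq20}, which the paper leaves implicit.
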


\begin{proof} Taking $f=0$ and $\tau(t)=\s(t)^{2-s}$ in Lemma \ref{S2lem2} gives rise to
 \beno
\begin{split}
\bigl\|&\s^{1-\f{s}2} u_t\|_{L^\infty_t(L^2)}^2+\|\s^{1-\f{s}2}\na
u_t\|_{L^2_t(L^2)}^2\\
&\qquad\leq
 \exp\bigl(C\exp\bigl(C \|v_0\|_{L^2}^4\bigr)\bigr)
\Bigl(\|\s^{\f{1-s}2}\na
u\|_{L^\infty_t(L^2)}^2
+\|\s^{\f{1-s}2} u_t\|_{L^2_t(L^2)}^2\Bigr) ,
\end{split}
\eeno which together with  Proposition
\ref{S2prop1} ensures that
 \beq \label{S2eq27}
\|\s^{1-\f{s}2}u_t\|_{L^\infty_t(L^2)}+\|\s^{1-\f{s}2}\na
u_t\|_{L^2_t(L^2)}\leq
\cC(v_0,u_0,s).
\eeq As a result, by virtue of  \eqref{S2eq7} and Corollary \ref{S2col1},
Proposition \ref{S2prop1}, we obtain \beno
\begin{split}
\bigl\|\s^{1-\f{s}2}&(\na^2 u,\na\Pi)\bigr\|_{L^\infty_t(L^2)}
\lesssim
\|\s^{1-\f{s}2}u_t\|_{L^\infty_t(L^2)}\\
&+\|v\|_{L^\infty_t(L^2)}\|\s^{\f12}\na
v\|_{L^\infty_t(L^2)}\|\s^{\f{1-s}2}\na u\|_{L^\infty_t(L^2)}
\lesssim  \cC(v_0,u_0,s).
\end{split}
\eeno By summing up \eqref{S2eq27} and the above inequality, we
achieve \eqref{S2eq25}.
\end{proof}

Thanks to \eqref{S2eq4}, Proposition \ref{S2prop1} and Corollary \ref{S2col2}, we conclude the proof of  Proposition \ref{S1prop1} by taking $u=v$ and $f=g=0$ in \eqref{S2eq1}.
Next let us present the proof of Corollary \ref{S1col1}.

\begin{proof}[Proof of Corollary \ref{S1col1}] We first deduce from the 2-D interpolation inequality \eqref{S2eq6}  that  for any $r\in [2,+\infty[$,
 \beno \begin{split}
 &\bigl\|\s^{\frac{1-s_0}{2}\left(1-\frac
2r\right)}v\bigr\|_{L^\infty_t(L^r)}\lesssim
 \| v\|_{L^\infty_t(L^2)}^{\f2r} \bigl\|\s^{\f{1-s_0}2}
 v\bigr\|_{L^\infty_t(L^\infty)}^{1-\f2r};\\
&\bigl\|\sigma ^{\frac{1-s_0}{2}}\nabla
v\bigr\|_{L^{\frac{2r}{r-2}}_t(L^r)}
\lesssim
\bigl\|\sigma ^{\frac{1-s_0}{2}}\nabla
v\bigr\|_{L^{\infty}_t(L^2)}^{\f2r}
\bigl\|\sigma ^{\frac{1-s_0}{2}}\nabla^2
v\bigr\|_{L^{2}_t(L^2)}^{1-\f2r};\\
 &\bigl\|\s^{\left(1-\f1r-\f{s_0}2\right)}\na v\bigr\|_{L^\infty_t(L^r)}
 \lesssim
 \bigl\|\s^{\f{1-s_0}2}\na v\bigr\|_{L^\infty_t(L^2)}^{\f2r} \bigl\|\s^{1-\f{s_0}2}\na^2
 v\bigr\|_{L^\infty_t(L^2)}^{1-\f2r};\\
&\bigl\|\s^{\left(1-\f1r-\f{s_0}2\right)}
v_t\bigr\|_{L^2_t(L^r)}\lesssim
 \bigl\|\s^{\f{1-s_0}2} v_t\bigr\|_{L^2_t(L^2)}^{\f2r} \bigl\|\s^{1-\f{s_0}2}\na
 v_t\bigr\|_{L^2_t(L^2)}^{1-\f2r};\\
&\bigl\|\s^{1-\f{s_0}2} v_t\bigr\|_{L^{\f{2r}{r-2}}_t(L^r)}\lesssim
 \bigl\|\s^{1-\f{s_0}2} v_t\bigr\|_{L^\infty_t(L^2)}^{\f2r} \bigl\|\s^{1-\f{s_0}2}\na
 v_t\bigr\|_{L^2_t(L^2)}^{1-\f2r},
 \end{split}
 \eeno
 from which and  Proposition \ref{S1prop1}, we infer
 \beq \label{S2eq29}
 \begin{split}
 &\bigl\|\sigma^{\frac{1-s_0}{2}}v\bigr\|_{L^\infty_t(L^\infty)}
 +\bigl\|\s^{\frac{1-s_0}{2}\left(1-\frac 2r\right)}v\bigr\|_{L^\infty_t(L^r)}
 +\bigl\|\sigma^{\frac{1-s_0}{2}}\nabla
v\bigr\|_{L^{\frac{2r}{r-2}}_t(L^r)}\\
&
+\bigl\|\s^{\left(1-\f1r-\f{s_0}2\right)}\na
v\bigr\|_{L^\infty_t(L^r)}
 +\bigl\|\s^{\left(1-\f1r-\f{s_0}2\right)}
v_t\bigr\|_{L^2_t(L^r)}
 +\bigl\|\s^{1-\f{s_0}2} v_t\bigr\|_{L^{\f{2r}{r-2}}_t(L^r)}\lesssim
\cC(v_0,s_0).
 \end{split}
 \eeq

While it follows from   the classical estimate of Stokes
operator, \eqref{S2eq6} and the 2-D interpolation inequality
$\|a\|_{L^{2r}}\lesssim \|a\|_{L^r}^{1-\f1r}\|\na a\|_{L^r}^{\f1r},$   that
\beno
\begin{split}
\bigl\| (\na^2 v,\na \pi)\bigr\|_{L^r}
&\lesssim
\|  v_t\|_{L^r}
+\|v\|_{L^{2r}}
\| \na v\|_{L^{2r}}
\\
&\lesssim
\|  v_t\|_{L^r}
+\|v\|_{L^2}^{\frac 1r}\|\nabla v\|_{L^2}^{1-\frac 1r}
\| \na v\|_{L^{r}}^{1-\frac{1}{r}}
\|\nabla^2 v\|_{L^r}^{\frac 1r},
\end{split}
\eeno
which implies
\beno
\begin{split}
\| (\na^2 v,\na \pi)\|_{L^r}
\lesssim
\| v_t\|_{L^r}
+\|v\|_{L^2}^{\frac{1}{r-1}}\|\nabla v\|_{L^2}
\| \na v\|_{L^{r}}  .
\end{split}
\eeno
We then deduce from \eqref{S2eq29}  that
\beq\label{S2eq34}
 \begin{split}
\bigl\|\s^{\left(1-\f1r-\f{s_0}2\right)}&(\na^2 v, \na
\pi)\bigr\|_{L^2_t(L^r)}
\lesssim \bigl\|\s^{\left(1-\f1r-\f{s_0}2\right)}
v_t\bigr\|_{L^2_t(L^r)}\\
&+
\|v\|_{L^\infty_t(L^2)}^{\frac{1}{r-1}}
\|\nabla v\|_{L^2_t(L^2)}
\bigl\|
\sigma^{\left( 1-\frac 1r-\frac{s_0}2\right)} \nabla v\bigr\|_{L^{\infty}_t(L^r)}
\lesssim \cC(v_0,s_0).
\end{split}
\eeq
Along the same line, it follows from  Corollary \ref{S2col1} and \eqref{S2eq29} that
\beno\label{S2eq33}
\begin{split}
\bigl\|\s^{1-\f{s_0}2}&(\na^2 v,\na
\pi)\bigr\|_{L^{\f{2r}{r-2}}_t(L^r)}
\lesssim \bigl\|\s^{1-\f{s_0}2}
v_t\bigr\|_{L^{\f{2r}{r-2}}_t(L^r)}\\
&+\| v\|_{L^\infty_t(L^2)}^{\frac{1}{r-1}}
\|\s^{\f{1}2}\nabla v\|_{L^\infty_t(L^2)}
\bigl\|\s^{\f{1-s_0}2}
\na v\bigr\|_{L^{\f{2r}{r-2}}_t(L^r)} \lesssim \cC(v_0,s_0).
\end{split}
 \eeno
Finally taking $r=\f2{1-s}$ in \eqref{S2eq34} and using the 2-D interpolation inequality
 \beno
 \|a\|_{L^\infty}\lesssim \|a\|_{L^{\f2{1-s}}}^s\|\na
 a\|_{L^{\f2{1-s}}}^{1-s}\lesssim \|a\|_{\dH^s}^s\|\na
 a\|_{L^{\f2{1-s}}}^{1-s}, \eeno
 and Proposition \ref{S1prop1}, we infer
 \beno
 \begin{split}
\bigl\| \sigma^{\frac{1-s_0}2} \nabla v\bigr\|_{L^{2}_t(L^\infty)}
\lesssim &\|\na v\|_{L^2_t(\dH^{s_0})}^{s_0}\bigl\|\s^{\f12}\na^2
v\bigr\|_{L^2_t(L^{\f2{1-s_0}})}^{1-s_0}
 \lesssim \cC(v_0,s_0).
 \end{split}
 \eeno
This ends the proof of Corollary \ref{S1col1}.
\end{proof}

\subsection{Time-weighted $H^1$ estimate of $D_t v$}\label{subs:Dv'}

The main result of this subsection is to perform  the time-weighted $\dot H^1$ estimate of $D_t v.$ To the end, let us first present the following lemma:

\begin{lem}\label{lem:Dv}
{\sl Under the assumptions of   Proposition \ref{S1prop1},
for any  $ r\in [2,\infty[,$ we have
\begin{equation}\label{S4eq4p}
\begin{split}
& \| \sigma^{1 -\frac{s_0}2}
 (D_t v, v\otimes \nabla v ) \|_{L^\infty_t(L^2)}
 +\| \sigma ^{ \frac {3-s_0}2}
 (\nabla v\otimes \nabla v, v\otimes\nabla^2 v ) \|_{L^\infty_t(L^2)}
\\
&+
 \bigl\| \sigma^{1 -\frac{s_0}2} \bigl(\nabla v_t, \nabla D_t v, D_t\nabla v,
\nabla v\otimes\nabla v, v\otimes\nabla^2 v\bigr) \bigr\|_{L^2_t(L^2)}
\\
&+
 \bigl\|\sigma^{\frac{3-s_0}2}
 \bigl(v\otimes(\nabla v_t, \nabla D_t v), \nabla v\otimes\nabla^2 v,
\nabla v\otimes\nabla \pi, D_t(v\otimes\nabla v)\bigr ) \bigr\|_{L^2_t(L^2)}\\
&+\bigl\|\sigma^{\left(1-\frac 1r-\frac{s_0}2\right)}
 \bigl(v_t, \nabla^2 v, \nabla \pi, D_t v, v\otimes\nabla v\bigr)
\bigr\|_{L^2_t(L^r)}
\leq \cC(v_0,s_0).
\end{split}
\end{equation}  }
\end{lem}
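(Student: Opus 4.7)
The plan is to reduce every quantity on the left-hand side of \eqref{S4eq4p} to the collection of estimates already given by Proposition \ref{S1prop1} (encoded in $A_{01}$) and Corollary \ref{S1col1}, using only the pointwise identities
\[
D_t v=v_t+v\cdot\nabla v,\qquad \nabla D_tv=\nabla v_t+\nabla v\cdot\nabla v+v\cdot\nabla^2v,\qquad D_t\nabla v=\nabla v_t+v\cdot\nabla^2 v,
\]
together with Hölder's inequality and two-dimensional interpolation. No additional energy estimate is required; the lemma is an algebraic consequence of bounds already at our disposal.

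For each product on the left, I would split the weight $\sigma^\alpha=\sigma^{\alpha_1}\sigma^{\alpha_2}$ with $\alpha_j\geq 0$ so that each factor lands in an already controlled norm. Schematically, the derivative-heaviest factor (carrying $v_t$, $\nabla v_t$, $\nabla^2v$ or $\nabla\pi$) is placed in $L^\infty_t(L^2)$ or $L^2_t(L^2)$ via $A_{01}$ with one of the weights $\sigma^{(1-s_0)/2}$, $\sigma^{1-s_0/2}$ or $\sigma^{1-1/r-s_0/2}$, while the velocity-type factor $v$ or $\nabla v$ is placed in the mixed Lebesgue norm $L^\infty_t(L^\infty)$, $L^\infty_t(L^r)$ or $L^{2r/(r-2)}_t(L^r)$ supplied by Corollary \ref{S1col1}. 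A representative instance is
\[
\bigl\|\sigma^{1-s_0/2}(v\cdot\nabla v)\bigr\|_{L^\infty_t(L^2)}\leq \bigl\|\sigma^{(1-s_0)/2}v\bigr\|_{L^\infty_t(L^\infty)}\,\bigl\|\sigma^{1/2}\nabla v\bigr\|_{L^\infty_t(L^2)}\leq \cC(v_0,s_0),
\]
where the second factor is controlled by Corollary \ref{S2col1}. The cubic pieces arising for example from $D_t(v\otimes\nabla v)=v_t\otimes\nabla v+v\otimes\nabla v_t+(v\cdot\nabla v)\otimes\nabla v+v\otimes(\nabla v\cdot\nabla v)+v\otimes(v\cdot\nabla^2 v)$ would be treated by a three-fold Hölder, placing $v$ in $L^\infty$ and the two gradient factors in $L^4$ via the $r=4$ case of Corollary \ref{S1col1}.

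The $L^2_t(L^r)$ estimate in the last line of \eqref{S4eq4p} is immediate for the linear terms $v_t,\nabla^2v,\nabla\pi$ from Corollary \ref{S1col1}, and extends to $D_t v$ and $v\otimes\nabla v$ upon multiplying $\|\sigma^{(1-s_0)/2}v\|_{L^\infty_t(L^\infty)}$ with the corresponding $L^2_t(L^r)$ bound on $\nabla v$ given there. The main obstacle is purely combinatorial: finding the correct weight split for each of the products in \eqref{S4eq4p}. Since $\sigma\leq 1$, overshooting on any single factor is free, so the only genuine constraint is that the derivative-heaviest factor retains enough weight to be covered by $A_{01}$; the free parameter $r\in\,]2,\infty[$ in Corollary \ref{S1col1} then furnishes precisely the missing flexibility in the time integrability of the velocity-type factors, and each of the roughly ten products in \eqref{S4eq4p} is dispatched by a single Hölder estimate.
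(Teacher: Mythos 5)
Your proposal is correct and follows essentially the same route as the paper: the paper's proof of this lemma is likewise a sequence of H\"older and 2-D interpolation estimates that distribute the weight $\sigma$ between a derivative-heavy factor controlled by $A_{01}$ (Proposition \ref{S1prop1}, Corollary \ref{S1col1}) and a velocity-type factor controlled by the weighted bounds of Corollaries \ref{S2col1} and \ref{S2col0}, with no new energy estimate. The only cosmetic difference is in the last line of \eqref{S4eq4p}, where the paper interpolates $D_tv$ and $v\otimes\nabla v$ between their $L^2_t(L^2)$ norms and those of their gradients, while you build the $L^2_t(L^r)$ bound directly by a product estimate; both work.
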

\begin{proof}
By virtue  of \eqref{S2eq6} and
$$
\|a\|_{L^\infty}\leq C\|a\|_{L^2}^{\frac 12}\|\nabla^2 a\|_{L^2}^{\frac 12},
$$
 we deduce from \eqref{S2eq5}, \eqref{S2eq11}, \eqref{S2eq11.1} that for any $t\in \R^+$ and $r\in [2,\infty[$
 \beq\label{bound:v}\begin{split}
& \|v\|_{L^4_t(L^4)}
 +\|\sigma^{ \frac 12-\frac 1r }v\|_{L^\infty_t(L^r)}
+ \|\sigma^{\frac 12} v\|_{L^\infty_t(L^\infty)}+\|\sigma^{\frac 12}\nabla v\|_{L^4_t(L^4)}
\\
&+\|\sigma^{1-\frac 1r}\nabla v\|_{L^\infty_t(L^r)}+\|\sigma(v_t, \nabla^2v,  v\otimes\nabla v)\|_{L^\infty_t(L^2)}
 \leq \exp\left(C\exp\bigl(C\|v_0\|_{L^2}^4\bigr)\right) .
\end{split}\eeq
It is easy to observe that
\begin{equation*}
\begin{split}
 \bigl\| \sigma^{1 -\frac {s_0}2}
 (D_t v, v\otimes\nabla v ) \bigr\|_{L^\infty_t(L^2)}
& \leq
 \| \sigma^{1 -\frac{s_0}2}v_t \|_{L^\infty_t(L^2)}
+ \| \sigma^{\frac 14}v \|_{L^\infty_t(L^4)}
 \| \sigma^{ \frac 34 -\frac {s_0}2 }\nabla v \|_{L^\infty_t(L^4)},
\end{split}
\end{equation*}
and
\begin{align*}
 \bigl\| \sigma ^{ \frac {3-s_0}2}
 (\nabla v\otimes \nabla v, v\otimes\nabla^2 v ) \bigr\|_{L^\infty_t(L^2)}
\leq &
 \| \sigma ^{\frac 34 }\nabla v  \|_{L^\infty_t(L^4)}
 \|\sigma^{ \frac 34-\frac {s_0}2 }\nabla v\|_{L^\infty_t(L^4)}\\
& + \| \sigma^{\frac 12} v  \|_{L^\infty_t(L^\infty)}
 \| \sigma ^{1-\frac{s_0}2} \nabla^2 v \|_{L^\infty_t(L^2)}.
\end{align*}
Hence we deduce from Proposition \ref{S1prop1},  Corollary \ref{S1col1} and \eqref{bound:v}  that
\beno
\| \sigma^{1 -\frac {s_0}2}
 (D_t v, v\otimes\nabla v ) \|_{L^\infty_t(L^2)}+\bigl\| \sigma ^{ \frac {3-s_0}2}
 (\nabla v\otimes \nabla v, v\otimes\nabla^2 v ) \bigr\|_{L^\infty_t(L^2)}\leq \cC(v_0,s_0).
 \eeno
Exactly along the same line, we have
\begin{equation*}
\begin{split}
 \bigl\| \sigma^{ \frac {1-s_0}2}  (D_t v, v\otimes\nabla v) \bigr\|_{L^2_t(L^2)}
&\leq
 \| \sigma^{ \frac {1-s_0 }2}  v_t \|_{L^2_t(L^2)}
+
 \|  v \|_{L^4_t(L^4)}
 \| \sigma^{ \frac {1-s_0 }2}\nabla v\|_{L^4_t(L^4)}\leq
\cC(v_0,s_0),
 \end{split}
\end{equation*}
and
\begin{equation*}
\begin{split}
 \bigl\| &\sigma ^{1 -\frac{s_0}2}\bigl(\nabla D_t v, D_t \nabla v,
\nabla v\otimes\nabla v, v\otimes\nabla^2 v\bigr)\bigr\|_{L^2_t(L^2)}
\leq
 \| \sigma^{1 -\frac {s_0}2}\nabla v_t \|_{L^2_t(L^2)}
\\
&\quad+ \| \sigma^{\frac 12}\nabla v \|_{L^4_t(L^4)}
 \| \sigma^{ \frac {1-s_0}2}\nabla v \|_{L^4_t(L^4)}+ \| \sigma^{\frac 12}  v \|_{L^\infty_t(L^\infty)}
 \| \sigma^{\frac {1-s_0}2}\nabla^2 v\|_{L^2_t(L^2)}\leq
\cC(v_0,s_0),
\end{split}
\end{equation*}
from which and \eqref{S2eq6}, we infer   for any $r\in [2,\infty[$,
\begin{equation*}
\begin{split}
 \bigl\| \sigma ^{\left(1-\frac 1r -\frac{s_0}2\right)}
\bigl( D_t v,  v\otimes\nabla v \bigr)\bigr\|_{L^2_t(L^r)}
\leq &
 C\| \sigma^{ \frac {1-s_0}2}
  (D_t v, v\otimes\nabla v) \|_{L^2_t(L^2)}^{\frac 2r}\\
  &\times
  \| \sigma^{1-\frac{s_0}2}
  (\nabla D_t v, \nabla(v\otimes\nabla v)) \|_{L^2_t(L^2)}  ^{1-\frac 2r}
  \leq
\cC(v_0,s_0),
\end{split}
\end{equation*}
and
\begin{align*}
\bigl\|\sigma^{ \frac {3-s_0}2}
\bigl( v\otimes(\nabla v_t, \nabla D_t v)\bigr)\bigr\|_{L^2_t(L^2)}
   \leq
\|\sigma ^{ \frac{1 }{2}} v\|_{L^\infty_t(L^\infty)}
\|\sigma ^{1-\frac{s_0}{2}}
(\nabla v_t, D_t\nabla v)\|_{L^2_t(L^2)}
\leq \cC(v_0,s_0).
\end{align*}
Moreover, we deduce from  Proposition \ref{S2prop1} and Corollary \ref{S2col1} that
\begin{equation*}\label{bound:FD}
\begin{split}
\bigl\|\sigma^{ \frac {3-s_0}2}
\bigl(\nabla v\otimes\nabla^2 v,
   \nabla v\otimes\nabla \pi\bigr)\bigr\|_{L^2_t(L^2)}
\leq &
 \| \sigma^{\frac 12} \nabla v\|_{L^4_t(L^4)}
\|\sigma^{1-\frac{s_0}2} (\nabla^2 v,\nabla \pi)\|_{L^4_t(L^4)}
\leq
\cC(v_0,s_0),
\end{split}
\end{equation*}
and
\begin{equation*}\label{bound:FD}
\begin{split}
\bigl\|\sigma ^{ \frac {3-s_0}2}
 D_t(v\otimes\nabla v)\bigr\|_{L^2_t(L^2)}
 \leq &
 \| \sigma D_t v \|_{L^\infty_t(L^2)}
 \|\sigma ^{\frac {1-s_0}2}\nabla v\|_{L^2_t(L^\infty)}\\
&+
 \| \sigma ^{\frac 12}  v \|_{L^\infty_t(L^\infty)}
 \|\sigma ^{1-\frac {s_0}2}D_t\nabla v\|_{L^2_t(L^2)}
\leq
 \cC(v_0,s_0).
\end{split}
\end{equation*}
This completes the proof of the Lemma.
\end{proof}

Before proceeding the time-weighted $\dH^1$ energy estimate, let us first present the proof of Lemma \ref{S4lem01}.

\begin{proof}[Proof of Lemma \ref{S4lem01}]
By taking $L^2(\R^2)$ inner product of \eqref{S8eq1} with $D_t \rw,$ one has
\begin{equation*}\label{energy:v1''}
\begin{split}
\frac 12\frac{d}{dt}\|\sqrt{\rho}  D_t \rw(t)\|_{L^2}^2
+\|\nabla D_t \rw\|_{L^2}^2
=\int_{\R^2} ({\rm F}- \nabla\rm{q})  |   D_t \rw\,\dx.
\end{split}
\end{equation*}
Multiplying the above equality   by $\sigma(t)^{2-s}$ and then integrating the resulting equality over $[0,t]$ leads to
\begin{equation}\label{energy:Dv1}
\begin{split}
&\frac 12\|\sigma^{1-\frac{s}2}\sqrt{\rho}D_t \rw(t)\|_{L^2}^2
+ \|\sigma^{1-\frac{s}2} \nabla D_t \rw\|_{L^2_t(L^2)}^2
\\
&=
\frac{2-s}{2}\|\sigma^{\frac {1-s}2}
\sqrt{\rho}D_t \rw\|_{L^2_t(L^2)}^2+\int^t_0 \int_{\R^2}\sigma^{2-s}( {\rm F}- \nabla \rm{q}) |  D_t \rw\,\dx\,\dt'.
\end{split}
\end{equation}
It is easy to observe that
\begin{align*}\label{Dv1:F1}
\bigl|\int^t_0 \int_{\R^2}\sigma^{2-s} {\rm F} | D_t \rw\,\dx\,\dt'\bigr|
&\leq
\|\sigma^{\frac{3-s}{2}}{\rm F}\|_{L^2_t(L^2)}^2
+\|\sigma^{\frac{1-s}{2}}D_t \rw\|_{L^2_t(L^2)}^2.
\end{align*}
While due to
$\div D_t \rw=\div \frak{a},$  by virtue of \eqref{S8eq1}, we write
\begin{align*}
-\int^t_0 &\int_{\R^2}\sigma^{2-s} \nabla {\rm q} | D_t \rw\,\dx\,\dt'=
 \int^t_0\int_{\R^2} \sigma^{2-s}
 (\rho D_t^2\rw-\Delta D_t \rw -{\rm F})
  |  \frak{a}\,\dx\,\dt'.
\end{align*}
As a result, for any $\e>0,$ we obtain
\beno
\begin{split}
\bigl|\int^t_0\int_{\R^2}\sigma^{2-s}
 \nabla {\rm q} |  D_t\rw\,\dx\,\dt'&\bigr|
 \leq
\frac 12 \Bigl(\|\sigma^{1-\frac{s}{2}} \nabla D_t \rw\|_{L^2_t(L^2)}^2
+\e\|\sigma^{\frac{3-s}{2}} \sqrt{\r} D_t^2\rw\|_{L^2_t(L^2)}^2\Bigr)\\
&+C_\e\Bigl(\|\sigma^{ \frac{1-s}{2}}  \frak{a}\|_{L^2_t(L^2)}^2
+     \|\sigma^{1-\frac{s}{2}}\nabla \frak{a}\|_{L^2_t(L^2)}^2
+\|\sigma^{\frac{3-s}{2}}{\rm F}\|_{L^2_t(L^2)}^2\Bigr).
\end{split}\eeno
Inserting the above inequalities into  \eqref{energy:Dv1} yields
\beq
\label{S8eq3}
\begin{split}
&\|\sigma^{1-\frac{s}2}\sqrt{\rho}D_t \rw(t)\|_{L^2}^2
+ \|\sigma^{1-\frac{s}2} \nabla D_t \rw\|_{L^2_t(L^2)}^2\leq C\|\sigma^{\frac {1-s}2}
\sqrt{\rho}D_t \rw\|_{L^2_t(L^2)}^2\\
&\ +\e\|\sigma^{\frac{3-s}{2}} \sqrt{\r} D_t^2\rw\|_{L^2_t(L^2)}^2+C_\e\Bigl(\|\sigma^{ \frac{1-s}{2}}  \frak{a}\|_{L^2_t(L^2)}^2
+     \|\sigma^{1-\frac{s}{2}}\nabla \frak{a}\|_{L^2_t(L^2)}^2
+\|\sigma^{\frac{3-s}{2}}{\rm F}\|_{L^2_t(L^2)}^2\Bigr).
\end{split}\eeq

On the other hand,  due to $\dive v=0,$ one has
\begin{align*}
\int_{\R^2}-\Delta D_t \rw |  D_t^2 \rw\,\dx
&=\frac12\frac{d}{dt}\int_{\R^2}|\nabla D_t \rw|^2\,\dx
+\int_{\R^2}\nabla D_t \rw:(\nabla v_\al\p_\al D_t \rw)\,\dx,
\end{align*}
and
\begin{align*}
|\int_{\R^2} \nabla D_t {\rw}:(\nabla v_\al\p_\al D_t\rw)\,\dx|
&\leq \|\nabla v\|_{L^2}\|\nabla D_t \rw\|_{L^4}^2\\
&\leq C\|\nabla v\|_{L^2}\|\nabla D_t\rw\|_{L^2}\|\nabla^2 D_t \rw\|_{L^2},
\end{align*}
so that we get, by taking the $L^2(\R^2)$ inner product of \eqref{S8eq1} with $D_t^2 \rw,$ that
\beno
\begin{split}
\|\sqrt{\rho}D^2_t \rw\|_{L^2}^2
+\frac 12\frac{d}{dt}\|\nabla D_t \rw\|_{L^2}^2=&\int_{\R^2} ({\rm F}-\nabla {\rm{q}}) | D^2_t \rw\,\dx+ C\|\nabla v\|_{L^2}\|\nabla D_t\rw\|_{L^2}\|\nabla^2 D_t \rw\|_{L^2}.
\end{split} \eeno
For any $\eta>0$, multiplying the above equality   by $\sigma(t)^{3-s}$ and then integrating the resulting equality  over $[0,t]$ leads to
\begin{equation}\label{ee:D2v1}
\begin{split}
\frac 12\bigl\|\sigma^{\frac{3-s}2}&\nabla D_t \rw(t)\bigr\|_{L^2}^2
+\bigl\| \sigma^{\frac{3-s}2} \sqrt\rho D^2_t \rw\bigr\|_{L^2_t(L^2)}^2
\leq
\frac {3-s}{2}
\bigl\|\sigma ^{1-\frac{s}2}
\nabla D_t \rw\bigr\|_{L^2_t(L^2)}^2\\
&+\frac 14 \| \sigma^{\frac{3-s}2}\sqrt{\rho}  D^2_t \rw\|_{L^2_t(L^2)}
 +C
 \| \sigma^{\frac{3-s}2} {\rm F}\|_{L^2_t(L^2)}^2
 +\eta\|\sigma^{\frac{3-s}{2}}\nabla^2 D_t \rw\|_{L^2}^2\\
&+C_\eta\int^t_0
\|\nabla v\|_{L^2}^2\bigl\| \sigma^{\frac{3-s}2} \nabla D_t \rw\bigr\|_{L^2}^2\,\dt'
-\int^t_0\int_{\R^2} \sigma ^{3-s}
\nabla {\rm{q}} |  D_t^2 \rw \,\dx\,\dt'.
\end{split}
\end{equation}
By using integration by parts and the condition:  $\dive D_t^2\rw=\dive \frak{b},$ we get that
\beq \label{D2v1:pil} \begin{split}
\bigl|\int^t_0\int_{\R^2} \sigma ^{3-s}
\nabla {\rm q} |  D_t^2 \rw \,\dx\,\dt' \bigr|=&\bigl|\int^t_0\int_{\R^2} \sigma ^{3-s}
\nabla {\rm q} |  \frak{b}\,\dx\,\dt' \bigr|\\
&\leq
\eta\bigl\|\sigma ^{ \frac{3-s}{2}} \nabla {\rm q} \bigr\|_{L^2_t(L^2)}^2
+C_\eta\| \sigma^{\frac{3-s}2} \frak{b}\|_{L^2_t(L^2)}^2.
\end{split}\eeq
To deal with the estimate of $\nabla^2 D_t \rw$ and $\na{\rm q},$ in view of \eqref{S8eq1}, we write
\begin{align*}
&-\Delta \bigl(D_t \rw
  -\nabla\Delta^{-1}\div D_t \rw\bigr)+
\nabla {{\rm q}}
=-\rho D_t^2 \rw+\nabla\div D_t\rw+{\rm F}.
\end{align*} Since $\dive\bigl(D_t \rw
  -\nabla\Delta^{-1}\div D_t \rw\bigr)=0,$ we deduce from
the classical estimate on Stokes' operator that
\begin{align*}
& \| (\nabla^2 D_t \rw,
\nabla{\rm q}) \|_{L^2 }
 \lesssim
\|\rho D_t^2 \rw\|_{L^2 }
+\|\nabla\div D_t \rw\|_{L^2 }
+\|{\rm F}\|_{L^2 },
\end{align*}
which implies
\begin{equation}\label{estimate:Dpi1l}
\begin{split}
 & \bigl\|\sigma ^{ \frac{3-s}{2}}
  ( \nabla^2 D_t \rw,  \nabla {{\rm q}} ) \bigr\|_{L^2_t(L^2)}
\leq
C \bigl\|\sigma ^{ \frac{3-s}{2}}(\sqrt{\r} D_t^2 \rw, \na\dive\fa, {\rm F}) \bigr\|_{L^2_t(L^2)}.
\end{split}
\end{equation}
Inserting \eqref{estimate:Dpi1l} into \eqref{D2v1:pil} and then substituting the resulting inequality into
 \eqref{ee:D2v1}, we get, by  taking $\eta$ so small that $C^2\eta\leq \f14,$
 that $$\longformule{
 \bigl\|\sigma ^{\frac{3-s}2}\nabla D_t \rw \bigr\|_{L^\infty_t(L^2)}^2
+\bigl\| \sigma^{\frac{3-s}2} \sqrt{\r}  D^2_t \rw\bigr\|_{L^2_t(L^2)}^2
\leq
C\bigl\| \sigma^{\frac{3-s}2}(\na\dive\fa, \frak{b}, {\rm F})\bigr\|_{L^2_t(L^2)}^2}{{}+3\bigl\|\sigma ^{1-\frac{s}2}
\nabla D_t \rw\bigr\|_{L^2_t(L^2)}^2
+2\int^t_0
\|\nabla v\|_{L^2}^2\| \sigma^{\frac{3-s}2} \nabla D_t \rw\|_{L^2}^2\,\dt'.} $$
Taking $\e\leq \f18,$ we deduce \eqref{S8eq2} by summing up the above inequality with $4\times$\eqref{S8eq3} and  then applying Gronwall's inequality  and
using
\eqref{estimate:Dpi1l}. This completes the proof of  Lemma \ref{S4lem01}.
\end{proof}

Let us now turn to the proof of Proposition \ref{S1prop2}.

\begin{proof}[Proof of Proposition \ref{S1prop2}]
Taking into account of  \eqref{S4eq7p},  we get, by applying Lemma \ref{S4lem01} to  \eqref{eq:Dv}, that
\begin{equation}\label{S3ineq:D2v}
\begin{split}
 \bigl\|\sigma ^{\frac{3-s_0}2}&\nabla D_t v \bigr\|_{L^\infty_t(L^2)}^2
+\bigl\| \sigma^{\frac{3-s_0}2} (\sqrt{\r}  D^2_t v, \na^2D_tv, \na D_t\pi)\bigr\|_{L^2_t(L^2)}^2\\
\leq  &
 \cA_0\Bigl(\bigl\|\sigma ^{\frac{3-s_0}2}(D_tv,v\cdot\na v)\bigr\|_{L^2_t(L^2)}^2+\|\sigma ^{1-\frac{s_0}2}\na(v\cdot\na v)\|_{L^2_t(L^2)}^2\\
&\qquad\qquad\qquad\qquad+
\bigl\| \sigma^{\frac{3-s_0}2}( \nabla v\otimes\nabla^2 v,
\frak{b}_0, F_D(v,\pi))\bigr\|_{L^2_t(L^2)}^2\Bigr).
\end{split}
\end{equation}
It follows from Lemma \ref{lem:Dv} that
\beno
\bigl\|\sigma ^{\frac{3-s_0}2}(D_tv,v\cdot\na v)\bigr\|_{L^2_t(L^2)}^2+\|\sigma ^{1-\frac{s_0}2}\na(v\cdot\na v)\|_{L^2_t(L^2)}^2\leq\cC_0(v_0,s_0). \eeno
In view of \eqref{eq:Dv}, \eqref{S4eq7p}, we deduce from   Lemma \ref{lem:Dv} that
\begin{equation*}\label{est:FDa}
\begin{split}
\|\sigma ^{ \frac{3-s_0}{2}}F_D(v,\pi)\|_{L^2_t(L^2)}
&\leq  \|\sigma ^{ \frac{3-s_0}{2}}
(\nabla v\otimes\nabla^2 v, \nabla v\otimes\nabla\pi)\|_{L^2_t(L^2)}
\leq \cC(v_0,s_0),
\end{split}
\end{equation*}
and
\begin{equation} \label{bound:tildeD2v}
\begin{split}
\|\sigma ^{ \frac{3-s_0}{2}}\frak{b}_0\|_{L^2_t(L^2)}
&\leq
\|\sigma ^{ \frac{3-s_0}{2}}
\bigl(v\otimes(\nabla v_t, D_t\nabla v),D_t v\otimes\nabla v\bigr)\|_{L^2_t(L^2)}
\leq \cC(v_0,s_0).
\end{split}
\end{equation}
Substituting the above inequalities into \eqref{S3ineq:D2v} gives rise to
\begin{equation}\label{ee:Dv''}
\begin{split}
\bigl\|\sigma^{\frac{3-s_0}2}\nabla D_t  v\bigr\|_{L^\infty_t(L^2)}
+\bigl\|\sigma^{\frac{3-s_0}2}
&\bigl(D_t^2 v, \nabla^2 D_t v, \nabla D_t \pi\bigr) \bigr\|_{ L^2_t(L^2)}
\leq
\cC(v_0,s_0),
\end{split}
\end{equation}
from which and  Lemma \ref{lem:Dv} again, we infer
\begin{align*}
 \|\sigma^{ \frac{3-s_0}2}
 (  D_t \nabla v, \nabla v_t ) \|_{L^\infty_t(L^2)}
\leq &
 \| \sigma^{ \frac{3-s_0}2}\nabla D_t v \|_{L^\infty_t(L^2)}\\
&+ \bigl\| \sigma^{ \frac{3-s_0}2}
 (\nabla v\otimes \nabla v, v\otimes\nabla^2 v  )\bigr\|_{L^\infty_t(L^2)}
\leq
\cC(v_0,s_0).
\end{align*}
Along with \eqref{ee:Dv''}, we  complete the proof of Proposition \ref{S1prop2}.
\end{proof}

We conclude this section with the following corollary concerning
  the estimates of such terms as $\|\sigma^{1-\frac{s_0}{2}}\nabla v\|_{L^\infty_t(L^\infty)}$,
  $\|\frak{B}_0\|_{L^1_t}$,
  which will
  play an important role in the following context.

\begin{col}\label{S2col3}
{\sl Let $(\r,v, \na\pi)$ be a smooth enough solution of \eqref{inNS}. Let $r\in [2,\infty[$ and
 $\frak{B}_0(t)$  be determined by \eqref{S1eq25}. Then
 one has
\beq\label{S2col3eq1}\begin{split} \bigl\|\sigma^{\left(\frac 32-\frac 1r-\frac{s_0}2\right)}
& \bigl(D_t v,  v_t, v\otimes\nabla v, \nabla^2 v, \nabla \pi\bigr)
\bigr\|_{L^\infty_t(L^r)}^2\\
&+\|\sigma^{1-\frac{s_0}2}\nabla v \|_{L^\infty_t(L^\infty)}^2+\int_0^t\frak{B}_0(t')\,\dt'\leq \cC^2(v_0,s_0). \end{split} \eeq}
\end{col}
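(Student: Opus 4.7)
\textbf{Proof proposal for Corollary \ref{S2col3}.}

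The corollary bundles three families of estimates---pointwise-in-time $L^r$ bounds on material-derivative-type quantities, an $L^\infty$ bound on $\sigma^{1-s_0/2}\nabla v$, and the time integrability of the functional $\frak{B}_0$---and I plan to derive all of them from two-dimensional Gagliardo--Nirenberg interpolation applied to the $L^\infty_t(L^2)\cap L^\infty_t(\dot H^1)$ and $L^2_t(L^2)\cap L^2_t(\dot H^1)$ bounds already furnished by Propositions \ref{S1prop1}--\ref{S1prop2}, Corollary \ref{S1col1} and Lemma \ref{lem:Dv}. No fresh energy argument is needed.

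\emph{Pointwise $L^r$ bounds.} Starting from the 2-D inequality $\|w\|_{L^r}\lesssim\|w\|_{L^2}^{2/r}\|\nabla w\|_{L^2}^{1-2/r}$, the weight balance
\[\bigl(1-\tfrac{s_0}{2}\bigr)\cdot\tfrac{2}{r}+\tfrac{3-s_0}{2}\cdot\bigl(1-\tfrac{2}{r}\bigr)=\tfrac{3}{2}-\tfrac{1}{r}-\tfrac{s_0}{2}\]
produces precisely the exponent prescribed in \eqref{S2col3eq1}. Applied to $w=D_t v$ the two endpoints are supplied by Lemma \ref{lem:Dv} and by $A_{02}$ in Proposition \ref{S1prop2}; applied to $w=v_t$ by $A_{01}$ in Proposition \ref{S1prop1} and by $A_{02}$. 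The term $v\otimes\nabla v$ is treated by H\"older as $\|\sigma^{1/2}v\|_{L^\infty_t(L^\infty)}\cdot\|\sigma^{1-1/r-s_0/2}\nabla v\|_{L^\infty_t(L^r)}$, both factors coming from Corollary \ref{S1col1}; finally $(\nabla^2 v,\nabla\pi)$ is recovered from the pointwise-in-time Stokes estimate $\|(\nabla^2 v,\nabla\pi)\|_{L^r}\lesssim\|v_t\|_{L^r}+\|v\cdot\nabla v\|_{L^r}$ applied with the previous bounds.

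\emph{$L^\infty$ bound on $\nabla v$.} For $r>2$ the 2-D Morrey-type inequality $\|a\|_{L^\infty}\lesssim\|a\|_{L^r}^{1-2/r}\|\nabla a\|_{L^r}^{2/r}$ applied to $a=\nabla v$ yields the balance
\[\bigl(1-\tfrac{1}{r}-\tfrac{s_0}{2}\bigr)\bigl(1-\tfrac{2}{r}\bigr)+\bigl(\tfrac{3}{2}-\tfrac{1}{r}-\tfrac{s_0}{2}\bigr)\tfrac{2}{r}=1-\tfrac{s_0}{2},\]
so the weight ends up as $\sigma^{1-s_0/2}$ as desired; the two right-hand factors are the $\nabla v$ bound of Corollary \ref{S1col1} and the $\nabla^2 v$ bound of the previous step.

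\emph{Integration of $\frak{B}_0$.} Each summand of \eqref{S1eq25} must be recognised as the square of a quantity whose $L^2_t$ norm is already known to be $\lesssim\cC(v_0,s_0)$: the pure energy $\|\nabla v\|_{L^2_t(L^2)}$; the $\sigma^{1-1/r-s_0/2}$-weighted $L^r$ group from Lemma \ref{lem:Dv}; $\sigma^{1-s_0/2}\nabla v_t$ from $A_{01}$; the $\sigma^{(3-s_0)/2}$-group from $A_{02}$ and from \eqref{bound:tildeD2v}; the $\sigma^{(1-s_0)/2}\nabla v$ norms in $L^\infty$ and $L^{2p/(p-2)}$ from Corollary \ref{S1col1}; the commutator items $D_t\nabla^2 v$ and $D_t\nabla\pi$ rewritten through $[D_t,\nabla]=-\nabla v\cdot\nabla$ and reduced to $\nabla^2 D_t v$, $\nabla D_t\pi$ plus lower-order pieces already controlled above; and the mixed group at weight $\sigma^{3/2-1/r-s_0/2}$ via H\"older in time
\[\|\sigma^{3/2-1/r-s_0/2}\nabla D_t v\|_{L^2_t(L^r)}\lesssim\|\sigma^{1-s_0/2}\nabla D_t v\|_{L^2_t(L^2)}^{2/r}\|\sigma^{(3-s_0)/2}\nabla^2 D_t v\|_{L^2_t(L^2)}^{1-2/r},\]
with an analogous treatment for $\nabla v_t$, $D_t\nabla v$ and $\nabla(v\otimes\nabla v)$. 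The pointwise $L^\infty$ norm of $\sigma^{1-s_0/2}(D_t v,v_t,v\otimes\nabla v)$ appearing in $\frak{B}_0$ is then squeezed by one further Morrey interpolation between the $L^r$ estimate of the first step and its gradient.

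\emph{Main obstacle.} The analytic content is light; the real difficulty is purely combinatorial bookkeeping: each of the roughly fifteen tensor-valued expressions comprising $\frak{B}_0$ and the first line of \eqref{S2col3eq1} requires its own Gagliardo--Nirenberg or H\"older pairing, and one must systematically verify both that the $\sigma$-exponents coming out of the interpolations match those prescribed and that the time-integration exponents are H\"older-compatible.
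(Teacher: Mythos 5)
Most of your plan coincides with the paper's own proof: the $L^\infty_t(L^r)$ bounds on $(D_tv,v_t,\nabla^2v,\nabla\pi)$ by Gagliardo--Nirenberg between the $\sigma^{1-\frac{s_0}2}$--weighted $L^\infty_t(L^2)$ norms and the $\sigma^{\frac{3-s_0}2}$--weighted $L^\infty_t(L^2)$ norms of their gradients (from Lemma \ref{lem:Dv} and $A_{01},A_{02}$), the Stokes estimate for $(\nabla^2v,\nabla\pi)$, the product bound for $v\otimes\nabla v$, the Morrey-type inequality for $\|\sigma^{1-\frac{s_0}2}\nabla v\|_{L^\infty_t(L^\infty)}$, the commutator reductions $[D_t;\nabla]$, $[D_t;\nabla^2]$ for $D_t\nabla^2v$, $D_t\nabla\pi$, and the interpolation $\|\sigma^{\frac32-\frac1r-\frac{s_0}2}\nabla D_tv\|_{L^2_t(L^r)}\lesssim\|\sigma^{1-\frac{s_0}2}\nabla D_tv\|_{L^2_t(L^2)}^{2/r}\|\sigma^{\frac{3-s_0}2}\nabla^2D_tv\|_{L^2_t(L^2)}^{1-2/r}$ are exactly the ingredients of the paper's argument, and your weight bookkeeping for these items is correct.

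There is, however, one step that fails as you describe it: the term $\bigl\|\sigma^{1-\frac{s_0}2}(D_tv,v_t,v\otimes\nabla v)(t)\bigr\|_{L^\infty}^2$ of $\frak{B}_0$, which must be bounded in $L^1_t$ \emph{uniformly in} $t$ (the right-hand side $\cC^2(v_0,s_0)$ carries no time growth). You propose to get the $L^\infty_x$ norm by ``one further Morrey interpolation between the $L^r$ estimate of the first step and its gradient''. The first-step estimates live at weight $\sigma^{\frac32-\frac1r-\frac{s_0}2}$ and in $L^\infty_t$ (for the function) or $L^2_t$ (for its gradient); interpolating them produces the $L^\infty_x$ norm at weight $\frac32-\frac1r-\frac{s_0}2$, which exceeds the required $1-\frac{s_0}2$ by $\frac12-\frac1r$. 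After H\"older in time the deficit costs exactly $\int_0^t\sigma^{-1}\,\dt'$, which diverges logarithmically at $t'=0$ and grows linearly for large $t$; and on $[1,\infty)$, where $\sigma\equiv1$, no $L^\infty_t$ information can yield square-integrability in time at all. The closing of this term must go through genuinely $L^2_t$-based quantities: as in \eqref{S4eq8p}, use $\|a\|_{L^\infty}\lesssim\|a\|_{L^2}^{1/2}\|\nabla^2a\|_{L^2}^{1/2}$ together with $\|\sigma^{\frac{1-s_0}2}D_tv\|_{L^2_t(L^2)}$ (Lemma \ref{lem:Dv}) and $\|\sigma^{\frac{3-s_0}2}\nabla^2D_tv\|_{L^2_t(L^2)}$ (Proposition \ref{S1prop2}, i.e.\ \eqref{ee:Dv''}), noting $\frac{1-s_0}4+\frac{3-s_0}4=1-\frac{s_0}2$ and applying Cauchy--Schwarz in time; then treat $v\otimes\nabla v$ by $\|\sigma^{\frac12}v\|_{L^\infty_t(L^\infty)}\|\sigma^{\frac{1-s_0}2}\nabla v\|_{L^2_t(L^\infty)}$ (Corollary \ref{S1col1}) and $v_t=D_tv-v\cdot\nabla v$. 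A smaller remark of the same flavour: the $L^2_t(L^{\frac{2p}{p-2}})$ control of $\sigma^{\frac{1-s_0}2}\nabla v$ is not literally in Corollary \ref{S1col1} (which gives the time exponent $\frac{2r}{r-2}=p\neq2$ for that $r$); it needs one extra interpolation against $\|\sigma^{\frac{1-s_0}2}\nabla^2v\|_{L^2_t(L^2)}$ from $A_{01}$, using $\sigma\le1$.
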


\begin{proof}

It follows from  \eqref{S2eq6} that for any $ r\in [2,\infty[,$
\begin{align*}
 &\| \sigma^{\left(\frac 32-\frac 1r-\frac{s_0}2\right)} D_t v  \|_{L^\infty_t(L^r)}
 \leq
C\|\s^{1-\f{s_0}2}D_tv\|_{L^\infty_t(L^2)}^{\f2r}\|\s^{\f{3-s_0}2}\na D_tv\|_{L^\infty_t(L^2)}^{1-\f2r},\\
&\|\sigma^{\left(\frac 32-\frac 1r-\frac{s_0}{2}\right)}
 \nabla D_t v  \|_{L^2_t(L^r)}
\leq
C\|\sigma^{1-\frac{s_0}{2}}
\nabla  D_t v \|_{L^2_t(L^2)}^{\frac 2r}
 \|\sigma^{ \frac{3-s_0}{2}}
\nabla^2 D_t v \|_{L^2_t(L^2)}^{1-\frac 2r},
\end{align*} which together with \eqref{S4eq4p}, \eqref{ee:Dv''} and
 classical estimates on Stokes operator
$-\Delta v+\nabla \pi=-\rho D_t v,$ ensures that for any $ r\in [2,\infty[,$
\begin{align}\label{S4eq8pa}
\bigl\| \sigma ^{\left(\frac 32-\frac 1r-\frac{s_0}2\right)}
  (D_tv, \nabla^2  v, \nabla \pi)\bigr\|_{L^\infty_t(L^r)}+\|\sigma^{\left(\frac 32-\frac 1r-\frac{s_0}{2}\right)}
 \nabla D_t v \|_{L^2_t(L^r)}
\leq
\cC(v_0,s_0).
\end{align}
Hence thanks to
\beno
\begin{split}
 &\|\sigma^{1-\frac{s_0}2}\nabla v\|_{L^\infty_t(L^\infty)}
\leq
C \|\sigma ^{ \frac {1-s_0}2}\nabla v\|_{L^\infty_t(L^2)}^{\frac 13}
 \|\sigma ^{ \frac 54-\frac{s_0}2 }\nabla^2v\|_{L^\infty_t(L^4)}^{\frac 23},
 \\
&\|\sigma^{1-\frac{s_0}{2}}
  D_t v  \|_{L^2_t(L^\infty)}
\leq
C\|\sigma^{ \frac{1-s_0}{2}}
  D_t v \|_{L^2_t(L^2)}^{\frac 12}
 \|\sigma^{ \frac{3-s_0}{2}}
\nabla^2 D_t v \|_{L^2_t(L^2)}^{\frac 12},
\end{split}
\eeno
 and Proposition \ref{S1prop1}, \eqref{S4eq4p}, \eqref{ee:Dv''}, \eqref{S4eq8pa},  we deduce that
\beq\label{S4eq8p}
\|\sigma^{1-\frac{s_0}2}\nabla v\|_{L^\infty_t(L^\infty)}+\|\sigma^{1-\frac{s_0}{2}}
  D_t v  \|_{L^2_t(L^\infty)}
\leq
\cC(v_0,s_0).
\eeq
While it follows  from  Corollary \ref{S1col1} and \eqref{bound:v}  that for any $ r\in [2,\infty[,$ \begin{align*}
 \| \sigma ^{\left(\frac 32-\frac 1r-\frac {s_0}2\right)}( v\otimes\nabla v) \|_{L^\infty_t(L^r)}
&\leq
C\| \sigma ^{\frac 12} v\|_{L^\infty_t(L^\infty)}
\| \sigma ^{ 1-\frac 1r-\frac{s_0}2} \nabla v \|_{L^\infty_t(L^r)}
\leq \cC(v_0,s_0),
\end{align*}
from which and $ v_t=D_t v-v\cdot\nabla v,$ we infer from \eqref{S4eq8pa} that for $r\in [2,+\infty[$
\begin{align*}
 \bigl\| \sigma ^{\left(\frac 32-\frac 1r-\frac{s_0}2\right)}
 (v\otimes\na v, v_t)\bigr\|_{L^\infty_t(L^r)}
\leq
\cC(v_0,s_0).
\end{align*}
Finally it is easy to observe that
\begin{align*}
\|\sigma^{1-\frac{s_0}{2}} (v\otimes\nabla v) \|_{L^2_t(L^\infty)}
&\leq
C\|\sigma^{\frac 12} v \|_{L^\infty_t(L^\infty)}
 \|\sigma^{ \frac{1-s_0}{2}}
\nabla  v \|_{L^2_t(L^\infty)}.
\end{align*}
and
\begin{align*}
\bigl\|\sigma ^{ \frac {3-s_0}2}
 (D_t \nabla^2 v, \nabla D_t\nabla v,  D_t\nabla \pi)& \bigr\|_{ L^2_t(L^2)}
\leq
\bigl\|\sigma ^{\frac {3-s_0}2}  (\nabla^2 D_t v, \nabla D_t \pi) \bigr\|_{ L^2_t(L^2)}\\
&+
C\bigl\|\sigma^{\frac {3-s_0}2}
  (\nabla v\otimes\nabla^2 v,
   \nabla v\otimes\nabla \pi) \bigr\|_{ L^2_t(L^2)},
\end{align*}
and \begin{align*}
\|\sigma^{ \frac 32-\frac 1r-\frac{s_0}{2} }
(\nabla v\otimes\nabla v, v\otimes\nabla^2 v) \|_{L^2_t(L^r)}
\leq &
\|\sigma^{1-\frac 1r} \nabla v \|_{L^\infty_t(L^r)}
 \|\sigma^{ \frac{1-s_0}{2}}
\nabla  v \|_{L^2_t(L^\infty)}\\
&
+\|\sigma^{\frac 12}v\|_{L^\infty_t(L^\infty)}
\|\sigma^{ 1-\frac 1r-\frac{s_0}2 }
\nabla^2 v\|_{L^2_t(L^r)}.
\end{align*}
As a consequence, we deduce from \eqref{S4eq4p},  \eqref{bound:v}, \eqref{ee:Dv''} and Corollary \ref{S1col1}  that for $r\in [2,+\infty[$
$$\longformule{
\|\sigma^{1-\frac{s_0}{2}} (v\otimes\nabla v) \|_{L^2_t(L^\infty)}
+\bigl\|\sigma ^{ \frac {3-s_0}2}
 (D_t \nabla^2 v, \nabla D_t\nabla v,  D_t\nabla \pi) \bigr\|_{ L^2_t(L^2)}
}{{}+\|\sigma^{\left(\frac 32-\frac 1r-\frac{s_0}{2}\right)}
(\nabla v\otimes\nabla v, v\otimes\nabla^2 v) \|_{L^2_t(L^r)}\leq
\cC(v_0,s_0).} $$
This together with
$$D_t\nabla v=\nabla D_t v-\nabla v\cdot\nabla v,
\quad v_t=D_t v-v\cdot\nabla v
$$
 and \eqref{S4eq8pa}, \eqref{S4eq8p} ensures that for $r\in [2,+\infty[$
\beno
\|\s^{1-\f{s_0}2}v_t\|_{L^2_t(L^\infty)}
+\|\sigma^{\left(\frac 32-\frac 1r-\frac{s_0}{2}\right)}
(\nabla D_t v, D_t\nabla v, \nabla v_t) \|_{L^2_t(L^r)}
\leq \cC(v_0,s_0). \eeno
This together with Propositions \ref{S1prop1} and \ref{S1prop2}, Corollary \ref{S1col1}, Lemma \ref{lem:Dv} and \eqref{bound:tildeD2v} ensures that
\beno \int_0^t\frak{B}_0(t')\,\dt'\leq \cC^2(v_0,s_0).
\eeno
This completes the proof of   \eqref{S2col3eq1}.
\end{proof}

\setcounter{equation}{0}
\section{Propagation of the time-weighted $H^1$ regularity of $\v^1$}\label{sec:J1}

The goal of this section is to study the propagation of time-weighted $H^1$ regularity of $\v^1.$ To this end,  in view of \eqref{S1eq3}, we split $(\v^1,
\na{\rm \pi}^1)$ as \beq \label{S3eq3} \v^1=\v_{11}+\v_{12} \andf
\na{\rm \pi}^1=\na \rp_{11}+\na \rp_{12}, \eeq with $(\v_{11}, \na \rp_{11})$
and  $(\v_{12}, \na \rp_{12})$ solving the following systems
respectively
\begin{equation}\label{S3eq4}
 \left\{\begin{array}{l}
\displaystyle \rho\d_t \v_{11}+\r v\cdot\nabla \v_{11} -\Delta \v_{11}
+\nabla \rp_{11} =0,\\
\displaystyle \div \v_{11}=0,\\
\displaystyle \v_{11}|_{t=0}=\p_{X_0}v_0,
\end{array}\right.
\end{equation}
and
\begin{equation}\label{S3eq5}
 \left\{\begin{array}{l}
\displaystyle \rho\d_t \v_{12}+\r v\cdot\nabla \v_{12} -\Delta \v_{12}
+\nabla \rp_{12} =F_1(v,\pi),\\
\displaystyle \div  \v_{12}=\div(v\cdot\nabla X),\\
\displaystyle \v_{12}|_{t=0}=0,
\end{array}\right.
\end{equation} where the source term $F_1(v,\pi)$ is given by \eqref{S3eq1}.

Then we deduce from Proposition \ref{S2prop1} that \beq
\|\v_{11}\|_{\wt{L}^\infty_t(\cB^{s_1})}
+\|\na \v_{11}\|_{\wt{L}^2_t(\cB^{s_1})}\leq
C\|\p_{X_0}v_0\|_{\cB^{s_1}}\exp\bigl(C\|v_0\|_{L^2}^4\bigr),
\label{S3eq6} \eeq and \beq
\begin{split} \|\s^{\f{1-s_1}2}
\v_{11}\|_{\wt{L}^\infty_t(\cB^1)}^2&+\bigl\|\s^{\f{1-s_1}2}(\p_t\v_{11},\na^2
\v_{11},\na \rp_{11})\bigr\|_{L^2_t(L^2)}^2\leq \cC(v_0,\p_{X_0}v_0,s_1).
\end{split} \label{S3eq7} \eeq

\subsection{Time-weighted $\dot H^1$  estimate for $\v_{12}$ }

Let us first present the proof of Lemma \ref{S3lem2}.

\begin{proof}[Proof of Lemma \ref{S3lem2}] We first get, by multiplying \eqref{S2eq8} by $\s^{1-{s}},$ that  \beno
\begin{split}
\f{d}{dt}\|\s^{\f{1-{s}}2}\na
u(t)\|_{L^2}^2+&\bigl\|\s^{\f{1-{s}}2}(u_t, \na^2 u, \na\Pi)\bigr\|_{L^2}^2
\leq (1-{s})\s^{-{s}}\|\na
u\|_{L^2}^2\\
&\qquad+C\s^{1-{s}}\bigl(\|v\|_{L^4}^4\|\na
u\|_{L^2}^2+\|(g_t,\na\dive g,f)\|_{L^2}^2\bigr),
\end{split}
\eeno so that applying Gronwall's inequality gives rise to
\beq\label{lemeq2}
\begin{split}
\|\s^{\f{1-{s}}2}\na
u\|_{L^\infty_t(L^2)}^2+&\bigl\|\s^{\f{1-{s}}2}(u_t, \na^2u, \na\Pi)\bigr\|_{L^2_t(L^2)}^2\leq
C\exp\bigl(C\|v\|_{L^4_t(L^4)}^4\bigr)\\
&\qquad\times\int_0^t\bigl(\s^{-{s}}\|\na
u\|_{L^2}^2+\s^{1-{s}}\|(g_t,\na\dive g,f)\|_{L^2}^2\bigr)\,\dt'.
\end{split}
\eeq To handle the term $\int_0^t\s^{-{s}}\|\na
u\|_{L^2}^2\,dt',$ let us denote  \beq\label{S3eq28} \y\eqdefa
u-g, \quad \y=\y_1+\y_2\andf
\na\Pi\eqdefa\na\Pi_{1}+\na\Pi_{2}, \eeq with
$(\y_1,\na\Pi_{1})$ and $(\y_2,\na\Pi_{2})$ solving respectively
\begin{equation}\label{S3eq30y}
 \left\{\begin{array}{l}
\displaystyle \rho\d_t \y_{1}+\r v\cdot\nabla \y_{1} -\Delta \y_{1}
+\nabla \Pi_{1} =0,\\
\displaystyle \div \y_1=0,\\
\displaystyle \y_1|_{t=0}=-g(0),
\end{array}\right.
\end{equation}
and
\begin{equation}\label{S3eq29}
 \left\{\begin{array}{l}
\displaystyle \rho\d_t \y_2+\r v\cdot\nabla \y_2 -\Delta \y_2
+\nabla \Pi_{2}= f-\left(\r\p_tg+\r
v\cdot\na g-\D g\right),\\
\displaystyle \div \y_2=0,\\
\displaystyle \y_2|_{t=0}=0.
\end{array}\right.
\end{equation}
For any $\de\in ]0,1[,$ it follows from
Proposition \ref{S2prop1}
that
 \beno
\begin{split}
\bigl\|\s^{\f{1-\de}2}\na\y_1\|_{L^\infty_t(L^2)}
\leq \cA_0\|g(0)\|_{\cB^{\de}},
\end{split}
\eeno
which in particular implies for $s\in ]0,\de[$ that
\beq\label{S3eq32}
\bigl\|\s^{-\f{s}2}\na\y_1\|_{L^2_t(L^2)}^2\leq \bigl\|\s^{\f{1-\de}2}\na\y_1\|_{L^\infty_t(L^2)}^2\|\s^{-1+\de-s}\|_{L^1_t}\leq \f{\cA_0\w{t}}{\de-s}\|g(0)\|_{\cB^{\de}}^2.
\eeq
Whereas for any fixed $t_0\in ]0,1[,$
we denote $\s_{t_0}(t)\eqdefa \left\{\begin{array}{l}
\s(t)\ \ \mbox{if}\ \ t\geq t_0,\\
\displaystyle t_0\quad \ \ \mbox{if}\ \ t\leq t_0.
\end{array}\right. $ Then we get, by first taking the $L^2$ inner product of \eqref{S3eq29} with $\y_2,$  then  multiplying the  equality
 by
$\s_{t_0}^{-{s}}(t)$ and finally integrating the resulting inequality over $[0,t],$  that
 \beno
\begin{split}
\|&\s_{t_0}^{-\f{s}2}\sqrt{\r}
\y_2(t)\|_{L^2}^2+\int_0^t\s_{t_0}^{-{s}}\|\na \y_2\|_{L^2}^2\,\dt'\\
=&-{s}\int_0^t\s_{t_0}^{-1-{s}}\s_{t_0}'\|\sqrt{\r} \y_2\|_{L^2}^2\,\dt'+
\int_0^t\int_{\R^2}\s_{t_0}^{-{s}}\left(f-\r\p_tg-\r
v\cdot\na g+\D g\right) | \y_2\,\dx\,\dt'\\
\leq&{C}\bigl\|\s_{t_0}^{-\f{s}2}(f,g_t)\bigr\|_{L^1_t(L^2)}^2+C\|\s^{\f{1-s_0}2}v\|_{L^\infty_t(L^\infty)}^2 \bigl\|\s^{-\f{1-(s_0-s)}2}
\na g\bigr\|_{L^1_t(L^2)}^2
\\
&\qquad\qquad\quad\qquad\qquad\ +\f12\|\s_{t_0}^{-\f{s}2}\sqrt{\r}
\y_2\|_{L^\infty_t(L^2)}^2-\int_0^t\int_{\R^2}\s_{t_0}^{-{s}}\na g | \na\y_2\,\dx\,\dt'.
\end{split}
\eeno
Noting that
\beno
\bigl|\int_0^t\int_{\R^2}\s_{t_0}^{-{s}}\na g | \na\y_2\,\dx\,dt'\bigr|\leq
\f12\bigl\|\s_{t_0}^{-\f{s}2}\na g\bigr\|_{L^2_t(L^2)}^2
+\f12\|\s_{t_0}^{-\f{s}2}\na\y_2
\|_{L^2_t(L^2)}^2.
\eeno
This together with Corollary \ref{S1col1} ensures that
\beno
\begin{split}
\int_0^t\s_{t_0}^{-{s}}\|\na \y_2\|_{L^2}^2\,\dt'
\leq
\cA_0\Bigl(
\bigl\|\s^{-\f{s}2}(f,g_t)\bigr\|_{L^1_t(L^2)}^2
+\bigl\|\s^{-\f{1-(s_0-s)}2}
\na g\bigr\|_{L^1_t(L^2)}^2
+\|\s^{-\f{s}2}\na g\|_{L^2_t(L^2)}^2\Bigr),
\end{split}
\eeno
from which, \eqref{S3eq28} and \eqref{S3eq32}, we infer  \beno
 \begin{split}
 \int_0^t\s_{t_0}^{-{s}}\|\na u\|_{L^2}^2\dt'
\leq
\cA_0\Bigl(&\f{\w{t}}{\de-s}\|g(0)\|_{\cB^{\de}}^2+\bigl\|\s^{-\f{s}2}(f,g_t)\bigr\|_{L^1_t(L^2)}^2\\
&+\bigl\|\s^{-\f{1-(s_0-s)}2}
\na g\bigr\|_{L^1_t(L^2)}^2
+\|\s^{-\f{s}2}\na g\|_{L^2_t(L^2)}^2\Bigr).
\end{split}
\eeno Applying Fatou's Lemma, we deduce that as $t_0\to 0_+$ in
the above inequality, there holds
\beno
 \begin{split}
 \int_0^t\s^{-{s}}\|\na u\|_{L^2}^2\dt'
\leq
\cA_0\Bigl(&\f{\w{t}}{\de-s}\|g(0)\|_{\cB^{\de}}^2+\bigl\|\s^{-\f{s}2}(f,g_t)\bigr\|_{L^1_t(L^2)}^2\\
&+\bigl\|\s^{-\f{1-(s_0-s)}2}
\na g\bigr\|_{L^1_t(L^2)}^2
+\|\s^{-\f{s}2}\na g\|_{L^2_t(L^2)}^2\Bigr).
\end{split}
\eeno
 Inserting the above estimate into \eqref{lemeq2} leads to \eqref{lemeq1}.
\end{proof}

Applying Lemma \ref{S3lem2} to the system \eqref{S3eq5} yields the time-weighted $\dH^1$ estimate of $\v_{12}.$

\begin{prop}\label{S3prop1}
{\sl Let $(\v_{12},\na \rp_{12})$ be a smooth enough solution of
\eqref{S3eq5}.
Then     there holds
\beq
\label{S3eq21}
\begin{split}
\bigl\|\s^{\f{1-s_1}2} \na \v_{12}&\bigr\|_{L^\infty_t(L^2)}^2
+\bigl\|\s^{\f{1-s_1}2}(\p_t\v_{12},\na^2
\v_{12},\na p_{12})\bigr\|_{L^2_t(L^2)}^2\leq \frak{G}_{1,X}(t)\with\\
\frak{G}_{1,X}&(t)
\eqdefa \exp\bigl(\cA_0 \w{t}^2\bigr)
 \Bigl(\cA_1+\int_0^t\bigl(\frak{B}_0(t')+\s(t')^{-1+ \frac{\theta_0}{2} }\bigr)
\|\na X(t')\|_{W^{1,p}}^2\,dt'\Bigr),
\end{split}
\eeq
where   $\frak{B}_0(t)$ is given by \eqref{S1eq25}.}
\end{prop}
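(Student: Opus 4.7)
The plan is to apply Lemma \ref{S3lem2} directly to the system \eqref{S3eq5} with the identifications $u = \v_{12}$, $f = F_1(v,\pi)$, $g = v\cdot\na X$, regularity index $s = s_1$, and auxiliary parameter $\de = s_0$, so that $\de - s = \th_0 = \e/k$. After this choice, the matter reduces to bounding each of the five forcing terms on the right-hand side of \eqref{lemeq1} by $\frak{G}_{1,X}(t)$.

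The initial-data contribution $\w{t}\th_0^{-1}\|v_0\cdot\na X_0\|_{\cB^{s_0}}^2$ is treated via a Bony paraproduct decomposition together with the embedding $W^{2,p}(\R^2)\hookrightarrow L^\infty(\R^2)$, valid since $p>2$. This yields $\|v_0\cdot\na X_0\|_{\cB^{s_0}} \lesssim \|v_0\|_{\cB^{s_0}}\|\na X_0\|_{L^\infty} + \|v_0\|_{L^2}\|X_0\|_{W^{2,p}}$, which is absorbed into $\cA_1\exp(\cA_0\w{t}^2)$.

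For the remaining four terms the strategy is uniform: each occurrence of $g$, $g_t$, $\na g$, $\na\dive g$, and each summand of $F_1(v,\pi)$ is expanded as a product of a $v$-factor (or $\rr^1$-factor) and an $X$-factor, and the product is split by H\"older's inequality with dual exponents $p$ and $2p/(p-2)$. The $X$-factor involves at most two derivatives of $X$ and is controlled pointwise in time by $\|\na X\|_{W^{1,p}}$, using $W^{1,p}(\R^2)\hookrightarrow L^\infty$. The $v$-factor involves $v,\ \na v,\ \na^2 v,\ v_t,\ D_t v$ or $\na\pi$ in $L^p$ or $L^{2p/(p-2)}$ with time weights exactly of the form listed in $\frak{B}_0(t)$ (cf.\ \eqref{S1eq25}), all of which are available from Corollary \ref{S1col1}, Proposition \ref{S1prop2} and Corollary \ref{S2col3}. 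The awkward time derivative $g_t = v_t\cdot\na X + v\cdot\na X_t$ is simplified by substituting $X_t = X\cdot\na v - v\cdot\na X$ from \eqref{eq:X}, which reduces $g_t$ to products of $v$, $\na v$ and $\na X$. Similarly, the piece $\rr^1 D_t v$ inside $F_1(v,\pi)$ is estimated via $\|\rr^1\|_{L^\infty}\leq\cA_1$ together with the time-weighted $L^r$-bound on $D_t v$ from \eqref{S2col3eq1}.

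The main obstacle lies in matching the singular time weights produced by these H\"older estimates against the $\s^{-1+\th_0/2}$ weight in $\frak{G}_{1,X}$. The most delicate contribution comes from $\na\dive g = \na(\p_\al v\cdot\na X_\al)$ in the fifth forcing term: its worst summand $\na^2 v\cdot\na X$ is estimated by $\|\na^2 v\|_{L^p}\|\na X\|_{L^{2p/(p-2)}}$, with the $L^p$-bound on $\na^2 v$ from Corollary \ref{S1col1}. The condition $p\in\,]2,\ 2/(1-s_k)[$ (equivalently $s_1 > 1 - 2/p$) is exactly what ensures that the resulting time-weight power on $\|\na X\|_{W^{1,p}}^2$ is no worse than $-1 + \th_0/2$ and hence integrable. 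Once all five contributions are controlled in this fashion, \eqref{S3eq21} follows after a Gronwall argument; the $\|v\|_{L^4_t(L^4)}$ bound \eqref{S2eq5} contributes only the harmless $\exp(\cA_0\w{t}^2)$ prefactor.
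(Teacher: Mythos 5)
Your overall frame---applying Lemma \ref{S3lem2} to \eqref{S3eq5} with $u=\v_{12}$, $f=F_1(v,\pi)$, $g=v\cdot\na X$, $s=s_1$, $\de=s_0$---is the same as the paper's, and your treatment of the initial term $\|v_0\cdot\na X_0\|_{\cB^{s_0}}$, of $F_1(v,\pi)$, of $\na g$ and of $\na\dive g$ is close in spirit to theirs. The genuine gap is in the estimate of $g_t$. Substituting $X_t=\p_X v-v\cdot\na X$ from \eqref{eq:X} into $g_t=v_t\cdot\na X+v\cdot\na X_t$ does \emph{not} reduce $g_t$ to ``products of $v$, $\na v$ and $\na X$'': exactly as in \eqref{S3eq35}, it produces the term $v\cdot\na\v^1=v\cdot\na(X\cdot\na v)$, which contains $v\otimes X\otimes\na^2 v$. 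This term defeats your uniform strategy on two counts. First, the $X$-factor here is $X$ itself, undifferentiated, and $\|X\|_{L^\infty}$ is not controlled by $\|\na X\|_{W^{1,p}}$, the only $X$-quantity allowed in $\frak{G}_{1,X}$. Second, even granting a bound on $\|X\|_{L^\infty}$, the available time-weighted bounds on $\na^2 v$ (namely $\s^{\frac{1-s_0}2}\na^2 v\in L^2_t(L^2)$ and $\s^{1-\frac{s_0}2}\na^2 v\in L^\infty_t(L^2)$) are too weak to dominate this contribution pointwise in time by $\frak{B}_0(t')+\s(t')^{-1+\th_0/2}$ times $\|\na X\|_{W^{1,p}}^2$, so it cannot be put in the form required by \eqref{S3eq21}.

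The paper resolves precisely this term by a different mechanism, which is absent from your proposal: it keeps $v\cdot\na\v^1$ intact, uses the splitting $\v^1=\v_{11}+\v_{12}$ of \eqref{S3eq3}, controls the $\v_{11}$-part by \eqref{S3eq7} --- this is where the striated hypothesis $\p_{X_0}v_0\in L^2\cap\cB^{s_1}$ and Proposition \ref{S2prop1} enter --- and notes (see \eqref{S3eq26}) that the $\v_{12}$-part reproduces the very quantity $\|\s^{\frac{1-s_1}2}\na\v_{12}\|_{L^2}^2$ being estimated, multiplied by $\s^{-(1-s_1)}$; this self-referential term is then removed by an additional Gronwall argument, and that absorption (not the $\|v\|_{L^4}^4$ factor already inside Lemma \ref{S3lem2}) is the actual source of the $\exp(\cA_0\w{t}^2)$ prefactor in $\frak{G}_{1,X}$. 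Since your argument contains neither the appearance of this term nor the corresponding Gronwall step, it breaks down at the estimate of $\|\s^{-\frac{s_1}2}g_t\|_{L^1_t(L^2)}+\|\s^{\frac{1-s_1}2}g_t\|_{L^2_t(L^2)}$, which is the delicate point of the proposition.
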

\begin{proof}
Noticing  that for $\theta_0=s_0-s_1,$
\beno
\begin{split}
&\|\s^{-\f{1-\th_0}2}\na g\|_{L^1_t(L^2)}^2\leq \|\s^{-\f{s_1}2}\na g\|_{L^2_t(L^2)}^2\|\s^{-1+s_0}\|_{L^1_t}\leq C\w{t}\|\s^{-\f{s_1}2}\na g\|_{L^2_t(L^2)}^2,
\andf\\
&\|\s^{-\f{s_1}2}(f,g_t)\|_{L^1_t(L^2)}^2
\leq \|\s^{\f{1-s_0}2}(f,g_t)\|_{L^2_t(L^2)}^2
\|\s^{-1+\th_0}\|_{L^1_t}
\leq C\w{t}\|\s^{\f{1-s_0}2}(f,g_t)\|_{L^2_t(L^2)}^2
\end{split}
\eeno
we  get, by applying Lemma \ref{S3lem2}  (with $s=s_1$ and $\delta=s_0$) to the system \eqref{S3eq5},  that
 \beq \label{S3eq22}
 \begin{split}
 \bigl\|&\s^{\f{1-s_1}2} \na \v_{12}\bigr\|_{L^\infty_t(L^2)}^2
+\bigl\|\s^{\f{1-s_1}2}(\p_t\v_{12},\na^2
\v_{12},\na \rp_{12})\bigr\|_{L^2_t(L^2)}^2
\leq \cA_0\w{t}\Bigl(\|v_0\cdot\na X_0\|_{\cB^{s_0}}^2\\
&+\bigl\|\s^{-\f{s_1}2}\na(v\cdot\na X)\|_{L^2_t(L^2)}^2+
\bigl\|\s^{\f{1-s_0}2}\bigl(\p_t(v\cdot\na X),\na\dive(v\cdot\na X), F_1(v,\pi)\bigr)\bigr\|_{L^2_t(L^2)}^2\Bigr).
\end{split}
\eeq
It follows from the law of product in Besov spaces (see \cite{BCD} for instance) that
\beno
\|v_0\cdot\na X_0\|_{\cB^{s_0}}
\lesssim \|v_0\|_{L^2\cap \cB^{s_0}}\|\na X_0\|_{\dB^{\f2{p}}_{p,1}}
\lesssim \|v_0\|_{L^2\cap \cB^{s_0}}\|\na X_0\|_{W^{1,p}}\leq \cA_1.
\eeno
It follows from Corollary \ref{S1col1} that
\beno
\begin{split}
\bigl\|\s^{-\f{s_1}2}\na(v\cdot\na X)\|_{L^2_t(L^2)}^2
\leq &\int_0^t\s^{-1+\th_0}\bigl(\|\s^{\f{1-s_0}2}v\|_{L^{\f{2p}{p-2}}}^2
+\|\s^{\f{1-s_0}2}\na v\|_{L^2}^2\bigr)\|\na X\|_{W^{1,p}}^2\,\dt'
\\
\leq &\cA_0\int_0^t\s(t')^{-1+\th_0}\|\na X(t')\|_{W^{1,p}}^2\,\dt'.
\end{split}
\eeno
While in view of \eqref{eq:X}, we have
\beq\label{S3eq35}
\p_t(v\cdot\na X)=v_t\cdot\na X-(v\cdot\na) v\cdot\na X-v_\al v\cdot\na\p_\al X+v\cdot\na\v^1, \eeq
 and thus it comes out
 \beq\label{S3eq26}
\begin{split} \bigl\|\s^{\f{1-s_0}2}&\p_t(v\cdot\na X)\bigr\|_{L^2_t(L^2)}^2\lesssim
\int_0^t\Bigl(\|\s^{\f{1-s_0}2}v_t\|_{L^2}^2
+\s^{-(1-s_0)}\bigl(\bigl\|\s^{\f{1-s_0}2\left(\f12+\f1p\right)}v\bigr\|_{L^\infty_t(L^{\f{4p}{p-2}})}^4
\\& +\bigl\|\s^{\f{1-s_0}2}v\bigr\|_{L^\infty_t(L^\infty)}^2
\|\s^{\f{1-s_0}2}\na
v\|_{L^\infty_t(L^2)}^2\bigr)
\Bigr)\|\na X\|_{W^{1,p}}^2\,\dt'
\\
&+\int_0^t\s^{-(1-s_1)}
\bigl\|\s^{\f{1-s_0}2}v\|_{L^\infty_t(L^\infty)}^2
\bigl(\bigl\|\s^{\f{1-s_1}2}\na
\v_{11}\|_{ L^2}^2+\bigl\|\s^{\f{1-s_1}2}\na
\v_{12}\bigr\|_{L^2}^2\bigr)\,\dt',
\end{split}
\eeq
from which, Corollary \ref{S1col1} and \eqref{S3eq7}, we infer that
\beno
\begin{split}
\bigl\|\s^{\f{1-s_0}2}\p_t(v\cdot\na X)\bigr\|_{L^2_t(L^2)}^2
\leq \cA_1\w{t}
+\cA_0 \int_0^t\bigl(&\bigl(\frak{B}_0(t')+\s(t')^{-1+s_0}\bigr)\|\na X(t')\|_{W^{1,p}}^2\\
&+\s(t')^{-(1-s_1)}\|\s^{\f{1-s_1}2}\na
\v_{12}(t')\|_{L^2}^2\bigr)\,\dt' .
\end{split}
\eeno
Notice that due to $\dive X=0,$ we write \beq \label{S3eq23}
\|\nabla\div(v\cdot\nabla X)\|_{L^2} = \|\nabla(\p_\al v\cdot\nabla
X^\al)\|_{L^2} \leq
 C\bigl(\|\nabla^2 v\|_{L^2}
 +
 \|\nabla v\|_{L^{\f{2p}{p-2}}}\bigr)\|\nabla X\|_{W^{1,p}},
\eeq
which together with Corollary \ref{S1col1} ensures that
\beno
\begin{split}
\|\s^{\f{1-s_0}2}\nabla\div(v\cdot\nabla X)\|_{L^2_t(L^2)}^2\leq &\int_0^t\bigl(\|\s^{\f{1-s_0}2}\na^2v\|_{L^2}^2+\|\s^{\f{1-s_0}2}\na v\|_{L^{\f{2p}{p-2}}}^2\bigr)\|\nabla X\|_{W^{1,p}}^2\,\dt'\\
\leq &\int_0^t\frak{B}_0(t')\|\nabla X(t')\|_{W^{1,p}}^2\,\dt'.
\end{split}
\eeno
Similarly, we deduce from \eqref{brho} and \eqref{S3eq1} that
 $$ \longformule{
\bigl\|\s^{\f{1-s_0}2}F_1(v,\pi)\|_{L^2_t(L^2)}^2
\lesssim
\|\s^{\f{1-s_0}2}D_t  v\|_{L^2_t(L^2)}^2 }{{}
+\int_0^t\left(\bigl\|\s^{\f{1-s_0}2}\na
v\bigr\|_{L^{\f{2p}{p-2}}}^2+\bigl\|\s^{\f{1-s_0}2}(\na^2 v,\na
\pi)\bigr\|_{L^{2}}^2\right)\|\na
X\|_{W^{1,p}}^2\,\dt',}
$$
which together with Lemma \ref{lem:Dv} and Corollary \ref{S2col3} ensures that
\beno
\bigl\|\s^{\f{1-s_0}2}F_1(v,\pi)\|_{L^2_t(L^2)}^2
\leq
\cA_0+\int_0^t\frak{B}_0(t')\|\na
X(t')\|_{W^{1,p}}^2\,\dt'.
\eeno
Inserting the above estimates into \eqref{S3eq22} gives rise to
\beno
\begin{split}
 \bigl\|&\s^{\f{1-s_1}2} \na \v_{12}\bigr\|_{L^\infty_t(L^2)}^2
+\bigl\|\s^{\f{1-s_1}2}(\p_t\v_{12},\na^2
\v_{12},\na \rp_{12})\bigr\|_{L^2_t(L^2)}^2\leq
\cA_1\w{t}^2 \\
&+\cA_0\w{t}\int_0^t\Bigl(\bigl(\frak{B}_0(t')+\s(t')^{-1+\th_0}\bigr)\|\na X(t')\|_{W^{1,p}}^2
+\s(t')^{-(1-s_1)}\|\s^{\f{1-s_1}2}\na
\v_{12}(t')\|_{L^2}^2\Bigr)\,\dt' .
\end{split}
\eeno
Applying Gronwall's inequality leads to \eqref{S3eq21}. This completes the proof of the proposition.
\end{proof}

Combining \eqref{S3eq7} with Proposition \ref{S3prop1}, we
obtain

\begin{col}\label{S3col1}
{\sl Let $(\r, v, \na \pi, X)$ be a smooth enough solution of the coupled system
\eqref{inNS} and \eqref{eq:X}. Then one has \beq
\begin{split} \|\s^{\f{1-s_1}2}\na
\v^1\|_{{L}^\infty_t(L^2)}^2+&\int_0^t\s(t')^{1-s_1}\bigl\|(\p_t\v^1,\na^2
\v^1,\na \rm{\pi}^1)\bigr\|_{L^2}^2\,dt'\leq \frak{G}_{1,X}(t),
\end{split} \label{S3eq40} \eeq with $\frak{G}_{1,X}(t)$ being given by \eqref{S3eq21}.}
\end{col}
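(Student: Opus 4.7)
The plan is straightforward: the corollary is a consolidation of the two preceding estimates via the linear decomposition \eqref{S3eq3}. Since \eqref{S1eq3} is linear in $(\v^1,\na{\rm \pi}^1)$ once $(\r,v,X)$ are fixed, we have $\v^1=\v_{11}+\v_{12}$ and $\na{\rm \pi}^1=\na\rp_{11}+\na\rp_{12}$, so the triangle inequality in $L^2$ reduces the target bound \eqref{S3eq40} to bounding each piece separately under the time weight $\s^{(1-s_1)/2}$.

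For the homogeneous piece $(\v_{11},\na\rp_{11})$ solving \eqref{S3eq4}, I would invoke \eqref{S3eq7} directly. To pass from the Chemin--Lerner-type bound $\|\s^{(1-s_1)/2}\v_{11}\|_{\wt L^\infty_t(\cB^1)}$ to the $L^\infty_t(L^2)$ bound on $\s^{(1-s_1)/2}\na\v_{11}$ appearing in \eqref{S3eq40}, I use Bernstein's inequality together with the embedding $\wt L^\infty_t(\cB^1)\hookrightarrow L^\infty_t(\dB^1_{2,1})$, which yields $\|\na a\|_{L^\infty_t(L^2)}\lesssim \|a\|_{\wt L^\infty_t(\cB^1)}$. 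The $L^2_t(L^2)$ control of $\s^{(1-s_1)/2}(\p_t\v_{11},\na^2\v_{11},\na\rp_{11})$ is already part of \eqref{S3eq7}. Therefore the contribution of the $\v_{11}$-part to \eqref{S3eq40} is bounded by $\cC(v_0,\p_{X_0}v_0,s_1)$.

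For the inhomogeneous piece $(\v_{12},\na\rp_{12})$ solving \eqref{S3eq5}, there is nothing to do: Proposition \ref{S3prop1} furnishes exactly the bound $\|\s^{(1-s_1)/2}\na\v_{12}\|_{L^\infty_t(L^2)}^2+\|\s^{(1-s_1)/2}(\p_t\v_{12},\na^2\v_{12},\na \rp_{12})\|_{L^2_t(L^2)}^2\leq\frak{G}_{1,X}(t)$. Summing this with the $\v_{11}$-estimate and observing that $\cC(v_0,\p_{X_0}v_0,s_1)\leq \cA_1 \leq \exp(\cA_0\w{t}^2)\cA_1\leq \frak{G}_{1,X}(t)$ (recall the definition of $\frak{G}_{1,X}(t)$ in \eqref{S3eq21} and of $\cA_1$ after \eqref{S1eq9}) absorbs the $\v_{11}$-constant into $\frak{G}_{1,X}(t)$, yielding \eqref{S3eq40}.

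There is no genuine obstacle here, since the heavy lifting (the time-weighted striated $\dH^1$ estimate with the $\na X\in W^{1,p}$ feedback through $F_1$, $\dive(v\cdot\na X)$, and $\p_t(v\cdot\na X)$) has already been carried out in Proposition \ref{S3prop1}; the only care needed is to check that the Chemin--Lerner norm $\wt L^\infty_t(\cB^1)$ from \eqref{S3eq7} indeed controls the plain $L^\infty_t(\dH^1)$ norm of $\v_{11}$, which is the standard Bernstein/embedding observation above.
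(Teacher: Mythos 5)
Your proof is correct and follows the paper's own route: the paper obtains the corollary precisely by combining the estimate \eqref{S3eq7} for $\v_{11}$ with Proposition \ref{S3prop1} for $\v_{12}$ through the decomposition \eqref{S3eq3}, absorbing the constant $\cC(v_0,\p_{X_0}v_0,s_1)$ into $\frak{G}_{1,X}(t)$. The Bernstein/embedding step you spell out, namely $\|\s^{\f{1-s_1}2}\na\v_{11}\|_{L^\infty_t(L^2)}\lesssim \|\s^{\f{1-s_1}2}\v_{11}\|_{\wt L^\infty_t(\cB^1)}$, is exactly the detail the paper leaves implicit, and it is valid since the time weight commutes with $\Delta_j$.
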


\subsection{The proof of Proposition \ref{prop:B1} }

We first present the $L^2$ energy estimate of $\v_{12}.$

\begin{lem}\label{S3prop2}
{\sl Let $(\v_{12}, \na \rp_{12})$ be a smooth enough solution of
\eqref{S3eq5}. Then there holds
\beq \label{S3eq8}\begin{split}
\|\v_{12}\|_{L^\infty_t(L^2)}^2+&\|\na
\v_{12}\|_{L^2_t(L^2)}^2
\leq \frak{G}_{1,X}(t), \end{split} \eeq for $\frak{G}_{1,X}(t)$  given by \eqref{S3eq21}. }
\end{lem}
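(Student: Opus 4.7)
The plan is to run a direct unweighted $L^2$ energy estimate on the system \eqref{S3eq5}, using the density transport equation from \eqref{inNS} to cancel the convective term, and to close it via the time-weighted bounds already at our disposal: Proposition \ref{S3prop1} for $\na\rp_{12}$, Corollaries \ref{S1col1} and \ref{S2col3} for the velocity field, and the intermediate bound on $\|\s^{(1-s_0)/2}F_1\|_{L^2_t(L^2)}$ obtained inside the proof of Proposition \ref{S3prop1}. Because $\v_{12}|_{t=0}=0$, no contribution from the initial data appears, so the estimate will be closed by absorbing a small fraction of $\|\sqrt{\r}\,\v_{12}\|_{L^\infty_t(L^2)}^2$ back to the left-hand side.

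Concretely, I would take the $L^2(\R^2)$ inner product of the $\v_{12}$-equation in \eqref{S3eq5} with $\v_{12}$; the standard manipulation using $\d_t\r+\dive(\r v)=0$ turns the $\r$-weighted terms into $\tfrac12\tfrac{d}{dt}\|\sqrt{\r}\,\v_{12}\|_{L^2}^2$, so that after integrating over $[0,t]$ one obtains
\begin{equation*}
\tfrac12\|\sqrt{\r}\,\v_{12}(t)\|_{L^2}^2+\|\na\v_{12}\|_{L^2_t(L^2)}^2=\int_0^t(F_1(v,\pi)\,|\,\v_{12})\,\dt'+\int_0^t(\rp_{12}\,|\,\dive(v\cdot\na X))\,\dt'.
\end{equation*}
The source integral I would bound by $\|F_1\|_{L^1_t(L^2)}\|\v_{12}\|_{L^\infty_t(L^2)}$; then by the weighted Cauchy--Schwarz $\|F_1\|_{L^1_t(L^2)}\le C\w{t}^{s_0/2}\|\s^{(1-s_0)/2}F_1\|_{L^2_t(L^2)}$, combined with the estimate $\|\s^{(1-s_0)/2}F_1\|_{L^2_t(L^2)}^2\le\cA_0+\int_0^t\frak{B}_0(t')\|\na X(t')\|_{W^{1,p}}^2\,\dt'$ already extracted from the proof of Proposition \ref{S3prop1}, this contribution is of the form $\frak{G}_{1,X}(t)\cdot\|\v_{12}\|_{L^\infty_t(L^2)}$ and will eventually be absorbed via Young's inequality.

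For the pressure contribution I would integrate by parts in space to rewrite it as $-\int_0^t(\na\rp_{12}\,|\,v\cdot\na X)\,\dt'$ and split by weighted Cauchy--Schwarz as $\|\s^{(1-s_1)/2}\na\rp_{12}\|_{L^2_t(L^2)}\,\|\s^{-(1-s_1)/2}(v\cdot\na X)\|_{L^2_t(L^2)}$. Proposition \ref{S3prop1} supplies the first factor squared as $\le\frak{G}_{1,X}(t)$. For the second factor, I would use H\"older's inequality $\|v\cdot\na X\|_{L^2}\le\|v\|_{L^2}\|\na X\|_{L^\infty}$ together with the Sobolev embedding $W^{1,p}(\R^2)\hookrightarrow L^\infty(\R^2)$ for $p>2$ and the conservation of $\|v\|_{L^\infty_t(L^2)}$, yielding $\|\s^{-(1-s_1)/2}(v\cdot\na X)\|_{L^2_t(L^2)}^2\le C\|v_0\|_{L^2}^2\int_0^t\s^{-(1-s_1)}\|\na X\|_{W^{1,p}}^2\,\dt'$, which, thanks to the structural choice $\theta_0=s_0-s_1$ and $1-s_1<1$, fits into the singular term $\s^{-1+\theta_0/2}\|\na X\|_{W^{1,p}}^2$ already built into $\frak{G}_{1,X}$ in \eqref{S3eq21}.

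The main obstacle is precisely the pressure term: because $\dive\v_{12}\ne 0$, the pressure does not drop out of the energy identity, and one must balance the time-weight $\s^{(1-s_1)/2}$ of Proposition \ref{S3prop1} against a compensating singular weight $\s^{-(1-s_1)/2}$ on $v\cdot\na X$ which is still $L^2_t$-integrable near $t=0$. The construction of the indices $s_1=s_0-\theta_0$ with $\theta_0\in\,]0,s_0[\,$ is tailored so that this balance works out and the resulting bound is dominated by $\frak{G}_{1,X}(t)$. Collecting the above estimates and applying Young's inequality to absorb $\tfrac14\|\sqrt{\r}\,\v_{12}\|_{L^\infty_t(L^2)}^2$ back to the left yields \eqref{S3eq8}.
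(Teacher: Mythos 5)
Your proposal is correct and follows essentially the same route as the paper: an unweighted $L^2$ energy estimate for \eqref{S3eq5} in which the pressure contribution is converted through $\dive \v_{12}=\dive(v\cdot\na X)$ and the whole estimate is closed by the time-weighted bounds of Proposition \ref{S3prop1} together with the bound on $\|\s^{\f{1-s_0}2}F_1\|_{L^2_t(L^2)}$ and an absorption of a fraction of $\|\sqrt{\r}\,\v_{12}\|_{L^\infty_t(L^2)}^2$ (the paper substitutes the momentum equation to replace $\na \rp_{12}$ by $\r\p_t\v_{12}+\r v\cdot\na\v_{12}-\D\v_{12}-F_1$ tested against $v\cdot\na X$, whereas you invoke the $\|\s^{\f{1-s_1}2}\na\rp_{12}\|_{L^2_t(L^2)}$ bound of Proposition \ref{S3prop1} directly, an equivalent and slightly shorter step). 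The only slip is cosmetic: $\|\s^{-\f{1-s_0}2}\|_{L^2_t}\lesssim \w{t}^{\f12}$ rather than $\w{t}^{\f{s_0}2}$ for large $t$, which is harmless since any polynomial factor in $\w{t}$ is absorbed by the prefactor $\exp(\cA_0\w{t}^2)$ in $\frak{G}_{1,X}(t)$.
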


\begin{proof} We  get, by using standard energy estimate to \eqref{S3eq5},
that \beno \f12\|\sqrt{\r}\v_{12}\|_{L^\infty_t(L^2)}^2+\|\na
\v_{12}\|_{L^2_t(L^2)}^2
= \int_0^t\int_{\R^2}\bigl(-\na \rp_{12}+F_1(v,\pi)\bigr) | \v_{12}\,\dx\,\dt'.
\eeno
By using
$\dive \v_{12}=\dive(v\cdot\na X),$ one has \beno
\begin{split}
\int_{\R^2}\na \rp_{12} | \v_{12}\,\dx
=\int_{\R^2}\na \rp_{12}
| v\cdot\na X\,\dx,
\end{split}
\eeno from which and the momentum equation of \eqref{S3eq5}, we
infer \beq\label{S3eq9}
\begin{split}
\f12\|\sqrt{\r}\v_{12}\|_{L^\infty_t(L^2)}^2+\|\na
\v_{12}&\|_{L^2_t(L^2)}^2=\int_0^t\int_{\R^2} F_1(v,\pi) | (\v_{12}- v\cdot\na
X)\,\dx\,\dt'\\
&+\int_0^t\int_{\R^2}\left(\r\p_t\v_{12}+\r v\cdot\na \v_{12}-\D \v_{12}\right)
| v\cdot\na X \dx\,\dt'.
\end{split}
\eeq
It is
easy to observe from \eqref{brho} and \eqref{S3eq1} that  \beno
\begin{split}
|\int_0^t&\int_{\R^2} F_1(v,\pi) | (\v_{12}- v\cdot\na
X)\,\dx\,\dt'|
\leq  \f14\|\sqrt{\r}\v_{12}\|_{L^\infty_t(L^2)}^2
\\
&+C\Bigl(\int_0^t\s^{-\f{1-s_0}2}\times\s^{\f{1-s_0}2}\bigl(\|\na v\|_{L^{\f{2p}{p-2}}}+
\|(D_t v, \na^2v, \na\pi)\|_{L^2}\bigr)
(1+\|\na X\|_{W^{1,p}})\,\dt'\Bigr)^2
\\
&+C\int_0^t\s^{-\f{1- s_0}2}\times\s^{\f{1-s_0}2}\bigl(\|\na v\|_{L^{\f{2p}{p-2}}}+
\|(D_t v, \na^2v, \na\pi)\|_{L^2}\bigr)\|v\|_{L^2}
(1+\|\na X\|_{W^{1,p}}^2)\,\dt'.
\end{split}
\eeno Hence by virtue of Corollary \ref{S1col1} and \eqref{S1eq25}, we
deduce \beq \label{S3eq10} \begin{split}\bigl| \int_0^t\int_{\R^2}
F_1(v,\pi) | (\v_{12}- v\cdot\na X)\,\dx\,\dt'\bigr|
\leq & \f14\|\sqrt{\r}\v_{12}\|_{L^\infty_t(L^2)}^2+\frak{G}_{1,X}(t)
\end{split} \eeq for  $\frak{G}_{1,X}(t)$  given by \eqref{S3eq21}.

Let us now turn to the estimate of the second line of
\eqref{S3eq9}. Once again, we get, by applying H\"older's
inequality, that
$$\longformule{ \bigl|\int_0^t\int_{\R^2}\r(\p_t\v_{12}-\D \v_{12}) | v\cdot\na X\,\dx\,\dt'\bigr|\leq
C\int_0^t\|(\p_t\v_{12},\na^2 \v_{12})\|_{L^2}\|v\|_{L^2}\|\na
X\|_{L^\infty}\,\dt'}{{} \leq
C\bigl\|\s^{\f{1-s_1}2}(\p_t\v_{12},\na^2 \v_{12})\|_{L^2_t(L^2)}^2+C\int_0^t\s^{-\left(1-s_1\right)}
\|v\|_{L^\infty_t(L^2)}^2\|\na
X\|_{W^{1,p}}^2\,\dt', } $$
and
\beno
\begin{split}
\bigl|\int_0^t\int_{\R^2}&\r v\cdot\na \v_{12} | v\cdot\na X\,dx\,dt'\bigr|\leq
\int_0^t\|v\|_{L^\infty}\|\na \v_{12}\|_{L^2}\|v\|_{L^2}\|\na
X\|_{L^{\infty}}\,\dt'
\\
\leq &\f12\|\na
\v_{12}\|_{L^2_t(L^2)}^2+\int_0^t\s^{-(1-s_0)}\bigl\|\s^{\f{1-s_0}2}
v\|_{L^\infty_t(L^\infty)}^2\|v\|_{L^\infty_t(L^{2})}^2\|\na
X\|_{W^{1,p}}^2\,\dt'.
\end{split}
\eeno
 Together with Corollary \ref{S1col1}, the above inequalities
ensure that \beq
\begin{split}
\bigl|&\int_0^t\int_{\R^2}\left(\r\p_t\v_{12}+\r v\cdot\na \v_{12}-\D
\v_{12}\right) | v\cdot\na X dx\,dt'\bigr|\leq  \f12\|\na \v_{12}\|_{L^2_t(L^2)}^2 \\
&\qquad+
C\bigl\|\s^{\f{1-s_1}2}(\p_t\v_{12},\na^2 \v_{12})\|_{L^2_t(L^2)}^2
+\cA_0\int_0^t\s(t')^{-1+s_1}\|\na
X(t')\|_{W^{1,p}}^2\,\dt'.
\end{split} \label{S3eq12} \eeq
Inserting the Inequalities \eqref{S3eq10} and \eqref{S3eq12} into
\eqref{S3eq9}   leads to \beno
\begin{split}
\|\sqrt{\r}\v_{12}\|_{L^\infty_t(L^2)}^2+\|\na
\v_{12}\|_{L^2_t(L^2)}^2 \leq &
C\|\s^{\f{1-s_1}2}(\p_t\v_{12},\na^2 \v_{12})\|_{L^2_t(L^2)}^2+\frak{G}_{1,X}(t)
\end{split} \eeno for  $\frak{G}_{1,X}(t)$  given by \eqref{S3eq21}. This together with Proposition \ref{S3prop1} leads to \eqref{S3eq8}. We finish the proof of the lemma.
\end{proof}

Thanks to \eqref{S3eq3},
we  conclude the proof of Proposition \ref{prop:B1} by  combining the Estimates \eqref{S3eq6} and \eqref{S3eq7} with  Proposition \ref{S3prop1} and Lemma \ref{S3prop2}. Moreover, we have the following corollary, which will be used in Section \ref{sectdtv}:

\begin{col}\label{lem:Dv1}
{\sl Let $\frak{G}_{1,X}(t)$ be given by \eqref{S3eq21}. Then under the assumptions of  Proposition \ref{prop:B1},
we have
\begin{equation}\label{S4eq10p}
\begin{split}
&\bigl\| \sigma ^{ \frac{1-s_1}{2}}
 \bigl( D_t \v^1, \v^1\otimes\nabla v, v\otimes \nabla \v^1,
\Delta D_t X, D_t\Delta X\bigr)\bigr\|_{L^2_t(L^2)}^2
\\
&+
\| \sigma ^{ \frac{1-s_1}{2}}\nabla \v^1\|_{L^4_t(L^4)}^2
+\bigl\|\sigma ^{1-\frac{s_1}{2}}
(\nabla v\otimes\nabla \v^1, v\otimes\nabla^2 \v^1,
\v^1\otimes\nabla^2 v)\bigr\|_{L^2_t(L^2)}^2
\\
&
+\|\sigma^{1-\frac{s_1}{2}} (\v^1\otimes\nabla v, v\otimes\nabla \v^1)\|_{L^\infty_t(L^2)}^2+\|\sigma ^{  \frac{3-s_1}{2}}\v^1\otimes (D_t\nabla v, \nabla v_t)\|_{L^2_t(L^2)}^2
\\
&
+\bigl\|\sigma ^{  \frac{3-s_1}{2}}
\nabla v\otimes(\nabla^2 \v^1,\nabla{\rm \pi}^1,
D_t \v^1, D_t\Delta X)\bigr\|_{L^2_t(L^2)}^2
\\
&
+\bigl\|\sigma ^{  \frac{3-s_1}{2}}
( \nabla \v^1, D_t\nabla X)
\otimes(\nabla^2 v, \nabla\pi, D_t v, v\otimes\nabla v)\bigr\|_{L^2_t(L^2)}^2\leq \frak{G}_{1,X}(t).
\end{split}
\end{equation} }
\end{col}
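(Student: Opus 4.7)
The bound \eqref{S4eq10p} is a bookkeeping consequence of Proposition \ref{prop:B1} (which provides all the $\v^1$-type bounds bundled in $A_{11}(t)^2\leq\frak{G}_{1,X}(t)$), combined with the $v$-estimates of Section \ref{sec:J0} (Propositions \ref{S1prop1}, \ref{S1prop2}, Corollaries \ref{S1col1}, \ref{S2col3}, and Lemma \ref{lem:Dv}), the transport equation \eqref{eq:X}, H\"older's inequality in both time and space, and the 2-D Gagliardo--Nirenberg inequality \eqref{S2eq6}.

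The first step is to record the identities $D_tX=X\cdot\nabla v=\v^1$, $D_t\nabla X=\nabla\v^1-\nabla v\cdot\nabla X$, $D_t\Delta X=\Delta\v^1-\Delta v\cdot\nabla X-2\nabla v\cdot\nabla^2 X$ and $\Delta D_tX=\Delta\v^1$, all of which follow from \eqref{eq:X} together with $[D_t;\partial_i]f=-\partial_iv\cdot\nabla f$. These reduce every $D_tX$, $D_t\nabla X$, or $D_t\Delta X$ appearing in \eqref{S4eq10p} to a $\v^1$-type term (already controlled by $A_{11}(t)$) plus products of $\nabla v$ or $\Delta v$ with $\nabla X$ or $\nabla^2 X$; the $X$-factors are absorbed using the transport-type bound $\|X\|_{L^\infty_t(W^{2,p})}\lesssim \frak{G}_{1,X}(t)^{1/2}$, obtained by applying standard $W^{2,p}$ estimates to \eqref{eq:X} with right-hand side $\v^1$, together with the Sobolev embedding $W^{1,p}\hookrightarrow L^\infty$ (valid for $p>2$). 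Similarly, every occurrence of $D_t\v^1$ is rewritten as $\partial_t\v^1+v\cdot\nabla\v^1$: the first piece lies in $A_{11}(t)$ with weight $\sigma^{(1-s_1)/2}$, and the convective piece obeys $\|\sigma^{(1-s_1)/2}v\cdot\nabla\v^1\|_{L^2_t(L^2)}\leq\|\sigma^{1/2}v\|_{L^\infty_t(L^\infty)}\|\sigma^{-s_1/2}\nabla\v^1\|_{L^2_t(L^2)}$, where the first factor is $\leq\cC(v_0,s_0)$ by Corollary \ref{S1col1} and the second follows by interpolating $\|\nabla\v^1\|_{L^2_t(L^2)}$ against $\|\sigma^{(1-s_1)/2}\nabla\v^1\|_{L^\infty_t(L^2)}$, both contained in $A_{11}(t)$.

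For each remaining product $\sigma^\alpha(A\otimes B)$ in \eqref{S4eq10p} I would split the weight $\sigma^\alpha=\sigma^{\alpha_1}\sigma^{\alpha_2}$ so that $\sigma^{\alpha_1}A$ matches a known $v$-estimate from Section \ref{sec:J0} and $\sigma^{\alpha_2}B$ matches one of the bounds in $A_{11}(t)$, and then close by H\"older. The $L^4_t(L^4)$-bound on $\sigma^{(1-s_1)/2}\nabla\v^1$ uses \eqref{S2eq6} in the form $\|f\|_{L^4}^2\lesssim\|f\|_{L^2}\|\nabla f\|_{L^2}$ applied to $\|\sigma^{(1-s_1)/2}\nabla\v^1\|_{L^\infty_t(L^2)}$ and $\|\sigma^{(1-s_1)/2}\nabla^2\v^1\|_{L^2_t(L^2)}$, both in $A_{11}(t)$. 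The most delicate $\sigma^{(3-s_1)/2}$-weighted terms, e.g.\ $\nabla v\otimes D_t\v^1$ or $\v^1\otimes\nabla v_t$, are handled by pairing $\|\sigma^{1-s_0/2}\nabla v\|_{L^\infty_t(L^\infty)}$ (Corollary \ref{S2col3}) or $\|\sigma^{1/2}v\|_{L^\infty_t(L^\infty)}$ (Corollary \ref{S1col1}) with the $L^2_t(L^2)$-bound on $\sigma^{(3-s_0)/2}\nabla v_t$ from Proposition \ref{S1prop2} and with the $A_{11}$-bounds on $\v^1,\nabla\v^1,\nabla^2\v^1,\nabla{\rm \pi}^1$, observing that $\sigma^{1/2+\theta_0/2}\leq\sigma^{(1-s_1)/2}$ for $\sigma\leq 1$ whenever the weight comparison is needed. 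The main obstacle is purely combinatorial: each of the roughly fifteen product terms in \eqref{S4eq10p} requires the correct choice of weight split and interpolation exponent, but no new analytical input beyond the estimates already proved enters the argument.
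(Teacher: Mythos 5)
There is a genuine gap, and it is structural rather than merely combinatorial. Your argument absorbs every $X$-factor through the claimed bound $\|X\|_{L^\infty_t(W^{2,p})}\lesssim \frak{G}_{1,X}(t)^{1/2}$, "obtained by applying standard $W^{2,p}$ estimates to \eqref{eq:X} with right-hand side $\v^1$". That bound is not available at this stage, and attempting to prove it here is circular: the $W^{2,p}$ transport estimate for $X$ (cf. \eqref{S4eq15p}) requires $\|\Delta \v^1\|_{L^1_t(L^p)}$, which via the Stokes estimate \eqref{Stokes:v1} requires $\|D_t\v^1\|_{L^1_t(L^p)}$, which in turn requires the time-weighted $H^1$ energy estimate of $D_t\v^1$ (i.e. \eqref{S4eq1t} and \eqref{bound:Dv1,q}); but that estimate is established only in Proposition \ref{S4prop1q}, whose proof uses Corollary \ref{lem:Dv1} as an input through Lemmas \ref{S4lem1w}, \ref{S4lem3} and \eqref{Dv1:F}. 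Note also that $A_{11}$ only controls $\nabla^2\v^1$ in $L^2$-based spaces, so no shortcut to $\Delta\v^1\in L^1_t(L^p)$, $p>2$, exists either. The paper's proof of the corollary deliberately never bounds $X$ in $W^{2,p}$: every occurrence of $\nabla X$ or $\nabla^2 X$ is kept inside a time integral and paired with components of $\frak{B}_0(t')$ (or with $\sigma^{-1+\theta_0}$), so that the output has exactly the form $\int_0^t\frak{B}_0(t')\|\nabla X(t')\|_{W^{1,p}}^2\,dt'$ entering the definition \eqref{S3eq21} of $\frak{G}_{1,X}(t)$; this is precisely what permits the later Gronwall closure for $\|X\|_{W^{2,p}}$ in Proposition \ref{S4prop1q}. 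Replacing this structure by a premature $W^{2,p}$ bound on $X$ breaks the bootstrap.

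Two secondary points. First, for the $\sigma^{(3-s_1)/2}$-weighted terms $\v^1\otimes(D_t\nabla v,\nabla v_t)$ your proposed pairing puts the low-order factor in $L^\infty_t(L^\infty)$, but the factor here is $\v^1$, not $v$, and $A_{11}$ gives no $L^\infty$ control of $\v^1$; the paper instead interpolates $\|\sigma^{(1-s_1)/r}\v^1\|_{L^\infty_t(L^{2r/(r-2)})}$ from $\|\v^1\|_{L^\infty_t(L^2)}$ and $\|\sigma^{(1-s_1)/2}\nabla\v^1\|_{L^\infty_t(L^2)}$ and pairs it with $\|\sigma^{(3/2-1/r-s_0/2)}(D_t\nabla v,\nabla v_t)\|_{L^2_t(L^r)}$ from Corollary \ref{S2col3}. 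Second, your bound for the convective part of $D_t\v^1$ via $\|\sigma^{1/2}v\|_{L^\infty_t(L^\infty)}\|\sigma^{-s_1/2}\nabla\v^1\|_{L^2_t(L^2)}$ does not close: the interpolation of $\|\nabla\v^1\|_{L^2_t(L^2)}$ against $\|\sigma^{(1-s_1)/2}\nabla\v^1\|_{L^\infty_t(L^2)}$ cannot absorb the weight $\sigma^{-s_1}$ (the resulting time integral diverges at $t'=0$ for every admissible splitting); this term should instead be estimated as $\|v\|_{L^4_t(L^4)}\|\sigma^{(1-s_1)/2}\nabla\v^1\|_{L^4_t(L^4)}$, using the $L^4_t(L^4)$ bound you do derive. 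These two items are repairable; the circular use of $\|X\|_{L^\infty_t(W^{2,p})}$ is the essential flaw.
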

\begin{proof}
We first deduce from  \eqref{S2eq6} and Proposition \ref{prop:B1} that
\begin{align}\label{S4eq11p}
 \|\v^1\|_{L^4_t(L^4)}^2
 +\|\sigma ^{ \frac{1-s_1}{2}}  \nabla \v^1  \|_{L^4_t(L^4)}^2
 \leq  \frak{G}_{1,X}(t).
\end{align}
Then due to
\begin{align*}
 \bigl\|\sigma^{ \frac{1-s_1}{2}}
 (v\otimes \nabla \v^1, \v^1\otimes\nabla v ) \bigr\|_{L^2_t(L^2)}
\leq&
 \| v  \|_{L^4_t(L^4)}
 \|\sigma ^{ \frac{1-s_1}{2}}
 \nabla \v^1 \|_{L^4_t(L^4)}+
 \|\sigma ^{ \frac{1-s_1}{2}}
 \nabla v\|_{L^4_t(L^4)}
 \| \v^1  \|_{L^4_t(L^4)},
\end{align*} and
\begin{align*}
 \|\sigma ^{ \frac{1-s_1}{2}}
D_t \v^1 \|_{L^2_t(L^2)}
& \leq
 \|\sigma ^{ \frac{1-s_1}{2}}
\d_t \v^1  \|_{L^2_t(L^2)}
+\|\sigma ^{ \frac{1-s_1}{2}}
v\cdot\nabla \v^1\|_{L^2_t(L^2)},
\end{align*}
we deduce from Corollary \ref{S1col1} and Proposition \ref{prop:B1} that
\beno
 \bigl\|\sigma^{ \frac{1-s_1}{2}}
 (v\otimes \nabla \v^1, \v^1\otimes\nabla v ) \bigr\|_{L^2_t(L^2)}^2+\|\sigma ^{ \frac{1-s_1}{2}}
D_t \v^1 \|_{L^2_t(L^2)}^2\leq \frak{G}_{1,X}(t).
\eeno
Note that $D_t X=\v^1,$ we have
\beq\label{S4eq11py}
D_t \Delta X=\Delta D_t X
-\Delta v\cdot\nabla X
-2\nabla v_\al\cdot\p_\al\nabla X,
\eeq from which and Proposition \ref{prop:B1}, we infer
\beq\label{S4eq13p}
\begin{split}
\|&\sigma ^{\frac{1-s_1}2}
 (\Delta D_t X, D_t \Delta X) \|_{ L^2_t(L^2)}^2\lesssim
\| \sigma ^{\frac{1-s_1}{2}}
 \Delta \v^1\|_{ L^2_t(L^2)}^2\\
 &\quad+\int_0^t\bigl(\|\sigma ^{\frac{1-s_0}2}
  \Delta v\|_{L^{2}}^2
+\|\sigma^{\frac{1-s_0}2}\nabla v\|_{L^{\f{2p}{p-2}}}^2 \bigr)
\| \na X \|_{W^{1,p}}^2\,\dt'\leq \frak{G}_{1,X}(t).
\end{split} \eeq

On the other hand,
it is easy to observe from  Corollary \ref{S1col1}, Proposition \ref{prop:B1} and \eqref{S4eq11p} that
\begin{equation}\label{v1v''}
\begin{split}
&\bigl\|\sigma ^{1-\frac{s_1}{2}}
(\nabla v\otimes\nabla \v^1, v\otimes\nabla^2 \v^1,
\v^1\otimes\nabla^2 v)\bigr\|_{L^2_t(L^2)}
 \leq
 \|\sigma ^{\frac 12}
 \nabla v \|_{L^4_t(L^4)}
 \|\sigma ^{ \frac{1-s_1}{2} }
 \nabla \v^1 \|_{L^4_t(L^4)}
 \\
&+
 \|\sigma ^{\frac 12 }
 v  \|_{L^\infty_t(L^\infty)}
 \|\sigma ^{ \frac{1-s_1}{2} }
 \nabla^2 \v^1 \|_{L^2_t(L^2)}+
 \|  \v^1  \|_{L^4_t(L^4)}
 \|\sigma^{1-\frac{s_1}{2}}   \nabla^2 v  \|_{L^4_t(L^4)}
 \leq
 \frak{G}_{1,X}^{\frac 12}(t).
 \end{split}
 \end{equation}
Similarly we deduce from Corollaries \ref{S1col1} and \ref{S2col3}, Proposition \ref{prop:B1}  and \eqref{S4eq13p} that
\begin{align*}
 \bigl\| \sigma^{1-\frac{s_1}{2}}  (\v^1\otimes\nabla v, v\otimes\nabla \v^1)\bigr\|_{L^\infty_t(L^2)}
 \leq &
 \|  \v^1 \|_{L^\infty_t(L^2)}
 \|\sigma^{1-\frac{s_1}{2}}\nabla v\|_{L^\infty_t(L^\infty)}\\
 &+
 \| \sigma^{\frac 12} v \|_{L^\infty_t(L^\infty)}
 \|\sigma^{ \frac{1-s_1}{2}}\nabla \v^1\|_{L^\infty_t(L^2)}\leq
 \frak{G}_{1,X}^{\frac 12}(t),
\end{align*}
and
\begin{align*}
&\bigl\|\sigma ^{ \frac{3-s_1}{2}}
\nabla v\otimes(\nabla^2 \v^1, \nabla{\rm \pi}^1, D_t \v^1, D_t\Delta X)\bigr\|_{L^2_t(L^2)}
\\
&\quad \leq
 \|\sigma
 \nabla v \|_{L^\infty_t(L^\infty)}
 \|\sigma ^{\frac{1-s_1}{2} }
 (\nabla^2 \v^1, \nabla{\rm \pi}^1, D_t \v^1,  D_t\Delta X) \|_{L^2_t(L^2)}\leq
 \frak{G}_{1,X}^{\frac 12}(t),
 \\
&\bigl\|\sigma ^{ \frac{3-s_1}{2}}
\nabla \v^1
\otimes(\nabla^2 v, \nabla\pi, D_t v, v\otimes\nabla v)\bigr\|_{L^2_t(L^2)}
\\
&\quad \leq
 \|\sigma ^{\frac{1-s_1}2 }
\nabla  \v^1\|_{L^4_t(L^4)}
 \|\sigma
( \nabla^2 v, \nabla\pi, D_t v, v\otimes\nabla v) \|_{L^4_t(L^4)}\leq
\frak{G}_{1,X}^{\frac 12}(t).
\end{align*}
As a result, we infer from    Corollary \ref{S1col1}  and Corollary \ref{S2col3}  that
\beno
\begin{split}
\bigl\|&\sigma ^{  \frac{3-s_1}{2}}
  D_t\na X
\otimes(\nabla^2 v, \nabla\pi, D_t v, v\otimes\nabla v)\bigr\|_{L^2_t(L^2)}^2\\
&\leq  \bigl\|\sigma ^{  \frac{3-s_1}{2}}
 \nabla \v^1
\otimes(\nabla^2 v, \nabla\pi, D_t v, v\otimes\nabla v)\bigr\|_{L^2_t(L^2)}^2\\
&\quad+\int_0^t\|\s^{\f{1-s_0}2}\na v\|_{L^\infty}^2\bigl\|\s^{1-\f{s_0}2}(\nabla^2 v, \nabla\pi, D_t v, v\otimes\nabla v)\bigr\|_{L^\infty_t(L^2)}^2\|\na X\|_{W^{1,p}}^2\,dt'
\leq \frak{G}_{1,X}(t).
\end{split}
\eeno
Finally it follows from  Corollary \ref{S2col3} and Proposition \ref{prop:B1} that for any $r\in ]2,+\infty[$,
\begin{align*}
\|\sigma ^{  \frac{3-s_1}{2}}&\v^1\otimes D_t\nabla v\|_{L^2_t(L^2)}
\leq
\|\sigma^{\frac{1-s_1}{r}} \v^1 \|_{L^\infty_t(L^{\frac{2r}{r-2}})}
\|\sigma ^{ \left( \frac{3}{2}-\frac 1r-\frac{s_1}{2}\right)}  D_t\nabla v \|_{L^2_t(L^r)}
\\
&\qquad\leq
C\|  \v^1 \|_{L^\infty_t(L^2)}^{1-\frac 2r}
\|\sigma^{\frac{1-s_1}{2}}\nabla \v^1\|_{L^\infty_t(L^2)}^{\frac 2r}
\|\sigma ^{  \left(\frac{3}{2}-\frac 1r-\frac{s_0}{2}\right)}  D_t\nabla v \|_{L^2_t(L^r)}\leq
\frak{G}_{1,X}^{\frac 12}(t),
\end{align*}
which together with the above estimate, \eqref{v1v''} and
Corollary \ref{S1col1} ensures
\begin{align*}
\|\sigma ^{  \frac{3-s_1}{2}}\v^1\otimes  \nabla v_t\|_{L^2_t(L^2)}
\leq &
\|\sigma ^{  \frac{3-s_1}{2}}\v^1\otimes  D_t\nabla v\|_{L^2_t(L^2)}
+\|\sigma ^{  \frac{3-s_1}{2}}\v^1\otimes  (v\cdot\nabla^2 v)\|_{L^2_t(L^2)}
\\
\leq &
\frak{G}_{1,X}^{\frac 12}(t)
+\|\sigma^{\frac 12}v\|_{L^\infty_t(L^\infty)}
\|\sigma^{1-\frac{s_1}{2}}\v^1\otimes\nabla^2 v\|_{L^2_t(L^2)}
\leq
\frak{G}_{1,X}^{\frac 12}(t).
\end{align*}
This completes the proof of the corollary.
\end{proof}
%

\setcounter{equation}{0}
\section{Propagation of the first-order striated regularity (continued)}\label{sectdtv}

The aim of this section is to prove Proposition \ref{prop:Dv1}. In order to do it, we shall first  present the time-weighted
$H^1$-energy estimate for $D_t \v^1,$  and then we derive the global {\it a priori} estimate of $\|\na X\|_{L^\infty_t(W^{1,p})}. $
Finally we study the propagation of $\cB^{s_1}$ regularity of $\v^1.$

\subsection{$H^1$ energy estimates for $D_t \v^1$}
In view of  of Lemma \ref{S4lem01}, to close the time-weighted $H^1$ estimate of $D_t\v^1,$ it remains to deal with the estimates of $\dive D_t\v^1$ and $\dive D_t^2\v^1,$
which are the lemmas as follows:

\begin{lem}\label{S4lem1w}
 {\sl
Let $\fa_1\eqdefa 2\v^1\cdot\nabla v+  v_t\cdot\nabla X+v\p_\al X\cdot\na v_\al
-(v\cdot\nabla X)\cdot\nabla v.$
Then we have
$\div D_t \v^1=\div \frak{a}_1,$ and for $\frak{G}_{1,X}(t)$ given by \eqref{S3eq21}, there holds
\beq \label{S8eq4} \|\sigma^{ \frac{1-s_1}{2}}\fa_1\|_{L^2_t(L^2)}^2+ \|\sigma^{1-\frac{s_1}{2}}\nabla \frak{a}_1\|_{L^2_t(L^2)}^2+\|\s^{\f{3-s_1}2}\na\dive\frak{a}_1\|_{L^2_t(L^2)}^2
\leq \frak{G}_{1,X}(t). \eeq  }
\end{lem}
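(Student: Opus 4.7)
First, I would verify the divergence identity $\dive D_t\v^1=\dive\fa_1$ by direct computation. Starting from $\dive\v^1=\p_\al X\cdot\na v_\al$ (proved in \eqref{S3eq1a}), I would write
\begin{equation*}
\dive D_t\v^1=\p_t(\p_\al X\cdot\na v_\al)+\dive(\v^1\cdot\na v)+v\cdot\na(\p_\al X\cdot\na v_\al),
\end{equation*}
using the commutator formula $\dive(v\cdot\na w)=\dive(w\cdot\na v)+v\cdot\na\dive w$ (valid because $\dive v=0$) applied to $w=\v^1$, together with $\p_tX=\v^1-v\cdot\na X$ from \eqref{eq:X}. Matching this with the direct expansion of $\dive\fa_1$ (using $\dive v=\dive X=0$) produces the identity. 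A crucial algebraic simplification, which I would also use in the higher-order estimates below, is the cancellation
\begin{equation*}
\dive\fa_1=2\,\na v:\na\v^1+\p_\al X\cdot D_t\na v_\al-\p_\al v\cdot\na X\cdot\na v_\al,
\end{equation*}
so that no $\na v_t$ (and in particular no $\na^2 v_t$ after a further derivative) ever appears in $\dive\fa_1$ or its gradient.

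For the $L^2_t(L^2)$ bound on $\sigma^{(1-s_1)/2}\fa_1$, the summand $2\v^1\cdot\na v$ is directly controlled by the $\v^1\otimes\na v$ entry in \eqref{S4eq10p}; the summand $v(\p_\al X\cdot\na v_\al)$ is bounded by pairing the $v$-factor from Corollary \ref{S1col1} with a $\na X\otimes\na v$ factor through Corollary \ref{S2col3} and the embedding $W^{1,p}\hookrightarrow L^\infty$ for $p>2$; finally the two $\na X$-bearing terms $v_t\cdot\na X$ and $(v\cdot\na X)\cdot\na v$ I would handle by placing $\na X$ in $L^\infty_t(L^\infty)$ and invoking Proposition \ref{S1prop1} and Corollary \ref{S2col3} on the $v$-factors with the appropriate $\sigma$-weights. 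The resulting integrand has the form $\bigl(\frak{B}_0(t')+\sigma(t')^{-1+\theta_0/2}\bigr)\|\na X(t')\|_{W^{1,p}}^2$, which is precisely what is absorbed into $\frak{G}_{1,X}(t)$.

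For the higher-order bounds on $\sigma^{1-s_1/2}\na\fa_1$ and $\sigma^{(3-s_1)/2}\na\dive\fa_1$, I would differentiate by Leibniz and distribute the $\sigma$-weight so that each factor is controlled by the available estimate. One derivative falls either on a $v$-factor (covered by Corollary \ref{S2col3}) or on $X$, producing $\na^2 X\in L^\infty_t(L^p)$ which I pair with a $v$-factor in the conjugate Lebesgue index via 2-D interpolation and Corollary \ref{S1col1}. Using the reformulation above, the fully expanded $\na\dive\fa_1$ contains at worst $\na^2\v^1$, $D_t\na^2 v$ and $\na D_t\na v$, all of which carry the required $\sigma$-weight through Proposition \ref{prop:B1}, Corollary \ref{lem:Dv1}, Proposition \ref{S1prop2} and Corollary \ref{S2col3}. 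The identity $(3-s_1)/2=(3-s_0)/2+\theta_0/2$ provides exactly the extra $\sigma^{\theta_0/2}$ on a $\na X$-factor that bridges the regularity gap between the $s_0$-level and $s_1$-level estimates.

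The main obstacle is precisely the top-order term $\sigma^{(3-s_1)/2}\na\dive\fa_1$: a naive expansion of $\dive\fa_1$ formally demands control of $\na^2 v_t$, which is not available in our function spaces. The cancellation noted above, converting the two $\na v_t$ contributions into the cleaner $D_t\na v$ expression plus a bilinear $\na v\otimes\na v$ term, is what makes the estimate closable using Propositions \ref{S1prop1}--\ref{S1prop2}, Corollary \ref{S2col3} and their consequences. Once this reformulation is in hand, the remaining bookkeeping is routine.
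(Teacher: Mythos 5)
Your proposal is correct and follows essentially the same route as the paper: the divergence identity is obtained by an equivalent direct computation, and the decisive step — rewriting $\dive\fa_1$ as $2\,\mathrm{tr}(\na\v^1\na v)+\p_\al X\cdot D_t\na v_\al$ plus a cubic $\na v\otimes\na v\otimes\na X$ term, so that $\na\dive\fa_1$ only requires $\na^2 D_tv$, $\na D_t\na v$ and $\na(\na v\otimes\na\v^1)$ rather than $\na^2 v_t$ — is exactly the cancellation the paper exploits, with the same inputs (Propositions \ref{S1prop1}, \ref{S1prop2}, \ref{prop:B1}, Corollaries \ref{S1col1}, \ref{S2col3}, \ref{lem:Dv1}) closing the bounds into $\frak{G}_{1,X}(t)$.
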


\begin{proof} Due to $\dive X=\dive v=0$ and $[D_t;\p_X]=0,$ we observe that
\beq\label{divDv1}\begin{split}
\div(D_t \v^1)
=\div(\d_X D_t v)
&=\div\left(D_t v\cdot\nabla X+X\div D_t v\right)
\\
&=\div\left(D_t v\cdot\nabla X+X\div(v\cdot\nabla v)\right)
\\
&=\div\left(D_t v\cdot\nabla X+\d_X(v\cdot\nabla v)-(v\cdot\nabla v)\cdot\nabla X\right) ,
\end{split}\eeq
and
\begin{align*}
\div(\d_X(v\cdot\nabla v))
&  =\div\left(\v^1\cdot\nabla v+v\,\cdot\nabla \v^1-(v\cdot\nabla X)\cdot\nabla v\right)
\\
&=\div\left(2\v^1\cdot\nabla v +v\,\div \v^1-(v\cdot\nabla X)\cdot\nabla v\right)
\\
&=\div\left(2\v^1\cdot\nabla v +v\,\p_\al X\cdot\na v_\al-(v\cdot\nabla X)\cdot\nabla v\right).
\end{align*} This shows that $\div D_t \v^1=\div \frak{a}_1.$
Moreover,
it is easy to observe  that
\beno
\begin{split}
&\|\sigma^{ \frac{1-s_1}{2}}\v^1\cdot\na v\|_{L^2_t(L^2)}^2\leq \|\sigma^{ \frac{1-s_0}{2}}\na v\|_{L^2_t(L^\infty)}^2\|\v^1\|_{L^\infty_t(L^2)}^2,\\
&\|\sigma^{ \frac{1-s_1}{2}}
\bigl(v_t\cdot\nabla X, v\otimes\nabla v\otimes\na X\bigr)\|_{L^2_t(L^2)}^2
\leq \int_0^t\|\s^{\f{1-s_0}2}(v_t, v\otimes\na v)\|_{L^2}^2 \|\nabla X\|_{L^\infty}^2\,\dt',
\end{split}
\eeno
 from which,  Proposition  \ref{prop:B1} and \eqref{S1eq25}, we infer
\beno\begin{split} \|\sigma^{ \frac{1-s_1}{2}}\frak{a}_1\|_{L^2_t(L^2)}^2
\leq \frak{G}_{1,X}(t).
\end{split}\eeno
While we deduce from \eqref{S1eq25} and Corollary \ref{lem:Dv1} that
\beno\begin{split}
 \|\sigma^{1-\frac{s_1}{2}}\nabla \frak{a}_1\|_{L^2_t(L^2)}^2
&\lesssim \| \sigma^{ 1-\frac{s_1}{2}} (\nabla \v^1\cdot\nabla v, \v^1\cdot\nabla^2 v)\|_{L^2_t(L^2)}^2
\\
&\quad +\int_0^t\|\sigma^{\f12+\f1p-\frac{s_0}{2}}
( v_t,   v\otimes\nabla v)\|_{L^{\f{2p}{p-2}}}^2
\|\nabla^2 X\|_{L^p}^2\,\dt'
\\
&\quad
+\int_0^t\|\sigma^{1-\frac{s_1}{2}}
(\nabla  v_t, \nabla(v\otimes\nabla v))\|_{L^2}^2
\|\nabla X\|_{L^\infty}^2\,\dt'
\leq
\frak{G}_{1,X}(t).
\end{split}\eeno

On the other hand, it follows from \eqref{divDv1} that
$$
\div D_t \v^1
=\div \d_X D_t v
=\p_\al X\cdot\na D_t v^\alpha+2 \mbox{tr}(\nabla \v^1\nabla v)
-2\mbox{tr}\bigl((\nabla X^\al\p_\al v)\nabla v\bigr),
$$
from which, Corollaries \ref{S1col1}, \ref{S2col3} and \ref{lem:Dv1}, we infer
\beno
\begin{split}
\|\s^{\f{3-s_1}2}\na\dive\frak{a}_1\|_{L^2_t(L^2)}^2\lesssim & \|\s^{\f{3-s_1}2}\na(\na v\otimes\na\v^1)\|_{L^2_t(L^2)}^2
\\
&+\int_0^t\Bigl(\|\s^{\f{3-s_1}2}\na^2 D_tv\|_{L^2}^2+\|\s^{\f{3-s_1}2}(\na D_tv, \na v\otimes\na v)\|_{L^{\f{2p}{p-2}}}^2\\
&\quad+\|\s^{1-\f{s_0}2}\na^2v\|_{L^\infty_t(L^2)}^2\|\s^{\f{1-s_0}2}\na v\|_{L^\infty}^2\Bigr)\|\na X\|_{W^{1,p}}^2\,dt'
\leq
\frak{G}_{1,X}(t).
\end{split}\eeno
This completes the proof of  \eqref{S8eq4}.
 \end{proof}

\begin{lem}\label{S4lem3}
{\sl   Let $\frak{b}_0(t)$ and
$\frak{G}_{1,X}(t)$ be given by \eqref{S4eq7p} and \eqref{S3eq21} respectively. Let $\frak{b}_1\eqdefa D_t^2 v\cdot\nabla X- \frak{b}_0\cdot\nabla X
+  \d_X \frak{b}_0.$
Then we have
\beq \label{divD2v}
\div D_t^2 \v^1
=\div\frak{b}_1,
\eeq
and for any $\e>0,$ there exists a positive constant $C_\e>0$ so that
\beq \label{S4eq3w}
\begin{split}
\|\sigma^{ \frac{3-s_1}{2}} \frak{b}_1\|_{L^2_t(L^2)}^2 \leq &\e\|\s^{\f{3-s_1}2}\sqrt{\r}D_t^2\v^1\|_{L^2_t(L^2)}^2+ C_\e\frak{G}_{1,X}(t).
\end{split}
\eeq}
\end{lem}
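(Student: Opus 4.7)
The plan is to establish the identity $\div D_t^2\v^1=\div\frak{b}_1$ by a commutator computation, and then to estimate each of the three summands of $\frak{b}_1$ in the weighted $L^2_tL^2$ norm; two of these will immediately yield to the $\frak{B}_0$-integral mechanism already built into $\frak{G}_{1,X}(t)$, while the third forces an $\varepsilon$-absorption against $\sqrt{\rho}\,D_t^2\v^1$. For the identity I would start from $D_t^2\v^1=\partial_X D_t^2 v$ (a consequence of \eqref{S3eq-1}) and use the Leibniz formula $\div(\partial_X w)=\partial_X\div w+\div(w\cdot\nabla X)$, valid for any vector field $w$ since $\div X=0$. Applying it with $w=D_t^2 v$ and invoking $\div D_t^2 v=\div\frak{b}_0$ from \eqref{S4eq7p} yields $\div D_t^2\v^1=\div(D_t^2 v\cdot\nabla X)+\partial_X\div\frak{b}_0$, and rewriting $\partial_X\div\frak{b}_0=\div(\partial_X\frak{b}_0)-\div(\frak{b}_0\cdot\nabla X)$ by the same formula produces the desired identity.

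The pieces $D_t^2 v\cdot\nabla X$ and $\frak{b}_0\cdot\nabla X$ are treated in parallel: the Sobolev embedding $W^{1,p}\hookrightarrow L^\infty$ (valid since $p>2$) together with $\sigma^{3-s_1}\le\sigma^{3-s_0}$, and the fact that both $\|\sigma^{(3-s_0)/2}D_t^2 v\|_{L^2}^2$ and $\|\sigma^{(3-s_0)/2}\frak{b}_0\|_{L^2}^2$ appear in the definition of $\frak{B}_0$ in \eqref{S1eq25}, reduce the contribution to
\[
\int_0^t\sigma^{3-s_1}\bigl(\|D_t^2 v\|_{L^2}^2+\|\frak{b}_0\|_{L^2}^2\bigr)\|\nabla X\|_{W^{1,p}}^2\,dt'\le\int_0^t\frak{B}_0(t')\|\nabla X(t')\|_{W^{1,p}}^2\,dt'\le\frak{G}_{1,X}(t).
\]
For the third summand $\partial_X\frak{b}_0$, I would expand via the Leibniz rule together with $[\partial_X,D_t]=0$, $[\partial_X,\partial_i]=-\partial_iX\cdot\nabla$, $\partial_X v=\v^1$, $\partial_X D_tv=D_t\v^1$, and $\partial_X v_t=\v^1_t-(\v^1-v\cdot\nabla X)\cdot\nabla v$. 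This produces, up to lower-order corrections, the terms $\v^1\otimes\nabla v_t$, $\v^1\otimes D_t\nabla v$, $D_t\v^1\otimes\nabla v$, $D_tv\otimes\nabla\v^1$, together with commutator corrections of type $\nabla X\otimes(\nabla v_t,D_t\nabla v,\nabla v\otimes\nabla v,\nabla^2 v,\nabla\pi,D_tv)$; all of these are bounded in the weighted $L^2_tL^2$ norm by $\frak{G}_{1,X}^{1/2}(t)$, either directly from Corollaries \ref{S2col3} and \ref{lem:Dv1}, or by another application of the same $\int\frak{B}_0\,\|\nabla X\|_{W^{1,p}}^2$ argument.

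The hard part will be the single remaining term of the form $v\cdot\nabla D_t\v^1$, which arises from $v\cdot\nabla\v^1_t$ and $v\cdot D_t\nabla\v^1$ after the commutators, and whose weighted $L^2_tL^2$ norm cannot be bounded in terms of $\frak{G}_{1,X}$ alone. To estimate it I would use $\|\sigma^{1/2}v\|_{L^\infty_tL^\infty}\lesssim\cC(v_0,s_0)$ (from Corollary \ref{S1col1}) together with the Gagliardo--Nirenberg inequality $\|\nabla g\|_{L^2}^2\le\|g\|_{L^2}\|\nabla^2 g\|_{L^2}$ and Cauchy--Schwarz in time, to obtain
\[
\|\sigma^{(3-s_1)/2}v\cdot\nabla D_t\v^1\|_{L^2_tL^2}^2\le C\,\|\sigma^{(1-s_1)/2}D_t\v^1\|_{L^2_tL^2}\,\|\sigma^{(3-s_1)/2}\nabla^2 D_t\v^1\|_{L^2_tL^2}.
\]
Here $\|\sigma^{(1-s_1)/2}D_t\v^1\|_{L^2_tL^2}^2\le\frak{G}_{1,X}(t)$ by Corollary \ref{lem:Dv1}, while the classical Stokes estimate applied to \eqref{eq:Dv1}, proceeding as in \eqref{estimate:Dpi1l} and using $\div D_t\v^1=\div\fa_1$, bounds the second factor by $C\bigl(\|\sigma^{(3-s_1)/2}\sqrt{\rho}D_t^2\v^1\|_{L^2_tL^2}+\|\sigma^{(3-s_1)/2}\nabla\div\fa_1\|_{L^2_tL^2}+\|\sigma^{(3-s_1)/2}F_{1D}\|_{L^2_tL^2}\bigr)$. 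Lemma \ref{S4lem1w} absorbs $\nabla\div\fa_1$ into $\frak{G}_{1,X}^{1/2}(t)$, and a term-by-term inspection of \eqref{S4eq14p} based on Corollaries \ref{S2col3} and \ref{lem:Dv1} does the same for $F_{1D}$. A final application of Young's inequality produces the absorbable term $\varepsilon\|\sigma^{(3-s_1)/2}\sqrt{\rho}D_t^2\v^1\|_{L^2_tL^2}^2$ on the right-hand side of \eqref{S4eq3w}.
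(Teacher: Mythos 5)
Your proposal is correct, and its skeleton coincides with the paper's: the identity \eqref{divD2v} is obtained exactly as you do (from $[\p_X;D_t]=0$, $\div X=0$ and $\div D_t^2v=\div\frak{b}_0$), the two pieces $D_t^2v\cdot\na X$ and $\frak{b}_0\cdot\na X$ are absorbed through the $\int\frak{B}_0\,\|\na X\|_{W^{1,p}}^2$ mechanism since $\|\s^{\frac{3-s_0}2}D_t^2v\|_{L^2}^2$ and $\|\s^{\frac{3-s_0}2}\frak{b}_0\|_{L^2}^2$ sit inside \eqref{S1eq25}, and the expansion of $\p_X\frak{b}_0$ with the same commutator identities isolates the same problematic contribution $v\otimes\na D_t\v^1$ (equivalently $\|\s^{1-\frac{s_1}2}\na D_t\v^1\|_{L^2_t(L^2)}$), which is where the $\e$-term must come from. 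The only genuine difference is the mechanism at that step: the paper bounds $\|\s^{1-\frac{s_1}2}\na D_t\v^1\|_{L^2_t(L^2)}^2$ by running the weighted parabolic energy inequality \eqref{S8eq3} on \eqref{eq:Dv1} (testing with $\s^{2-s_1}D_t\v^1$ and using $\div D_t\v^1=\div\fa_1$ to handle the pressure), whereas you interpolate $\|\na g\|_{L^2}^2\le\|g\|_{L^2}\|\na^2g\|_{L^2}$ and then invoke the elliptic Stokes estimate of type \eqref{estimate:Dpi1l} for $\na^2D_t\v^1$. Both routes consume exactly the same inputs — $\fa_1$, $\na\fa_1$, $\na\div\fa_1$ from Lemma \ref{S4lem1w} and the bound on $F_{1D}$ from \eqref{S4eq14p} via Proposition \ref{S1prop2} and Corollaries \ref{S2col3}, \ref{lem:Dv1}, which is precisely the paper's \eqref{Dv1:F} — and both end with Young's inequality producing $\e\|\s^{\frac{3-s_1}2}\sqrt{\r}D_t^2\v^1\|_{L^2_t(L^2)}^2$; your variant buys the avoidance of a second energy estimate at the price of needing the full second-order Stokes regularity for $D_t\v^1$, while the paper's \eqref{S8eq3} gets by with first-order information plus the divergence data, and in fact it reuses that inequality verbatim since it is already the first half of the proof of Lemma \ref{S4lem01}. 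Your weight bookkeeping ($\frac{1-s_1}2+\frac{3-s_1}2=2-s_1$, $\|\s^{\frac12}v\|_{L^\infty_t(L^\infty)}\le\cC(v_0,s_0)$) and the identity $\p_Xv_t=\p_t\v^1-(\v^1-v\cdot\na X)\cdot\na v$ all check out, and there is no circularity, since every estimate you cite precedes this lemma.
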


\begin{proof} It follows from \eqref{S4eq7p} and $[\p_X; D_t]=0$ that
\begin{align*}
\div D_t^2 \v^1
=\div (\d_X D_t^2 v)
&=\div(D_t^2 v\cdot\nabla X)
+\div\bigl(X\div (D_t^2 v)\bigr)
\\
&=\div(D_t^2 v\cdot\nabla X)
+\div\bigl(X\div \frak{b}_0\bigr).
\end{align*}
This together with $\dive X=0$ ensures \eqref{divD2v}.

Observing from \eqref{S4eq7p} that
\begin{align*}
\d_X \frak{b}_0
&=\v^1\cdot(\nabla v_t+D_t\nabla v)+D_t \v^1\cdot\nabla v+D_t v\cdot\nabla \v^1-(D_t v\cdot\nabla X)\cdot\nabla v
\\
&\quad
+v\cdot\bigl(\nabla\d_t \v^1+D_t\nabla \v^1
-\nabla X^\al\p_\al v_t-\nabla(X_t\cdot\nabla v)-D_t(\nabla X^\al\p_\al v)\bigr),
\end{align*}
we have
\beq\label{S4eq5wr}
\begin{split}
\|\sigma^{ \frac{3-s_1}{2}}& \d_X \frak{b}_0\|_{L^2_t(L^2)}^2\lesssim  \bigl\|\s^{\f{3-s_1}2}\v^1\otimes (\na v_t+D_t \na v)\bigr\|_{L^2_t(L^2)}^2\\
&+
\bigl\|\s^{\f{3-s_1}2}(D_t \v^1\otimes \na v, D_t v\otimes\na \v^1)\bigr\|_{L^2_t(L^2)}^2\\
&+\|\s^{\f{1-s_0}2}\na v\|_{L^\infty_t(L^2)}^2\int_0^t\|\s^{1-\f{s_0}2}D_tv(t')\|_{L^\infty}^2\|\na X(t')\|_{W^{1,p}}^2\,dt'\\
&+\|\s^{\f{1-s_0}2}v\|_{L^\infty_t(L^\infty)}^2\int_0^t\|\s^{1-\f{s_0}2}\na v_t(t')\|_{L^2}^2\|\na X(t')\|_{W^{1,p}}^2\,dt'\\
&+\|\s^{\f{1-s_0}2}v\|_{L^\infty_t(L^\infty)}^2\bigl\|\s\bigl(\na \p_t\v^1
+D_t \na \v^1, \na(X_t\cdot\na v), D_t(\na X\cdot\na v)\bigr)\bigr\|_{L^2_t(L^2)}^2.
\end{split}
\eeq
To handle the last line of \eqref{S4eq5wr},
we first deduce from \eqref{eq:Dv1} and \eqref{S8eq3} that
\beq
\label{S8eq5-1}
\begin{split}
\|\sigma^{1-\frac{s_1}2} \nabla& D_t \v^1\|_{L^2_t(L^2)}^2\leq C\|\sigma^{\frac {1-s_1}2}
\sqrt{\rho}D_t \v^1\|_{L^2_t(L^2)}^2+\e\|\sigma^{\frac{3-s_1}{2}} \sqrt{\r} D_t^2\v^1\|_{L^2_t(L^2)}^2\\
&\quad+C_\e\Bigl(\|\sigma^{ \frac{1-s_1}{2}}  \frak{a}_1\|_{L^2_t(L^2)}^2
+     \|\sigma^{1-\frac{s_1}{2}}\nabla \frak{a}_1\|_{L^2_t(L^2)}^2
+\|\sigma^{\frac{3-s_1}{2}}{F_{1D}}\|_{L^2_t(L^2)}^2\Bigr).
\end{split}\eeq
While in view of \eqref{brho} and \eqref{S4eq14p}, we have
\begin{align*}
&\|\sigma^{\frac{3-s_1}{2}}
 F_{1D}   \|_{L^2_t(L^2)}^2
\lesssim
\|\sigma^{\frac{3-s_1}{2}}
\nabla v\otimes(  \nabla^2 \v^1, \nabla{\rm \pi}^1, D_t\Delta X)\|_{L^2_t(L^2)}^2
\\
&\qquad
+\|\sigma^{\frac{3-s_1}{2}}
(\nabla \v^1, D_t\nabla X)\otimes (\nabla^2 v,\nabla\pi)\|_{L^2_t(L^2)}^2
+\|\sigma^{\frac{3-s_0}{2}}D_t^2 v\|_{L^2_t(L^2)}^2
\\
&\qquad
+\int^t_0
\bigl(\|\sigma^{\frac{3-s_0}{2}}D_t \nabla v\|_{L^{\frac{2p}{p-2}}}^2
+\|\sigma^{\frac{3-s_0}{2}}(D_t\nabla^2 v, D_t\nabla\pi)\|_{L^2}^2\bigr)
\|\nabla X\|_{W^{1,p}}^2 \,\dt',
\end{align*}
from which and Proposition \ref{S1prop2} and Corollary \ref{lem:Dv1},   we infer
\beq\label{Dv1:F}\begin{split}
\|\sigma^{\frac{3-s_1}{2}}
 F_{1D} \|_{L^2_t(L^2)}^2
\leq \frak{G}_{1,X}(t).
\end{split}\eeq
Inserting the Inequality \eqref{Dv1:F} into \eqref{S8eq5-1} and using Corollary \ref{lem:Dv1} and Lemma \ref{S4lem1w},
we achieve
\beno
\|\sigma^{1-\frac{s_1}2} \nabla D_t \v^1\|_{L^2_t(L^2)}^2\leq \e\|\s^{\f{3-s_1}2}\sqrt{\r}D_t^2\v^1\|_{L^2_t(L^2)}^2+C_\e\frak{G}_{1,X}(t),\eeno
which together with Corollary \ref{lem:Dv1} implies
\beno
\begin{split}
\bigl\|\s(\na \p_t\v^1
+D_t \na \v^1)\bigr\|_{L^2_t(L^2)}^2\leq &\|\s^{1-\f{s_1}2}\na D_t\v^1\|_{L^2_t(L^2)}^2+\bigl\|\s^{1-\f{s_1}2}(v\otimes \na^2\v^1,\na v\otimes\na \v^1)\bigr\|_{L^2_t(L^2)}^2\\
\leq &\e\|\s^{\f{3-s_1}2}\sqrt{\r}D_t^2\v^1\|_{L^2_t(L^2)}^2+C_\e\frak{G}_{1,X}(t).
\end{split}
\eeno
Wheres due to $D_tX=\v^1,$ we have
$$ \longformule{
\|\s\na (X_t\cdot\na v)\|_{L^2_t(L^2)}^2\leq\bigl\|\s^{1-\f{s_1}2}\bigl(\na v\otimes\na \v^1, \v^1\otimes\na^2v\bigr)\bigr\|_{L^2_t(L^2)}^2}
{{}+\int_0^t\Bigl(\|\s^{\f{1-s_0}p}v\|_{L^\infty_t(L^{\f{2p}{p-2}})}^2\|\s^{\f{1-s_0}2}\na v\|_{L^\infty}^2
+\bigl\|\s^{1-\f{s_0}2}\bigl(\na v\otimes \na v, v\otimes \na^2 v\bigr)\bigr\|_{L^2}^2\Bigr)\|\na X\|_{W^{1,p}}^2\,\dt',}$$
and
\beno
\begin{split}
\|\s D_t(\na X)\cdot\na v)\|_{L^2_t(L^2)}^2\lesssim & \bigl\|\s^{1-\f{s_1}2}\na \v^1\otimes\na v\|_{L^2_t(L^2)}^2\\
& +\int_0^t\bigl\|\s^{1-\f{s_0}2}\na v\otimes\na v\|_{L^2}^2
\|\na X\|_{W^{1,p}}^2\,\dt',\\
\|\s \na X\otimes D_t\na v\|_{L^2_t(L^2)}^2\lesssim &\int_0^t\|\s^{1-\f{s_0}2}D_t\na v\|_{L^2}^2\|\na X\|_{W^{1,p}}^2\,\dt',
\end{split}
\eeno
we thus deduce from Corollaries  \ref{S1col1} and \ref{lem:Dv1}   that
\beno
\bigl\|\s\bigl(
\na(X_t\cdot\na v), D_t(\na X\cdot\na v)\bigr)\bigr\|_{L^2_t(L^2)}^2
\leq
\frak{G}_{1,X}(t).
\eeno
Inserting the above inequalities into \eqref{S4eq5wr} and using Corollaries \ref{S1col1} and \ref{lem:Dv1}, we find
\beq\label{S8eq5}
\|\sigma^{ \frac{3-s_1}{2}} \p_X\frak{b}_0\|_{L^2_t(L^2)}^2 \leq \e\|\s^{\f{3-s_1}2}\sqrt{\r}D_t^2\v^1\|_{L^2_t(L^2)}^2+ \frak{G}_{1,X}(t),
\eeq
which together with the fact that
\beno
\begin{split}
\|\sigma^{ \frac{3-s_1}{2}} \frak{b}_1\|_{L^2_t(L^2)}^2 \leq
& \|\sigma^{ \frac{3-s_1}{2}} \p_X\frak{b}_0\|_{L^2_t(L^2)}^2\\
&+\int_0^t\bigl(\|\sigma^{ \frac{3-s_1}{2}} D_t^2v\|_{L^2}^2+\|\sigma^{ \frac{3-s_1}{2}} \frak{b}_0\|_{L^2}^2\bigr)\|\na X\|_{L^\infty}^2\,\dt',
\end{split}
\eeno
ensures \eqref{S4eq3w}.
\end{proof}

With the above preparations, we can close the time-weighted $H^1$ energy estimate of $D_t\v^1.$

\begin{prop}\label{S4prop1q}
{\sl Under the assumptions of Proposition \ref{prop:Dv1}, we have
\beq \label{S4prop1eq1}
A_{11}(t)+A_{12}(t)+\|X\|_{L^\infty_t(W^{2,p})}\leq \cH_1(t),
\eeq
where $A_{11}(t), A_{12}(t)$ are given  by \eqref{Jell} for $\ell=1.$} \end{prop}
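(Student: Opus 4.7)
The strategy rests on three ingredients: (i) the bound $A_{11}(t)^2\le\frak{G}_{1,X}(t)$ furnished by Proposition \ref{prop:B1}; (ii) a time-weighted $\dH^1$ energy estimate for $D_t\v^1$ coming from Lemma \ref{S4lem01} applied to the system \eqref{eq:Dv1}; and (iii) a transport-based $W^{2,p}$ estimate for $X$ derived from \eqref{eq:X}. These three estimates couple through $\|\na X\|_{L^\infty_{t'}(W^{1,p})}^2$, which sits inside the definition of $\frak{G}_{1,X}(t)$ in \eqref{S3eq21}, and are closed by a Gronwall argument.

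I will begin by applying Lemma \ref{S4lem01} to \eqref{eq:Dv1}, with divergence constraints $\dive D_t\v^1=\dive\fa_1$ and $\dive D_t^2\v^1=\dive\frak{b}_1$ supplied by Lemmas \ref{S4lem1w} and \ref{S4lem3}, and forcing $F_{1D}$. The $\fa_1$-bounds are exactly \eqref{S8eq4}; the $\frak{b}_1$-bound is \eqref{S4eq3w}, which produces an absorbable $\e\|\s^{(3-s_1)/2}\sqrt{\rho}D_t^2\v^1\|_{L^2_t(L^2)}^2$ term; and the $F_{1D}$-bound is \eqref{Dv1:F}. After absorbing the $\e$-term on the left for small $\e$, together with Corollary \ref{lem:Dv1} for the remaining cross-products, this yields $A_{12}(t)^2\le C\frak{G}_{1,X}(t)$, and hence $A_{11}(t)^2+A_{12}(t)^2\le C\frak{G}_{1,X}(t)$.

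Next I will close the $X$-estimate. Since $\p_t X+v\cdot\na X=\v^1$ and $\dive v=0$, taking $\na$ and $\na^2$ and performing a Lagrangian $L^p$-estimate along the flow of $v$ yields a Gronwall-type inequality for $\|\na X(t)\|_{W^{1,p}}$ whose exponential factor $\exp(C\|\na v\|_{L^1_t(L^\infty)}+C\|\na^2 v\|_{L^1_t(L^p)})$ is finite by Corollary \ref{S1col1} -- the $L^1_t(L^p)$ control of $\na^2 v$ relying on the condition $p<2/(1-s_k)$. The forcing involves $\|\na\v^1\|_{L^1_t(W^{1,p})}$, controlled by interpolating the $L^2_t(\dH^1)$- and weighted $L^2_t(L^2)$-bounds on $\na\v^1$ and $\na^2\v^1$ inside $A_{11}+A_{12}$, together with the elliptic Stokes-type estimate $\|\na^2\v^1\|_{L^p}+\|\na{\rm \pi}^1\|_{L^p}\lesssim \|F_1\|_{L^p}+\|D_t\v^1\|_{L^p}+\|\na\dive\v^1\|_{L^p}$ applied to $-\Delta\v^1+\na{\rm \pi}^1=F_1-\rho D_t\v^1$.

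The main obstacle is the closure of this coupling. Substituting the $X$-bound back into the definition \eqref{S3eq21} of $\frak{G}_{1,X}(t)$ produces an integral inequality in which $\|\na X\|_{W^{1,p}}^2$ is multiplied by $\frak{B}_0+\s^{-1+\theta_0/2}$; both factors are locally integrable in $t$ by Corollary \ref{S2col3} and since $\theta_0=\e/k>0$. A first Gronwall application then bounds $\|\na X\|_{L^\infty_t(W^{1,p})}^2$ by $\exp(\cA_1\w{t}^2)$, and plugging back into $\frak{G}_{1,X}(t)$ produces the double-exponential $\cH_1(t)=\exp\exp(\cA_1\w{t}^2)$ claimed in \eqref{S4prop1eq1}. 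The delicate point is that the singular factor $\s^{-1+\theta_0/2}$ near $t=0$ must be absorbed simultaneously with the higher-order nonlinear terms in $\frak{B}_0$ stemming from $D_t\na v$, $\na D_t\pi$, and $\frak{b}_0$; this is precisely where the strict positivity of $\theta_0$ and the weighted material-derivative estimates of Proposition \ref{S1prop2} enter in an essential way.
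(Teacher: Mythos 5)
Your plan follows essentially the same route as the paper's proof: Lemma \ref{S4lem01} applied to \eqref{eq:Dv1} with the bounds \eqref{S8eq4}, \eqref{S4eq3w}, \eqref{Dv1:F} and absorption of the small $\e$-term, a $W^{2,p}$ transport estimate for $X$ whose $\Delta\v^1$ forcing is converted through the Stokes estimate and the $L^1_t(L^p)$-integrability of $D_t\v^1$ (using $p<2/(1-s_1)$), and a Gronwall closure against the integrable kernel $\frak{B}_0+\s^{-1+\theta_0/2}$, yielding $\cH_1(t)$. Apart from minor bookkeeping (the first Gronwall already produces a double exponential, and the $\na^2\v^1,\na{\rm \pi}^1$ part of $A_{12}$ requires the Stokes estimate after the $X$-bound, which you do invoke), this is the paper's argument.
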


\begin{proof}
We get, by applying Lemma \ref{S4lem01} to \eqref{eq:Dv1}, that
\beno
\begin{split}
 \|&\sigma^{1-\frac{s_1}2}D_t \v^1 \|_{L^\infty_t(L^2)\cap L^2_t(\dH^1)}^2+\|\sigma^{\frac{3-s_1}2} \na D_t \v^1 \|_{L^\infty_t(L^2)}^2
+  \|\sigma^{\frac{3-s_1}2}(D_t^2 \v^1,\na^2 D_t\v^1, \na D_t{\rm \pi}^1)\|_{L^2_t(L^2)}^2
\\
&\quad\leq \cA_0\bigl(\|\s^{\f{1-s_1}2}(D_t\v^1,\fa_1)\|_{L^2_t(L^2)}^2+\|\s^{1-\f{s_1}2}\na \fa_1\|_{L^2_t(L^2)}^2
+\|\s^{\f{3-s_1}2}(\na\dive\fa_1,\frak{b}_1,F_{1D})\|_{L^2_t(L^2)}^2\bigr) .\end{split}
\eeno
Taking into account \eqref{S4eq10p}, by substituting the Estimates \eqref{S8eq4}, \eqref{S4eq3w} and \eqref{Dv1:F} into the above inequality and then choosing $\varepsilon$ sufficiently small, we achieve
\beq \label{S4eq1t}
\begin{split}
 \|\sigma^{1-\frac{s_1}2}D_t \v^1 \|_{L^\infty_t(L^2)\cap L^2_t(\dH^1)}^2&+\|\sigma^{\frac{3-s_1}2} \na D_t \v^1 \|_{L^\infty_t(L^2)}^2
\\
&+  \|\sigma^{\frac{3-s_1}2} (D_t^2 \v^1,\na^2D_t \v^1, \na D_t{\rm \pi}^1)\|_{L^2_t(L^2)}^2
 \leq \frak{G}_{1,X}(t). \end{split} \eeq
 Moreover, it follows from  the 2-D interpolation inequality \eqref{S2eq6} that for any   $ r\in [2,\infty[$
 \begin{equation}\label{bound:Dv1,q}
 \begin{split}
 \| \sigma^{1-\frac{s_1}{2}} D_t \v^1 \|_{L^{\frac{2r}{r-2}}_t(L^r)}^2
&\leq
C \| \sigma^{1-\frac{s_1}{2}} D_t \v^1 \|_{L^{\infty}_t(L^2)}^{2\cdot\frac 2r}
 \| \sigma^{1-\frac{s_1}{2}} \nabla D_t \v^1 \|_{L^2_t(L^2)}^{2(1-\frac 2r)}
\leq
\frak{G}_{1,X}(t).
 \end{split}
 \end{equation}
 Along the same line, we deduce from Proposition \ref{S1prop2} that
 \begin{equation}\label{bound:Dv,q}
 \begin{split}
 \| \sigma ^{1-\frac{s_0}{2}} D_t v \|_{L^{\frac{2r}{r-2}}_t(L^r)}
&\leq
\cC(v_0,s_0)
\quad\forall\ r\in [2,\infty[.
 \end{split}
 \end{equation}

Let us now turn to the estimate of $\|X(t)\|_{W^{2,p}}.$
We first get, by using $L^p$ energy estimate to \eqref{eq:X} and \eqref{S4eq11py},  that
\begin{equation}\label{S4eq15p}
\begin{split}
 \frac{d}{dt}\| X\|_{W^{2,p}}
&\leq
C\bigl(\|\nabla v\|_{L^\infty}\|X\|_{W^{2,p}}
+\|\Delta v\|_{L^p}\|\nabla X\|_{L^\infty}\bigr)
+\|\Delta \v^1\|_{L^p}.
  \end{split}
\end{equation}
To deal with the estimate of $\|\Delta \v^1\|_{L^p},$  we rewrite \eqref{S1eq3} as
$$
-\Delta\bigl(\v^1-\nabla\Delta^{-1}(\p_\al v\cdot\nabla X^\al)\bigr)
+\nabla{\rm \pi}^1
=-\rho D_t \v^1+\na(\p_\al v\cdot\nabla X^\alpha)+F_1(v,\pi),
$$
for $F_1(v,\pi)$ given by \eqref{S3eq1}. Then due to $\dive \v^1=\p_\al v\cdot\na X^\alpha,$   we deduce from
 classical estimate for the Stokes operator and \eqref{S3eq1}  that
 that for any $r\in [2,p]$
\beq\label{Stokes:v1}\begin{split}
\|(\nabla^2 \v^1,& \nabla{\rm \pi}^1)\|_{L^r}
\leq
C\bigl(\|\nabla(\nabla v\otimes\nabla X)\|_{L^r}
+\|\rho D_t \v^1\|_{L^r}
+\|F_1(v, \pi)\|_{L^r}\bigr)
\\
&\leq C\Bigl(
 \|(D_t \v^1, D_t v)\|_{L^r}
+ \bigl(\|\nabla v\|_{  L^{\frac{rp}{p-r}}}
  +\|(\nabla^2 v, \nabla\pi)\|_{L^r}\bigr)\|\na X\|_{W^{1,p} }\Bigr) .
\end{split}\eeq
Inserting \eqref{Stokes:v1} for $r=p$ into \eqref{S4eq15p}, and  integrating the resulting inequality over $[0,t],$ we obtain
\beq \label{S4eq15p1}
\begin{split}
\|X(t)\|_{W^{2,p}}\leq &\|X_0\|_{W^{2,p}}+C\|(D_t \v^1, D_t v)\|_{L^1_t(L^p)}\\
&+C\int_0^t \bigl(\|\nabla v\|_{L^\infty}
  +\|(\nabla^2 v, \nabla\pi)\|_{L^p}\bigr)\| X\|_{W^{2,p} }\,\dt'.\end{split}
  \eeq
Note from the hypothesis that $p\in \bigl]2, 2/(1-s_1)\bigr[,$ which implies
$$
\frac{2p}{p+2}(1-\frac{s_1}{2})<1.
$$
This together with \eqref{bound:Dv1,q} and \eqref{bound:Dv,q} ensures that
\beno
\begin{split}
\|(D_t \v^1, D_t v) \|_{L^1_t(L^p)}^2
\leq &
 \|\sigma ^{1-\frac{s_1}{2}}(D_t \v^1, D_t v)  \|_{L^{\frac{2p}{p-2}}_t(L^p)}^2
 \| \sigma ^{-(1-\frac{s_1}{2})} \|_{L^{\frac{2p}{p+2}}([0,t])}^2\leq \frak{G}_{1,X}(t).
\end{split}\eeno
And it is obvious to observe from $2/p+s_1>1$ that
$$\longformule{
\Bigl(\int_0^t \bigl(\|\nabla v\|_{L^\infty}
  +\|(\nabla^2 v, \nabla\pi)\|_{L^p}\bigr)\| X\|_{W^{2,p} }\,\dt'\Bigr)^2\lesssim \w{t}\int_0^t \|\s^{\f{1-s_0}2}\nabla v\|_{L^\infty}^2
  \| X\|_{W^{2,p} }^2\,\dt'}{{}+\int_0^t\|\s^{\left(1-\f1p-\f{s_0}2\right)}(\nabla^2 v, \nabla\pi)\|_{L^p}^2\| X\|_{W^{2,p} }^2\,\dt'\int_0^t\s^{-2+\f2p+s_0}\,\dt'\leq \frak{G}_{1,X}(t).}
$$
Substituting the above estimates into \eqref{S4eq15p1} and using the definition of $\frak{G}_{1,X}(t)$ given by \eqref{S3eq21} leads  to
\begin{align*}
 \| X(t)\|_{W^{2,p}}^2
&\leq
\|X_0\|_{W^{2,p}}^2
+ \exp\bigl(\cA_0\w{t}^2\bigr)\Bigl(\cA_1
+\int^t_0 (\frak{B}_0(t')+\s(t')^{-1+\frac{\theta_0}2})\|X(t')\|_{W^{2,p}}^2\,\dt'\Bigr).
\end{align*}
Then by virtue of \eqref{S2col3eq1}, we get, by applying Gronwall's inequality, that
\beq\label{S4eq16p}
 \| X \|_{L^\infty_t(W^{2,p})}
 \leq \cA_1 \bigl(1+\|X_0\|_{W^{2,p}}^2\bigr)\exp\left(\exp(\cA_0\w{t}^2)\right)\leq\cH_1(t),\eeq which together with Proposition \ref{prop:B1} and \eqref{S2col3eq1}
 implies that
 \beq \label{S4eq16py}
 A_{11}(t) \leq \cH_1(t).
 \eeq
Similarly, by inserting \eqref{S4eq16p} into \eqref{S4eq1t}, we arrive at
\beq\label{S4eq17p}
\begin{split}
&\|\sigma^{1-\frac{s_1}2}D_t \v^1 \|_{L^\infty_t(L^2)}^2+\|\sigma^{\frac{3-s_1}2} \na D_t \v^1 \|_{L^\infty_t(L^2)}^2\\
&\qquad\qquad+  \|\sigma^{1-\frac{s_1}2} \nabla D_t \v^1\|_{L^2_t(L^2)}^2+  \|\sigma^{\frac{3-s_1}2} (D_t^2 \v^1, \nabla^2 D_t v^1, \nabla D_t\pi^1)\|_{L^2_t(L^2)}^2
 \leq \cH_1(t).
\end{split}
\eeq
On the other side, it follows from   \eqref{Stokes:v1} for $r=2$  that
\begin{align*}
 \|\sigma^{1-\frac{s_1}{2}}
&(\nabla^2 \v^1, \nabla{\rm \pi}^1) \|_{L^\infty_t(L^2)}
\leq
C \|\sigma ^{1-\frac{s_1}{2}} (D_t \v^1, D_t v) \|_{L^\infty_t(L^2)}
\\
&+
C\bigl(
 \|\sigma ^{1-\frac{s_1}{2}}\nabla v \|_{L^\infty_t(L^{\frac{2p}{p-2}})}
 + \|\sigma^{1-\frac{s_1}{2}}
  (\nabla^2 v, \nabla\pi) \|_{L^\infty_t(L^2)} \bigr)
  \|X\|_{L^\infty_t(W^{2,p} )},
\end{align*}
from which, Proposition \ref{S1prop1}, \eqref{bound:Dv,q}, and \eqref{S4eq17p}, we infer
\begin{align*} \label{S4eq18p}
\bigl\|\sigma ^{1-\frac{s_1}{2}}
(\nabla^2 \v^1, \nabla{\rm \pi}^1) \bigr\|_{L^\infty_t(L^2)}
\leq \cH_1(t),
\end{align*}
which together with \eqref{S4eq17p} implies
\beno
A_{12}(t)\leq \cH_1(t).
\eeno
Along with \eqref{S4eq16p},  \eqref{S4eq16py}, we conclude the proof of the proposition.
\end{proof}

\subsection{Some implications }

In this subsection, we shall derive some useful estimates implied by Proposition \ref{S4prop1q}.

\begin{col}\label{S4col1}
{\sl Under the assumptions of of Proposition \ref{prop:Dv1}, for any $\e_0\in ]0,1[$ and
$r\in \{2,p\},$ one has
\beno
\begin{split}
\dot{\frak{A}}_{1}(t) &\eqdefa
\|\v^1\|_{L^\infty_t(L^2)}
+\|\sigma^{\frac{1-s_1}{2}}
\v^1
\|_{L^\infty_t(  L^{\frac{2p}{p-2}})}
+\|\sigma^{\frac{1+\varepsilon_0-s_1}{2}}
(\v^1,  X_t) \|_{L^\infty_t(L^\infty )}
\\
&+ \bigl\| \sigma ^{ \frac{1-s_1}{2} }
(\d_X \nabla v, \nabla \v^1, \nabla  X_t ) \bigr\|_{L^\infty_t(L^2)}
+\bigl\|\sigma ^{\left(1-\frac{s_1}{2}\right)}
(   \d_X  v_t, \d_t \v^1,
  D_t  \nabla^2 X) \bigr\|_{L^\infty_t(L^2)}
\\
&
+ \bigl\| \sigma ^{\left(\frac 12+\frac 1p-\frac{s_1}{2}\right)}
( \d_X\nabla v, \nabla \v^1,
  D_t\nabla  X) \|_{L^\infty_t( L^{\frac{2p}{p-2}})}
  + \| \sigma ^{1-\frac{s_1}{2}}
(\d_X\nabla v, \nabla \v^1,
  D_t\nabla  X) \|_{L^\infty_t(L^\infty )}
\\
&+\sum_{i+j+l= 1}
\bigl\|\sigma ^{\left(\frac 32-\frac 1r-\frac{s_1}{2}\right)}
(  \d_X^i \nabla\d_X^j\nabla  \v^l,
  D_t \v^i, \d_X^i \nabla {\rm \pi}^j
 ) \bigr\|_{L^\infty_t(L^r)}
  \leq  \cH_1(t).
\end{split}
\eeno}\end{col}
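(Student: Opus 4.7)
\medskip

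\noindent\textbf{Proof proposal.} The strategy is to deduce every term of $\dot{\frak{A}}_1(t)$ by combining Proposition~\ref{S4prop1q} (which yields $A_{11}(t)+A_{12}(t)+\|X\|_{L^\infty_t(W^{2,p})}\leq\cH_1(t)$) with the interpolation and Stokes-type techniques already used in Corollaries~\ref{S1col1}, \ref{S2col3} and~\ref{lem:Dv1}, together with the fact that $D_t X=\v^1$ from \eqref{eq:X}. The bounds $\|\v^1\|_{L^\infty_t(L^2)}$, $\|\sigma^{(1-s_1)/2}\nabla\v^1\|_{L^\infty_t(L^2)}$ and $\|\sigma^{1-s_1/2}(\d_t\v^1,\nabla^2\v^1,\nabla{\rm \pi}^1)\|_{L^\infty_t(L^2)}$ are immediate. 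For the interpolated $L^r$ norms of $\v^1$ ($r\in\{2p/(p-2),\infty\}$) I will apply \eqref{S2eq6} and $\|a\|_{L^\infty}\lesssim\|a\|_{L^2}^{1/2}\|\nabla^2 a\|_{L^2}^{1/2}$ with a careful distribution of the time-weight; the $\sigma^{(1+\e_0-s_1)/2}$ factor on the $L^\infty$ bound comes from interpolating $\sigma^{(1-s_1)/2}\nabla\v^1\in L^\infty_t L^2$ against $\sigma^{1-s_1/2}\nabla^2\v^1\in L^\infty_t L^2$, with a small margin $\e_0>0$ to bypass the critical 2-D endpoint.

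Next, the $X_t$-related estimates are reduced via $X_t=\v^1-v\cdot\nabla X$ together with the commutators
\begin{equation*}
D_t\nabla X=\nabla\v^1-\nabla v\cdot\nabla X,\qquad D_t\nabla^2 X=\nabla^2\v^1-\nabla^2 v\cdot\nabla X-2\nabla v_\al\cdot\p_\al\nabla X,
\end{equation*}
each summand being controlled by the new $\v^1$ bounds and by the weighted estimates of $v$ supplied by Corollary~\ref{S1col1} along with $\|X\|_{L^\infty_t(W^{2,p})}\leq\cH_1(t)$. The $\d_X$-derivative terms $\d_X\nabla v$, $\d_X v_t$, $\d_X\nabla^2 v$, $\d_X\nabla\pi$ are treated by $\|\d_X f\|_{L^r}\leq\|X\|_{L^\infty}\|\nabla f\|_{L^r}$ combined with the already controlled $L^r$ norms of $\nabla^2 v$, $\nabla v_t$, $\nabla^3 v$ and $\nabla^2\pi$ from Corollary~\ref{S2col3} and the Stokes estimate used in \eqref{Stokes:v1}.

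For the last, more delicate family $\sum_{i+j+l=1}\|\sigma^{(3/2-1/r-s_1/2)}(\d_X^i\nabla\d_X^j\nabla\v^l,D_t\v^i,\d_X^i\nabla{\rm \pi}^j)\|_{L^\infty_t(L^r)}$, I will exploit the Stokes-type identity leading to \eqref{Stokes:v1}: writing the equation for $\v^1$ as
\begin{equation*}
-\Delta\bigl(\v^1-\nabla\Delta^{-1}(\p_\al v\cdot\nabla X^\al)\bigr)+\nabla{\rm \pi}^1=-\rho D_t\v^1+\nabla(\p_\al v\cdot\nabla X^\al)+F_1(v,\pi),
\end{equation*}
and applying the classical stationary Stokes $L^r$ estimate, so that the right-hand side is bounded via the $L^r$-interpolated norms of $D_t\v^1$ (obtained from $\|\sigma^{1-s_1/2}D_t\v^1\|_{L^\infty_t(L^2)}$ and $\|\sigma^{(3-s_1)/2}\nabla D_t\v^1\|_{L^\infty_t(L^2)}$ in Proposition~\ref{S4prop1q}) and products of $\nabla v$ with $\nabla X\in L^\infty_t(W^{1,p})$. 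The mixed terms $\d_X\nabla\d_X v=\d_X\nabla(X\cdot\nabla v)$ are expanded by Leibniz and bounded factor by factor, using $W^{2,p}\hookrightarrow W^{1,\infty}$ for $X$ and the weighted $L^r$ norms of $\nabla v,\nabla^2 v$ from Corollary~\ref{S1col1}. The $D_t\v^0=D_t v$ term is covered by Corollary~\ref{S2col3}.

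The principal difficulties are of two kinds. First, a purely bookkeeping but delicate task: for each interpolation I must verify that the product of the two time-weights, raised to the interpolation exponents, matches exactly the weight claimed in the statement; this is where the choice $p\in]2,2/(1-s_k)[$ and $\theta_0=\varepsilon/k$ are crucial, since they force the endpoint exponents $1-1/p-s_0/2$ to remain strictly below $1/2$. Second, and more substantial, is the 2-D endpoint $L^\infty$ embedding needed for $\|\sigma^{(1+\e_0-s_1)/2}\v^1\|_{L^\infty_t(L^\infty)}$ and $\|\sigma^{1-s_1/2}(\d_X\nabla v,\nabla\v^1,D_t\nabla X)\|_{L^\infty_t(L^\infty)}$: the embedding $\dH^1\not\hookrightarrow L^\infty$ in dimension two is bypassed by interpolating between $\dH^1$ and $\dH^2$ regularity of $\v^1$ provided by Proposition~\ref{S4prop1q}, with the parameter $\e_0$ inserted precisely to keep the interpolation strictly subcritical and to ensure that the weight stays integrable near $t=0$.
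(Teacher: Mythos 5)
Your overall strategy is the same as the paper's: interpolate the $L^2$-based bounds of Proposition \ref{S4prop1q} in space with a careful distribution of the time weight, use the Stokes estimate \eqref{Stokes:v1} together with $\|X\|_{L^\infty_t(W^{2,p})}\leq \cH_1(t)$ for the family $\bigl(D_t\v^1,\nabla^2\v^1,\nabla{\rm \pi}^1\bigr)$, and reduce the $X$-related quantities through $X_t=\v^1-v\cdot\nabla X$, $D_t\nabla X=\nabla\v^1-\nabla v_\al\p_\al X$, $D_t\nabla^2 X=\nabla^2\v^1-\nabla^2 v\cdot\nabla X-2\nabla v_\al\cdot\p_\al\nabla X$; the $\e_0$-interpolation for the $L^\infty$ bound of $\v^1$ is also in the spirit of the paper (which uses $\|a\|_{L^\infty}\lesssim\|a\|_{L^2}^{\e_0/(1+\e_0)}\|\na a\|_{L^{2/(1-\e_0)}}^{1/(1+\e_0)}$ after first interpolating $\nabla\v^1$ between the $\dot H^1$- and $\dot H^2$-level bounds).

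There is, however, a genuine gap in your treatment of the tangential derivatives of the second-order quantities. You propose to bound $\d_X\nabla^2 v$, $\d_X\nabla\pi$ and $\d_X v_t$ by $\|\d_X f\|_{L^r}\leq\|X\|_{L^\infty}\|\nabla f\|_{L^r}$ using ``the already controlled $L^r$ norms of $\nabla^2 v$, $\nabla v_t$, $\nabla^3 v$ and $\nabla^2\pi$''. But $\nabla^3 v$ and $\nabla^2\pi$ are controlled nowhere in the paper and cannot be: with $\rho$ only bounded (a patch), the Stokes relation $-\Delta v+\nabla\pi=-\rho D_t v$ cannot be differentiated without differentiating $\rho$, so no third-order estimate on $v$ (nor second-order on $\pi$) is available — this is exactly the obstruction that the striated-regularity framework is designed to bypass. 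Even for $\d_X v_t$ the crude bound fails on weights: the available estimate is $\sigma^{(3-s_0)/2}\nabla v_t\in L^\infty_t(L^2)$, whereas the claim needs the smaller power $\sigma^{1-\frac{s_1}2}$, so the product bound produces a non-integrable singularity at $t=0$. The correct route, and the one the paper takes, is to commute $\d_X$ through the derivatives and rewrite
$\d_X\nabla^2 v=\nabla^2\v^1-\nabla^2X^\al\p_\al v-2\nabla X^\al\p_\al\nabla v$, $\d_X\nabla\pi=\nabla{\rm \pi}^1-\nabla X^\al\p_\al\pi$, $\d_X v_t=\d_t\v^1-X_t\cdot\nabla v$,
so that only $\nabla^2\v^1$, $\nabla{\rm \pi}^1$, $\d_t\v^1$, $D_t\v^1$ (controlled via \eqref{Stokes:v1} and Proposition \ref{S4prop1q}) and lower-order products of $\nabla v,\nabla^2 v,\nabla\pi$ with $\nabla X,\nabla^2 X, X_t$ appear; with that replacement, the rest of your argument goes through as in the paper.
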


\begin{proof}
In view of
Proposition \ref{S4prop1q}, we have
  for any $ r\in [2,+\infty[$
 \begin{equation}\label{S5:v1,r}
 \begin{split}
 &\|\sigma^{ \frac{1-s_1}{2}(1-\frac 2r) }
   \v^1 \|_{L^\infty_t(L^r)}
  \leq
  C \|  \v^1 \|_{L^\infty_t(L^2)}^{\frac 2r}
   \|\sigma^{1 -\frac{s_1}{2}}
  \nabla  \v^1 \|_{L^\infty_t(L^2)}^{1-\frac 2r}
  \leq\cH_1(t),
  \\
& \|\sigma^{\left(1-\frac 1r-\frac{s_1}{2}\right)}
  \nabla \v^1 \|_{L^\infty_t(L^r)}
  \leq
  C \|\sigma^{ \frac{1-s_1}{2}}
  \nabla \v^1 \|_{L^\infty_t(L^2)}^{\frac 2r}
   \|\sigma^{1 -\frac{s_1}{2}}
  \nabla^2 \v^1 \|_{L^\infty_t(L^2)}^{1-\frac 2r}
  \leq\cH_1(t),
 \end{split}
 \end{equation}
 which together with 2-D the interpolation inequality, $\|a\|_{L^\infty}\lesssim \|a\|_{L^2}^{\f{\e_0}{1+\e_0}}\|\na a\|_{L^{\f2{1-\e_0}}}^{\f1{1+\e_0}},$
 ensures that   for any $\e_0\in ]0,1[ $
\begin{align*}
\|\sigma^{\frac{1+\varepsilon_0-s_1}{2 }}\v^1\|_{L^\infty_t(L^\infty)}
&\leq
C\|\v^1\|_{L^\infty_t(L^2)}^{\frac{\varepsilon_0}{1+\varepsilon_0}}
\|\sigma^{\frac{1+\varepsilon_0-s_1}{2}}
\nabla \v^1\|_{L^\infty_t(L^{\frac{2}{1-\varepsilon_0}})}^{\frac{1}{1+\varepsilon_0}}
\leq\cH_1(t).
\end{align*}
Furthermore, in view of \eqref{eq:X} and Corollary \ref{S1col1}, for any $ \varepsilon_0\in ]0,1[,$ we infer
$$\longformule{\|\sigma^{\frac{1+\varepsilon_0-s_1}{2}}  X_t \|_{L^\infty_t(L^\infty )}
+\|\sigma^{\frac{1-s_1}{2}}  X_t\|_{L^\infty_t(  L^{\frac{2p}{p-2}})}
}{{}\leq
\cH_1(t)+
 C\|\sigma^{\frac{1-s_1}{2}}  v\|_{L^\infty_t(L^2\cap L^\infty)}
\|\nabla X\|_{L^\infty_t(L^\infty)}
\leq \cH_1(t),} $$
and
 \begin{align*}
\bigl\|&\sigma^{  \frac{1-s_1}{2} }
\bigl(\d_X\nabla v, \nabla \v^1,  \nabla X_t \bigr)\bigr\|_{L^\infty_t(L^2)}\leq
\bigl\|\sigma^{  \frac{1-s_1}{2} }
\bigl( \nabla \v^1, \nabla v\otimes\nabla X, v\otimes\nabla^2 X\bigr)\bigr\|_{L^\infty_t(L^2)}
\\
&\leq
\Bigl(\|\sigma^{  \frac{1-s_1}{2} }
(\nabla \v^1, \nabla v)\|_{L^\infty_t(L^2)}
+\|\sigma^{\frac{1-s_1}{2}}v\|_{L^\infty_t(   L^{\frac{2p}{p-2}})}\Bigr)
\bigl(1+\|X\|_{L^\infty_t(W^{2,p})}\bigr)
\leq\cH_1(t).
\end{align*}
As a result, we obtain
\beq\label{S4eq20p}
\begin{split}
\|\sigma^{\frac{1+\varepsilon_0-s_1}{2}}
(\v^1,& X_t) \|_{L^\infty_t(L^\infty )}
+\|\sigma^{\frac{1-s_1}{2}}
(\v^1, X_t)\|_{L^\infty_t(  L^{\frac{2p}{p-2}})}
\\
&+\sum_{i+j= 1}\| \sigma^{  \frac{1-s_1}{2} }
(\d_X^i \nabla \v^j,    \nabla X_t)\|_{L^\infty_t(L^2)}
  \leq \cH_1(t).
\end{split} \eeq

Whereas it follows from Proposition \ref{S4prop1q} and the 2-D interpolation inequality \eqref{S2eq6} that for any $r\in [2,+\infty[,$
 $$
 \|\sigma ^{\left(\frac 32-\frac 1r-\frac{s_1}{2}\right)}
 D_t\v^1 \|_{L^\infty_t(L^r)}\leq \|\sigma ^{1-\frac{s_1}{2}}
 D_t\v^1 \|_{L^\infty_t(L^2)}^{\f2r}\|\sigma ^{\frac{3-s_1}{2}}\na
 D_t\v^1 \|_{L^\infty_t(L^2)}^{1-\f2r}
 \leq \cH_1(t),
 $$
 which together with  \eqref{Stokes:v1}, Corollaries \ref{S1col1} and \ref{S2col3}   ensures that for   $r\in \{2,p\}$
 \beq\label{V1:v''}\begin{split}
& \bigl\|\sigma^{\left(\frac 32-\frac 1r-\frac{s_1}{2}\right)}
  (D_t\v^1,\nabla^2 \v^1, \nabla{\rm \pi}^1) \bigr\|_{L^\infty_t(L^r)}
\leq
C \bigl(1+\|X\|_{L^\infty_t(W^{2,p} )} \bigr)
\\
&\ \times\Bigl(\bigl\|\sigma^{\left(\frac 32-\frac 1r-\frac{s_1}{2}\right)}
  (D_t \v^1, D_t v, \nabla^2 v, \nabla\pi)\bigr\|_{L^\infty_t(L^r)}
+  \|\sigma ^{1-\frac{s_1}{2}}
  \nabla v\|_{L^\infty_t(L^\infty \cap L^{\frac{2p}{p-2}})} \Bigr)
\leq \cH_1(t).
 \end{split}\eeq
Then we deduce from 2-D interpolation inequality that  \beq\label{S4eq20po}
\|\sigma^{\left(1 -\frac {s_1}{2}\right)}\nabla \v^1\|_{L^\infty}
\leq C\|\sigma^{ \frac {1-s_1}{2} }\nabla \v^1\|_{L^2}^{\f{1/2-1/p}{1-1/p}}
\|\sigma^{\left(\frac 32-\frac 1p-\frac {s_1}{2}\right)}\na^2 \v^1\|_{L^p}^{\f{1/2}{1-1/p}}
\leq\cH_1(t).
\eeq

Observing that
$$\d_X\nabla v=\nabla \v^1-\nabla X^\al\p_\al v \andf
 \quad D_t\nabla X=\nabla \v^1-\nabla v_\al\p_\al X.
 $$
 Then in view of Corollaries \ref{S1col1}, \ref{S2col3} and, \eqref{S5:v1,r}, \eqref{S4eq20po}, we infer
 \beq\label{V1:v'}\begin{split}
 & \bigl\|\sigma ^{\left(\frac 12+\frac 1p-\frac {s_1}{2}\right)} ( \nabla \v^1, \d_X\nabla v, D_t\nabla X)
 \bigr\|_{L^\infty_t(  L^{\frac{2p}{p-2}})}+\bigl\|\sigma ^{1-\frac {s_1}{2}} ( \nabla \v^1, \d_X\nabla v, D_t\nabla X)
 \bigr\|_{L^\infty_t(L^\infty )}
 \\
& \leq
\Bigl(\|\sigma ^{\left(\frac 12+\frac 1p-\frac {s_1}{2}\right)}
(\nabla v, \nabla \v^1) \|_{L^\infty_t(  L^{\frac{2p}{p-2}})}
 +\|\sigma ^{1-\frac {s_1}{2}} (\nabla v, \nabla \v^1)
 \|_{L^\infty_t(L^\infty )}\Bigr)\\
&\qquad\qquad\qquad\qquad\qquad\qquad\qquad\qquad\qquad\quad\times \bigl(1 + \| \nabla X  \|_{L^\infty_t(L^\infty )}\bigr)
 \leq \cH_1(t).
 \end{split}\eeq

 Finally it is easy to observe that
 \begin{align*}
&\sum_{i+j+l=1}\bigl\|\sigma ^{\left(\frac 32-\frac 1r-\frac {s_1}{2}\right)}
 \bigl(\d_X^i\nabla\d_X^j\nabla \v^l, \d_X^i\d_t \v^j, D_t \v^i, \d_X^i\nabla\rm{\pi}^j ,
 D_t\nabla^2 X\bigr)\bigr\|_{L^\infty_t(L^r)}
 \\
   &=\bigl\|\sigma ^{\left(\frac 32-\frac 1r-\frac {s_1}{2}\right)}
 \bigl(
 \d_X\nabla^2 v, \nabla\d_X\nabla v,  \nabla^2 \v^1,
  \d_X v_t,
  \d_t \v^1, D_t \v^1,
 \d_X \nabla \pi, \nabla{\rm \pi}^1,  D_t\nabla^2 X\bigr)\bigr\|_{L^\infty_t(L^r)}
 \\
 &\leq
 \bigl\|\sigma ^{\left(\frac 32-\frac 1r-\frac {s_1}{2}\right)}
 \bigl(\nabla^2 \v^1,  D_t \v^1,   \nabla{\rm \pi}^1\bigr)\bigr\|_{L^\infty_t(L^r)}
 \\
&\qquad +
 \bigl\|\sigma ^{\left(\frac 32-\frac 1r-\frac {s_1}{2}\right)}
 \bigl(
  \nabla^2 X\otimes\nabla v, \nabla X\otimes\nabla^2 v,  v\cdot\nabla \v^1, \nabla X\otimes\nabla\pi,
 X_t\otimes\nabla v\bigr)\bigr\|_{L^\infty_t(L^r)}.
 \end{align*}
While it follows from Corollary \ref{S1col1} and \eqref{S5:v1,r} that for any $r\in [2,\infty[$
\begin{align*}
 \|\sigma ^{\left(\frac 32-\frac 1r-\frac {s_1}{2}\right)}
 ( \v^1\otimes\nabla v, & v\otimes\nabla \v^1) \|_{L^\infty_t(L^r)}\\
& \leq  \|\sigma ^{\frac 12  }  (v, \v^1 ) \|_{L^\infty_t(L^\infty)}
 \|\sigma ^{\left(1-\frac 1r-\frac {s_1}{2}\right)}
( \nabla v , \nabla \v^1) \|_{L^\infty_t(L^r)}
  \leq \cH_1(t),
 \end{align*}
 which together with \eqref{eq:X}  and \eqref{S2col3eq1} ensures that for   $ r\in \{2,p\},$
  \begin{align*}
& \bigl\|\sigma ^{\left(\frac 32-\frac 1r-\frac {s_1}{2}\right)}
 \bigl(\nabla^2 X\otimes\nabla v, \nabla X\otimes\nabla^2 v, \nabla X\otimes\nabla\pi,
 X_t\otimes\nabla v, v\otimes\nabla \v^1 \bigr)\bigr\|_{L^\infty_t(L^r)}
 \\
 &\leq
 C\bigl(1+\|X\|_{L^\infty_t(W^{2,p})}\bigr)
 \Bigl( \|\sigma ^{1-\frac {s_1}{2}}   \nabla v \|_{L^\infty_t(L^\infty\cap L^{\frac{2p}{p-2}})}\\
 &\qquad\qquad +
 \|\sigma ^{\left(\frac 32-\frac 1r-\frac {s_1}{2}\right)}
 (\nabla^2 v, \nabla \pi,
 v\otimes\nabla v, \v^1\otimes\nabla v, v\otimes\nabla \v^1) \|_{L^\infty_t(L^r)} \Bigr)
 \leq
 \cH_1(t),\quad
 \end{align*}
 from which
 and \eqref{V1:v''}, we infer that for  $ r\in \{2,p\},$
 \begin{align*}
&\sum_{i+j+l=1}\bigl\|\sigma ^{\left(\frac 32-\frac 1r-\frac {s_1}{2}\right)}
 \bigl(\d_X^i\nabla\d_X^j\nabla \v^l,  \d_X^i\d_t \v^j, D_t\v^i, \d_X^i\nabla{\rm \pi}^j,
 D_t\nabla^2 X\bigr)\bigr\|_{L^\infty_t(L^r)}
  \leq \cH_1(t).
 \end{align*}
 This together with
  \eqref{S4eq20p} and \eqref{V1:v'} completes the proof of the corollary
 \end{proof}

 \begin{col}\label{S4col2}
{\sl Under the assumptions Corollary \ref{S4col1}, one has
\beq \label{S1eq7}
\begin{split}
\int_0^t\dot{\frak{B}}_1(t')\,dt' \leq \cH_1(t),\end{split}\eeq
where
\beno
\begin{split}
\dot{\frak{B}}_1(t) \eqdefa
 \sum_{i+j = 1}
\Bigl(& \bigl\|\sigma^{\left(\frac 12+\frac 1p-\frac{s_1}{2}\right)} D_t \v^1(t)\|_{L^{\frac{2p}{p-2}}}^2
+\bigl\|\sigma^{1-\frac{s_1}{2}}
(\d_X^i D_t \nabla \v^j, \d_X^i \nabla D_t \v^j)(t)\bigr\|_{L^2}^2\\
&+\bigl\|\sigma^{\left(1+\frac 1p-\frac{s_1}{2}\right)}
(\d_X^i D_t \nabla \v^j, \d_X^i \nabla D_t \v^j)(t)\bigr\|_{L^{\f{2p}{p-2}}}^2
 \\
 &+\bigl\|\sigma^{\frac{3-s_1}2}\bigl( D_t^2 \v^i, \d_X^i D_t \nabla^2\v^j,
\nabla\d_X^i \nabla D_t\v^j, \d_X^i D_t \nabla{{\rm \pi}}^j\bigr)(t)\bigr\|_{L^2}^2 \Bigr).
\end{split}\eeno}
\end{col}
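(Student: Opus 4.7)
The plan is to derive \eqref{S1eq7} as a direct corollary of Proposition \ref{S4prop1q} together with Corollary \ref{S4col1}, Proposition \ref{S1prop2}, and Corollary \ref{S2col3}, by systematically permuting the operators $D_t$, $\d_X$ and $\na$ using the commutator identities
$$
[\d_X, D_t]=0, \quad [D_t,\na]f=-\na v_\al\,\p_\al f, \quad [\d_X,\na]f=-\na X^\al\,\p_\al f,
$$
together with the 2-D Gagliardo--Nirenberg interpolation $\|a\|_{L^{2p/(p-2)}}\lesssim \|a\|_{L^2}^{2/p}\|\na a\|_{L^2}^{1-2/p}$. The key observation is that every commutator remainder is a product of $\na X$ or $\na v$ (both already controlled in suitable norms) with a quantity already bounded in one of $A_0$, $A_{11}$, $A_{12}$, or $\dot{\frak A}_1$.

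For the simpler case $(i,j)=(0,1)$, the bounds $\|\sigma^{1-s_1/2}\na D_t\v^1\|_{L^2_t(L^2)}$, $\|\sigma^{(3-s_1)/2}(\na^2 D_t\v^1, \na D_t{\rm \pi}^1, D_t^2\v^1)\|_{L^2_t(L^2)}$ follow directly from Proposition \ref{S4prop1q}, while $\|\sigma^{(3-s_0)/2}D_t^2 v\|_{L^2_t(L^2)}$ comes from Proposition \ref{S1prop2}. The remaining terms $\sigma^{1-s_1/2}D_t\na\v^1$ and $\sigma^{(3-s_1)/2} D_t\na^2\v^1$ are rewritten via $D_t\na f=\na D_t f+\na v_\al\p_\al f$ and the analogous second-order identity; the resulting remainders belong to the claimed space thanks to Corollaries \ref{lem:Dv1} and \ref{S4col1}.

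For the case $(i,j)=(1,0)$, the identity $\d_X D_t v = D_t\v^1$ gives $\d_X\na D_t v = \na D_t\v^1 - \na X^\al\p_\al D_t v$ and $\d_X D_t\na v = D_t\na\v^1 - D_t(\na X^\al\p_\al v)$, where one further uses $D_t\na X = \na\v^1 - \na v_\al\p_\al X$. The higher-order quantities $\d_X D_t\na^2 v$, $\na\d_X\na D_t v$ and $\d_X D_t\na \pi$ are reduced in the same way to $D_t\na^2\v^1$, $\na^2 D_t\v^1$ and $\na D_t{\rm \pi}^1$ plus lower-order terms of the schematic form $\na^k X\otimes D_t(\na^{3-k}v, \na^{2-k}\pi)$ with $k\in\{1,2\}$. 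These products are then estimated by distributing the powers of $\sigma$, using Corollary \ref{S4col1} for the $D_t$-factor and the bound $\|X\|_{L^\infty_t(W^{2,p})}\leq \cH_1(t)$ from Proposition \ref{S4prop1q} together with the embedding $W^{1,p}\hookrightarrow L^\infty$ in $\R^2$ for the $\na X$ and $\na^2 X$ factors.

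Finally, for the $L^{2p/(p-2)}$-norms, the 2-D interpolation yields, for each of the relevant quantities $u$,
$$
\|\sigma^{1+\frac{1}{p}-\frac{s_1}{2}}u\|_{L^{2p/(p-2)}}\lesssim \|\sigma^{1-\frac{s_1}{2}}u\|_{L^2}^{2/p}\,\|\sigma^{\frac{3-s_1}{2}}\na u\|_{L^2}^{1-2/p},
$$
with weight identity $\tfrac{2}{p}(1-\tfrac{s_1}{2})+(1-\tfrac{2}{p})\tfrac{3-s_1}{2}=1+\tfrac{1}{p}-\tfrac{s_1}{2}$; H\"older in time then reduces the $L^2_t$ bound on the $L^{2p/(p-2)}$ norm to the $L^\infty_t(L^2)$ and $L^2_t(L^2)$ bounds already established, and the same argument applies to $\|\sigma^{(1/2+1/p-s_1/2)}D_t\v^1\|_{L^{2p/(p-2)}}$ using the second line of \eqref{S4eq17p}. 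The main technical obstacle lies in bookkeeping the commutator remainders in $\d_X D_t\na^2 v$ and $\na\d_X\na D_t v$, where $X$ must be moved past two gradients and one material derivative; however, each commutator produces only a factor of $\na X$ or $\na^2 X$, both controlled in $L^\infty_t(L^\infty\cap L^p)$ via $\|X\|_{L^\infty_t(W^{2,p})}\leq\cH_1(t)$, so the terms ultimately close and \eqref{S1eq7} follows.
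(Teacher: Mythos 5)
Your proposal follows essentially the same route as the paper's proof: reduce the $i+j=1$ terms to the quantities already controlled in Proposition \ref{S4prop1q}, Corollary \ref{lem:Dv1}, Proposition \ref{S1prop2} and Corollaries \ref{S2col3}, \ref{S4col1} by commuting $\d_X$, $D_t$, $\na$, absorb the commutator remainders via $\|X\|_{L^\infty_t(W^{2,p})}\leq\cH_1(t)$, and obtain the $L^{\frac{2p}{p-2}}$ norms by 2-D interpolation between the $\sigma^{1-\frac{s_1}2}$-weighted $L^2$ bounds and the $\sigma^{\frac{3-s_1}2}$-weighted gradient bounds. Only note that in your interpolation the Gagliardo--Nirenberg exponents are swapped: one should use $\|a\|_{L^{2p/(p-2)}}\lesssim\|a\|_{L^2}^{1-\frac2p}\|\na a\|_{L^2}^{\frac2p}$, for which the weights combine as $\bigl(1-\frac2p\bigr)\bigl(1-\frac{s_1}2\bigr)+\frac2p\cdot\frac{3-s_1}2=1+\frac1p-\frac{s_1}2$, whereas the identity as you wrote it gives $\frac32-\frac1p-\frac{s_1}2$; this is a bookkeeping slip, not a gap in the argument.
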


\begin{proof}
It follows from
Corollary \ref{lem:Dv1}
and
Proposition \ref{S4prop1q} that
  \beno\label{cV1:Dv}\begin{split}
  \|\sigma^{\left(\frac 12+\frac 1p-\frac{s_1}{2}\right)}
  D_t \v^1\|_{L^2_t(L^{\frac{2p}{p-2}})}
 \leq &
C  \|\sigma^{\frac{1-s_1}{2}}
 D_t \v^1\|_{L^2_t(L^2)}^{1-\frac 2p}
\|\sigma^{1- \frac{s_1}{2}}
 \nabla D_t \v^1 \|_{L^2_t(L^2)}^{\frac 2p}
   \leq \cH_1(t),
  \end{split}\eeno
and  for any $r\in [2,+\infty[$
\begin{align*}
 \|\sigma^{\frac 32-\frac 1r- \frac{s_1}{2}}
   \nabla D_t \v^1\|_{L^2_t(L^r)}
 \leq
  C\|\sigma^{\left(1- \frac{s_1}{2}\right)}
  \nabla D_t \v^1 \|_{L^2_t(L^2)}^{\frac2r}
  \|\sigma^{\left(\frac 32 - \frac{s_1}{2}\right)}
   \nabla^2 D_t \v^1 \|_{L^2_t(L^2)}^{1-\frac 2r}
  \leq\cH_1(t).
\end{align*}
As a result, for  $r=2, \frac{2p}{p-2},$ we deduce from  Corollary \ref{S2col3}
  that
$$ \longformule{\sum_{i+j=1}\|\sigma^{\left(\frac 32-\frac 1r- \frac{s_1}{2}\right)}
\d_X^i \nabla D_t \v^j\|_{L^2_t(L^r)}
 \leq
  \bigl\|\sigma^{\left(\frac 32-\frac 1r- \frac{s_1}{2}\right)}
  \bigl(\nabla D_t \v^1, [\d_X;\nabla]   D_t v\bigr)\bigr\|_{L^2_t(L^r)}
 }{{}\leq
  \|\sigma^{\left(\frac 32-\frac 1r- \frac{s_1}{2}\right)}
  (\nabla D_t v, \nabla D_t \v^1 )\|_{L^2_t(L^r)}
  (1+\|\nabla X\|_{L^\infty_t(L^\infty)})
  \leq \cH_1(t).} $$
Moreover, for  $r=2$ or $r=\f{2p}{p-2},$ by virtue of Corollary \ref{S2col3} and Corollary \ref{S4col1}, we infer
\beno \begin{split}
&\sum_{i+j= 1}\|\sigma^{\left(\frac 32-\frac 1r- \frac{s_1}{2}\right)}
\d_X^i D_t\nabla \v^j\|_{L^2_t(L^r)}\leq
  \bigl\|\sigma^{\bigl(\frac 32-\frac 1r- \frac{s_1}{2}\bigr)}
  \bigl( D_t\nabla \v^1, D_t [\d_X; \nabla] v\bigr)\bigr\|_{L^2_t(L^r)}
  \\
  &\leq
  \bigl\|\sigma^{\left(\frac 32-\frac 1r- \frac{s_1}{2}\right)}
  (\nabla D_t \v^1, D_t \nabla v)\|_{L^2_t(L^r)}
  (1+\|\nabla X\|_{L^\infty_t(L^\infty)})
  \\
&\qquad\qquad\qquad\qquad
  +
  \|\sigma^{\left(\frac 32-\frac 1r- \frac{s_1}{2}\right)}
  ( \nabla v\otimes\nabla \v^1,
  D_t\nabla X\otimes\nabla v )\|_{L^2_t(L^r)}
  \\
  &\leq
 \cH_1(t)
 +
 \bigl( \|\sigma^{\frac 12-\frac 1p }
  \nabla v \|_{L^2_t(L^p)}
  +\|\sigma^{\frac{1 }2}
  \nabla v \|_{L^2_t(L^\infty)} \bigr)
 \|\sigma^{\left(\frac 12+\frac 1p-\frac{s_1}{2}\right)}
 (\nabla \v^1,     D_t\nabla X )\|_{L^\infty_t(L^{\frac{2p}{p-2}})}
 \\
&  \leq \cH_1(t).
\end{split}\eeno
It is easy to observe that
 \beno
 \begin{split}
 & [D_t; \nabla^2 ]\v^1=-\nabla^2 v_\al\p_\al \v^1-2\na v^\alpha\p_\alpha\nabla \v^1,
  \qquad\qquad
  [D_t; \nabla]{\rm \pi}^1=-\nabla v_\al\p_\al{\rm \pi}^1,
  \\
 & D_t [\d_X; \nabla^2 ]v=-D_t(\nabla^2 X^\al\p_\al v+2\nabla X^\al\p_\al\nabla v),
 \quad \  D_t[\d_X; \nabla]\pi=-D_t(\nabla X^\al\p_\al\pi),
  \\
&\nabla[\d_X; \nabla]D_t v=-\nabla(\nabla X^\al\p_\al D_t v),
\end{split}
  \eeno
we deduce from Corollaries \ref{S2col3}, \ref{lem:Dv1} and \ref{S4col1},  Proposition \ref{S4prop1q} that
  \begin{align*}
  &\sum_{i+j =1}  \bigl\|\sigma ^{ \frac{3-s_1}{2}}
  \bigl(D_t^2 \v^i, \d_X^i D_t\nabla^2 \v^j, \nabla\d_X^i\nabla D_t \v^j,
  \d_X^i D_t\nabla{\rm \pi}^j\bigr)\|_{L^2_t(L^2)}
  \\
  &\lesssim\bigl\|\sigma ^{ \frac{3-s_1}{2}}
 \bigl (  D_t^2 \v^1,
  \nabla^2 D_t \v^1,
  \nabla D_t{\rm \pi}^1, \nabla^2 v\otimes\nabla \v^1,
   \nabla v\otimes\nabla^2 \v^1, \nabla v\otimes\nabla{\rm \pi}^1 \bigr)\bigr\|_{L^2_t(L^2)}
  \\
  &\quad+\bigl\|\sigma ^{ \frac{3-s_1}{2}}
 D_t\bigl (\nabla^2 X\otimes \nabla v,
  \nabla X\otimes(\nabla^2 v, \nabla\pi) \bigr)\bigr\|_{L^2_t(L^2)}
  +\bigl\|\sigma ^{ \frac{3-s_1}{2}}
  \nabla(\nabla X\otimes\nabla D_t v) \bigr\|_{L^2_t(L^2)}
\\
&\lesssim \cH_1(t)
+\|\sigma ^{\frac 12}(\nabla v, D_t\nabla X )\|_{L^2_t(L^\infty)}
\|\sigma ^{1-\frac{s_1}{2}}(D_t\nabla^2 X, \nabla^2 v,
 \nabla\pi)\|_{L^\infty_t(L^2)}
\\
&\quad +\|X\|_{L^\infty_t(W^{2,p})}
\Bigl(\|\sigma ^{ \frac{3-s_1}{2}}
(D_t\nabla v, \nabla D_t v)\|_{L^2_t(L^{\frac{2p}{p-2}})}\\
&\qquad\qquad\qquad\qquad\qquad\qquad\qquad+\|\sigma ^{ \frac{3-s_1}{2}}
(D_t\nabla^2 v, \nabla^2 D_t v, D_t\nabla\pi)\|_{L^2_t(L^2)}\Bigr)
\leq \cH_1(t).
  \end{align*}
  Summing up the above estimates leads to \eqref{S1eq7}.
\end{proof}

\subsection{Propagation of the $\cB^{s_1}$-regularity for $\v^1$}\label{sectpxv}

We shall use the same  strategy as that used in the proof of Proposition
\ref{S2prop1} to study the Besov  regularity of $(\v^1,\na{\rm \pi}^1).$
Indeed let $(v_q,\na \pi_q)$ be the unique solution of
\eqref{S1eq15}, we shall first
investigate the following system for the unknown $(\u_q,\nabla\mathfrak{p}_q)$:
\begin{equation}\label{S4eq1}
 \left\{\begin{array}{l}
\displaystyle\r\d_t\u_q+\r v\cdot\na\u_q-\Delta \u_q+\nabla\mathfrak{p}_q=F_{1,q}\with\\
\ F_{1,q}\eqdefa -\rr^1 D_t v_q -(\Delta
X\cdot\nabla v_q+2\p_\al X\cdot\nabla\p_\al v_q) +\nabla
X^\al\p_\al \pi_q,\\
\displaystyle \div \u_q=\dive(v_q\cdot\na X),\\
\displaystyle  \u_q|_{t=0}=0.
\end{array}\right.
\end{equation}
Let $(\v_{12},\na \rp_{12})$ be a smooth enough solution of \eqref{S3eq5}.
Then by virtue of \eqref{S1eq3}, \eqref{S3eq5} and \eqref{S4eq1},
we have \beq \label{S4eq2} \v_{12}=\sum_{q\in\Z}\u_q \andf \na
\rp_{12}=\sum_{q\in\Z}\na \frak{p}_q. \eeq

\subsubsection{The $L^2$ energy estimate}

The following lemma concerning the norms of solution to
\eqref{S1eq15} will be very useful in the estimates that follows.

\begin{lem}\label{S4lem1}
{\sl Let $(v_q,\na \pi_q)$ be the unique  solution determined
by the System \eqref{S1eq15}. Let
$\th_0$ and $\cC(v_0,s_0)$ be given by \eqref{S1eq9}, then the following inequality is valid \beq\label{S4eq3}
\begin{split}
\|&v_q\|_{L^\infty_t(L^{\f2{1-\th_0}})}+ \bigl\|\s^{\f{1-\th_0}2}
v_q\bigr\|_{L^\infty_t(L^\infty)}+\|\na
v_q\|_{L^2_t(L^{\f2{1-\th_0}})}+ \bigl\|\s^{\f{1-\th_0}2}\na
v_q\bigr\|_{L^\infty_t(L^2)}\\
&+\bigl\|\s^{\f{1-\th_0}2}\na
v_q\bigr\|_{L^2_t(L^\infty)}
+\bigl\|\s^{\f{1-\th_0}2}(D_t v_q, \p_tv_q,\na^2v_q,\na\pi_q)\bigr\|_{L^2_t(L^2)}\\
&+\bigl\|\s^{\f{2-\th_0}4}(D_t v_q, \p_tv_q,\na^2v_q,\na\pi_q)\bigr\|_{L^2_t(L^{\f4{2-\th_0}})}\leq
\cC(v_0,s_0)\w{t}^{\f12} d_q2^{-qs_1}.
\end{split}
\eeq}
\end{lem}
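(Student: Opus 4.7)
My approach is to treat \eqref{S1eq15} as a special instance of the linearized system \eqref{S2eq1} (with $f=g=0$) whose initial velocity is $\Delta_q v_0$, and then invoke Propositions~\ref{S2col0} and~\ref{S2colu} directly. Bernstein's inequality gives $\|\Delta_q v_0\|_{L^2}\lesssim d_q 2^{-qs_0}\|v_0\|_{\cB^{s_0}}$ and $\|\nabla\Delta_q v_0\|_{L^2}\lesssim d_q 2^{q(1-s_0)}\|v_0\|_{\cB^{s_0}}$, so \eqref{S2eq2}, \eqref{S2eq3}, \eqref{S2eq10} and \eqref{S2eq10.1} yield four tiers of $L^2$-based bounds: an $L^\infty_t(L^2)\cap L^2_t(\dH^1)$ estimate of order $d_q 2^{-qs_0}$, an $L^\infty_t(\dH^1)\cap L^2_t(\dH^2)$ estimate of order $\cC d_q 2^{q(1-s_0)}$, their $\sigma^{1/2}$-weighted counterparts of order $\cC d_q 2^{-qs_0}$ and $\cC d_q 2^{q(1-s_0)}$ respectively, all carrying the exponential $\cC(v_0,s_0)$.

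The key observation is the identity $-s_1=(1-\theta_0)(-s_0)+\theta_0(1-s_0)$, which makes every convex combination with weights $(1-\theta_0,\theta_0)$ between a ``low-scale'' ($d_q 2^{-qs_0}$) and a ``high-scale'' ($d_q 2^{q(1-s_0)}$) bound produce exactly $d_q 2^{-qs_1}$. For the time-weighted $L^2_t$ terms of the form $\|\sigma^{(1-\theta_0)/2}\phi\|_{L^2_t(L^2)}$ with $\phi\in\{\partial_t v_q,D_t v_q,\nabla^2 v_q,\nabla\pi_q\}$, I would use H\"older in the form
\[
\int_0^t\sigma^{1-\theta_0}\|\phi\|_{L^2}^2\,\dt'
\leq\Bigl(\int_0^t\|\phi\|_{L^2}^2\,\dt'\Bigr)^{\theta_0}\Bigl(\int_0^t\sigma\|\phi\|_{L^2}^2\,\dt'\Bigr)^{1-\theta_0},
\]
which combined with the unweighted and $\sigma^{1/2}$-weighted tiers above yields $\cC d_q 2^{-qs_1}$ with no $\langle t\rangle$ loss. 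For the spatial norms, I would cascade two 2-D Gagliardo--Nirenberg inequalities: $\|f\|_{L^{2/(1-\theta_0)}}\lesssim\|f\|_{L^2}^{1-\theta_0}\|\nabla f\|_{L^2}^{\theta_0}$ to move from $L^2$ to $L^{2/(1-\theta_0)}$, and $\|f\|_{L^\infty}\lesssim\|f\|_{L^{2/(1-\theta_0)}}^{\theta_0}\|\nabla f\|_{L^{2/(1-\theta_0)}}^{1-\theta_0}$ to reach $L^\infty$, always applying the same convex-combination identity so that only the first power of $d_q$ appears.

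For the mixed norm $\|\sigma^{(2-\theta_0)/4}\phi\|_{L^2_t(L^{4/(2-\theta_0)})}$ I would apply $\|f\|_{L^{4/(2-\theta_0)}}\lesssim\|f\|_{L^2}^{1-\theta_0/2}\|\nabla f\|_{L^2}^{\theta_0/2}$ in $\R^2$ with $f=\sigma^{(2-\theta_0)/4}\phi$, distributing $\sigma^{(2-\theta_0)/4}=(\sigma^{1/2})^{1-\theta_0/2}$ across the two factors and then closing with H\"older in time (with conjugate exponents $2/(2-\theta_0)$ and $2/\theta_0$) against $\|\sigma^{1/2}\phi\|_{L^2_t(L^2)}$ and $\|\sigma^{1/2}\nabla\phi\|_{L^2_t(L^2)}$. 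The convective piece of $D_t v_q=\partial_t v_q+v\cdot\nabla v_q$ would be absorbed using $\|\sigma^{(1-s_0)/2}v\|_{L^\infty_t(L^\infty)}\leq\cC(v_0,s_0)$ from Corollary~\ref{S1col1} together with the bounds on $\nabla v_q$ already derived. The $\langle t\rangle^{1/2}$ slack in the target appears only where a pointwise-in-time bound must be promoted to an $L^2_t$ bound through the trivial embedding $\|\cdot\|_{L^2_t(L^r)}\leq\langle t\rangle^{1/2}\|\cdot\|_{L^\infty_t(L^r)}$, for instance in treating $\|\sigma^{(1-\theta_0)/2}\nabla v_q\|_{L^2_t(L^\infty)}$.

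The main obstacle is the bookkeeping of the $d_q$ factors: since $(d_q^\alpha)_{q\in\Z}$ is not summable for $\alpha<1$, every interpolation must produce $d_q$ to the first power, which forces the Gagliardo--Nirenberg and time-H\"older exponents to match the scaling identity $-s_1=(1-\theta_0)(-s_0)+\theta_0(1-s_0)$ exactly. This is especially delicate for negative frequencies $q<0$, where the low-scale estimate $d_q 2^{-qs_0}$ alone does not dominate $d_q 2^{-qs_1}$, so the full convex combination (and not a crude monotonicity) is required throughout.
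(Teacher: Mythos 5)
Your skeleton is the same as the paper's: view \eqref{S1eq15} as the linearized system with data $\Delta_q v_0$, invoke the four tiers of linear estimates from Section 3, and close by convex combinations matching $-s_1=(1-\theta_0)(-s_0)+\theta_0(1-s_0)$. The $L^\infty_t(L^2)$-based norms, the cascaded Gagliardo--Nirenberg bounds for $\|v_q\|_{L^\infty_t(L^{2/(1-\theta_0)})}$, $\|\sigma^{(1-\theta_0)/2}v_q\|_{L^\infty_t(L^\infty)}$, $\|\sigma^{(1-\theta_0)/2}\nabla v_q\|_{L^\infty_t(L^2)}$, and the time-H\"older argument for $\|\sigma^{(1-\theta_0)/2}(\partial_t v_q,\nabla^2 v_q,\nabla\pi_q)\|_{L^2_t(L^2)}$ are all sound and essentially what the paper does. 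The gap is in the last line of \eqref{S4eq3}. For $\phi=\partial_t v_q$, your two-factor splitting against $\|\sigma^{1/2}\phi\|_{L^2_t(L^2)}$ and $\|\sigma^{1/2}\nabla\phi\|_{L^2_t(L^2)}$ fails twice: the factors carry total weight $\sigma^{(1-\theta_0/2)/2+\theta_0/4}=\sigma^{1/2}$, exceeding the available $\sigma^{(2-\theta_0)/4}$, so after the (already saturated) time-H\"older with exponents $2/(2-\theta_0)$ and $2/\theta_0$ a non-integrable factor $\sigma^{-\theta_0/4}$ is left in $L^\infty_t$; and the Besov scale it would produce is $d_q2^{-q(s_0-\theta_0/2)}$, which does not dominate $d_q2^{-qs_1}$ for $q<0$ --- precisely the regime you flag as requiring the exact convex combination. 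The correct combination is a three-factor one, as in the paper: first raise integrability at fixed weight, $\|\sigma^{1/2}\partial_t v_q\|_{L^2_t(L^{(2-\theta_0)/(1-\theta_0)})}\lesssim\|\sigma^{1/2}\partial_t v_q\|_{L^2_t(L^2)}^{1-\delta}\|\sigma^{1/2}\nabla\partial_t v_q\|_{L^2_t(L^2)}^{\delta}$ with $\delta=\theta_0/(2-\theta_0)$, then interpolate against the unweighted $\|\partial_t v_q\|_{L^2_t(L^2)}^{\theta_0/2}$; the effective exponents $(\theta_0/2,\,1-\theta_0,\,\theta_0/2)$ balance both the weight $\sigma^{(2-\theta_0)/4}$ and the scale $2^{-qs_1}$.

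Moreover, for $\phi\in\{\nabla^2 v_q,\nabla\pi_q\}$ (and $D_t v_q$) your Gagliardo--Nirenberg scheme needs $\nabla\phi$, i.e.\ third derivatives of $v_q$ and second derivatives of $\pi_q$, which none of the tiers controls. The paper instead uses the $L^r$ elliptic estimate for the Stokes operator applied to \eqref{S1eq15}, reducing $(\nabla^2 v_q,\nabla\pi_q)$ in $L^{2/(1-\delta)}$ to $\partial_t v_q$ plus the convection term $\rho v\cdot\nabla v_q$; it is this convection term, estimated by $\|\sigma^{(1-s_0)/2}v\|_{L^\infty_t(L^\infty)}\|\sigma^{1/2}\nabla v_q\|_{L^\infty_t(L^{2/(1-\delta)})}$ through the embedding $L^\infty_t\times L^\infty_t\to L^2_t$, that generates the $\w{t}^{1/2}$ --- not a trivial promotion of an $L^\infty_t(L^\infty)$ bound. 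Relatedly, your treatment of $\|\sigma^{(1-\theta_0)/2}\nabla v_q\|_{L^2_t(L^\infty)}$ via $\|\cdot\|_{L^2_t}\le\w{t}^{1/2}\|\cdot\|_{L^\infty_t}$ presupposes an $L^\infty_t(L^\infty)$ bound on $\sigma^{(1-\theta_0)/2}\nabla v_q$ that is not deducible from the available estimates (it would require $\nabla^2 v_q$ in $L^\infty_t(L^r)$ for some $r>2$); the paper obtains this norm by interpolating $\|\nabla v_q\|_{L^2_t(L^2)}^{\theta_0}$ against the Stokes-derived bound $\|\sigma^{1/2}\nabla^2 v_q\|_{L^2_t(L^{2(1-\theta_0)/(1-2\theta_0)})}^{1-\theta_0}$, and the same Stokes step, together with $\rho D_t v_q=\Delta v_q-\nabla\pi_q$, disposes of the $D_t v_q$ entries. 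So the proposal needs the $L^r$ Stokes regularity ingredient and the corrected three-factor interpolation before it closes.
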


\begin{proof}
It follows from  \eqref{S2eq6}, \eqref{S1eq15} and \eqref{S2eq21a},
\eqref{S2eq22}, \eqref{S2eq23}  that for any $\de\in ]0,1[$ \beno
\begin{split}
&\|v_q\|_{L^\infty_t(L^{\f2{1-\th_0}})}\lesssim
\|v_q\|_{L^\infty_t(L^2)}^{1-\th_0}\|\na
v_q\|_{L^\infty_t(L^2)}^{\th_0}\leq \cC(v_0,s_0) d_q2^{-qs_1},\\
&\|\na v_q\|_{L^2_t(L^{\f2{1-\th_0}})}\lesssim \|\na
v_q\|_{L^2_t(L^2)}^{1-\th_0}\|\na^2
v_q\|_{L^2_t(L^2)}^{\th_0}\leq \cC(v_0,s_0) d_q2^{-qs_1},\\
&\bigl\|\s^{\f{1-\de}2}v_q\bigr\|_{L^\infty_t(L^{\f2\de})}\lesssim
\|v_q\|_{L^\infty_t(L^2)}^\de\bigl\|\s^{\f12}\na
v_q\bigr\|_{L^\infty_t(L^2)}^{1-\de}\leq \cC(v_0,s_0) d_q
2^{-qs_0},\end{split} \eeno
which together with the 2-D
interpolation inequality  that
\beno \|a\|_{L^\infty}\lesssim
\|a\|_{L^{\f2\de}}^{\f1{1+\de}}\|\na^2 a\|_{L^2}^{\f\de{1+\de}},
\eeno
for $\de=\f{\th_0}{1-\th_0}$ and \eqref{S2eq23a} ensures that
\beno
\bigl\|\s^{\f{1-\th_0}2}
v_q\bigr\|_{L^\infty_t(L^\infty)}
\lesssim
\bigl\|\s^{\f{1-2\th_0}{2(1-\th_0)}}
 v_q\|_{L^\infty_t(L^{\f{2(1-\th_0)}{\th_0}})}^{1-\th_0}
\bigl\|\s^{\f12}\na^2v_q\|_{L^\infty_t(L^2)}^{\th_0} \leq \cC(v_0,s_0)
d_q2^{-qs_1}. \eeno
It is easy to observe that
\beno \begin{split}
&\bigl\|\s^{\f{1-\th_0}2}\na v_q\bigr\|_{L^\infty_t(L^2)}\leq \|\na
v_q\|_{L^\infty_t(L^2)}^{\th_0}\bigl\|\s^{\f12}\na
v_q\bigr\|_{L^\infty_t(L^2)}^{1-\th_0},\\
&\bigl\|\s^{\f{1-\th_0}2}\p_tv_q\bigr\|_{L^2_t(L^2)}
\leq
\|\p_tv_q\|_{L^2_t(L^2)}^{\th_0}
\bigl\|\s^{\f12}\p_tv_q\bigr\|_{L^2_t(L^2)}^{1-\th_0},
\end{split} \eeno
which together with \eqref{S2eq22} and \eqref{S2eq23} implies
that
\beno \bigl\|\s^{\f{1-\th_0}2}\na
v_q\bigr\|_{L^\infty_t(L^2)}
+\bigl\|\s^{\f{1-\th_0}2}\p_tv_q\bigr\|_{L^2_t(L^2)}
\leq
\cC(v_0,s_0) d_q2^{-qs_1}. \eeno
While note from  \eqref{S2eq6},
\eqref{S2eq23} and \eqref{S2eq23a} that for $\de\in [0,1[,$ there
holds
\beq\label{S4eq5} \begin{split}
&\bigl\|\s^{\f12}\na
v_q\bigr\|_{L^\infty_t(L^{\f2{1-\de}})}
\leq C\bigl\|\s^{\f12}\na
v_q\bigr\|_{L^\infty_t(L^2)}^{1-\de}
\bigl\|\s^{\f12}\na^2v_q\bigr\|_{L^\infty_t(L^2)}^{\de}
\leq  \cC(v_0,s_0) d_q2^{-q(s_0-\de)},
\\
&\bigl\|\s^{\f12}\p_tv_q\bigr\|_{L^2_t(L^{\f2{1-\de}})}\leq
C\bigl\|\s^{\f12}\p_tv_q\bigr\|_{L^2_t(L^2)}^{1-\de}
\bigl\|\s^{\f12}\na\p_tv_q\bigr\|_{L^2_t(L^2)}^{\de}
\leq \cC(v_0,s_0) d_q2^{-q(s_0-\de)}.
\end{split}
\eeq
Then it follows from the classical estimate on Stokes operator
and  Corollary \ref{S1col1}, \eqref{S1eq15} that
 \beq \label{S4eq15}
\begin{split}
\bigl\|\s^{\f12}(&\na^2v_q,\na\pi_q)\bigr\|_{L^2_t(L^{\f2{1-\de}})}
\lesssim
\bigl\|\s^{\f12}\p_tv_q\bigr\|_{L^2_t(L^{\f2{1-\de}})}\\
&+\w{t}^{\f{1}2}\|\s^{\f{1-s_0}2}v\|_{L^\infty_t(L^\infty)}
\bigl\|\s^{\f{1}2}\na v_q\bigr\|_{L^\infty_t(L^{\f2{1-\de}})}
\leq  \cC(v_0,s_0)\w{t}^{\f12}d_q2^{-q(s_0-\de)}.
\end{split}
\eeq
Taking $\de=\f{\th_0}{2-\th_0}$ in the second inequality of
\eqref{S4eq5}  and using \eqref{S2eq22} gives rise to \beno
\begin{split}
\bigl\|\s^{\f{2-\th_0}4}\p_tv_q\bigr\|_{L^2_t(L^{\f4{2-\th_0}})}\leq
&
\|\p_tv_q\|_{L^2_t(L^2)}^{\f{\th_0}2}
\bigl\|\s^{\f12}\p_tv_q\bigr\|_{L^2_t(L^{\f{2-\th_0}{1-\th_0}})}^{1-\f{\th_0}2}
\leq \cC(v_0,s_0) d_q2^{-qs_1},
\end{split}
\eeno
which together with a similar derivation of \eqref{S4eq15}
ensures that \beno
\bigl\|\s^{\f{2-\th_0}4}(\na^2v_q,\na\pi_q)\bigr\|_{L^2_t(L^{\f4{2-\th_0}})}
+\bigl\|\s^{\f{1-\th_0}2}(\na^2v_q,\na\pi_q)\bigr\|_{L^2_t(L^2)}\leq
\cC(v_0,s_0)\w{t}^{\f12}d_q2^{-qs_1}. \eeno
Furthermore, thanks to the
following interpolation inequality
\beno \|a\|_{L^\infty}\lesssim
\|a\|_{L^2}^{\f{\de}{1+\de}}\|\na a\|_{L^{\f2{1-\de}}}^{\f1{1+\de}},
\eeno
and \eqref{S4eq15}, we infer \beno
\begin{split}
\bigl\|\s^{\f{1-\th_0}2}\na v_q\bigr\|_{L^2_t(L^\infty)}\lesssim
&\|\na v_q\|_{L^2_t(L^2)}^{\th_0}\bigl\|\s^{\f{1}2}\na^2
v_q\bigr\|_{L^2_t(L^{\frac{2(1-\theta_0)}{1-2\th_0}})}^{1-\th_0}\leq \cC(v_0,s_0)\w{t}^{\f12}d_q2^{-qs_1}.
\end{split}
\eeno By summing up the above estimates and making use of the fact that $\rho D_t v_q=\Delta v_q-\nabla\pi_q$, we complete the proof of
\eqref{S4eq3}.
\end{proof}

Let us turn to the $L^2$ energy estimate of $\u_q.$

\begin{lem}\label{S4prop1}
{\sl Let $(\u_q, \na\frak{p}_q)$ be the unique solution of \eqref{S4eq1}.
Then there holds
\beq\label{S4eq9} \|\u_q\|_{L^\infty_t(L^2)}+\|\na
\u_q\|_{L^2_t(L^2)}\leq
 \cH_1(t)d_q2^{-qs_1}. \eeq}
\end{lem}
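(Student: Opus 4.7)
The plan is to mimic the $L^2$ energy estimate of $\v_{12}$ carried out in Lemma \ref{S3prop2} at the Littlewood--Paley localized level, replacing $v$ and $F_1(v,\pi)$ there by $v_q$ and $F_{1,q}$. Concretely, testing the momentum equation of \eqref{S4eq1} against $\u_q$ gives
\[
\tfrac12 \tfrac{d}{dt}\|\sqrt{\r}\,\u_q(t)\|_{L^2}^2 + \|\na \u_q\|_{L^2}^2
=\int_{\R^2} \bigl(-\na\frak{p}_q + F_{1,q}\bigr)\cdot \u_q\,\dx.
\]
Exploiting $\div \u_q = \div(v_q\cdot\na X)$ to recast the pressure integral as $\int \na\frak{p}_q \cdot(v_q\cdot\na X)\,\dx$ and substituting the momentum equation back into this expression produces, after time integration, the identity
\[
\tfrac12\|\sqrt{\r}\u_q\|_{L^\infty_t(L^2)}^2 + \|\na\u_q\|_{L^2_t(L^2)}^2
= \int_0^t\!\!\!\int F_{1,q}\cdot(\u_q - v_q\cdot\na X)\,\dx\,\dt' + \int_0^t\!\!\!\int(\r\p_t\u_q+\r v\cdot\na\u_q-\D\u_q)\cdot(v_q\cdot\na X)\,\dx\,\dt',
\]
which is the exact analogue of \eqref{S3eq9}.

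To close it I will need time-weighted $\dot H^1$ bounds on $\u_q$ that are summable in $q$. These I expect to obtain by applying Lemma \ref{S3lem2} (with $s=s_1$, $\delta=s_0$) to the system \eqref{S4eq1}, using the bound $\|\rr^1\|_{L^\infty}\le \|\p_{X_0}\r_0\|_{L^\infty}$, the sharp estimates from Lemma \ref{S4lem1} on $(v_q,\na v_q,\na^2v_q,D_tv_q,\na\pi_q)$, and the Proposition \ref{S4prop1q} control $\|X\|_{L^\infty_t(W^{2,p})}\le \cH_1(t)$. This should yield
\[
\|\s^{\f{1-s_1}2}\na\u_q\|_{L^\infty_t(L^2)}^2
+\|\s^{\f{1-s_1}2}(\p_t\u_q,\na^2\u_q,\na\frak{p}_q)\|_{L^2_t(L^2)}^2
\le \cH_1(t)\,d_q^2\,2^{-2qs_1},
\]
by the same calculation as in Proposition \ref{S3prop1}, the crucial point being that every Lemma \ref{S4lem1} estimate carries the factor $d_q 2^{-qs_1}$, hence this factor propagates through every bilinear pairing.

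With this auxiliary estimate in hand, I estimate the two right-hand-side terms of the identity above. For the first, I split $F_{1,q}$ into its four constituents and bound each by a product of the form $\|\s^{-\f{1-\th_0}2}\|_{L^2_t}\cdot \|\s^{\f{1-\th_0}2}(D_tv_q,\na^2v_q,\na\pi_q,\na v_q)\|_{L^2_t(L^2)}\cdot \|\na X\|_{L^\infty_t(W^{1,p})}\cdot(\|\u_q\|_{L^\infty_t(L^2)}+\|v_q\|_{L^\infty_t(L^2)}\|\na X\|_{L^\infty_t(L^\infty)})$, absorbing the $\tfrac14\|\sqrt{\r}\u_q\|_{L^\infty_t(L^2)}^2$ piece on the left. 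For the second right-hand-side term, Cauchy--Schwarz with the weighted $\dot H^1$ bound on $(\p_t\u_q,\na^2\u_q)$ just established, together with Lemma \ref{S4lem1} on $v_q$ and $\|v\|_{L^\infty_t(L^2)}+\|X\|_{L^\infty_t(W^{2,p})}\le \cH_1(t)$, absorbs a $\tfrac12\|\na\u_q\|_{L^2_t(L^2)}^2$ piece and leaves the remainder bounded by $\cH_1(t)d_q^2 2^{-2qs_1}$. Summing yields \eqref{S4eq9}.

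The main obstacle will be the bookkeeping that preserves the sharp decay $d_q 2^{-qs_1}$ through every product and verifies the integrability in time of the weight exponents. In particular one must check, as in \eqref{S3eq12} and the estimate just after \eqref{S3eq26}, that in each pairing one factor carries the (exactly borderline) time weight $\s^{(1-s_1)/2}$ or $\s^{(1-\theta_0)/2}$ from Lemma \ref{S4lem1} while the companion weight $\s^{-\alpha}$ with $\alpha<1/2$ is $L^2$-integrable in time on bounded intervals, and that the $W^{2,p}$ norm of $X$ can always be estimated in $L^\infty$ (using $p>2$) against the Besov/Sobolev factor. No genuinely new analytic idea is required beyond those already used for $\v_{12}$ in Lemma \ref{S3prop2} and the Besov reconstruction of Proposition \ref{S2prop1}; the proof is a clean parallel, with the density jump $\rr^1\in L^\infty$ playing the role of the benign multiplier in $F_{1,q}$.
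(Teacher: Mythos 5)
Your overall skeleton (the $L^2$ energy identity with the pressure recast through $\div \u_q=\div(v_q\cdot\na X)$ and the equation substituted back) is the same as the paper's, but the way you propose to close it contains a genuine gap: the auxiliary time-weighted $\dot H^1$ estimate
$\|\s^{\f{1-s_1}2}\na\u_q\|_{L^\infty_t(L^2)}^2+\|\s^{\f{1-s_1}2}(\p_t\u_q,\na^2\u_q,\na\frak{p}_q)\|_{L^2_t(L^2)}^2\le \cH_1(t)\,d_q^2\,2^{-2qs_1}$
does not follow from Lemma \ref{S3lem2}, and is not available with that decay. When you apply Lemma \ref{S3lem2} to \eqref{S4eq1} the right-hand side contains the initial divergence datum $\f{\w{t}}{\de-s}\|g(0)\|_{\cB^{\de}}^2$ with $g(0)=\D_qv_0\cdot\na X_0$. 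Since $\D_qv_0$ is spectrally localized at $2^q$ and $\na X_0$ is only $W^{1,p}$ (H\"older of exponent $1-2/p<s_k\le s_1$), one has $\|\D_qv_0\cdot\na X_0\|_{\cB^{s_0}}\approx 2^{qs_0}\|\D_qv_0\|_{L^2}\|\na X_0\|_{L^\infty}\lesssim d_q$, with no factor $2^{-qs_1}$; optimizing over $\de\in\,]s_1,1[$ gives at best $d_q2^{-q(\de-s_0)}$ hence at best $d_q2^{-q\th_0}$, and $\th_0=\epsilon/k$ is in general much smaller than $s_1$. Consequently your absorption of the second right-hand-side term through $\|\s^{\f{1-s_1}2}(\p_t\u_q,\na^2\u_q)\|_{L^2_t(L^2)}^2$ only yields $\|\u_q\|_{L^\infty_t(L^2)}\lesssim \cH_1(t)d_q2^{-q\th_0}$, which is too weak for the Besov reconstruction of Proposition \ref{S4prop3} (it would only give $\cB^{\th_0}$-regularity of $\v^1$). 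Note also that the unweighted bound actually proved later (Lemma \ref{S4prop2}) carries the growing factor $2^{q(1-s_1)}$, so it cannot substitute either; this is exactly the point where the ``clean parallel'' with $\v_{12}$ breaks, because in Proposition \ref{S3prop1} the corresponding datum $\|v_0\cdot\na X_0\|_{\cB^{s_0}}\le\cA_1$ needed no $q$-decay.

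The paper avoids any second-order or time-derivative norm of $\u_q$ in this step: in the term $\int_0^t\int(\r\p_t\u_q+\r v\cdot\na\u_q-\D\u_q)\,|\,v_q\cdot\na X\,\dx\,\dt'$ it integrates by parts in time for $\p_t(\r\u_q)$ (producing $\f{d}{dt}\int\r\u_q|v_q\cdot\na X$, absorbed by $\f14\|\sqrt{\r}\u_q\|_{L^\infty_t(L^2)}^2$ plus $\|v_q\cdot\na X\|_{L^\infty_t(L^2)}^2$, and the term $\int\r\u_q|\p_t(v_q\cdot\na X)$ controlled by $\|\s^{\f{2-\th_0}4}\p_t(v_q\cdot\na X)\|_{L^2_t(L^2)}\lesssim \cH_1(t)d_q2^{-qs_1}$ via Lemma \ref{S4lem1} and Corollary \ref{S4col1}), and in space for $\D\u_q$ (leaving $\int\na\u_q|\na(v_q\cdot\na X)$, with $\|\na(v_q\cdot\na X)\|_{L^2_t(L^2)}\lesssim\cH_1(t)d_q2^{-qs_1}$). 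In this way every factor carrying the decay comes from $v_q$ and $X$ alone, and Gronwall with the integrable weight $\s^{-1+\f{\th_0}2}$ closes the estimate. If you restructure your second step along these lines (and keep your treatment of $\int F_{1,q}|\u_q$, which is essentially the paper's), your proof becomes correct.
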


\begin{proof} We first get, by using $L^2$ energy estimate to the $\u_q$
equation of \eqref{S4eq1}, that \beq \label{S4eq10}
\f12\|\sqrt{\r}\u_q(t)\|_{L^2}^2
+\|\na \u_q\|_{L^2_t(L^2)}^2
=-\int_0^t\int_{\R^2}\na \frak{p}_q | \u_q\,\dx\,\dt'
+\int_0^t\int_{\R^2} F_{1,q} | \u_q\,\dx\,\dt'. \eeq
It follows from the definition of $F_{1,q}$ and
\eqref{brho} that
\beno
\begin{split}
\|\sigma^{\frac{2-\theta_0}{4}} F_{1,q}\|_{L^2_t(L^2)}
 \lesssim &
 \|\sigma^{\frac{1-\theta_0}{2}} D_tv_q\|_{L^2_t(L^2)}
 +\|\D X\|_{L^\infty_t(L^p)}
 \|\sigma^{\frac{1-\theta_0}{2}}\na v_q\|_{L^2_t(L^{\f{2p}{p-2}})}
 \\
&\qquad
+\|\na X\|_{L^\infty_t(L^{\f4{\th_0}})}
 \|\sigma^{\frac{2-\theta_0}{4}}
 (\na^2 v_q,\nabla{\pi}_q)\|_{L^2_t(L^{\f4{2-\th_0}})} ,
\end{split}
\eeno
from which,  Proposition \ref{S4prop1q} and Lemma \ref{S4lem1},
we infer for $\beta=\f{p-2}{(1-\th_0)p}\in ]0,1[$
\beq\label{S4eq11-}
\begin{split}
\|\sigma^{\frac{2-\theta_0}{4}} F_{1,q}\|_{L^2_t(L^2)}
&\lesssim
\cH_1(t)d_q 2^{-qs_1}
+\Bigl(\|\na v_q\|_{L^2_t(L^{\f2{1-\th_0}})}^{ \beta}
\bigl\|\s^{\f{1-\th_0}2}\na
v_q\bigr\|_{L^2_t(L^\infty)}^{ 1-\beta }\\
&+
 \bigl\|\s^{\f{2-\th_0}4}(\na^2 v_q,\na
\pi_q)\bigr\|_{L^2_t(L^{\f4{2-\th_0}})}
\Bigr)\|\na
X\|_{L^\infty_t(W^{1,p})}
\lesssim
 \cH_1(t) d_q 2^{- qs_1}.
\end{split}
\eeq
We thus obtain
\beq\label{S4eq11}
\begin{split}
\bigl|\int_0^t&\int_{\R^2} F_{1,q} | \u_q\,\dx\,\dt'\bigr|
\leq \int^t_0\s^{-1+ \f{ \th_0}2}\|\u_q\|_{L^2}^2\,\dt'
+\cH_1(t) d_q^22^{-2qs_1}.
\end{split}
\eeq
 While we get, by using integration by parts and \eqref{S4eq1}, that
\beq\label{S4eq14}
\begin{split}
\int_{\R^2}\na\frak{p}_q | \u_q\,\dx
=&\int_{\R^2} \na\frak{p}_q | v_q\cdot\na X\,\dx
\\
=&\int_{\R^2}\left(F_{1,q}-\p_t(\r \u_q)-\dive(\r
v\otimes\u_q)+\D\u_q\right) |  v_q\cdot\na X\,\dx.
\end{split}
\eeq
In view of Proposition \ref{S4prop1q} and   Lemma \ref{S4lem1}, one has \beq \label{S4eq13}
\|v_q\cdot\na X\|_{L^\infty_t(L^2)}\leq
\|v_q\|_{L^\infty_t(L^{\f2{1-\th_0}})}\|\na
X\|_{L^\infty_t(L^{\f2{\th_0}})}
\leq
 \cH_1 (t)d_q2^{-qs_1}, \eeq
so that a similar derivation as \eqref{S4eq11} ensures
\beno
\bigl|\int_0^t\int_{\R^2}F_{1,q} |  v_q\cdot\na X\,dx\,\dt'\bigr|
\leq
 \cH_1 (t)d_q^22^{-2qs_1}.
 \eeno
Again by  integration by parts, we write
\beno \int_{\R^2}\p_t(\r \u_q) | v_q\cdot\na X\,\dx
=\f{d}{dt}\int_{\R^2}\r \u_q | v_q\cdot\na X\,\dx
-\int_{\R^2}\r \u_q|  \p_t(v_q\cdot\na X) \,\dx.
\eeno
Yet it follows from   Proposition \ref{S4prop1q}, Corollary \ref{S4col1} and Lemma
\ref{S4lem1} that
 \beno
\begin{split}
\bigl\|\sigma^{\frac{2-\theta_0}{4}}\d_t(v_q\cdot\nabla X)\bigr\|_{L^2_t(L^2)}
&\leq
 \|\na X\|_{L^\infty_t(W^{1,p})}
 \bigl\|\s^{\f{2-\th_0}4}\p_tv_q\bigr\|_{L^2_t(L^{\f4{2-\th_0}})}
 \\
&\quad
 +C\w{t}\|\s^{\f{1-\th_0}2} v_q\|_{L^\infty_t(L^\infty)}
 \|\s^{\f{1-s_1}2}\nabla X_t\|_{L^\infty_t(L^2)}
 \leq
  \cH_1 (t)d_q 2^{- qs_1},
\end{split}
\eeno
which together with   \eqref{S4eq13} implies
 \beno
\begin{split}
 \bigl|\int_0^t\int_{\R^2}\p_t(\r \u_q) |
&v_q\cdot\na X\,\dx\,\dt'\bigr|
\leq
\f14\|\sqrt{\r}\u_q(t)\|_{L^2}^2+
C\|v_q\cdot\na X\|_{L^\infty_t(L^2)}^2
\\
&\qquad\qquad
+C\int_0^t\s^{-1+\f{\th_0}2}\|\u_q(t')\|_{L^2}^2\,\dt'
+C\|\sigma^{\frac{2-\theta_0}{4}}\d_t(v_q\cdot\nabla X)\|_{L^2_t(L^2)}^2
\\
 &\qquad\quad\leq
\f14\|\sqrt{\r}\u_q\|_{L^2}^2
+C\int_0^t\s^{-1+\f{\th_0}2}\|\u_q(t')\|_{L^2}^2\,dt'
+  \cH_1 (t)d_q^22^{-2qs_1}.
\end{split}
\eeno
Along the same line to the derivation of \eqref{S4eq11-}, one has
\beno
\begin{split}
\bigl\|\nabla( v_q\cdot\na X)\bigr\|_{L^2_t(L^2)}
&\leq
\w{t}\Bigl(\|\na v_q\|_{L^2_t(L^{\f2{1-\th_0}})}
+\|v_q\|_{L^\infty_t(L^{\f2{1-\th_0}})}^{ \beta}
\|\s^{\f{1-\th_0}2}v_q\|_{L^\infty_t(L^\infty)}^{ 1-\beta }\Bigr)
\|\na X\|_{L^\infty_t(W^{1,p})}
\\
&\leq \cH_1(t)d_q2^{-qs_1},
\end{split}
\eeno
which ensures that
 \beno \begin{split}
\bigl|\int_0^t\int_{\R^2}\dive(\r v\otimes\u_q) |&
 v_q\cdot\na X\,\dx\,\dt'\bigr|
 \leq
 C\int_0^t\|v\|_{L^4}\|\u_q\|_{L^4}
 \|\nabla(v_q\cdot\nabla X)\|_{L^2}\,\dt'
\\
&\leq \f16\|\na \u_q\|_{L^2_t(L^2)}^2
+\int_0^t\|v\|_{L^4}^4\|\u_q\|_{L^2}^2\,\dt'
+ \cH_1 (t)d_q^22^{-2qs_1},
\end{split} \eeno
and \beno
\bigl|\int_0^t\int_{\R^2}\D \u_q | v_q\cdot\na X\,\dx\,\dt'\bigr|\leq
\f16\|\na\u_q\|_{L^2_t(L^2)}^2
+\cH_1 (t)d_q^22^{-2qs_1}. \eeno
Inserting the above estimates into \eqref{S4eq14} gives rise to \beq \label{S4eq4}
\begin{split}
\bigl|\int_0^t\int_{\R^2}\na\frak{p}_q |& \u_q\,\dx\,\dt'\bigr|\leq
\f13\bigl(\|\sqrt{\r}\u_q(t)\|_{L^2}^2+\|\na\u_q\|_{L^2_t(L^2)}^2\bigr)
\\
&+C\int_0^t\bigl(\|v\|_{L^4}^4+\s^{-1+\f{\th_0}2}\bigr)\|\u_q(t')\|_{L^2}^2\,\dt'
+\cH_1 (t)d_q^22^{-2qs_1}.
\end{split}
\eeq
Substituting the
Inequalities \eqref{S4eq11} and \eqref{S4eq4} into \eqref{S4eq10}, we arrive at \beno
\|\sqrt{\r}\u_q(t)\|_{L^2}^2+\|\na\u_q\|_{L^2_t(L^2)}^2\leq
C\int_0^t\bigl(\|v\|_{L^4}^4+\s^{-1+\f{\th_0}2}\bigr)\|\u_q(t')\|_{L^2}^2\,dt'
+  \cH_1(t)d_q^22^{-2qs_1}.
\eeno
 Then applying Gronwall's inequality  leads to \eqref{S4eq9}.
This completes the proof of the lemma.
\end{proof}

\subsubsection{The $\dH^1$ energy estimate}

\begin{lem}\label{S4prop2}
{\sl Under the asme assumptions of Lemma \ref{S4prop1}, one
has \beq\label{S4eq6}
\|\na\u_q\|_{L^\infty_t(L^2)}
+\bigl\|(\p_t\u_q,\na^2 \u_q,\na\frak{p}_q)\bigr\|_{L^2_t(L^2)}
\leq \cH_1(t)d_q2^{q(1-s_1)}.
\eeq
}
\end{lem}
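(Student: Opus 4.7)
The strategy is parallel to that of Lemma~\ref{S4prop1} but performs an $\dot H^1$ energy estimate in place of an $L^2$ one. Precisely, I would apply Lemma~\ref{S2lem1} to the system~\eqref{S4eq1} with source data $f=F_{1,q}$, $g=v_q\cdot\nabla X$, and zero initial datum. Integrating the resulting differential inequality over $[0,t]$ using $\nabla\u_q|_{t=0}=0$, and invoking Gronwall's lemma with the global bound $\|v\|_{L^4_t(L^4)}\leq C\|v_0\|_{L^2}$ from~\eqref{S2eq5}, reduces the target \eqref{S4eq6} to producing the estimate
\[
\|F_{1,q}\|_{L^2_t(L^2)}
+\|\partial_t(v_q\cdot\nabla X)\|_{L^2_t(L^2)}
+\|\nabla\dive(v_q\cdot\nabla X)\|_{L^2_t(L^2)}
\leq \cH_1(t)\,d_q\,2^{q(1-s_1)}.
\]

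For $\|F_{1,q}\|_{L^2_t(L^2)}$ I would expand $F_{1,q}$ along the lines of its definition in~\eqref{S4eq1} and use $\|\rr^1\|_{L^\infty_t(L^\infty)}\leq\|\p_{X_0}\rho_0\|_{L^\infty}$ from~\eqref{brho} together with $\|X\|_{L^\infty_t(W^{2,p})}\leq \cH_1(t)$ from Proposition~\ref{S4prop1q}. The $v_q$-factors are handled through the unweighted-in-time bounds $\|(\p_t v_q,\nabla^2 v_q,\nabla\pi_q)\|_{L^2_t(L^2)}\lesssim d_q\,2^{q(1-s_0)}\cC(v_0,s_0)$ from~\eqref{S2eq22}, supplemented by the Gagliardo-Nirenberg interpolation $\|\nabla v_q\|_{L^2_t(L^{\frac{2p}{p-2}})}\lesssim \|\nabla v_q\|_{L^2_t(L^2)}^{(p-2)/p}\|\nabla^2 v_q\|_{L^2_t(L^2)}^{2/p}\lesssim d_q\,2^{q(2/p-s_0)}$ needed to control $\Delta X\cdot\nabla v_q$. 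The convective piece of $D_tv_q=\p_t v_q+v\cdot\nabla v_q$ is bounded by $\|v\|_{L^4_t(L^4)}\|\nabla v_q\|_{L^4_t(L^4)}$, with $\|\nabla v_q\|_{L^4_t(L^4)}^2\leq \|\nabla v_q\|_{L^\infty_t(L^2)}\|\nabla^2 v_q\|_{L^2_t(L^2)}\lesssim d_q^2\,2^{2q(1-s_0)}$. Since $s_0>s_1$ and $2/p<1$, each of these scales is dominated by $d_q\,2^{q(1-s_1)}$. The divergence term $\nabla\dive(v_q\cdot\nabla X)=\nabla(\p_\al v_q\cdot\nabla X^\al)$ (using $\dive X=0$) is treated identically, being bounded by $\|\nabla X\|_{L^\infty_t(L^\infty)}\|\nabla^2 v_q\|_{L^2_t(L^2)}+\|\nabla^2 X\|_{L^\infty_t(L^p)}\|\nabla v_q\|_{L^2_t(L^{\frac{2p}{p-2}})}$.

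The main obstacle is the time derivative $\partial_t(v_q\cdot\nabla X)=\p_t v_q\cdot\nabla X+v_q\cdot\nabla X_t$. The first piece yields immediately to $\|\nabla X\|_{L^\infty_t(L^\infty)}\|\p_t v_q\|_{L^2_t(L^2)}$ and~\eqref{S2eq22}. For the delicate piece $v_q\cdot\nabla X_t$, I would exploit the identity $X_t=\v^1-v\cdot\nabla X$ coming from~\eqref{eq:X}, and estimate
\[
\|v_q\cdot\nabla X_t\|_{L^2_t(L^2)}
\leq \|v_q\|_{L^\infty_t(L^{\frac{2p}{p-2}})}
\Bigl(\|\nabla\v^1\|_{L^2_t(L^p)}
+\|\nabla(v\cdot\nabla X)\|_{L^2_t(L^p)}\Bigr).
\]
The factor $\|v_q\|_{L^\infty_t(L^{\frac{2p}{p-2}})}$ is bounded by $d_q\,2^{q(2/p-s_0)}$ through the interpolation $\|v_q\|_{L^2}^{(p-2)/p}\|\nabla v_q\|_{L^2}^{2/p}$ and~\eqref{S2eq21a}-\eqref{S2eq22}. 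The decisive observation is that the hypothesis $p<2/(1-s_k)\leq 2/(1-s_1)$ guarantees the Besov embedding $\cB^{1+s_1}\hookrightarrow W^{1,p}$, so that Proposition~\ref{prop:B1} delivers $\|\nabla\v^1\|_{L^2_t(L^p)}\lesssim\|\v^1\|_{L^2_t(\cB^{1+s_1})}\leq\cH_1(t)$; the corresponding embedding of $\cB^{1+s_0}$ gives $\|\nabla v\|_{L^2_t(L^p)}\leq\cC(v_0,s_0)$, and the remaining terms in $\nabla(v\cdot\nabla X)$ are closed using Corollary~\ref{S1col1} and $\|X\|_{L^\infty_t(W^{2,p})}\leq\cH_1(t)$. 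Combining all of these estimates yields~\eqref{S4eq6}.
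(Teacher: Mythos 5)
Your overall strategy coincides with the paper's: apply Lemma \ref{S2lem1} to \eqref{S4eq1}, use Gronwall with \eqref{S2eq5}, and reduce to bounding $F_{1,q}$, $\p_t(v_q\cdot\na X)$ and $\na\dive(v_q\cdot\na X)$ in $L^2_t(L^2)$. However, two steps do not close as written. The first is the frequency bookkeeping: you estimate the $v_q$-factors through \eqref{S2eq22} at the level $s_0$, obtaining bounds of size $d_q2^{q(1-s_0)}$ (and $d_q2^{q(2/p-s_0)}$ for $\|\na v_q\|_{L^2_t(L^{2p/(p-2)})}$ and $\|v_q\|_{L^\infty_t(L^{2p/(p-2)})}$), and then claim these are ``dominated by $d_q2^{q(1-s_1)}$'' because $s_0>s_1$. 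That domination only holds for $q\geq 0$: for $q<0$ one has $2^{q(1-s_0)}=2^{q(1-s_1)}2^{-q\theta_0}\gg 2^{q(1-s_1)}$, and likewise $2/p-s_0<1-s_1$. Since \eqref{S4eq6} must hold uniformly in $q\in\Z$ with the exponent $1-s_1$ (exactly one derivative above the $2^{-qs_1}$ of Lemma \ref{S4prop1}, which is what the Besov characterization argument of Proposition \ref{S4prop3} requires), your bound does not give the lemma. The paper's fix, which you miss, is the observation that $v_0\in L^2\cap\cB^{s_0}$ implies $v_0\in\cB^{s}$ for every $s\in]0,s_0[$, so the estimates \eqref{S2eq21a}--\eqref{S2eq23a} may be invoked directly at the index $s_1$, producing $\cC(v_0,s_1)d_q2^{q(1-s_1)}$ in \eqref{S4eq17}--\eqref{S4eq20}.

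The second problem is your treatment of $v_q\cdot\na X_t$. Proposition \ref{prop:B1} only controls the time-weighted energy functional $A_{11}(t)$; it gives no bound on $\|\v^1\|_{L^2_t(\cB^{1+s_1})}$. That Besov bound is precisely the content of Proposition \ref{S4prop3}, whose proof relies on the present lemma, so invoking it here is circular; moreover the homogeneous embedding $\cB^{1+s_1}\hookrightarrow W^{1,p}$ you use fails by scaling since $p<2/(1-s_1)$ forces $s_1>1-2/p$, unless one adds low-frequency ($L^2$-type, hence time-weighted after differentiation) information, which then introduces singular time weights you do not track. The paper never puts $\na X_t$ in $L^p$: it writes $\p_t(v_q\cdot\na X)=\p_tv_q\cdot\na X+v_q\cdot\na X_t$ and bounds the second term by
\begin{equation*}
\bigl\|\sigma^{\frac{\de}{2(1+\de)}}v_q\bigr\|_{L^\infty_t(L^\infty)}
\bigl\|\sigma^{\frac{1-s_1}{2}}\na X_t\bigr\|_{L^\infty_t(L^2)}
\bigl\|\sigma^{-\frac12\left(\frac{1}{1+\de}-s_1\right)}\bigr\|_{L^2_t},
\end{equation*}
with $\de\in]0,s_1/(1-s_1)[$, using Corollary \ref{S4col1} for the $\na X_t$ factor and the weighted $L^\infty$ bound \eqref{S4eq17b} for $v_q$, the time singularity being integrable. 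Unless both points are repaired along these lines, your proof does not yield \eqref{S4eq6}.
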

\begin{proof} In view of
\eqref{S4eq1}, we get, by applying Lemma \ref{S2lem1}, that
$$\longformule{
\f{d}{dt}\|\na\u_q(t)\|_{L^2}^2+\|(\p_t\u_q,\na^2\u_q,\na\frak{p}_q)\|_{L^2}^2}{{}\leq
C\bigl(\|v\|_{L^4}^4\|\na\u_q\|_{L^2}^2
+\|\p_t(v_q\cdot\na X)\|_{L^2}^2
+\|(F_{1,q}, \na\dive(v_q\cdot\na X)) \|_{L^2}^2 \bigr).
} $$
 Applying Gronwall's inequality
yields
 \beq \label{S4eq8}
\begin{split}
\|\na&\u_q\|_{L^\infty_t(L^2)}^2+\|(\p_t\u_q,\na^2\u_q,\na\frak{p}_q)\|_{L^2_t(L^2)}^2
\\
&\leq
\exp\bigl(C\|v_0\|_{L^2}^4\bigr)
\left(\|\p_t(v_q\cdot\na X)\|_{L^2}^2
+\|(F_{1,q}, \na\dive(v_q\cdot\na X))\|_{L^2_t(L^2)}^2
\right).
\end{split} \eeq

Note that
since $v_0\in L^2\cap\cB^{s_0},$ we observe
that the Inequalities \eqref{S2eq21a} to \eqref{S2eq23a}  hold for
any $s\in ]0,s_0[.$
In particular, it follows
from \eqref{S2eq22}, \eqref{S2eq23a} and $\rho D_t v_q=\Delta v_q-\nabla\pi_q$ that
 \beq\label{S4eq17}
\begin{split}
\|(\na v_q, \sigma^{\frac 12}\nabla^2 v_q)\|_{L^\infty_t(L^2)}
+\bigl\|(D_t v_q, \na^2 v_q,\p_tv_q,\na \pi_q)\bigr\|_{L^2_t(L^2)}
\lesssim
&\cC(v_0,s_1)d_q 2^{ q(1-s_1)},
\end{split} \eeq
  which together with \eqref{S2eq6} ensures that for any $\delta\in ]0,1[$,
 \beq \label{S4eq20} \begin{split}
 \|\na v_q\|_{L^{\frac{2}{1-\delta}}_t(L^{\f{2}{\delta}})}
\lesssim
\|\na v_q\|_{L^\infty_t(L^{2})}^{\delta}
 \|\na^2 v_q\|_{L^2_t(L^{2})}^{1-\delta}
 \lesssim
&\cC(v_0,s_1)d_q2^{q(1-s_1)}.
\end{split} \eeq
Similarly for any $\delta\in ]0,1[$, we have
\beno\label{S4eq17a}
\begin{split}
\|v_q\|_{L^\infty_t(L^{\f{2}{\delta}})}
\lesssim
&\|v_q\|_{L^\infty_t(L^2)}^{\delta}
\|\na v_q\|_{L^\infty_t(L^2)}^{1-\delta}
\lesssim
\cC(v_0,s_1-\delta)d_q2^{q(1-s_1)},
\end{split} \eeno
and
\beq\label{S4eq17b}
\begin{split}
\|\sigma^{\frac{\delta}{2(1+\delta)}}v_q\|_{L^\infty_t(L^\infty)}
\lesssim
&\|v_q\|_{L^\infty_t(L^{\frac{2}{\delta}})}^{\frac 1{1+\delta}}
\|\sigma^{\frac 12}\na^2 v_q\|_{L^\infty_t(L^2)}^{\frac{\delta}{1+\delta}}
\leq \cA_0 d_q2^{q(1-s_1)}.
\end{split} \eeq

Taking $\de\in ]0, s_1/(1-s_1)[,$ we deduce from  \eqref{S4eq17}, \eqref{S4eq17b} and Corollary \ref{S4col1} that
\beq\label{S4eq18} \begin{split}
\|\p_t(v_q\cdot\na
X)\|_{L^2_t(L^2)}\leq &
\|\p_tv_q\|_{L^2_t(L^2)}\|\nabla X\|_{L^\infty_t(L^\infty)}\\
&+\|\sigma^{\frac{\delta}{2(1+\delta)}}v_q\|_{L^\infty_t(L^\infty)}
\|\sigma^{\frac{1-s_1}{2}}\nabla X_t\|_{L^\infty_t(L^2)}
\|\sigma^{-\f12\left(\frac{1}{1+\d}-s_1\right)}\|_{L^2_t}\\
\leq &
 \cH_1(t)d_q2^{q(1-s_1)}.
\end{split}
\eeq

While due to $\dive X=0,$ we have
\beno \na \dive(v_q\cdot\na X)
=\na\p_\al v_q\cdot\na X^\al+\p_\al v_q\cdot\na \na
X^\al,
\eeno
from which, \eqref{brho} and  \eqref{S4eq1},  we infer
\begin{align*}
\|(F_{1q}, \na \dive(v_q\cdot\na X))\|_{L^2_t(L^2)}
\lesssim &
(1+\|\nabla X\|_{L^\infty_t(W^{1,p})})\\
&\times \bigl(\|(D_t v_q, \nabla^2 v_q, \nabla\pi_q)\|_{L^2_t(L^2)}
+t^{\left(\f{1}2-\f1p\right)}\|\nabla v_q\|_{L^p_t(L^{\frac{2p}{p-2}})}\bigr).
\end{align*}
As a result, by virtue of  \eqref{S4eq17} and \eqref{S4eq20} (for $\delta=(p-2)/p$), it comes out
\beq \label{S4eq21}
\|(F_{1q}, \na \dive(v_q\cdot\na X))\|_{L^2_t(L^2)}
\leq  \cH_1(t)d_q2^{q(1-s_1)}.
\eeq

Substituting the Inequalities  \eqref{S4eq18} and \eqref{S4eq21} into \eqref{S4eq8} gives rise to \eqref{S4eq6}.  This finishes the proof of Lemma
\ref{S4prop2}.
\end{proof}

The main result of this subsection is as follows

\begin{prop}\label{S4prop3}
{\sl Under the assumptions of Proposition \ref{prop:Dv1}, we have
\beq \label{S4eq22} \|{\v}^1\|_{\wt{L}^\infty_t(\cB^{s_1})}+\|\na
\v^1\|_{\wt{L}^2_t(\cB^{s_1})}\leq
\cH_1(t). \eeq}
\end{prop}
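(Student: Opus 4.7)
The plan is to exploit the splitting $\v^1=\v_{11}+\v_{12}$ together with $\na{\rm \pi}^1=\na\rp_{11}+\na\rp_{12}$ from \eqref{S3eq3}, and then handle the two pieces separately by the same Littlewood--Paley strategy that was used for Proposition \ref{S1prop1} in Section \ref{sec:J0}. For the homogeneous part $\v_{11}$, the system \eqref{S3eq4} is of exactly the form \eqref{S2eq1} with $f=g=0$ and with initial data $\p_{X_0}v_0\in \cB^{s_1}$, so Proposition \ref{S2prop1} applies directly and yields \eqref{S3eq6}, namely
\[
\|\v_{11}\|_{\wt{L}^\infty_t(\cB^{s_1})}+\|\na \v_{11}\|_{\wt{L}^2_t(\cB^{s_1})}\leq \cC(v_0,\p_{X_0}v_0,s_1)\leq \cH_1(t).
\]

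The core of the argument concerns the inhomogeneous part $\v_{12}$. I would use the decomposition \eqref{S4eq2}, namely $\v_{12}=\sum_{q\in\Z}\u_q$ and $\na\rp_{12}=\sum_{q\in\Z}\na\frak{p}_q$, where $(\u_q,\na\frak{p}_q)$ solves \eqref{S4eq1} with right-hand side $F_{1,q}$ and divergence constraint $\dive\u_q=\dive(v_q\cdot\na X)$. The two key inputs are already available: the $L^2$ bound of Lemma \ref{S4prop1}
\[
\|\u_q\|_{L^\infty_t(L^2)}+\|\na\u_q\|_{L^2_t(L^2)}\leq \cH_1(t)\,d_q\,2^{-qs_1},
\]
and the $\dH^1$ bound of Lemma \ref{S4prop2}
\[
\|\na\u_q\|_{L^\infty_t(L^2)}+\|(\p_t\u_q,\na^2\u_q,\na\frak{p}_q)\|_{L^2_t(L^2)}\leq \cH_1(t)\,d_q\,2^{q(1-s_1)}.
\]
These are perfectly analogous to the estimates \eqref{S2eq21a}--\eqref{S2eq22} which drive the proof of \eqref{S2eq18}. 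Then, exactly as in the argument following \eqref{S2eq22} (the characterization of Besov spaces with positive index, Lemma 2.88 of \cite{BCD}), applying $\D_j$, splitting into low and high frequencies $q\leq j$ and $q>j$, and invoking Bernstein's inequality gives for any $j\in\Z$
\[
\|\D_j\v_{12}\|_{L^\infty_t(L^2)}+\|\D_j\na\v_{12}\|_{L^2_t(L^2)}\lesssim \sum_{q>j}\bigl(\|\u_q\|_{L^\infty_t(L^2)}+\|\na\u_q\|_{L^2_t(L^2)}\bigr)+2^{-j}\sum_{q\leq j}\bigl(\|\na\u_q\|_{L^\infty_t(L^2)}+\|\na^2\u_q\|_{L^2_t(L^2)}\bigr),
\]
which by the two dyadic bounds above and a standard convolution-of-$\ell^1$-sequences lemma is controlled by $d_j\,2^{-js_1}\cH_1(t)$, as desired.

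Summing the estimates for $\v_{11}$ and $\v_{12}$ through \eqref{S3eq3} yields \eqref{S4eq22}. The main technical obstacle has already been absorbed into Lemmas \ref{S4prop1} and \ref{S4prop2}: there, the $\dH^1$ source term $\|\p_t(v_q\cdot\na X)\|_{L^2_t(L^2)}$ must be controlled dyadically with a factor $d_q\,2^{q(1-s_1)}$, which requires exploiting \eqref{S4eq17b} together with the bound $\|\s^{\frac{1-s_1}{2}}\na X_t\|_{L^\infty_t(L^2)}\lesssim \cH_1(t)$ from Corollary \ref{S4col1}, and similarly the source $F_{1,q}$ has to be estimated via the classical Stokes estimates \eqref{S4eq17}--\eqref{S4eq20} applied to \eqref{S1eq15}. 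Once those dyadic source bounds are in hand, the Besov estimate for $\v_{12}$ follows by the characterization argument outlined above, and the proposition is proved.
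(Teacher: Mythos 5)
Your proposal is correct and follows essentially the same route as the paper: the paper proves \eqref{S4eq22} by combining the splitting \eqref{S3eq3} with \eqref{S3eq6} for $\v_{11}$, and for $\v_{12}$ it invokes the dyadic decomposition \eqref{S4eq2} together with Lemmas \ref{S4prop1} and \ref{S4prop2} and then repeats the Besov characterization argument of Proposition \ref{S2prop1}, which is exactly the high/low-frequency Bernstein argument you spell out. The only difference is that the paper leaves the frequency-splitting step implicit ("by a similar proof of Proposition \ref{S2prop1}"), whereas you write it out; no gap.
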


\begin{proof} Indeed in view of \eqref{S4eq2}, Lemmas
\ref{S4prop1} and \ref{S4prop2}, we deduce by a similar proof of
Proposition \ref{S2prop1} that \beno
\|\v_{12}\|_{\wt{L}^\infty_t(\cB^{s_1})}+\|\na
\v_{12}\|_{\wt{L}^2_t(\cB^{s_1})}\leq \cH_1(t), \eeno
  which together
with \eqref{S3eq3} and \eqref{S3eq6} leads to \eqref{S4eq22}.
\end{proof}

Combing Proposition \ref{S4prop1q} with Proposition \ref{S4prop3}, we achieve Proposition \ref{prop:Dv1}.

\setcounter{equation}{0}
\section{Propagation of higher order striated regularity}\label{Sect6}

The goal of this section is  to present the proof of Proposition \ref{prop:Jell}. To this end,
for the functional $A_\ell(t)$  given by \eqref{estimate:Jell},
 we  inductively assume that
\begin{equation}\label{bound:Jell-1}
A_{l}(t)\leq \cH_{l}(t), \quad\mbox{for any}\  l\leq \ell-1\andf \ell\leq k.
\end{equation}
We shall always assume \eqref{bound:Jell-1} throughout this section. We aim at establishing the Estimate \eqref{bound:Jell-1} for $l=\ell$.

\subsection{Deductive estimates from \eqref{bound:Jell-1}}
In this subsection we shall derive some   estimates from the inductive assumption \eqref{bound:Jell-1}, which will be used constantly in the following context.
For $\ell\geq 1,$ let
\begin{equation}\label{Rell}
\begin{split}
R_{\ell}(t)
\eqdefa &
\sum_{  m+n+\kappa\leq\ell-1}
 \bigl(\|\d_X^m\nabla \rx^n\|_{L^\infty_t(W^{1,p} )}
+\|\d_X^m\nabla\d_X^n \nabla\rx^\kappa\|_{L^\infty_t(L^p )}\bigr).
\end{split}
\end{equation}
Let
\begin{equation}\label{r}
\begin{split}
  &r_1\in \left\{2, \,{2p}/{(p-2)},\,+\infty\right\},
  \quad
  r\in \{2,\,p \},
  \\
&   r_2\in\left\{2p/(p-2), \,+\infty\right\},
\andf r_3\in \left\{2,\,2p/(p-2) \right\},
  \end{split}
  \end{equation}
and
\begin{equation}\label{epsilon0}
0<\e_0<\min\left(\,{s_k}/2,
\, \bigl({p}/2-1\bigr)(1-s_0)\,\right),
\end{equation}
 we denote
\begin{equation}\label{Vell}
\begin{split}
\dot{\frak{A}}_{\ell}(t)
 \eqdefa
\mathop{ \sum_{ i+j+l=\ell}}\limits_{ m+n=\ell-1}
\Bigl(& \|\v^i\|_{L^\infty_t(L^2)}
+\| \sigma ^{\frac{1-s_{\ell}}{2 }}
 \v^i  \|_{L^\infty_t(  L^{\frac{2p}{p-2}})}
+\| \sigma ^{ \frac{1+\varepsilon_0-s_{\ell}}{2 }}
( \v^i, \d_X^m \d_t X) \|_{L^\infty_t(L^\infty )}
\\
&+ \| \sigma ^{ \frac{1-s_{\ell}}{2} }
\d_X^m  \nabla X_t \|_{L^\infty_t(L^2)}
+\bigl\|\sigma ^{ 1-\frac{s_{\ell}}{2}}
(  \d_X^i \d_t \v^j,
   \d_X^m D_t  \nabla ^2 \rx^n)\|_{L^\infty_t(L^2)}
\\
&
+ \| \sigma ^{ \left(1-\frac{1}{r_1}-\frac{s_{\ell}}{2}\right) }
  \d_X^i\nabla \v^j  \|_{L^\infty_t(L^{r_1})}
+ \| \sigma ^{\left(1-\frac{1}{r_2}-\frac{s_{\ell}}{2}\right)}
\d_X^m D_t \nabla \rx^n\|_{L^\infty_t(L^{r_2})}
\\
&+ \bigl\|\sigma ^{ \left(\frac 32-\frac 1r-\frac{s_{\ell}}{2}\right) }
(\d_X^i \nabla\d_X^j \nabla \v^l,
  D_t \v^i, \d_X^i \nabla{\rm \pi}^j)\|_{L^\infty_t(L^r)} \Bigr);
\end{split}
\end{equation}
and
\begin{equation}\label{cVell}
\begin{split}
\dot{\frak{B}}_{\ell}(t)
\eqdefa
\sum_{ i+j= \ell}
  \Bigl(&\|\sigma^{\left(\frac 32-\frac{1}{r_3}-\frac{s_{\ell}}{2}\right)}
  ( \d_X^i\nabla D_t \v^j, \d_X^i D_t\nabla \v^j)(t)\|_{L^{r_3}}^2
+\|\sigma^{ \left(\frac 12+\frac 1p-\frac{s_{\ell}}{2}\right) }
   D_t \v^i(t)\|_{L^{\frac{2p}{p-2}}}^2
  \\
  &\quad
  + \bigl\|\sigma^{\frac{3-s_{\ell}}{2}}
 ( D_t^2\v^i, \d_X^i D_t \nabla^2\v^j,
\nabla\d_X^i \nabla D_t\v^j, \d_X^i D_t \nabla{{\rm \pi}}^j)(t)\bigr\|_{L^2}^2 \Bigr);
 \end{split}
\end{equation}
and
\beq\label{eq:AB}
\frak{A}_\ell(t)\eqdefa
 \sum_{ l\leq \ell}\dot{\frak{A}}_{l}(t)
\andf \frak{B}_\ell(t)\eqdefa
\frak{B}_0(t)+\sum_{  l\leq \ell}\dot{\frak{B}}_{l}(t), \eeq
where   $\frak{B}_0(t)$ is given by  \eqref{S1eq25}.

\smallbreak

Under the inductive assumption \eqref{bound:Jell-1},
the  estimates of $R_{\ell-1}(t), \frak{A}_{\ell-1}(t), \frak{B}_{\ell-1}(t)$ relies on the   following lemma concerning the  commutative  estimates,
 the proof of which will be postponed in the appendix.

\begin{lem}\label{lem:comm,X}
{\sl Let $\ell\in \{1,\cdots, k\}$ and $(i,j)$ be   any pair of nonnegative integers  with
$i+j\leq \ell.$ Then for $r_1, r, r_3$  satisfying \eqref{r}, there exists a positive constant  $C$  such that
\begin{equation}\label{comm:X}
\begin{array}{l}
\|\d_X^i\nabla\rx^{\ell-i}
   -\nabla\rx^\ell\|_{L^\infty_t(W^{1,p})}
 +\|\d_X^i\nabla\d_X^j\nabla\rx^{\ell-i-j}
   -\nabla^2\rx^{\ell}\|_{L^\infty_t(L^p)}
   \leq
C R_{\ell}^{2}(t) ,
\end{array}\end{equation}
and
\begin{equation}\label{comm:v}
\begin{split}
& \bigl\|\sigma ^{\left(1-\frac{1}{r_1}-\frac{s_{\ell-1}}{2}\right)}
(\d_X^i \nabla\v^{\ell-i} -\nabla \v^\ell) \bigr\|_{L^\infty_t(L^{r_1})}
+ \bigl\|\sigma^{\left(\frac 32-\frac 1r-\frac{s_{\ell-1}}{2}\right)}
 ( \d_X^i \nabla{\rm \pi}^{\ell-i}  -\nabla{\rm \pi}^\ell) \bigr\|_{L^\infty_t(L^r)}
\\
&\qquad\quad
  + \bigl\|\sigma ^{\left(\frac 32-\frac 1r-\frac{s_{\ell-1}}{2}\right)}
 ( \d_X^i\nabla\d_X^j\nabla \v^{\ell-i-j}
  -\nabla^2 \v^\ell) \bigr\|_{L^\infty_t(L^r)}
   \leq
C  R_\ell(t) \frak{A}_{\ell-1}(t),
\end{split}
\end{equation}
and
\begin{equation}\label{comm:v''}
\begin{split}
 & \bigl\|\sigma(t)^{\left(\frac 32-\frac{1}{r_3}- \frac{s_{\ell-1}}2 \right)}\bigl(\d_X^i \nabla D_t \v^{\ell-i}
-\nabla D_t \v^\ell,\,
\d_X^i D_t\nabla \v^{\ell-i}- \nabla D_t \v^\ell\bigr)(t) \bigr\|_{L^{r_3} }^2
\\
&
+\bigl\| \sigma(t)^{ \frac{3-s_{\ell-1}}2 }
(\d_X^i D_t\nabla^2\v^{\ell-i} - \nabla^2 D_t \v^\ell,\,
   \nabla\d_X^i \nabla D_t \v^{\ell-i}
  -\nabla^2 D_t \v^\ell)(t) \bigr\|_{L^2}^2
    \\&
+   \bigl\|\sigma(t)^{ \frac{3-s_{\ell-1}}2 }
 (\d_X^i D_t\nabla{\rm{\pi}}^{\ell-i}- \nabla D_t{\rm{\pi}}^\ell)(t)  \bigr\|_{L^2}^2
   \leq
C R_{\ell}^2(t)\frak{B}_{\ell-1}(t)
+C\frak{A}_{\ell}^4(t)\s^{-(1-s_\ell)}.
\end{split}
\end{equation}
 }
\end{lem}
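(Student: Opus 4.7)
The strategy is systematic expansion via the elementary commutators
\[
[\d_X;\p_i]f=-\p_iX\cdot\nabla f,\quad [\d_X;\nabla^2]f=-(\nabla^2X)\cdot\nabla f-2\nabla X\cdot\nabla^2 f,\quad [\d_X;D_t]=0,
\]
iterated $i$ (resp.\ $j$) times with the Leibniz rule. For the first inequality \eqref{comm:X}, observe that
\[
\d_X^i\nabla\rx^{\ell-i}-\nabla\rx^\ell
=\sum_{\alpha=1}^{i}\binom{i}{\alpha}\,(-1)^{\alpha}\,
 \d_X^{i-\alpha}\bigl(\,\d_X^{\alpha-1}\nabla \rx\cdot\nabla\rx^{\ell-i}\bigr),
\]
modulo a similar but cleaner expansion. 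Every summand is a multilinear product of factors of the form $\d_X^m\nabla\rx^n$ (or $\d_X^m\nabla^2\rx^n$) with $m+n\leq\ell-1$, and each such factor is controlled in $L^\infty\cap W^{1,p}$ (resp.\ $L^p$) by $R_\ell(t)$. Since every summand contains at least two such factors, the product law in $W^{1,p}\hookrightarrow L^\infty$ yields the bound $CR_\ell^2(t)$; the second half of \eqref{comm:X} is identical after one extra $\nabla$.

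For \eqref{comm:v} I would follow the same recipe, writing
\[
\d_X^i\nabla \v^{\ell-i}-\nabla \v^{\ell}
=\sum_{1\leq \alpha\leq i}\binom{i}{\alpha}\,(-1)^\alpha
  \d_X^{i-\alpha}\!\bigl(\d_X^{\alpha-1}\nabla \rx\cdot\nabla \v^{\ell-i}\bigr),
\]
and analogously for $\nabla^2 \v^{\ell}$ and $\nabla{\rm\pi}^{\ell}$. Each term is a product of one $\rx$-factor (measured by $R_\ell(t)$ in a Lebesgue norm with enough integrability) and one $v$-factor, of the form $\d_X^{i'}\nabla\v^{j'}$, $\d_X^{i'}\nabla\d_X^{j'}\nabla\v^{l'}$, or $\d_X^{i'}\nabla{\rm\pi}^{j'}$ with $i'+j'+l'\leq \ell-1$. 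The corresponding time-weighted norms of these $v$-factors are exactly those appearing in the definition \eqref{Vell} of $\dot{\frak{A}}_{\ell-1}(t)$. A term-by-term H\"older estimate (the gap $s_{\ell-1}-s_\ell=\theta_0/k$ in the time-weight exponents is absorbed by the bounded factor $\sigma(t)^{\theta_0/(2k)}\leq 1$) yields the target bound $CR_\ell(t)\frak{A}_{\ell-1}(t)$.

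The third inequality \eqref{comm:v''} is the most delicate step and I expect it to be the main obstacle. The commutation $[\d_X;D_t]=0$ makes $\d_X^i D_t=D_t\d_X^i$; hence the commutators arise only from exchanging $\d_X^i$ with $\nabla$, producing an analogous Leibniz expansion whose factors are now products of $\rx$-quantities, and either $\d_X^{i'}\nabla D_t\v^{j'}$ or $\d_X^{i'}D_t\nabla\v^{j'}$ or $\d_X^{i'}D_t\nabla{\rm\pi}^{j'}$ with $i'+j'\leq\ell-1$. The genuinely new difficulty is that exchanging $\nabla$ with $D_t$ inside those factors produces terms of the form $\nabla v\cdot\nabla \v^{j'}$, $\nabla^2v\cdot\nabla\v^{j'}$, etc., which are quadratic in velocity quantities. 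After pairing with the $\rx$-factors, one obtains cubic and quartic products of velocity quantities taken at the $\ell$-th level, and these are estimated through interpolation: the time-weighted norms in $\dot{\frak A}_\ell$ only control $\v^\ell$ and $\nabla\v^\ell$ in $L^\infty_t L^p$-type spaces with a deficit of weight $\sigma^{-(1-s_\ell)/2}$ each, so two such factors combine to give the characteristic $\frak{A}_\ell^4\sigma^{-(1-s_\ell)}$ contribution. The remaining linear (in the top-order derivative) terms reproduce $R_\ell^2(t)\frak{B}_{\ell-1}(t)$ after H\"older in time. Summing all pieces gives \eqref{comm:v''}, and completes the proof.
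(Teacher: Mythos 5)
Your sketch follows essentially the same route as the paper's Appendix A proof: iterate the commutator identities $[\d_X;\nabla]$, $[\d_X;\nabla^2]$, $[D_t;\nabla]$ together with $[\d_X;D_t]=0$, expand by Leibniz, and estimate each resulting product by H\"older against the norms packaged in $R_\ell$, $\frak{A}_{\ell-1}$, $\frak{A}_{\ell}$ and $\frak{B}_{\ell-1}$, absorbing the weight gap $s_{\ell-1}-s_\ell=\theta_0$ via $\sigma\le 1$. The differences are only bookkeeping-level: the paper treats the mixed term $\d_X^i\nabla\d_X^j\nabla\rx^{\ell-i-j}$ by a two-case triangle-inequality comparison through $\d_X^i\nabla^2\rx^{\ell-i}$ and $\nabla\d_X^{i+j}\nabla\rx^{\ell-i-j}$, and in \eqref{comm:v''} the factors $\d_X^l D_t\nabla X$ are controlled by $\frak{A}_\ell$ rather than $R_\ell$, which together with $D_t\nabla\v^\ell-\nabla D_t\v^\ell=-\nabla v\cdot\nabla\v^\ell$ is exactly where the extra $\frak{A}_\ell^4\,\sigma^{-(1-s_\ell)}$ term originates, as your sketch indicates.
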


\begin{lem}\label{lem:R,V}
{\sl Under the assumptions of  Proposition \ref{prop:Jell} and the inductive assumption \eqref{bound:Jell-1},
one has
\begin{equation}\label{bound:R,V}
\begin{array}{l}
R_{l}(t)+\frak{A}_{l}(t)
+\int_0^t\frak{B}_{l}(t')\,dt'
\leq \cH_{l}(t),\quad \mbox{for}\quad 1\leq l\leq \ell-1.
\end{array}\end{equation}}
\end{lem}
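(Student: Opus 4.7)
The plan is to prove \eqref{bound:R,V} by strong induction on $l\in\{1,\dots,\ell-1\}$, using at each step the inductive hypothesis \eqref{bound:Jell-1}, the commutator estimates of Lemma \ref{lem:comm,X}, and 2-D interpolation in the spirit of \eqref{S2eq6}. The base case $l=1$ has already been established in Proposition \ref{S4prop1q} and Corollaries \ref{S4col1}, \ref{S4col2}. Assuming \eqref{bound:R,V} for all $l'<l$, I would first handle $R_l(t)$: by \eqref{Rell}, each summand has the form $\|\d_X^m\nabla\rx^n\|_{L^\infty_t(W^{1,p})}$ or $\|\d_X^m\nabla\d_X^n\nabla\rx^\kappa\|_{L^\infty_t(L^p)}$ with $m+n+\kappa\leq l-1$, and \eqref{comm:X} (with its parameter set to $m+n$, resp.\ $m+n+\kappa$) reduces each such expression, modulo a remainder bounded by $R^2_{m+n}(t)$ (resp.\ $R^2_{m+n+\kappa}(t)$), to $\|\rx^{m+n}\|_{L^\infty_t(W^{2,p})}$ (resp.\ $\|\rx^{m+n+\kappa}\|_{L^\infty_t(W^{2,p})}$). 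The main parts are dominated by $A_{m+n+1}(t)\leq \cH_l(t)$ via \eqref{bound:Jell-1}, while the remainders are dominated by $\cH^2_{l-1}(t)$ by the inductive hypothesis, and both are absorbed into $\cH_l(t)$.

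Next I would bound $\frak{A}_l(t)=\sum_{l'\leq l}\dot{\frak{A}}_{l'}(t)$ term by term. Each piece involving $\d_X^i\nabla\v^{l'-i}$, $\d_X^i\nabla\d_X^j\nabla\v^{l'-i-j}$ or $\d_X^i\nabla{\rm{\pi}}^{l'-i}$ splits, via \eqref{comm:v}, into a main part (respectively $\nabla\v^{l'}$, $\nabla^2\v^{l'}$, $\nabla{\rm{\pi}}^{l'}$) plus a commutator defect controlled by $R_{l'}(t)\frak{A}_{l'-1}(t)\leq \cH^2_{l'}(t)$. The main parts sit inside $A_{l'}(t)\leq \cH_{l'}(t)$ from \eqref{estimate:Jell}; the spatial $L^{r_1}$, $L^\infty$ and $L^{2p/(p-2)}$ norms with their prescribed time-weights are extracted from $\|\v^{l'}\|_{\wt{L}^\infty_t(\cB^{s_{l'}})\cap L^2_t(\cB^{1+s_{l'}})}$ and from $A_{l'1}$, $A_{l'2}$ by exactly the 2-D interpolation argument already displayed in Corollary \ref{S4col1}. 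The quantities $\d_X^m D_t\nabla\rx^n$ and $\d_X^m D_t\nabla^2\rx^n$ are reduced, via the identity $D_t\rx^n=\v^{n+1}$ and a \eqref{S4eq11py}-type commutator, to spatial derivatives of $\v^{n+1}$ plus $R_{n+1}$-controlled remainders; $\d_X^m\d_t X=\d_X^m(\v^1-v\cdot\nabla X)$ is treated the same way, invoking the inductive assumption on lower-index functionals.

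Finally I would estimate $\int_0^t\frak{B}_l(t')\,dt'$ in parallel fashion, now using \eqref{comm:v''}. Each commutator remainder squared is bounded by $R^2_{l'}(t)\frak{B}_{l'-1}(t)+\frak{A}^4_{l'}(t)\sigma(t)^{-(1-s_{l'})}$; since $s_{l'}\in\,]0,1[$, the weight $\sigma^{-(1-s_{l'})}$ is time-integrable on $[0,t]$, and time-integration produces a factor of order $\cH^4_{l'}(t)\w{t}$, absorbed into $\cH_l(t)$. The main parts, namely $\nabla D_t\v^{l'}$, $D_t\nabla\v^{l'}$, $D_t^2\v^{l'}$, $\nabla^2D_t\v^{l'}$ and $\nabla D_t{\rm{\pi}}^{l'}$ with their prescribed time-weights, are precisely the entries of $A_{l'2}(t)^2$ in \eqref{Jell} and hence controlled by $\cH^2_{l'}(t)$; the $L^{2p/(p-2)}$ piece for $D_t\v^{l'}$ follows by a 2-D interpolation between the $L^2_t(L^2)$ norm of $\sigma^{(1-s_{l'})/2}D_t\v^{l'}$ and that of $\sigma^{1-s_{l'}/2}\nabla D_t\v^{l'}$, exactly as in Corollary \ref{S4col2}.

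The main obstacle here is not analytic — all ingredients are in place — but combinatorial bookkeeping: matching, for every individual summand in the intricate definitions \eqref{Vell}-\eqref{cVell}, the precise time-weights and index constraints under which each case of Lemma \ref{lem:comm,X} applies. Carrying out this bookkeeping systematically, paired with \eqref{bound:Jell-1} and 2-D interpolation, closes the induction and yields \eqref{bound:R,V}.
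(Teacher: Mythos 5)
Your plan follows the paper's proof essentially verbatim: the base case $l=1$ from Proposition \ref{S4prop1q} and Corollaries \ref{S4col1}--\ref{S4col2}, an induction on the striated index, the commutator estimates \eqref{comm:X}, \eqref{comm:v}, \eqref{comm:v''} to reduce iterated $\d_X$-derivatives to pure derivatives of $\v^{l}$ and $\rx^{l}$, the identity $D_t\rx^{n}=\v^{n+1}$ for the $X$-terms, and 2-D interpolation combined with the Stokes elliptic estimate (the analogue of \eqref{Stokes:v1}, i.e.\ the second inequality of \eqref{est:Fell,p}) to extract the weighted $L^r$ norms from \eqref{bound:Jell-1}. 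The one point to make explicit is that the weighted $L^\infty_t(L^p)$ control of $(\nabla^2\v^{l},\nabla{\rm \pi}^{l})$ cannot be obtained by interpolating the entries of $A_{l}$ alone and genuinely requires that Stokes estimate, which you invoke only implicitly through the argument of Corollary \ref{S4col1}.
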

\begin{proof}
We first deduce from    Corollaries \ref{S1col1} and \ref{S2col3},
Proposition \ref{prop:Dv1},
Corollaries \ref{S4col1} and \ref{S4col2}  that \eqref{bound:R,V} holds for
 $l=1$.
 Inductively, we assume that
 \beq \label{S6eq1q}
 R_{\kappa}(t)+\frak{A}_{\kappa}(t)
+\int_0^t\frak{B}_{\kappa}(t')\,dt'
\leq \cH_{\kappa}(t),\quad 2\leq \kappa\leq l-1.
\eeq
We intend to show the Estimate \eqref{S6eq1q} for $\kappa+1.$
Indeed it follows from  \eqref{bound:Jell-1} that
$$
\|\nabla\rx^{\beta}\|_{L^\infty_t(W^{1,p})}  \leq \cH_{\beta+1}(t)\quad \forall\ \beta\leq l-1,
$$
which together with the commutator estimate \eqref{comm:X} for  $\ell=\kappa$ and \eqref{S6eq1q} ensures that
\begin{equation}\label{RV:R}
R_{\kappa+1}(t)\leq \cH_{\kappa+1}(t).
\end{equation}
While we deduce from \eqref{S2eq6} and \eqref{bound:Jell-1}   that for any $ \beta\leq l$ and $ r\in [2,\infty[,$
\begin{equation}\label{RV:J}
\begin{split}
&\|\v^{\beta}\|_{L^\infty_t(L^2)\cap L^2_t(\dot H^1)}
+ \|\sigma ^{ \frac{1-s_{\beta}}{2}}
\nabla \v^{\beta} \|_{L^\infty_t(L^2)}
+\|\sigma^{\frac{1-s_\beta}{2}} \d_t \v^\beta\|_{L^2_t(L^2)}
\\
&+\bigl\|\sigma ^{1-\frac{s_{\beta}}{2}}
(D_t \v^{\beta}, \nabla^2 \v^{\beta}, \nabla{\rm{\pi}}^{\beta}) \bigr\|_{L^\infty_t(L^2)}
 + \|\sigma ^{ \frac{3-s_{\beta}}{2}}
\nabla D_t \v^{\beta} \|_{L^\infty_t(L^2)}
\\
&
+\|\sigma^{\frac{1-s_\beta}{2}}\nabla D_t \v^\beta\|_{L^2_t(L^2)}
+ \|\sigma ^{ \frac{3-s_{\beta}}{2}}
(D_t^2 \v^\beta, \nabla^2 D_t \v^{\beta}, \nabla D_t\pi^\beta) \|_{L^2_t(L^2)}
\\
& +\bigl\|\sigma^{ \left(\frac 32-\frac 1{r}-\frac{s_{\beta}}{2}\right) }
D_t \v^{\beta} \|_{L^\infty_t(L^r)}
 +\bigl\|\sigma^{\left( \frac 32-\frac 1{r}-\frac{s_{\beta}}{2}\right) }
\nabla D_t \v^{\beta} \|_{L^2_t(L^r)}\leq \cH_{\beta}(t),
\end{split}
\end{equation}
from which, the second inequality of  \eqref{est:Fell,p} below and
\eqref{RV:R}, we infer
\begin{equation}\label{RV:v''}
\bigl\| \sigma ^{\left(\frac 32-\frac 1r-\frac{s_{\kappa+1}}{2}\right)}
(D_t \v^{\kappa+1}, \nabla^2 \v^{\kappa+1}, \nabla{\rm{\pi}}^{\kappa+1})
\bigr\|_{L^\infty_t(L^r)}\leq \cH_{\kappa+1}(t),
\quad r\in \{2,p\}.
\end{equation}
Furthermore,  by Corollary \ref{S1col1} and \eqref{RV:J} one has
$$
\|\sigma^{\frac{1-s_\beta}{2}} D_t\v^\beta\|_{L^2_t(L^2)}
\leq
\|\sigma^{\frac{1-s_\beta}{2}} \d_t\v^\beta\|_{L^2_t(L^2)}
+\|\sigma^{\frac{1-s_\beta}{2}} v\|_{L^\infty_t(L^\infty)}
\|  \nabla\v^\beta\|_{L^2_t(L^2)}
\leq\cH_\beta(t),
\,\forall\beta\leq l,
$$
from which, \eqref{RV:J} and \eqref{S2eq6} we deduce
\begin{equation}\label{RV:Dv}
\begin{split}
\|\sigma^{\left(\frac 12+\frac 1p-\frac{s_{\kappa+1}}{2}\right)} D_t\v^{\kappa+1}\|_{L^2_t(L^{\frac{2p}{p-2}})}
\leq\cH_{\kappa+1}(t) .
\end{split}
\end{equation}

On the other side, in view of \eqref{RV:J} and \eqref{RV:v''},  we get, by applying \eqref{S2eq6}
and
\begin{equation*}
\|\sigma^{\left(1-\f{s}{2}\right)}
 a\|_{L^\infty}
\leq
C\|\sigma^{\frac{1-s}{2}}a\|_{L^2}^{\frac{1-2/p}{2(1-1/p)}}
\|\sigma^{\left(\frac 32-\f1p-\f{s}{2}\right)}
\nabla a\|_{L^p}^{\frac{1}{2(1-1/p)}},
\end{equation*}
that
\begin{equation}\label{RV:v'}
 \bigl\|\sigma ^{\left(1-\frac{1}{r_1}-\frac{s_{\kappa+1}}{2}\right)}
\nabla \v^{\kappa+1} \|_{L^\infty_t(L^{r_1})}\leq \cH_{\kappa+1}(t),
\quad \forall r_1\in [2,+\infty].
\end{equation}
Noticing that for any $q\in ]2,\infty[,$
\begin{equation*}
\|\sigma^{\left(\f12-\f{s}{4(1-1/q)}\right)}
 a\|_{L^\infty}
\leq
C\|a\|_{L^2}^{\frac{1-2/q}{2(1-1/q)}}
\|\sigma^{\left(1-\f1q-\f{s}{2}\right)}
\nabla a\|_{L^q}^{\frac{1}{2(1-1/q)}},
\end{equation*}
and \eqref{S2eq6}
$$
\|\sigma^{\frac{1 -s_\beta}{2}}
 a\|_{L^{\frac{2p}{p-2}}}
\leq
C\|a\|_{L^2}^{ 1-\frac 2p }
\|\sigma^{\frac{1 -s_\beta}{2}}
 \nabla a\|_{ L^2}^{\frac 2p},
$$
we deduce  from \eqref{RV:J} and  \eqref{RV:v'} (by taking $q=\f{2(s_{\kappa+1}-\e_0)}{s_{\kappa+1}-2\e_0}$)
 that
\begin{equation}\label{RV:v}
 \|\sigma^{ \frac{1+\varepsilon_0-s_{\kappa+1}}{2}}
  \v^{\kappa+1} \|_{L^\infty_t(L^\infty )}
  + \|\sigma^{ \frac{1 -s_{\kappa+1}}{2}}
  \v^{\kappa+1} \|_{L^\infty_t( L^{\frac{2p}{p-2}})}
 \leq \cH_{\kappa+1}(t).
\end{equation}

To deal with the estimates pertaining to $X$ in $\dot{\frak{A}}_{\kappa+1}(t),$ we first get from \eqref{eq:X} that
\begin{equation}\label{S6eqX'}
\d_X^\kappa\d_t X
=\d_X^\kappa(\v^{1}-v\cdot\nabla X)
=\v^{\kappa+1}-\sum_{n=0}^\kappa C_\kappa^n\v^n\cdot\d_X^{\kappa-n}\nabla X,
\end{equation}
and
\begin{equation}\label{S6eqX''}
\d_X^\kappa\nabla X_t
=\d_X^\kappa\nabla(\v^1-v\cdot\nabla X)
=\d_X^\kappa\nabla\v^1
-\sum_{n=0}^\kappa C_\kappa^n \bigl(\d_X^n\nabla v\cdot\d_X^{\kappa-n}\nabla X
+\v^n\cdot\d_X^{\kappa-n}\nabla^2 X\bigr).
\end{equation}
We then
apply the operator $\d_X^{\kappa-1}$ to  \eqref{eq:X} to get
\begin{equation}\label{S9eq10}
D_t\rx^{\kappa-1}
=\d_t\rx^{\kappa-1} +v\cdot\nabla\rx^{\kappa-1} =\v^\kappa,
\end{equation}
which together with \eqref{S3eq0} ensures that for any nonnegative integer $m\leq\kappa$,
\begin{equation}\label{S6eqDX'}
\begin{split}
\d_X^m D_t \nabla \rx^{\kappa-m}
&=\d_X^m\nabla D_t\rx^{\kappa-m}
-\d_X^m(\nabla v\cdot\nabla\rx^{\kappa-m})
\\
&=\d_X^m\nabla\v^{\kappa+1-m}
-\sum_{n=0}^m C_m^n \d_X^n \nabla v_\alpha\,\d_X^{m-n}\d_\alpha\rx^{\kappa-m},
\end{split}
\end{equation}
and
\begin{equation}\label{S6eqDX''}
\begin{split}
&\d_X^m D_t \nabla^2 \rx^{\kappa-m}
=\d_X^m\nabla^2 D_t\rx^{\kappa-m}
-\d_X^m(\nabla^2 v_\alpha \d_\alpha\rx^{\kappa-m}
+2\nabla v_\alpha\d_\alpha\nabla\rx^{\kappa-m})
\\
&\quad
=\d_X^m\nabla^2 \v^{\kappa+1-m}
-\sum_{n=0}^m C_m^n
\left( \d_X^n\nabla^2 v_\alpha\d_X^{m-n}\d_\alpha\rx^{\kappa-m}
+2\d_X^n\nabla v_\alpha\d_X^{m-n}\d_\alpha\nabla\rx^{\kappa-m}\right).
\end{split}
\end{equation}
By virtue of \eqref{Rell}, we deduce  from \eqref{S6eqX'}-\eqref{S6eqX''} that
\begin{align*}
&\|\sigma^{\frac{1+\e_0-s_{\kappa+1}}{2}}\d_X^\kappa X_t\|_{L^\infty_t(L^\infty)}
+\|\sigma^{\frac{1 -s_{\kappa+1}}{2}} \d_X^\kappa\nabla X_t\|_{L^\infty_t(L^2)}
\leq C(1+R_{\kappa+1}(t))
\\
& \quad\times\sum_{i+j\leq \kappa+1}
\Bigl(\|\sigma^{\frac{1+\e_0-s_{\kappa+1}}{2}}\v^i\|_{L^\infty_t(L^\infty)}
+\|\sigma^{\frac{1 -s_{\kappa+1}}{2}} \d_X^i\nabla \v^j\|_{L^\infty_t(L^2)}
+\|\sigma^{\frac{1 -s_{\kappa+1}}{2}} \v^i\|_{L^\infty_t(L^{\frac{2p}{p-2}})}\Bigr),
\end{align*}
and it follows from \eqref{S6eqDX'}-\eqref{S6eqDX''} that for $r_2=\frac{2p}{p-2}$ or $r_2=\infty$ and for any $m\leq\kappa$,
\begin{align*}
&\|\sigma^{1-\frac{1}{r_2}-\frac{s_{\kappa+1}}{2}}
\d_X^m D_t\nabla \rx^{\kappa-m}\|_{L^\infty_t(L^{r_2})}
+\|\sigma^{1-\frac{s_{\kappa+1}}{2}}
\d_X^m D_t\nabla^2 \rx^{\kappa-m}\|_{L^\infty_t(L^2)}
\\
& \quad
\leq C(1+R_{\kappa+1}(t)) \sum_{i+j\leq \kappa+1}
\Bigl(\|\sigma^{1-\frac{1}{r_2}-\frac{s_{\kappa+1}}{2}}
\d_X^i  \nabla \v^{j}\|_{L^\infty_t(L^{r_2})}
\\
&\qquad\quad\qquad\quad
+\|\sigma^{1-\frac{s_{\kappa+1}}{2}} \d_X^i\nabla^2 \v^j\|_{L^\infty_t(L^2)}
+\|\sigma^{1-\frac{ s_{\kappa+1}}{2}} \d_X^i\nabla v\|_{L^\infty_t(L^{\frac{2p}{p-2}})}\Bigr).
\end{align*}
The Estimates   \eqref{RV:R} to \eqref{RV:v}, along with \eqref{bound:Jell-1},  \eqref{comm:v}, \eqref{S6eq1q},
the above two inequalities and the following fact
  for any nonnegative integer $i\leq\kappa+1$,
\begin{equation*} \begin{split}
& \|\sigma ^{ 1-\frac{s_{\kappa+1}}{2} }
\d_X^i \d_t \v^{\kappa+1-i} \|_{L^\infty_t(L^2)}
=
 \|\sigma ^{ 1-\frac{s_{\kappa+1}}{2} }
(\d_X^i D_t \v^{\kappa+1-i}
-\d_X^i (v\cdot\nabla \v^{\kappa+1-i})) \|_{L^\infty_t(L^2)}
\\
&\leq
\|\sigma ^{ 1-\frac{s_{\kappa+1}}{2} }
D_t \v^{\kappa+1} \|_{L^\infty_t(L^2)}
+C\sum_{n=0}^i \|\sigma^{\frac 12}\v^n\|_{L^\infty_t(L^\infty)}
\|\sigma^{\frac{1-s_{\kappa+1}}{2}}\d_X^{i-n}\nabla\v^{\kappa+1-i}\|_{L^\infty_t(L^2)},
\end{split}\end{equation*}
 implies that
\begin{equation}\label{RV:Vell}
\begin{split}
&\dot{\frak{A}}_{\kappa+1}(t) \leq \cH_{\kappa+1}(t).
\end{split}
\end{equation}

Finally under the assumption \eqref{bound:Jell-1},  we deduce from the
commutative estimates \eqref{comm:v''}
and  \eqref{S6eq1q}, \eqref{RV:R},  \eqref{RV:J}, \eqref{RV:Dv}, \eqref{RV:Vell}, that
 \beq\label{RV:B}
 \int_0^t\dot{\frak{B}}_{\kappa+1}(t')\,dt'\leq \cH_{\kappa+1}(t). \eeq
Whence in view of  \eqref{eq:AB},  by summing up \eqref{S6eq1q}, \eqref{RV:R},  \eqref{RV:Vell}
 and \eqref{RV:B}, we conclude that  \eqref{S6eq1q}
 is valid for $\kappa+1$.
Hence Lemma \ref{lem:R,V} follows.
\end{proof}

\subsection{Some preliminary estimates}
\label{sec:prepa,vell}
According to Lemma  \ref{S3lem2}   and  \eqref{eq:vell},
the time-weighted $H^1$ energy estimates of $\v^\ell$ relies on  the estimates of  $F_\ell(v,\pi)$, $\dive\v^{\ell}$ and $\nabla \div \v^\ell,$
which is the goal of this subsection.

Let us first calculate $\dive \v^\ell.$
Notice that $\div \v^1=\div(v\cdot\nabla X)$.
Suppose inductively that
$
\div \v^{\ell-1}=\div g_{\ell-1},
$
then due to $\div X=0$ we have
\beq\label{S1eq6}
\begin{split}
\div \v^{\ell}
=\div(\d_X \v^{\ell-1})
&=\div\left(\v^{\ell-1}\cdot\nabla X
+ X\div g_{\ell-1}\right)
\\
&=\div\left(\v^{\ell-1}\cdot\nabla X
+\d_Xg_{\ell-1}
-g_{\ell-1}\cdot\nabla X\right)\eqdefa \dive g_{\ell}.
\end{split}\eeq
We denote
\beq\label{S1eq6a}
\cE_X f\eqdefa \p_Xf-f\cdot\na X.\eeq
It is easy to observe that
\begin{align*}
\div \cE_X f
=\d_X\div f.
\end{align*}
We thus get, by using \eqref{S1eq6} and an inductive argument, that
\begin{equation}\label{tildevell}
\begin{split}
g_{\ell}
&=\sum_{i=0}^{\ell-1} \cE_X^{i}(\v^{\ell-1-i}\cdot\nabla X),
\end{split}
\end{equation}
and
\beq\label{S1eq8}
\begin{split}
\div \v^\ell
=\sum_{i=0}^{\ell-1}\div \cE_X^i(\v^{\ell-1-i}\cdot\nabla X)
&=\sum_{i=0}^{\ell-1}\d_X^i \div(\v^{\ell-1-i}\cdot\nabla X)
\\
&=\sum_{i=0}^{\ell-1}\d_X^i (\p_\al \v^{\ell-1-i}\cdot\nabla X^\al).
\end{split} \eeq

\begin{lem}\label{lem:Fell}
{\sl For $\ell=2,\cdots,k$, and $r\in\{2,p\},$   there hold
\begin{equation}\label{est:Fell,p}
\begin{split}
 & \|\sigma ^{\left(\frac 32-\frac 1r-\frac{s_{\ell-1}}{2}\right)}
 (F_\ell(v,\pi),\nabla \div \v^{\ell})(t) \|_{L^r}
 \leq
\cH_{\ell-1}(t)(1+\|\na\rx^{\ell-1}\|_{W^{1,p}})\andf\\
&\bigl\|\sigma ^{\left(\frac 32-\frac 1r-\frac{s_{\ell}}{2}\right)}
( \nabla^2 \v^\ell, \nabla{\rm{\pi}}^\ell)(t)\bigr\|_{L^r}
\leq
C\|\sigma ^{\left(\frac 32-\frac 1r-\frac{s_{\ell}}{2}\right)}
 D_t \v^\ell(t)\|_{L^r}\\
&\qquad\qquad\qquad\qquad\qquad\qquad\qquad\qquad+\cH_{\ell-1}(t)\bigl(1
+\|\na\rx^{\ell-1}(t)\|_{W^{1,p}}\bigr).
\end{split}
\end{equation} }
\end{lem}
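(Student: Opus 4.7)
\smallbreak

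\textbf{Plan of proof.} The approach is direct: expand $F_\ell(v,\pi)$ and $\nabla\dive\v^\ell$ term by term via the explicit formulas \eqref{S1eq5} and \eqref{S1eq8}, and then estimate each summand using H\"older's inequality, splitting the $X$-factors from the $(\v^j,{\rm{\pi}}^j)$-factors with $j\leq\ell-1$. The $X$-factors will be absorbed into either $R_{\ell-1}(t)$ or the single extracted factor $\|\na\rx^{\ell-1}(t)\|_{W^{1,p}}$, while the $(\v^j,{\rm{\pi}}^j)$-factors will be absorbed into $\frak{A}_{\ell-1}(t)$. Finally, the two inductive bounds $R_{\ell-1}(t)\leq\cH_{\ell-1}(t)$ and $\frak{A}_{\ell-1}(t)\leq\cH_{\ell-1}(t)$ provided by Lemma~\ref{lem:R,V} will yield the desired estimate. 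The second inequality is obtained from the first via a Stokes regularity argument.

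First, I will observe that every $X$-factor appearing in \eqref{S1eq5} and in $\na\dive\v^\ell=\sum_{i=0}^{\ell-1}\na\d_X^i(\p_\al\v^{\ell-1-i}\cdot\na X^\al)$ has the form $\d_X^a \na^b X$ for some $a+b\leq\ell$, where the case $a+b=\ell$ forces the companion $(\v^j,{\rm{\pi}}^j)$ factor to be of order zero. Iterating the commutator identity \eqref{S3eq0} expresses each $\d_X^a\na^b X$ as $\na^b\rx^a$ plus a polynomial (with bounded coefficients in $\na X,\ldots,\na\rx^{a-1}$) in terms of order strictly less than $a+b$; therefore, invoking \eqref{comm:X} with $\ell$ replaced by the appropriate index, every such factor is bounded in $L^\infty_t(W^{1,p})$ by $R_{\ell-1}(t)+\|\na\rx^{\ell-1}(t)\|_{W^{1,p}}$, the second summand appearing only in the extremal configurations where the $X$-order equals $\ell-1$.

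Next, for each companion $(\v^j,{\rm{\pi}}^j)$-type factor, the bound built into $\frak{A}_{\ell-1}(t)$, together with the embedding $W^{1,p}\hookrightarrow L^\infty$ (for $p>2$) applied to the $X$-factor, provides the required weighted $L^r$ control. Concretely, every term in $F_\ell$ and $\na\dive\v^\ell$ contains exactly one $(\v^j,{\rm{\pi}}^j)$-factor of order at most $\ell-1$ with at most one derivative in excess of the $\d_X^i\na\d_X^j\na\v^l,\,\d_X^i\na{\rm{\pi}}^j$ already controlled in $\dot{\frak{A}}_{\ell-1}$; since $s_{\ell-1}\leq s_j$ for $j\leq\ell-1$ and $\sigma\leq 1$, the weight $\sigma^{3/2-1/r-s_{\ell-1}/2}$ on the left is dominated pointwise by the weights $\sigma^{3/2-1/r-s_j/2}$ inherent in $\frak{A}_{\ell-1}(t)$, so no loss occurs. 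Summing the H\"older estimates for all summands yields the first inequality. For the second, I rewrite the $\v^\ell$-equation \eqref{eq:vell} as a Stokes system with divergence-free unknown $\v^\ell-\nabla\Delta^{-1}\dive\v^\ell$, which gives
$$
\|(\nabla^2\v^\ell,\nabla{\rm{\pi}}^\ell)\|_{L^r}\leq C\bigl(\|\rho D_t\v^\ell\|_{L^r}+\|\nabla\dive\v^\ell\|_{L^r}+\|F_\ell(v,\pi)\|_{L^r}\bigr),
$$
and the conclusion follows by multiplying through by $\sigma^{3/2-1/r-s_\ell/2}\leq\sigma^{3/2-1/r-s_{\ell-1}/2}$ and invoking the first inequality together with the pointwise bound on $\|\sigma^{3/2-1/r-s_\ell/2}\rho D_t\v^\ell\|_{L^r}$.

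The main obstacle will be the bookkeeping: the sums in \eqref{S1eq5} and \eqref{S1eq8} produce a combinatorial tangle of cross terms of the form $\d_X^{j_1}\nabla^{b_1}X\otimes\cdots\otimes\d_X^{c}\nabla^d(\v^e\text{ or }{\rm{\pi}}^e)$, and one must verify systematically that in every configuration either the total $X$-order is at most $\ell-2$ (absorbed into $R_{\ell-1}\leq\cH_{\ell-1}$) or the top-order $X$-factor $\na\rx^{\ell-1}$ gets extracted exactly once while the remaining factors sit in $\cH_{\ell-1}$. The weight distribution, by contrast, is routine once one notes that the monotonicity $s_0\geq s_1\geq\cdots\geq s_k$ combined with $\sigma\leq 1$ lets every weight $\sigma^{3/2-1/r-s_j/2}$ for $j\leq\ell-1$ dominate $\sigma^{3/2-1/r-s_{\ell-1}/2}$.
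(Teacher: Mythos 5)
Your plan follows essentially the same route as the paper's proof: termwise H\"older estimates on the explicit formulas \eqref{S1eq5} and \eqref{S1eq8}, with the $X$-factors reduced to $R_{\ell-1}(t)$ plus a single extracted $\|\na\rx^{\ell-1}\|_{W^{1,p}}$ via the commutator bound \eqref{comm:X}, the velocity/pressure factors absorbed into $\frak{A}_{\ell-1}(t)$ (using $\sigma\leq 1$ and the monotonicity of the $s_j$), the conclusion via Lemma \ref{lem:R,V}, and the identical Stokes-regularity reduction for the second inequality. The only detail you leave implicit is the treatment of the density factors $\rr^{j+1}$ in $F_\ell$, which the paper handles trivially with the $L^\infty$ bound \eqref{brho}, so this is not a genuine gap.
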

\begin{proof} For $r=2$ or $p,$ we  deduce from \eqref{S1eq5} that
\begin{align*}
\|F_\ell(v,\pi)\|_{L^r }
&\leq
C\Bigl(R_{\ell-1}(t)+\sum_{j=1}^\ell\|\rr^j\|_{L^\infty }
+\|\d_X^{\ell-1}\nabla X\|_{L^\infty }
+\|\d_X^{\ell-1}\Delta X\|_{L^p } \Bigr)
\\
& \qquad
\times\sum_{ i+j\leq \ell-1}
\Bigl( \|(D_t \v^j, \d_X^i\nabla^2 \v^j, \d_X^i\nabla{\rm{\pi}}^j)\|_{L^r }
+\|\d_X^i \nabla \v^j\|_{L^{\frac{pr}{p-r}}} \Bigr),
\end{align*}
which together with  \eqref{Vell} and \eqref{comm:X}, ensures that for $r=2$ or $r=p$
\beno
\begin{split}
 \| F_{\ell }(v,\pi) \|_{L^r}
&\leq
C\Bigl(R_{\ell-1}(t)+R_{\ell-1}^2(t)
+\sum_{j=1}^\ell\|\rr^\ell\|_{L^\infty }
 +\|\nabla\rx^{\ell-1}\|_{W^{1,p} } \Bigr)
 \sigma^{-\left(\f32-\f1r-\frac{s_{\ell-1}}{2}\right)}
 \frak{A}_{\ell-1}(t).
\end{split}
\eeno
As a result, we deduce from Lemma \ref{lem:R,V} and \eqref{brho} that for $r=2$ or $r=p$
\beq\label{S1eq8af}
\begin{split}
& \|  F_\ell(v,\pi) \|_{L^r}
 \leq
\cH_{\ell-1}(t)\sigma(t)^{-\left(\frac 32-\frac 1r-\frac{s_{\ell-1}}{2}\right)}
\bigl( 1 +\|\na\rx^{\ell-1}\|_{W^{1,p}}\bigr).
\end{split}
\eeq

While it follows from \eqref{S1eq8} that
\begin{align*}\label{divvell}
\nabla\div \v^\ell
&=\sum_{i=0}^{\ell-1}\sum_{j=0}^i C_i^j
\nabla\Bigl(\d_X^j  \p_\al \v^{\ell-1-i} \cdot\d_X^{i-j}\nabla X^\al\Bigr),
\end{align*}
from which, we infer for   $r=2$ or $r=p$
\begin{align*}
\|\nabla\div \v^\ell\|_{L^r}
\leq &
C\bigl( R_{\ell-1} (t)+\|\d_X^{\ell-1}\nabla X\|_{L^\infty}
+\| \nabla\d_X^{\ell-1}\nabla X\|_{L^p}\bigr)
\\
&\qquad\times
\sum_{  i+j\leq\ell-1}
\Bigl(\|\nabla\d_X^i\nabla \v^j\|_{L^r}
+\|\d_X^i\nabla \v^j\|_{L^{\f{pr}{p-r}}}\Bigr) .
\end{align*}
Then we deduce  from  \eqref{Vell} and the commutative estimate \eqref{comm:X} that for $r=2,p,$
 \begin{align*}
 \|\nabla\div v_\ell\|_{L^r}
\leq
C\bigl(R_{\ell-1}(t)+R_{\ell-1}^2(t)+\|\na\rx^{\ell-1}\|_{W^{1,p}}\bigr)
\sigma^{-\left(\frac 32-\frac 1r-\frac{s_{\ell-1}}{2}\right)}\frak{A}_{\ell-1}(t).
 \end{align*}
By virtue of Lemma \ref{lem:R,V} and \eqref{S1eq8af}, we obtain the first  inequality of  \eqref{est:Fell,p}.

On the other hand, in view of \eqref{eq:vell}, we write
\begin{align*}
&-\Delta (\v^\ell-\nabla\Delta^{-1}\div   \v^{\ell} )
+\nabla{\rm{\pi}}^\ell
 =\nabla\div  \v^{\ell}
-\rho D_t \v^\ell
+F_\ell(v,\pi ),
\end{align*}
so that for any $r\in ]1,\infty[$, it follows from classical estimates for   Stokes operator  that  \begin{equation}\label{Stokes:vell}
\begin{split}
 \| ( \nabla^2 \v^\ell, \nabla{\rm{\pi}}^\ell ) \|_{L^r }
\leq
C\bigl(\|\nabla\div   \v^{\ell}\|_{L^r }
+\|\rho D_t \v^\ell \|_{L^r }
+\|F_\ell(v,\pi )\|_{L^r }\bigr),
\end{split}
\end{equation}
which together with the first inequality of \eqref{est:Fell,p} gives rise to the second one of \eqref{est:Fell,p}.
This completes the proof of the lemma.
\end{proof}

\begin{lem}\label{lem:vell}
{\sl Let $g_{\ell}$ be given by \eqref{tildevell}. Then one has
\beq\label{S6eq1}
 \begin{split}
 &\|g_{\ell}(t)\|_{L^2}+\sigma(t)^{\frac{1-s_{\ell-1}}{2}}\|\nabla g_{\ell}(t)\|_{L^2 }
\leq \cH_{\ell-1}(t)\bigl(1+\|\na\rx^{\ell-1}(t)\|_{W^{1,p}}\bigr),
\\
&\|\d_t g_{\ell}(t)\|_{L^2 }
 \leq
\|v\|_{L^\infty}\|\nabla \v^\ell\|_{L^2}
+\cH_{\ell-1}(t)\sigma(t)^{-\left(1-\frac{s_{\ell-1}}{2}\right)}\bigl(1+\|\na\rx^{\ell-1}(t)\|_{W^{1,p}}\bigr).
 \end{split}\eeq
 }
\end{lem}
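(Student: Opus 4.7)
The plan is to expand $g_\ell$ from \eqref{tildevell} by iterated Leibniz and bound every resulting summand using Lemma \ref{lem:R,V} and the inductive assumption \eqref{bound:Jell-1}. Since $\cE_X f = \p_X f - f\cdot\nabla X$, each iterated application $\cE_X^i(\v^{\ell-1-i}\cdot\nabla X)$ unfolds into a finite sum of products, schematically of the form $\prod_s (\d_X^{m_s}\nabla\rx^{n_s})\cdot \d_X^a\v^b\cdot (\d_X^c\nabla\rx^d)$, in which the total order of the $X$- and $\v$-derivatives is exactly $\ell-1$. To bound such a product in $L^2(\R^2)$, I would place the single velocity factor in $L^2$ and all $X$-derivative factors in $L^\infty$ via $W^{1,p}\hookrightarrow L^\infty(\R^2)$ for $p>2$; all factors of order $\leq \ell-2$ in $X$ are controlled by $R_{\ell-1}(t)\leq \cH_{\ell-1}(t)$, while the lone summand containing the maximal-order factor $\nabla\rx^{\ell-1}$ contributes the $(1+\|\nabla\rx^{\ell-1}\|_{W^{1,p}})$ term on the right-hand side. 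For $\|\nabla g_\ell\|_{L^2}$, one additional spatial derivative is distributed by Leibniz; the time-weight $\sigma^{(1-s_{\ell-1})/2}$ is supplied by the bound $\|\sigma^{(1-s_{\ell-1})/2}\nabla\v^b\|_{L^2}\leq \cH_{\ell-1}(t)$ (or its $L^{2p/(p-2)}$ analogue) extracted from $\frak{A}_{\ell-1}(t)$.

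For the second estimate, I would compute $\partial_t g_\ell$ using the identity
$$\partial_t(\cE_X^i f) = \cE_X^i f_t + \sum_{j=0}^{i-1}\cE_X^{i-1-j}\,\cE_{X_t}\,\cE_X^j f,$$
which follows by induction from the base case $\partial_t\cE_X f = \cE_{X_t} f + \cE_X f_t$, together with $X_t = \v^1 - v\cdot\nabla X$ from \eqref{eq:X} and $\partial_t\v^m = \d_X^m D_t v - v\cdot\nabla\v^m$. Substituting these and re-expanding by Leibniz yields a finite sum whose summands have the same product structure as in the first step, with one factor replaced by either a $D_t$-image, a $v\cdot\nabla$-image, or an $X_t$-substituted factor. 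All but one of these terms fit within the bound $\cH_{\ell-1}(t)\sigma^{-(1-s_{\ell-1}/2)}(1+\|\nabla\rx^{\ell-1}\|_{W^{1,p}})$ by the same H\"older-type estimates as before, combined with the $D_t$-regularity bounds inside $\frak{A}_{\ell-1}(t)$ and the time-weight information encoded in $\int_0^t\frak{B}_{\ell-1}(t')\,dt'\leq\cH_{\ell-1}(t)$ from Lemma \ref{lem:R,V}.

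The single exceptional term arises from the deepest level of the expansion: when all $i=\ell-1$ copies of $\p_X$ cascade onto a velocity factor $v$ (present, for example, in $\cE_X^{\ell-1}(v\cdot\nabla X_t)$ coming from the $f_t$ part at $i=\ell-1$, using $\nabla X_t = \nabla\v^1 - \nabla(v\cdot\nabla X)$), the iterative use of the commutator $\p_X\nabla = \nabla\p_X - \nabla X\cdot\nabla$ rewrites $\d_X^{\ell-1}\nabla v$ as $\nabla\v^\ell$ plus lower-order remainders that are already controlled by $\frak{A}_{\ell-1}(t)$ through Lemma \ref{lem:R,V}. Retaining the outer $v$ factor and extracting the highest-order piece yields a single term of the form $c_\ell\,v\cdot\nabla\v^\ell$, bounded by $\|v\|_{L^\infty}\|\nabla\v^\ell\|_{L^2}$, which accounts for the first term on the right-hand side of the second inequality in \eqref{S6eq1}.

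The main obstacle is the combinatorial bookkeeping of the many summands arising from the time-differentiated expansion of $\cE_X^i(\v^{\ell-1-i}\cdot\nabla X)$ over $0\leq i\leq\ell-1$, and the careful verification that exactly one coefficient of $v\cdot\nabla\v^\ell$ survives after the iterated commutator $[\nabla,\p_X]f = -\nabla X\cdot\nabla f$ is applied, while every other term falls cleanly within the inductive budget supplied by $R_{\ell-1}(t) + \frak{A}_{\ell-1}(t) + \int_0^t\frak{B}_{\ell-1}(t')\,dt' \leq \cH_{\ell-1}(t)$.
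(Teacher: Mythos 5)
Your proposal follows essentially the same route as the paper: expand the iterated $\cE_X^i$ into products, control every factor of striated order $\leq\ell-2$ through $R_{\ell-1}$ and $\frak{A}_{\ell-1}$ (hence $\cH_{\ell-1}$), let the lone top-order factor $\d_X^{\ell-1}\nabla X\simeq\na\rx^{\ell-1}$ (via the commutator estimate and $W^{1,p}\hookrightarrow L^\infty$) produce the $(1+\|\na\rx^{\ell-1}\|_{W^{1,p}})$ factor, and for $\d_t g_\ell$ isolate the single order-$\ell$ term coming from $X_t=\v^1-v\cdot\na X$, namely $v\cdot\d_X^{\ell-1}\na\v^1=v\cdot\na\v^\ell+$ commutator remainders, exactly as in the paper's splitting of $\d_\kappa g_\ell$. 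Two small slips worth fixing but not gaps: it is $\d_X^{\ell-1}\na\v^1$ (equivalently $\d_X^{\ell}\na v$), not $\d_X^{\ell-1}\na v$, that is rewritten as $\na\v^\ell$; and since \eqref{S6eq1} is a pointwise-in-time bound you cannot appeal to the time-integrated quantity $\int_0^t\frak{B}_{\ell-1}\,dt'$ — all the needed bounds (on $D_t\v^m$, $\d_t\v^m$, $X_t$, $\na X_t$ and their striated derivatives of order $\leq\ell-1$) are already pointwise in the $L^\infty_t$-type functional $\frak{A}_{\ell-1}$, which is what the paper uses.
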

\begin{proof} It follows from  \eqref{tildevell} that
\beq\label{tildevell1}\begin{split}
g_{\ell}
&=\sum_{i=0}^{\ell-1}\bigl (\d_X-(\nabla X)^T\cdot\bigr) ^i (\v^{\ell-1-i}\cdot\nabla X)
\\
&=\sum_{i=0}^{\ell-1}\sum_{i_1+\cdots+i_r=i}
\d_X^{i_1}(\v^{\ell-1-i}\cdot\nabla X)
\cdot\d_X^{i_2}  (-\nabla X) ^{i_3}
\cdots \d_X^{i_{r-1}} (-\nabla X)^{i_r}.
\end{split}\eeq
It is obvious to observe from \eqref{tildevell1}, Lemmas \ref{lem:comm,X} and \ref{lem:R,V}  that
\beq\label{S6eq7}
\begin{split}
\|g_{\ell}\|_{L^2}
\leq &
\|v\cdot\d_X^{\ell-1}\nabla X\|_{L^2}
+\sum_{l\leq\ell-2, i\leq\ell-1}
\|\v^i\|_{L^2}\|\d_X^l\nabla X\|_{L^\infty}^{\ell-1}
\\
\leq &
\Bigl(\sum_{i\leq\ell-1}\|\v^i\|_{L^2} \Bigr)
\bigl(\|\d_X^{\ell-1}\nabla X\|_{L^\infty}+R^{\ell-1}_{\ell-1}(t)\bigr)\\
\leq&
\cH_{\ell-1}(t)(1+\|\na\rx^{\ell-1}\|_{W^{1,p}}).
\end{split}\eeq

While by taking $\d_{\kappa}$ with $\kappa=0,1,2$ (here $\p_0\eqdefa \p_t$) to \eqref{tildevell1} and using \eqref{S3eq0}, we write
\begin{equation}\label{vell'}
\begin{split}
\d_{\kappa} g_{\ell}
=g_{\ell,\kappa}^1+g_{\ell,\kappa}^2,
\end{split}
\end{equation}
where
\begin{align*}
g_{\ell,\kappa}^1
\eqdefa & \sum_{i=1}^{\ell-1}\sum_{i_1+\cdots+i_r=i-1}\Bigl(
 \d_X^{i_1}\bigl(\d_\kappa X\cdot\nabla\d_X^{i_2}(\v^{\ell-1-i}\cdot\nabla X)\bigr)
 \cdots\d_X^{i_{r-1}} (-\nabla X)^{i_r}
\\
&
+\d_X^{i_1}(\v^{\ell-1-i}\cdot\nabla X)\cdots
 \d_X^{i_{j-1}}\bigl( \d_\kappa X\cdot\nabla \d_X^{i_j}(-\nabla X)^{i_{j+1}}\bigr)
\cdots\d_X^{i_{r-1}} (-\nabla X)^{i_r}
\\
&
+\d_X^{i_1} (  \v^{\ell-1-i} \cdot \nabla X )
\cdot\d_X^{i_2}(-\nabla X) ^{i_3}
\cdots\d_X^{i_j} \d_\kappa(-\nabla X) \cdots\d_X^{i_{r-1}} (-\nabla X)^{i_r}\Bigr) ,
\end{align*}
and
\begin{align*}
g_{\ell,\kappa}^2
\eqdefa&\sum_{i=0}^{\ell-1}\sum_{i_1+\cdots+i_r=i}
\d_X^{i_1} \d_\kappa (\v^{\ell-1-i}\cdot\nabla X)
\cdot\d_X^{i_2}(-\nabla X) ^{i_3}\cdots\d_X^{i_{r-1}} (-\nabla X)^{i_r}  .
\end{align*}
Notice that the indices $i_1,i_2,\cdots,i_r$ satisfy $i_1+\cdots i_r\leq \ell-2$  in
 $g_{\ell,\kappa}^1$. We thus get, by applying Lemma \ref{lem:R,V}, that for $\kappa=1,2,$
\begin{align*}
\|g_{\ell,\kappa}^1\|_{L^2}
&\leq
C\sum_{ i+j\leq \ell-2} \cH_{\ell-1}(t)
\bigl(\|\d_X^{i}\nabla  \v^{j}\|_{L^2}
+
\|  \v^{i}\|_{L^{\frac{2p}{p-2}}}
\bigr)
\leq
C\cH_{\ell-1}(t)\sigma^{-\frac{1-s_{\ell-1}}{2}}\frak{A}_{\ell-1}(t) .
\end{align*}
To handle $g_{\ell,\kappa}^2$, we  separate the case when $i_1=i=\ell-1$ from others  to get
\begin{align*}
\|g_{\ell,\kappa}^2\|_{L^2}
&\leq
 \|\nabla v\|_{L^2}\|\d_X^{\ell-1}\nabla X\|_{L^\infty}
 + \|v\|_{L^{\frac{2p}{p-2}}}\|\d_X^{\ell-1}\nabla^2 X\|_{L^p}
\\
&\qquad\qquad
+C\sum_{  i+j\leq \ell-1 }
\cH_{\ell -1}(t)
\bigl(\|\d_X^{i}\nabla \v^j  \|_{L^2}
+\| \v^i\|_{L^{\frac{2p}{p-2}}} \bigr),
\end{align*}
from which and the commutative estimate \eqref{comm:X}, we infer
 \begin{align*}
 \|g_{\ell,\kappa}^2\|_{L^2 }
& \leq
C \bigl(\cH_{\ell-1}(t) +\|\na\rx^{\ell-1}\|_{W^{1,p} }\bigr)
\sigma^{-\frac{1-s_{\ell-1}}{2}}\frak{A}_{\ell-1}(t).
 \end{align*}
 Combining the above estimates of $g_{\ell,\kappa}^1$ and
 $g_{\ell,\kappa}^2$  with \eqref{S6eq7} and Lemma \ref{lem:R,V}, we obtain the first estimate of \eqref{S6eq1}.

Along the same line, we deduce from \eqref{vell'} that
\beno\begin{split}
\|g_{\ell,0}^1
 \|_{L^2 }
\lesssim &
\cH_{\ell-1}(t) \sum_{\substack{i +j\leq \ell-2\\
 l\leq \ell-2}}
\Bigl(  \|\d_X^{l} X_t\|_{L^\infty}
\bigl(\|\d_X^{i}\nabla  \v^{j}\|_{L^2}
+
\|\v^i\|_{L^{\frac{2p}{p-2}}}
\bigr)
+\|\d_X^l\nabla X_t\|_{L^2}\|\v^{i}\|_{L^\infty}\Bigr)
\\
\lesssim&
 \cH_{\ell-1}(t)\sigma^{-\left(1-\frac{s_{\ell-1}}{2}\right)}
 \sum_{\substack{i +j\leq\ell-2\\ l\leq \ell-2}}
\Bigl( \|\sigma^{\frac 12}\d_X^{l}X_t\|_{L^\infty}
\bigl(\|\sigma^{\frac{1-s_{\ell-1}}{2}}\d_X^{i}\nabla  \v^{j}\|_{L^2}
 \\
 &\qquad\qquad\qquad\qquad\qquad+
\|\sigma^{\frac{1-s_{\ell-1}}{2}} \v^i\|_{L^{\frac{2p}{p-2}}}
\bigr)
+\|\sigma^{\frac{1-s_{\ell-1}}{2}}\d_X^l\nabla X_t\|_{L^2}
\|\sigma^{\frac 12} \v^{i}\|_{L^\infty}\Bigr),
\end{split}\eeno
from which and Lemma \ref{lem:R,V}, we infer that
\beq\label{tildevell2}\begin{split}
\|g_{\ell,0}^1\|_{L^2 }
\leq
 \cH_{\ell-1}(t)
\sigma^{-\left(1-\frac{s_{\ell-1}}{2}\right)} \frak{A}_{\ell-1}^2(t)
\leq \cH_{\ell-1}(t)
\sigma^{-\left(1-\frac{s_{\ell-1}}{2}\right)}.
\end{split}\eeq
While by separating the case when $i_1=i=\ell-1$ from others and taking into account of the fact that:
$X_t=-v\cdot\nabla X+\v^1$ one has
\beno
\begin{split}
 \|g_{\ell,0}^2\|_{L^2}\lesssim&
 \|v_t\cdot\d_X^{\ell-1}\nabla X \|_{L^2}
  + \|v\otimes \d_X^{\ell-1}\nabla (v\cdot\nabla X) \|_{L^2}
  + \|v\cdot \d_X^{\ell-1}\nabla \v^1 \|_{L^{2}}
\\
&
+C\sum_{\substack{ i+j\leq \ell-1\\ l+m\leq\ell-1, m\leq \ell-2}}
\cH_{\ell-1}(t)
\bigl(    \|\d_X^{i}\d_t \v^j \|_{L^2}
+   \| \v^l\|_{L^\infty}
\| \d_X^{m}\nabla X_t\|_{L^2}\bigr),
\end{split}\eeno
 which yields
\begin{align*}\|g_{\ell,0}^2\|_{L^2 }
\leq&
\bigl( \| (v_t, v\otimes \nabla v) \|_{L^2}
+\|v\otimes v\|_{L^{\frac{2p}{p-2}}}\bigr)
 \bigl(\|\d_X^{\ell-1}\nabla X\|_{L^\infty}
+ \|\d_X^{\ell-1}\nabla^2 X\|_{L^p} \bigr)
\\
&+\| v\cdot\nabla \v^\ell\|_{L^2 }
+ \sum_{  i+j\leq \ell-1}
\bigl(\|v\otimes\d_X^i\nabla \v^j\|_{L^2}
+\|v\otimes \v^i\|_{L^{\frac{2p}{p-2}}}\bigr)R_{\ell-1}(t)\\
&+\| v\otimes(\d_X^{\ell-1}\nabla \v^1- \nabla \v^\ell)\|_{L^2}
+C   \cH_{\ell-1}(t) \sigma^{-\left(1-\frac{s_{\ell-1}}{2}\right)}
\bigl(\frak{A}_{\ell-1}(t)+\frak{A}_{\ell-1}^2(t)\bigr) .
\end{align*}
However, due to
\begin{align*}
\d_X^{\ell-1}\nabla \v^1-\nabla \v^\ell
=\sum_{i=0}^{\ell-2}\d_X^i[\d_X,\nabla]\v^{\ell-i-1}
&=-\sum_{i=0}^{\ell-2}\d_X^i (\nabla X\cdot\nabla \v^{\ell-i-1})
\\
&=-\sum_{i=0}^{\ell-2}\sum_{m=0}^i C_i^m
\d_X^m(\nabla X)\cdot\d_X^{i-m}\nabla \v^{\ell-i-1},
\end{align*}
we deduce from the commutative estimate \eqref{comm:X} and Lemma \ref{lem:R,V} that
\begin{equation*}
\begin{split}
 \|g_{\ell,0}^2\|_{L^2 }
\leq &
\|v\|_{L^\infty}\|\nabla \v^\ell\|_{L^2}
 +\cH_{\ell-1}(t) \sigma^{-\left(1-\frac{s_{\ell-1}}{2}\right)}
+C\bigl(\cH_{\ell-1}(t) +\|\na\rx^{\ell-1}\|_{W^{1,p}}\bigr)\\
&\times
\sum_{ i+j\leq\ell-1 }
 \Bigl(\|v_t\|_{L^2}
 +\|v\|_{L^\infty}\bigl( \|\d_X^i\nabla \v^j\|_{L^2}
 +\|\v^i\|_{L^{\frac{2p}{p-2}}}\bigr)\Bigr)\\
\leq &
\|v\|_{L^\infty}\|\nabla \v^\ell\|_{L^2}
 +\cH_{\ell-1}(t) \sigma^{-\left(1-\frac{s_{\ell-1}}{2}\right)}
 \bigl(1+\|\na\rx^{\ell-1}\|_{W^{1,p}}\bigr),\end{split}
\end{equation*}
which together with the Estimate \eqref{tildevell2} ensures the second estimate of \eqref{S6eq1}.
This finishes the proof of Lemma \ref{lem:vell}.
\end{proof}

\subsection{Time-weighted $H^1$ energy estimate of $ \v^{\ell}$}
In this subsection,  we  follow the same lines as that in Section \ref{sec:J1} to derive  the time-weighted $H^1$ estimate of $\v^{\ell}$.
Similar to the beginning of Section \ref{sec:J1}, due to \eqref{eq:vell},
we first decompose $(\v^\ell, \nabla{\rm{\pi}}^\ell)$ as
\beq \label{S6eq2}
\v^\ell=\v_{\ell 1}+\v_{\ell 2},
\andf \nabla{\rm{\pi}}^\ell=\nabla \rm{p}_{\ell 1}+\nabla \rp_{\ell 2},
\eeq
with $(\v_{\ell 1}, \nabla \rp_{\ell 1})$ and $(\v_{\ell 2}, \nabla \rp_{\ell 2})$ solving the following systems respectively
\beq\label{vell1}
\left\{\begin{array}{l}
\rho \d_t \v_{\ell 1}+\rho v\cdot\nabla \v_{\ell 1}
-\Delta \v_{\ell 1}+\nabla \rp_{\ell 1}=0,
\\
\div \v_{\ell 1}=0,
\\
\v_{\ell 1}|_{t=0}=\d_{X_0}^\ell v_0,
\end{array}\right.
\eeq
and
\beq\label{vell2}
\left\{\begin{array}{l}
\rho \d_t \v_{\ell 2}+\rho v\cdot\nabla \v_{\ell 2}
-\Delta \v_{\ell 2}+\nabla {\rm{p}}_{\ell 2}=F_\ell(v,\pi),
\\
\div \v_{\ell 2}=\div g_\ell,
\\
\v_{\ell 2}|_{t=0}=0,
\end{array}\right.
\eeq
where $F_\ell(v,\pi)$ and $g_{\ell}$ are given by \eqref{S1eq5} and \eqref{tildevell} respectively.

It follows from Proposition \ref{S2prop1} that
\beq
\|\v_{\ell 1}\|_{\wt{L}^\infty_t(\cB^{s_\ell})}
+\|\nabla \v_{\ell 1}\|_{\wt{L}^2_t(\cB^{s_\ell })}
\leq
C\|\p_{X_0}^\ell v_0\|_{\cB^{s_\ell }}
\exp\bigl(C\|v_0\|_{L^2}^4\bigr),
\label{vell1:L2}
 \eeq
and
 \beq
\begin{split}
\|\s^{\f{1-s_\ell }2}
\na \v_{\ell 1}\|_{L^\infty_t(L^2)}
&+
\bigl\|\s^{\f{1-s_\ell }2}(\p_t\v_{\ell 1},\na^2
\v_{\ell 1},\na \rp_{\ell 1})\bigr\|_{L_t^2(L^2)}\leq \cC(v_0,\p_{X_0}^\ell v_0,s_\ell).
\end{split} \label{vell1:H1} \eeq

\begin{prop}\label{S6prop1}
{\sl Let $A_{\ell1}(t)$ be given by \eqref{Jell}. Then under the assumptions of Proposition \ref{prop:Jell}, we have
\beq\label{S6eq3}
A_{\ell1}^2(t)\leq \cA_\ell\exp\left(\cA_0\w{t}^2\right)
+\cH_{\ell-1}(t)
\Bigl(1+\int_0^t\s^{-\left(1-\f{\th_0}2\right)} \|\na\rx^{\ell-1}(t')\|_{W^{1,p}}^2\,dt'\Bigr)
\eqdefa\wt{\frak{G}}_{\ell,X}(t).
\eeq
}
\end{prop}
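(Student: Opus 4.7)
Following the approach of Proposition \ref{prop:B1}, I would decompose
$\v^\ell=\v_{\ell 1}+\v_{\ell 2}$ and $\nabla{\rm \pi}^\ell=\nabla \rp_{\ell 1}+\nabla \rp_{\ell 2}$ as in \eqref{S6eq2}, so that $(\v_{\ell 1},\nabla \rp_{\ell 1})$ solves the homogeneous system \eqref{vell1} with initial data $\p_{X_0}^\ell v_0\in \cB^{s_\ell}$ and $(\v_{\ell 2},\nabla \rp_{\ell 2})$ solves the inhomogeneous system \eqref{vell2}. The estimates \eqref{vell1:L2} and \eqref{vell1:H1}, obtained directly from Proposition \ref{S2prop1}, give the required bounds on $\v_{\ell 1}$ and account for the $\cA_\ell\exp(\cA_0\w{t}^2)$ term in \eqref{S6eq3}. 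The $L^\infty$-bound on $\rr^\ell$ is exactly \eqref{brho} and requires no further work.

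\textbf{Time-weighted $\dot H^1$ estimate for $\v_{\ell 2}$.} I would apply Lemma \ref{S3lem2} to the system \eqref{vell2} with $s=s_\ell$ and $\de=s_{\ell-1}$, so that $\de-s=\theta_0/k$. The right-hand side of \eqref{lemeq1} then splits into four types of contributions: (i) the initial-datum term $\w{t}\|g_\ell(0)\|_{\cB^{s_{\ell-1}}}^2/(\de-s)$, which is pure data and absorbed in $\cA_\ell$; (ii) the terms $\|\s^{-s_\ell/2}(F_\ell,\p_tg_\ell)\|_{L^1_t(L^2)}^2$ and $\|\s^{-(1-\th_0/k)/2}\na g_\ell\|_{L^1_t(L^2)}^2$; (iii) the term $\|\s^{-s_\ell/2}\na g_\ell\|_{L^2_t(L^2)}^2$; and (iv) $\|\s^{(1-s_\ell)/2}(\p_t g_\ell,\na\dive g_\ell,F_\ell)\|_{L^2_t(L^2)}^2$. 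Using the first estimate of \eqref{est:Fell,p} and the two estimates of \eqref{S6eq1}, each of these is bounded by $\cH_{\ell-1}(t)$ times an integral of the form $\int_0^t\s(t')^{-(1-\th_0/2)}\|\na\rx^{\ell-1}(t')\|_{W^{1,p}}^2\,dt'$ via Hölder's inequality in time, modulo one dangerous contribution $\|v\|_{L^\infty}\|\na\v^\ell\|_{L^2}$ arising in $\|\p_tg_\ell\|_{L^2}$, which will be re-absorbed by Gronwall.

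\textbf{The $L^2$ energy estimate for $\v_{\ell 2}$.} Mimicking Lemma \ref{S3prop2}, I would take the $L^2$ inner product of \eqref{vell2} with $\v_{\ell 2}$ and exploit the constraint $\dive\v_{\ell 2}=\dive g_\ell$ to rewrite
\beno
(\na \rp_{\ell 2}\mid\v_{\ell 2})=(\na\rp_{\ell 2}\mid g_\ell)=(F_\ell-\r\p_t\v_{\ell 2}-\r v\cdot\na\v_{\ell 2}+\D\v_{\ell 2}\mid g_\ell).
\eeno
Then I would integrate by parts so as to move derivatives from $\v_{\ell 2}$ onto $g_\ell$ (respectively $F_\ell$), producing integrals of the form $(F_\ell\mid\v_{\ell 2}-g_\ell)$ and $(\r\p_t\v_{\ell 2}-\D\v_{\ell 2}\mid g_\ell)$ and $(\r v\cdot\na\v_{\ell 2}\mid g_\ell)$. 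These are bounded via Cauchy--Schwarz, \eqref{est:Fell,p}, \eqref{S6eq1}, Corollary \ref{S1col1}, and by re-using the time-weighted bounds on $\p_t\v_{\ell 2}, \na^2\v_{\ell 2}$ already obtained in the previous step; the extra factors of $\|v\|_{L^\infty}^2$ and $\s^{-(1-s_\ell)}$ yield the same integral $\int_0^t\s^{-(1-\theta_0/2)}\|\na\rx^{\ell-1}\|_{W^{1,p}}^2\,dt'$ after using $1-s_\ell<1-\theta_0/2$.

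\textbf{Main obstacle.} The key technical difficulty is the exponent bookkeeping: the estimates of Lemma \ref{lem:vell} contain a singular weight $\s^{-(1-s_{\ell-1}/2)}$, while Lemma \ref{S3lem2} requires $L^1_t$ and $L^2_t$ integrability of various quantities involving $g_\ell$. Matching these requires choosing the Hölder split so that the residual time weight on $\|\na\rx^{\ell-1}\|_{W^{1,p}}^2$ is precisely $\s^{-(1-\theta_0/2)}$, which is $L^1_t$-integrable near $0$ but which the $W^{2,p}$-estimate of $\rx^{\ell-1}$ (to be closed at level $\ell$ in the next subsection) cannot improve. A second subtlety is that the term $\|v\|_{L^\infty}\|\na\v^\ell\|_{L^2}$ from $\|\p_tg_\ell\|_{L^2}$ in the second estimate of \eqref{S6eq1} depends on $\v^\ell=\v_{\ell 1}+\v_{\ell 2}$; after using \eqref{vell1:H1} for $\v_{\ell 1}$ and Corollary \ref{S1col1} to bound $\|\s^{(1-s_0)/2}v\|_{L^\infty}$, the remaining $\|\s^{(1-s_\ell)/2}\na\v_{\ell 2}\|_{L^2}$ contribution is handled by Gronwall's inequality. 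Summing the above two steps with \eqref{vell1:L2}, \eqref{vell1:H1} and \eqref{brho}, and finally applying Gronwall to close the loop, yields \eqref{S6eq3}.
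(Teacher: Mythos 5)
Your proposal follows essentially the same route as the paper: the decomposition \eqref{S6eq2} into \eqref{vell1}--\eqref{vell2}, the application of Lemma \ref{S3lem2} with $s=s_\ell$, $\delta=s_{\ell-1}$ together with Lemmas \ref{lem:Fell} and \ref{lem:vell}, the absorption of the $\|v\|_{L^\infty}\|\na\v^\ell\|_{L^2}$ contribution via \eqref{vell1:H1} and Gronwall, and an $L^2$ energy estimate modelled on Lemma \ref{S3prop2}, summed with \eqref{vell1:L2}, \eqref{vell1:H1} and \eqref{brho}. The only blemishes are bookkeeping slips (e.g.\ $\delta-s=s_{\ell-1}-s_\ell=\theta_0$, not $\theta_0/k$, and the weight on $\na g_\ell$ in $L^1_t(L^2)$ is $\s^{-\frac{1-(s_0-s_\ell)}2}$), which do not affect the argument.
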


\begin{proof} We first deduce from Lemma \ref{S3lem2} and \eqref{vell2} that
\beq\label{S6eq4}
\begin{split}
\|&\s^{\f{1-s_\ell }2}
\na \v_{\ell 2}\|_{\wt{L}^\infty_t(L^2)}^2
+
\bigl\|\s^{\f{1-s_\ell }2}(\p_t\v_{\ell 2},\na^2
\v_{\ell 2},\na {\rm{p}}_{\ell 2})\bigr\|_{L_t^2(L^2)}\\
&\leq \cA_0\w{t}\Bigl(\|g_\ell(0)\|_{\cB^{s_{\ell-1}}}^2+\bigl\|\s^{-\f{s_\ell}2}(\p_tg_\ell, F_\ell(v,\pi))\bigr\|_{L^1_t(L^2)}^2+\|\s^{-\f{1-(s_0-s_\ell)}2}\na g_\ell\|_{L^1_t(L^2)}^2\\
&\qquad\qquad\qquad\quad
+\|\s^{-\f{s_\ell}2}\na g_\ell\|_{L^2_t(L^2)}^2+\bigl\|\s^{\f{1-s_\ell}2}(\p_tg_\ell,\na\dive g_\ell, F_\ell(v,\pi))\bigr\|_{L^2_t(L^2)}^2\Bigr).
\end{split}
\eeq
In view of \eqref{tildevell1}, we get, by applying the law of product  in Besov spaces (see \cite{BCD} for instance), that
\beno\begin{split}
\|g_{\ell}(0)\|_{\cB^{s_{\ell-1}}}
&\leq
C\sum_{i,j,l\leq\ell-1}
\|\v^i(0)\|_{\cB^{s_{\ell-1}}}
 \|\d_{X_0}^j\nabla X_0\|_{\dot B^{\f2p}_{p,1}}^{l}
 \\
 &\leq C\sum_{i,j\leq\ell-1}\|\v^i(0)\|_{\cB^{s_{\ell-1}}}
 \|\d_{X_0}^j\nabla X_0\|_{W^{1,p}}^l
 \\
& \leq
 C\bigl(\|v\|_{L^2\cap \cB^{s_0}},\cdots,
 \|\d_{X_0}^{\ell-1}v_0\|_{\cB^{s_{\ell-1}}},
 \|  X_0\|_{W^{2,p}}, \cdots,
 \|\d_{X_0}^{\ell-1} X_0\|_{W^{2,p}}\bigr)\leq \cA_\ell.
\end{split}\eeno
While it follows from Lemma \ref{lem:Fell} and  $\th_0=s_{\ell-1}-s_\ell$ that
\beno \begin{split}
\bigl\|\s^{-\f{s_\ell}2}F_\ell(v,\pi)\bigr\|_{L^1_t(L^2)}^2+&
\bigl\|\s^{\f{1-s_\ell}2}(\na\dive g_\ell, F_\ell(v,\pi))\bigr\|_{L^2_t(L^2)}^2\\
\lesssim & \cH_{\ell-1}^2(t)\int_0^t\s^{-\left(1-\f{\th_0}2\right)}\bigl(1+\|\na\rx^{\ell-1}\|^2_{W^{1,p}}\bigr)\,dt'.
\end{split}
\eeno
And Lemma \ref{lem:vell} ensures that
\beno
\begin{split}
\bigl\|&\s^{-\f{s_\ell}2}\p_tg_\ell\bigr\|_{L^1_t(L^2)}^2+\|\s^{-\f{1-(s_0-s_\ell)}2}\na g_\ell\|_{L^1_t(L^2)}^2+\|\s^{-\f{s_\ell}2}\na g_\ell\|_{L^2_t(L^2)}^2+\|\s^{\f{1-s_\ell}2}\p_tg_\ell\|_{L^2_t(L^2)}^2\\
&\lesssim\w{t}\|\s^{\f{1-s_0}2}v\|_{L^\infty_t(L^\infty)}^2\int_0^t\s^{-\left(1-\f{s_0}2\right)}\|\s^{\f{1-s_\ell}2}\na\v^\ell\|_{L^2}^2\,\dt'\\
&\qquad\qquad\qquad\qquad+\cH_{\ell-1}^2(t)\int_0^t\s^{-\left(1-\f{\th_0}2\right)}\bigl(1+\|\na\rx^{\ell-1}\|^2_{W^{1,p}}\bigr)\,\dt'.
\end{split}
\eeno
Yet it follows from \eqref{S6eq2} that
\beno
\begin{split} \int_0^t\s^{-\left(1-\f{s_0}2\right)}\|\s^{\f{1-s_\ell}2}\na\v^\ell\|_{L^2}^2\,\dt'
\lesssim& \w{t}\|\s^{\f{1-s_\ell}2}\na \v_{\ell 1}\|_{L^\infty_t(L^2)}^2
+\int_0^t\s^{-\left(1-\f{s_0}2\right)}\|\s^{\f{1-s_\ell}2}\na \v_{\ell 2}\|_{L^2}^2\,\dt'.
\end{split}
\eeno
This together with  \eqref{vell1:H1} implies
\beno
\begin{split}
\bigl\|&\s^{-\f{s_\ell}2}\p_tg_\ell\bigr\|_{L^1_t(L^2)}^2+\|\s^{-\f{1-(s_0-s_\ell)}2}\na g_\ell\|_{L^1_t(L^2)}^2+\|\s^{-\f{s_\ell}2}\na g_\ell\|_{L^2_t(L^2)}^2
+\|\s^{\f{1-s_\ell}2}\p_tg_\ell\|_{L^2_t(L^2)}^2\\
&\lesssim \cA_\ell\w{t}^2+\cA_0\w{t}\int_0^t\s^{-\left(1-\f{s_0}2\right)}\|\s^{\f{1-s_\ell}2}\na \v_{\ell 2}\|_{L^2}^2\,\dt'\\
&\qquad\qquad\qquad\qquad+\cH_{\ell-1}^2(t)\int_0^t\s^{-\left(1-\f{\th_0}2\right)}\bigl(1+\|\na\rx^{\ell-1}\|^2_{W^{1,p}}\bigr)\,\dt'.
\end{split}
\eeno
Inserting the above estimates into \eqref{S6eq4} and applying Gronwall's inequality gives rise to
\beq\label{S6eq5}
\|\s^{\f{1-s_\ell }2}
\na \v_{\ell 2}\|_{\wt{L}^\infty_t(L^2)}^2
+
\bigl\|\s^{\f{1-s_\ell }2}(\p_t\v_{\ell 2},\na^2
\v_{\ell 2},\na {\rm{p}}_{\ell 2})\bigr\|_{L_t^2(L^2)}^2\leq \wt{\frak{G}}_{\ell,X}(t),
\eeq
for $\wt{\frak{G}}_{\ell,X}(t)$ given by \eqref{S6eq3}.

Let us now turn to the $L^2$ energy estimate of $\v^\ell.$
Similar to the derivation of \eqref{S3eq9},  we   get, by taking $L^2(\R^2)$ inner product of \eqref{vell2} with $\v_{\ell2}$ and 
making use of the fact: $\div \v_{\ell2}=\div g_{\ell},$ that
\begin{equation}\label{estimate:vell}
\begin{split}
\frac12\frac{d}{dt}\|\sqrt{\rho}\v_{\ell2}\|_{L^2}^2
+\|\nabla \v_{\ell2}\|_{L^2}^2
 =&\int_{\R^2}  F_\ell  (v,\pi ) | (\v_{\ell2}-g_{\ell})\,\dx\\
&+\int_{\R^2} (\rho \d_t \v_{\ell2}
+\rho v\cdot\nabla \v_{\ell2}
 -\Delta \v_{\ell2}) | g_{\ell} \,\dx .
\end{split}
\end{equation}
By virtue of Lemmas \ref{lem:Fell} and \ref{lem:vell}, we find
\begin{equation*}\label{estimate:Fell}
\begin{split}
|\int_{\R^2}  F_\ell  (v,\pi ) | (\v_{\ell2}-g_{\ell})\,\dx|
 &\leq
\sigma^{1- {s_{\ell}} }\|F_\ell(v,\pi)\|_{L^2} ^2
+\sigma^{-(1- {s_{\ell}} )}\|\v_{\ell2}\|_{L^2}^2
+\sigma^{-(1- {s_{\ell}} )}\|g_{\ell}\|_{L^2}^2
 \\
&\leq
\sigma^{-(1- {s_{\ell}} )}
\|\v_{\ell2}\|_{L^2}^2
+\cH_{\ell-1}(t)\s^{-(1-\th_0)}\bigl(1+\|\na\rx^{\ell-1}\|^2_{W^{1,p}}\bigr),
\end{split}
\end{equation*}
and
$$\longformule{|\int_{\R^2} (\rho \d_t \v_{\ell2}
+\rho v\cdot\nabla \v_{\ell2}
 -\Delta \v_{\ell2}) | g_{\ell} \,\dx|
   \lesssim
 \|\sigma^{\frac{1-s_\ell}{2}}
   (\d_t \v_{\ell2}, \Delta \v_{\ell2})\|_{L^2}^2}
 {{}  +C\|v\|_{L^\infty}^2\|\sigma^{\frac{1-s_\ell}{2}}
   \nabla \v_{\ell2}\|_{L^2}^2
   +\cH_{\ell-1}(t)\s^{-(1-s_\ell)}\bigl(1+\|\na\rx^{\ell-1}\|^2_{W^{1,p}}\bigr).
}$$
Inserting the above inequalities into   \eqref{estimate:vell}   and then applying Gronwall's inequality to the resulting inequality, we achieve
 \begin{equation*}
 \begin{split}
 \|&\v_{\ell2}\|_{L^\infty_t(L^2)}^2
+\|\nabla \v_{\ell2}\|_{L^2_t(L^2)}^2
 \leq  C\exp\left(\w{t}\right)\Bigl(
 \|\sigma^{\frac{1-s_\ell}{2}}
   (\d_t \v_{\ell2}, \Delta \v_{\ell2})\|_{L^2_t(L^2)}^2\\
  &\quad +\|v\|_{L^2_t(L^\infty)}^2\|\sigma^{\frac{1-s_\ell}{2}}
   \nabla \v_{\ell2}\|_{L^\infty_t(L^2)}^2
  +\cH_{\ell-1}(t)\int_0^t\s^{-(1-\th_0)}\bigl(1+\|\na\rx^{\ell-1}\|^2_{W^{1,p}}\bigr)\,dt'\Bigr),
 \end{split}
 \end{equation*}
 which together with Corollary \ref{S1col1} and \eqref{S6eq5} ensures that
 \begin{equation}\label{S6eq6}
 \begin{split}
&\|\v_{\ell2}\|_{L^\infty_t(L^2)}^2
+\|\nabla \v_{\ell2}\|_{L^2_t(L^2)}^2
 \leq \wt{\frak{G}}_{\ell,X}(t).
 \end{split}
 \end{equation}

 By summing up \eqref{vell1:L2}, \eqref{vell1:H1}, \eqref{S6eq5} and \eqref{S6eq6},
 we conclude the proof of \eqref{S6eq3}.
\end{proof}

  \begin{col}\label{lem:vellH1}
{\sl Under the assumptions of Proposition \ref{S6prop1}, one has
  \beq\label{vell:L4}\begin{split}
  \|\sigma^{\frac {1-s_\ell}p}\v^\ell\|_{L^\infty_t(L^{\frac{2p}{p-2}})}^2
 & + \|\sigma^{\frac{1-s_{\ell}}{2}}
  ( D_t \v^\ell,
  \nabla\d_X^\ell\nabla v,
  D_t\d_X^{\ell-1}\Delta X)\|_{L^2_t(L^2)}^2
  \\
&  +
\|\sigma^{\frac{1- s_{\ell} }{p}}
(\nabla \v^\ell, \d_X^\ell\nabla v,
D_t\d_X^{\ell-1}\nabla X)\|_{L^2_t(L^{\frac{2p}{p-2}})}^2
   \leq
 \wt{\frak{G}}_{\ell,X}(t).
\end{split}  \eeq}
\end{col}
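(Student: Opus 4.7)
The strategy is to reduce every quantity on the left-hand side, via 2-D Gagliardo--Nirenberg interpolation or commutator identities, to quantities that are either directly controlled by $A_{\ell 1}(t)$---hence by $\sqrt{\wt{\frak{G}}_{\ell,X}(t)}$ thanks to Proposition \ref{S6prop1}---or controlled by the inductive bound $R_{\ell-1}(t)+\frak{A}_{\ell-1}(t)\leq \cH_{\ell-1}(t)$ from Lemma \ref{lem:R,V}, together with Corollaries \ref{S1col1} and \ref{S2col3}.

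The pure $\v^\ell$ pieces are handled by direct interpolation. Writing $\sigma^{(1-s_\ell)/p}=\sigma^{(1-s_\ell)/2\cdot 2/p}$ and using $\|a\|_{L^{2p/(p-2)}}\lesssim \|a\|_{L^2}^{(p-2)/p}\|\nabla a\|_{L^2}^{2/p}$ gives
$$\|\sigma^{(1-s_\ell)/p}\v^\ell\|_{L^\infty_t(L^{\frac{2p}{p-2}})}\lesssim \|\v^\ell\|_{L^\infty_t(L^2)}^{(p-2)/p}\|\sigma^{(1-s_\ell)/2}\nabla\v^\ell\|_{L^\infty_t(L^2)}^{2/p}\leq A_{\ell 1}(t);$$
applying the same interpolation to $\nabla\v^\ell$ and then H\"older in time with exponents $p/(p-2)$ and $p/2$ yields
$$\|\sigma^{(1-s_\ell)/p}\nabla\v^\ell\|_{L^2_t(L^{\frac{2p}{p-2}})}^2\lesssim \|\nabla\v^\ell\|_{L^2_t(L^2)}^{2(p-2)/p}\|\sigma^{(1-s_\ell)/2}\nabla^2\v^\ell\|_{L^2_t(L^2)}^{4/p}\leq A_{\ell 1}(t)^2.$$
For $D_t\v^\ell$, splitting $D_t=\p_t+v\cdot\nabla$ and using $s_\ell\leq s_0$ (so $\sigma^{(s_0-s_\ell)/2}\leq 1$) reduces the estimate to $\|\sigma^{(1-s_\ell)/2}\p_t\v^\ell\|_{L^2_t(L^2)}+\|\sigma^{(1-s_0)/2}v\|_{L^\infty_t(L^\infty)}\|\nabla\v^\ell\|_{L^2_t(L^2)}$, both of which are bounded by $A_{\ell 1}(t)$ and Corollary \ref{S1col1}.

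The striated pieces are treated via commutator expansions. Iterating $[\d_X,\nabla]f=-\nabla X^\alpha\p_\alpha f$ gives
$$\d_X^\ell\nabla v=\nabla\v^\ell-\sum_{j=0}^{\ell-1}\d_X^j\bigl(\nabla X^\alpha\p_\alpha\v^{\ell-1-j}\bigr),$$
and differentiating once more produces $\nabla\d_X^\ell\nabla v=\nabla^2\v^\ell+\cdots$. Similarly, $[\d_X,D_t]=0$ combined with $D_t\nabla X=\nabla\v^1-\nabla v^\alpha\p_\alpha X$ and $D_t\Delta X=\Delta\v^1-\Delta v^\alpha\p_\alpha X-2\nabla v^\alpha\cdot\nabla\p_\alpha X$ yields $D_t\d_X^{\ell-1}\nabla X=\nabla\v^\ell+\cdots$ and $D_t\d_X^{\ell-1}\Delta X=\Delta\v^\ell+\cdots$. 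The leading $\v^\ell$-type terms are handled by the previous step, and each commutator remainder is a sum of products in which one factor involves $\d_X^m\nabla^j X$ ($m\leq\ell-1$) and the other involves $\d_X^i\nabla^j v$ with $i\leq\ell-1$. Terms in which the $X$-factor has striated order at most $\ell-2$ are bounded by $\cH_{\ell-1}(t)$ via Lemma \ref{lem:R,V}; the genuinely top-order factors $\d_X^{\ell-1}\nabla X$ and $\nabla\d_X^{\ell-1}\nabla X$ (which, up to commutators treatable by Lemma \ref{lem:R,V}, are $\nabla\rx^{\ell-1}$ and $\nabla^2\rx^{\ell-1}$) are paired via H\"older with a velocity factor in $L^{2p/(p-2)}$ whose weighted $L^\infty_t$-norm $\|\sigma^{1/2+1/p-s_0/2}\nabla v\|_{L^\infty_t(L^{2p/(p-2)})}$ is furnished by Corollary \ref{S1col1}; after matching the powers of $\sigma$ (using $s_0-s_\ell=\ell\theta_0$ and $p>2$), the resulting contribution is bounded by $\cH_{\ell-1}(t)\int_0^t\sigma^{-(1-\theta_0/2)}\|\nabla\rx^{\ell-1}(t')\|_{W^{1,p}}^2\,dt'$, which is exactly the reservoir built into $\wt{\frak{G}}_{\ell,X}(t)$.

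The main obstacle will be the careful bookkeeping of these commutator expansions---especially for $\nabla\d_X^\ell\nabla v$ and $D_t\d_X^{\ell-1}\Delta X$, where the extra derivative multiplies the number of Leibniz terms and in particular produces the factor $\nabla^2\rx^{\ell-1}$. One must verify, on each of the many mixed products generated, that a weight decomposition compatible with the available a priori norms exists, and that the unique uncontrolled factor $\|\nabla\rx^{\ell-1}\|_{W^{1,p}}$ always appears paired with a velocity factor whose weighted time-$L^2$ norm produces exactly the $\sigma^{-(1-\theta_0/2)}$ tail absorbed by $\wt{\frak{G}}_{\ell,X}(t)$.
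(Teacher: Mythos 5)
Your proposal is correct and follows essentially the same route as the paper: Gagliardo--Nirenberg interpolation of the pure $\v^\ell$ terms against the norms of Proposition \ref{S6prop1}, then commutator expansions (using $[\d_X;\nabla]$, $[\d_X;D_t]=0$ and $D_t\rx^{\ell-1}=\v^\ell$) reducing $\nabla\d_X^\ell\nabla v$, $\d_X^\ell\nabla v$, $D_t\d_X^{\ell-1}\nabla X$ and $D_t\d_X^{\ell-1}\Delta X$ to $\nabla^2\v^\ell$, $\nabla\v^\ell$ plus remainders, with lower-order remainders absorbed by Lemma \ref{lem:R,V} and the top-order factors $\nabla\rx^{\ell-1}$, $\nabla^2\rx^{\ell-1}$ paired with the weighted velocity norms of Corollary \ref{S1col1} and Proposition \ref{S1prop1} to produce exactly the $\int_0^t\sigma^{-(1-\theta_0/2)}\|\nabla\rx^{\ell-1}\|_{W^{1,p}}^2\,dt'$ reservoir inside $\wt{\frak{G}}_{\ell,X}(t)$. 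The only caveat is that some top-order products (e.g. $\d_X^{\ell-1}\nabla X\otimes\nabla^2 v$) are paired in $L^\infty\times L^2$ rather than $L^p\times L^{2p/(p-2)}$, exactly as in the paper's estimates, but this is a bookkeeping detail your scheme already accommodates.
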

\begin{proof}
We  first deduce from  Proposition \ref{S6prop1} and the 2-D interpolation inequality \eqref{S2eq6} that
$$
\|\sigma^{\frac {1-s_\ell}p}\v^\ell\|_{L^\infty_t(L^{\frac{2p}{p-2}})}^2
\leq C\Bigl(\|\v^\ell\|_{L^\infty_t(L^2)}^{1-\frac 2p}
\|\sigma^{\frac{1-s_\ell}{2}}\nabla\v^\ell\|_{L^\infty_t(L^2)}^{\frac 2p}\Bigr)^2
\leq \wt{\frak{G}}_{\ell,X}(t).
$$
It follows  from Corollary \ref{S1col1} and Proposition \ref{S6prop1} that
\begin{align*}
 \|\sigma^{\frac{1-s_\ell}{2}}D_t \v^\ell\|_{L^2_t(L^2)}^2
 \leq
\|\sigma^{\frac{1-s_\ell}{2}}\d_t \v^\ell\|_{L^2_t(L^2)}^2
 +\|v\|_{L^2_t(L^\infty)}^2
 \|\sigma^{\frac{1-s_\ell}{2}}\nabla \v^\ell\|_{L^\infty_t(L^2)}^2\leq \wt{\frak{G}}_{\ell,X}(t).
\end{align*}
While it is easy to observe from \eqref{S3eq0} that
\begin{align*}
\nabla\d_X^\ell\nabla v
&=\nabla\sum_{i=0}^{\ell-1}\d_X^i[\d_X; \nabla]\v^{\ell-1-i}
+\nabla^2\v^\ell
=-\sum_{i=0}^{\ell-1}\nabla\d_X^i(\nabla X\cdot\nabla\v^{\ell-1-i})+\nabla^2 \v^\ell,
\end{align*}
from which, we infer
\begin{align*}
 \|\sigma^{\frac{1-s_\ell}{2}}
 (\nabla\d_X^{\ell}\nabla v&-\nabla^2 \v^\ell)\|_{L^2_t(L^2)}^2
 \lesssim \bigl(
 \|\sigma^{1-\frac{s_0}{2}}\nabla v\|_{L^\infty_t(L^{\frac{2p}{p-2}})} ^2+
 \|\sigma^{1-\frac{s_0}{2}}\nabla^2 v\|_{L^\infty_t(L^2)}^2\bigr)
\\
&\times \int^t_0 \sigma^{-(1-\theta_0)} \bigl(\|\nabla\d_X^{\ell-1}\nabla X\|_{L^p}^2+\|\d_X^{\ell-1}\nabla X\|_{L^\infty}^2\bigr)
\,\dt'
 \\
+\w{t}&\sum_{l\leq\ell-2, i+j\leq\ell-1}
\Bigl(\|\nabla\d_X^l\nabla X\|_{L^\infty_t(L^p)}^2
\|\sigma^{1-\frac{s_{\ell-1}}{2}}
 \d_X^i\nabla\v^j\|_{L^\infty_t(L^{\frac{2p}{p-2}})}^2
\\
&\qquad\qquad\qquad
+\|\d_X^l\nabla X\|_{L^\infty_t(L^\infty)}^2
\|\sigma^{1-\frac{s_{\ell-1}}{2}}
 \nabla\d_X^i\nabla\v^j\|_{L^\infty_t(L^2)}^2\Bigr).
\end{align*}
As a result, we deduce from   Lemmas \ref{lem:comm,X} and \ref{lem:R,V}, and Proposition \ref{S6prop1}  that
\begin{align}\label{vellL4:v''}
\|\sigma^{\frac{1-s_\ell}{2}}\nabla\d_X^{\ell}\nabla v\|_{L^2_t(L^2)}^2
\leq \wt{\frak{G}}_{\ell,X}(t).
\end{align}

Next due to $D_t\rx^{\ell-1}= \v^\ell,$ we have
\begin{align*}
D_t\d_X^{\ell-1}\Delta X
&=D_t\sum_{i=0}^{\ell-2}\d_X^i[\d_X;\Delta]\rx^{\ell-2-i}
+D_t\Delta\rx^{\ell-1}
\\
&=-\sum_{i=0}^{\ell-2}D_t\d_X^i \bigl(\Delta X\cdot\nabla\rx^{\ell-2-i}
+2\p_\al X\cdot\nabla\p_\al\rx^{\ell-2-i}\bigr)\\
&\qquad
+\Delta \v^\ell
-\bigl(\Delta v\cdot\nabla\rx^{\ell-1}+2\p_\al v\cdot\nabla\p_\al \rx^{\ell-1}\bigr),
\end{align*}
and it thus comes out
\begin{align*}
&\|\sigma^{\frac{1-s_\ell}{2}}
(D_t\d_X^{\ell-1}\Delta X-\Delta \v^\ell)\|_{L^2_t(L^2)} ^2
\\
&\ \lesssim \w{t}
\sum_{m+n+\kappa+l\leq\ell-2}\Bigl(
\|\sigma^{1-\frac{s_{\ell-1}}{2}} D_t\d_X^m \nabla^2 \rx^n \|_{L^\infty_t(L^2)}^2
\| \d_X^\kappa \nabla\rx^l\|_{L^\infty_t(L^\infty)}^2
\\
&\qquad\qquad\qquad\qquad
+
\|\d_X^m\nabla^2\rx^n\|_{L^\infty_t(L^p)}^2
\|\sigma^{1-\frac{s_{\ell-1}}{2}}
 D_t\d_X^\kappa\nabla\rx^l\|_{L^\infty_t(L^{\frac{2p}{p-2}})}^2\Bigr)
\\
&\qquad
+ \Bigl(\|\sigma^{1-\frac{ s_{\ell-1}}{2}}
\nabla^2 v\|_{L^\infty_t(L^2)} ^2
+\|\sigma^{1-\frac{ s_{\ell-1}}{2}}
 \nabla v\|_{L^\infty_t(L^{\frac{2p}{p-2}})}^2\Bigr)
\int^t_0\sigma^{-(1-\theta_0)}
  \|\na\rx^{\ell-1}\|_{W^{1,p}} ^2\,\dt',
\end{align*}
from which  and Lemma \ref{lem:R,V}, Proposition \ref{S6prop1}, we infer that
\begin{align*}
 \|\sigma^{\frac{1-s_\ell}{2}}D_t\d_X^{\ell-1}\Delta X\|_{L^2_t(L^2)} ^2
 \leq \wt{\frak{G}}_{\ell,X}(t).
\end{align*}

On the other hand, by virtue of \eqref{S6eq3}, we get from \eqref{S2eq6} that
\beq\label{vellL4:v'}\begin{split}
\|\sigma^{\frac {1-s_\ell}{p}}\nabla \v^\ell\|_{L^2_t(L^{\frac{2p}{p-2}})}^2
&\leq C
\Bigl(\|\nabla \v^\ell\|_{L^2_t(L^2)}^{1-\frac 2p}
\|\sigma^{\frac {1-s_\ell}{2}}\nabla^2 \v^\ell\|_{L^2_t(L^2)}^{\frac 2p}\Bigr)^2
\leq \wt{\frak{G}}_{\ell,X}(t).
\end{split}\eeq
Then, due to
\begin{align*}
\d_X^\ell\nabla v
&=\sum_{i=0}^{\ell-1}
\d_X^i[\d_X;\nabla]\v^{\ell-1-i}
+\nabla\v^\ell
=-\sum_{i=0}^{\ell-1}\d_X^i(\nabla X\cdot\nabla\v^{\ell-1-i})
+\nabla \v^\ell ,
\end{align*}
we have (noticing $p>2$)
\beno\begin{split}
\|\sigma^{\frac{1- s_{\ell}}{p}}
(\d_X^\ell\nabla v-\nabla \v^\ell)&\|_{L^2_t(L^{\frac{2p}{p-2}})}^2
\leq
 \int^t_0 \sigma^{-1+s_0-\frac 2p s_\ell}\|\d_X^{\ell-1}\nabla X\|_{L^\infty}^2\,\dt'
\|\sigma^{\frac 12+\frac 1p- \frac{ s_0}{2}}\nabla v\|_{L^\infty_t(L^{\frac{2p}{p-2}})}^2
\\
&\quad
+\w{t}\sum_{m\leq\ell-2, i+j\leq\ell-1}\|\d_X^m\nabla X\|_{L^\infty_t(L^\infty)}^2
\|\sigma^{\frac 12+\frac 1p-\frac{s_{\ell-1}}{2}}
 \d_X^i\nabla\v^j\|_{L^\infty_t(L^{\frac{2p}{p-2}})}^2.
\end{split}\eeno
This together with  Lemma  \ref{lem:R,V} and \eqref{vellL4:v'} ensures that \eqref{vellL4:v'} also holds for $ \d_X^\ell\nabla v $.

Finally observing from \eqref{eq:X} and \eqref{S3eq0} that
\begin{align*}
D_t\d_X^{\ell-1}\nabla X
&=D_t\sum_{i=0}^{\ell-2}\d_X^i[\d_X; \nabla]\rx^{\ell-2-i}
+D_t\nabla\rx^{\ell-1}
\\
&=-\sum_{i=0}^{\ell-2}D_t\d_X^i (\nabla X\cdot\nabla\rx^{\ell-2-i})
+\nabla \v^\ell-\nabla v\cdot\nabla\rx^{\ell-1},
\end{align*}
we infer from   Lemma \ref{lem:R,V} and \eqref{vellL4:v'}
\beno\begin{split}
\|\sigma^{\frac{1- s_{\ell}}{p}}
&(D_t\d_X^{\ell-1}\nabla X-\nabla \v^\ell)\|_{L^2_t(L^{\frac{2p}{p-2}})}^2\\
\leq &
 \int^t_0 \sigma^{-1+s_0-\frac 2p s_\ell}
 \|\nabla   \rx^{\ell-1}\|_{L^\infty}^2\,\dt'
\|\sigma^{\frac 12+\frac 1p- \frac{ s_0}{2}}\nabla v\|_{L^\infty_t(L^{\frac{2p}{p-2}})}^2
\\
&
+\w{t}\sum_{m+n+\kappa+l \leq\ell-2 }
\|\sigma^{\frac 12+\frac 1p-\frac{s_{\ell-1}}{2}}
  \d_X^m D_t\nabla \rx^n\|_{L^\infty_t(L^{\frac{2p}{p-2}})}^2
\|   \d_X^\kappa\nabla\rx^l\|_{L^\infty_t(L^\infty)}^2\leq \wt{\frak{G}}_{\ell,X}(t).
\end{split}\eeno
This completes the proof of \eqref{vell:L4}.
\end{proof}

\setcounter{equation}{0}
\section{Energy estimate of
 $D_t \v^\ell$ }\label{sec:Dvell}

The goal of this section is to derive the time-weighted $H^1$ energy estimate for $D_t\v^\ell.$ To this end, for $F_{\ell D}$ given by \eqref{S1eq18},
we denote
  \begin{equation}\label{FellD}
  \begin{split}
  \widetilde F_{\ell D}\eqdefa &F_{\ell D}
  +2\p_\al v\cdot\nabla\p_\al \v^\ell+\Delta v\cdot\nabla \v^\ell
  -\nabla v\cdot\nabla{\rm \pi}^\ell
  +\rr^\ell D_t^2 v
  \\
  &
  +2D_t(\d_X^{\ell-1}\p_\al X\cdot\nabla\p_\al v)
  +D_t(\d_X^{\ell-1}\Delta X\cdot\nabla v)
  -D_t(\d_X^{\ell-1}\nabla X\cdot\nabla\pi).
  \end{split}
  \end{equation}
Then by \eqref{brho} and \eqref{S1eq18},  we find
  \begin{align*}
  |\widetilde F_{\ell D}|
  &\lesssim
  \sum_{i+j\leq \ell-1, l\leq \ell-2}\Bigl(|D_t^2 \v^i|
  +|\d_X^l D_t\nabla X| \bigl(|\d_X^i \nabla^2 \v^j|+|\d_X^i \nabla{\rm \pi}^j|\bigr)
  \\
&+|\d_X^l\nabla X| \bigl(|\d_X^i D_t \nabla^2 \v^j|+|\d_X^i D_t\nabla{\rm \pi}^j|\bigr)
+|\d_X^l D_t \Delta X| |\d_X^i\nabla \v^j|
  +|\d_X^l \Delta X| |\d_X^i D_t \nabla \v^j|\Bigr),
  \end{align*}
which together with \eqref{Vell}, \eqref{cVell} and Lemma \ref{lem:R,V} implies
 \begin{align*}
 \|\widetilde F_{\ell D}\|_{L^2} ^2
 &\leq
\cH_{\ell-1}^2(t)
 \sum_{i+j\leq\ell-1}
\Bigl(\|(D_t^2 \v^i, \d_X^i D_t\nabla^2 \v^j, \d_X^i D_t\nabla{\rm \pi}^j)\|_{L^2}^2
 +\|\d_X^i D_t\nabla \v^j\|_{L^{\frac{2p}{p-2}}}^2\Bigr)
 \\
 &\quad+
 C  \sum_{i+j\leq \ell-1, l\leq \ell-2}
 \|(\d_X^l D_t\nabla X, \d_X^i\nabla \v^j)\|_{L^\infty} ^2
\|(\d_X^i\nabla^2 \v^j, \d_X^i\nabla{\rm \pi}^j, \d_X^l D_t\Delta X)\|_{L^2}^2
\\
&\leq
\cH_{\ell-1}^2(t)\left(\sigma(t)^{-(3-s_{\ell-1})}\frak{B}_{\ell-1}(t)
+\sigma(t)^{-(4-2s_{\ell-1})}\right).
  \end{align*}
As a result, we deduce from \eqref{FellD} and Lemma \ref{lem:R,V} that
\beno
\begin{split}
  \|\sigma^{\frac{3-s_\ell}{2}}F_{\ell D}\|_{L^2_t(L^2)}^2
&\leq \cH_{\ell-1}^2(t)+
C\|\sigma \nabla v\|_{L^\infty_t(L^\infty)}^2
\|\sigma^{\frac{1-s_\ell}{2}}
(\nabla^2 \v^\ell, \nabla{\rm \pi}^\ell, D_t\d_X^{\ell-1}\Delta X)\|_{L^2_t(L^2)}^2
\\
&\quad
+C\|\sigma^{\frac32-\frac 1p-\frac{s_\ell}{2}}
   (\nabla^2 v, \nabla\pi)\|_{L^\infty_t(L^p)}^2
\|\sigma^{ \frac 1p }
 (\nabla \v^\ell, D_t\d_X^{\ell-1}\nabla X)\|_{L^2_t(L^{\frac{2p}{p-2}})}^2
 \\
 &+ C\int_0^t\bigl(\|(\rr^\ell,\d_X^{\ell-1}\nabla X)\|_{ L^\infty}^2
  +\| \d_X^{\ell-1}\Delta X\|_{L^p}^2\bigr)
  \\
  &\qquad\times
  \Bigl(
   \|\sigma^{\frac{3-s_\ell}{2}}(D^2_t v, D_t\nabla^2 v, D_t\nabla\pi)\|_{L^2}^2
  +\|\sigma^{\frac{3-s_\ell}{2}}D_t\nabla v\|_{L^{\frac{2p}{p-2}}} ^2 \Bigr) \,\dt'
 .
 \end{split}
\eeno
By virtue of \eqref{brho}, Corollary \ref{S2col3}, Proposition \ref{S6prop1} and Corollary   \ref{lem:vellH1}, we  arrive at
  \begin{lem}\label{lem:FellD}
  {\sl For $\ell=2,\cdots,k$, there holds
\begin{equation*}
\begin{split}
 \|\sigma^{\frac{3-s_\ell}{2}}F_{\ell D}\|_{L^2_t(L^2)}^2
\leq \cH_{\ell-1}^2  (t)\Bigl(\cA_\ell+
 \int^t_0
\bigl(\frak{B}_{0}(t')
+\sigma(t')^{-(1-\frac{\theta_0}2)}\bigr)
\|\na\rx^{\ell-1}(t')\|_{W^{1,p}}^2 \,\dt'\Bigr),\end{split}
\end{equation*} for $\frak{B}_{0}(t)$ given by  \eqref{S1eq25}. }\end{lem}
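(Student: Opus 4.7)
The plan is to split $F_{\ell D}$ into the striated tail $\widetilde F_{\ell D}$ introduced in \eqref{FellD}, which involves only factors of striated depth at most $\ell-1$, and the top-order remainder $F_{\ell D}-\widetilde F_{\ell D}$, which pairs the genuinely $\ell$-th order objects $\v^\ell$, $\nabla{\rm \pi}^\ell$, $\rr^\ell$, $\d_X^{\ell-1}\nabla X$, $\d_X^{\ell-1}\Delta X$ (and their material derivatives) with zero-order velocity factors. For the tail, I will distribute H\"older so that the $X$-type factors sit in $L^\infty$ or $L^p$ and the velocity factor in $L^2$ or $L^{2p/(p-2)}$, using the inductive control of $\frak{A}_{\ell-1}$ and $\frak{B}_{\ell-1}$ from Lemma \ref{lem:R,V}. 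This is exactly the pointwise bound
\begin{equation*}
\|\widetilde F_{\ell D}(t)\|_{L^2}^2\leq \cH_{\ell-1}^2(t)\bigl(\sigma(t)^{-(3-s_{\ell-1})}\frak{B}_{\ell-1}(t)+\sigma(t)^{-(4-2s_{\ell-1})}\bigr)
\end{equation*}
displayed in the excerpt. Multiplying by $\sigma^{3-s_\ell}=\sigma^{3-s_{\ell-1}+\theta_0}$ converts the right-hand side into $\sigma^{\theta_0}\frak{B}_{\ell-1}+\sigma^{-(1-s_{\ell-1}-\theta_0)}$, both of which are $L^1_t$-integrable (the first through $\|\frak{B}_{\ell-1}\|_{L^1_t}\leq\cH_{\ell-1}^2$ supplied by Lemma \ref{lem:R,V}, the second because $1-s_{\ell-1}-\theta_0<1$), and therefore yield a clean $\cH_{\ell-1}^2(t)$ contribution to $\|\sigma^{(3-s_\ell)/2}\widetilde F_{\ell D}\|_{L^2_t(L^2)}^2$.

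For the top-order remainder, the three terms with $v$-derivatives paired with $\v^\ell$ or $\nabla{\rm \pi}^\ell$ will be controlled by placing the $v$-factor in $L^\infty_t(L^\infty)$ or $L^\infty_t(L^p)$ via Corollary \ref{S2col3}, and the $\v^\ell$-factor in the time-weighted norms $\|\sigma^{(1-s_\ell)/2}(\nabla^2\v^\ell,\nabla{\rm \pi}^\ell)\|_{L^2_t(L^2)}$ and $\|\sigma^{(1-s_\ell)/p}\nabla\v^\ell\|_{L^2_t(L^{2p/(p-2)})}$ supplied by Proposition \ref{S6prop1} and Corollary \ref{lem:vellH1}. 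Both of these are bounded by $\wt{\frak{G}}_{\ell,X}(t)$, whose definition \eqref{S6eq3} produces exactly the singular time integral $\int_0^t\sigma^{-(1-\theta_0/2)}\|\na\rx^{\ell-1}\|_{W^{1,p}}^2\,dt'$ on the right-hand side of the claim. The three material-derivative pieces $D_t(\d_X^{\ell-1}\p_\al X\cdot\nabla\p_\al v)$, $D_t(\d_X^{\ell-1}\Delta X\cdot\nabla v)$, $D_t(\d_X^{\ell-1}\nabla X\cdot\nabla\pi)$ will be expanded via Leibniz: the parts where $D_t$ falls on $\d_X^{\ell-1}\nabla X$ or $\d_X^{\ell-1}\Delta X$ are again absorbed by Corollary \ref{lem:vellH1} (yielding the same $\wt{\frak{G}}_{\ell,X}$-controlled integral), while the parts where $D_t$ falls on $\nabla v$, $\nabla^2 v$ or $\nabla\pi$ are handled by extracting $\|\d_X^{\ell-1}\nabla X\|_{L^\infty}^2+\|\d_X^{\ell-1}\Delta X\|_{L^p}^2$ pointwise in time and using the commutator estimate \eqref{comm:X} of Lemma \ref{lem:comm,X} to replace this quantity by $\|\na\rx^{\ell-1}\|_{W^{1,p}}^2$ modulo a $\cH_{\ell-1}^2$-controlled remainder; the surviving $D_t^2v$, $D_t\nabla^2 v$, $D_t\nabla\pi$, $D_t\nabla v$ factors, weighted by $\sigma^{(3-s_0)/2}$, are precisely the constituents of $\frak{B}_0(t)$ from \eqref{S1eq25}. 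The leftover term $\rr^\ell D_t^2v$ uses \eqref{brho} to put $\rr^\ell$ in $L^\infty_t(L^\infty)$ (absorbable into $\cA_\ell$) and $\frak{B}_0$ to dominate $\|\sigma^{(3-s_0)/2}D_t^2v\|_{L^2}^2$.

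The main obstacle will be the bookkeeping of $\sigma$-weights across the two subclasses, so that, after multiplication by $\sigma^{(3-s_\ell)/2}$ and integration in time, the singular factor $\sigma^{-(1-s_{\ell-1})}$ generated by the inductive $H^1$-estimate on $\v^\ell$ is upgraded to the $L^1_t$-integrable singular weight $\sigma^{-(1-\theta_0/2)}$ appearing in the statement. This upgrade hinges entirely on the strictly positive gap $\theta_0=s_{\ell-1}-s_\ell$, which is precisely the reason for introducing the decreasing sequence $s_\ell=s_0-\theta_0\ell$ in \eqref{initialell}; any attempt to work with a single Besov index would break this conversion. Summing the tail and remainder contributions then delivers the stated bound, closing the $\ell$-th inductive step.
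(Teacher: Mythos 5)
Your proposal is correct and follows essentially the same route as the paper: the same splitting of $F_{\ell D}$ into the lower-order tail $\widetilde F_{\ell D}$ (estimated pointwise via $\frak{A}_{\ell-1}$, $\frak{B}_{\ell-1}$ and Lemma \ref{lem:R,V}, with exactly the weight conversion $\sigma^{3-s_\ell}=\sigma^{\theta_0}\sigma^{3-s_{\ell-1}}$ you describe) and the top-order remainder handled by pairing $\v^\ell$, $\nabla{\rm \pi}^\ell$, $D_t\d_X^{\ell-1}\nabla X$, $D_t\d_X^{\ell-1}\Delta X$ with the velocity bounds of Corollary \ref{S2col3} and the $\wt{\frak{G}}_{\ell,X}$-controlled norms of Proposition \ref{S6prop1} and Corollary \ref{lem:vellH1}, and the $D_t$-on-$v$ pieces together with $\rr^\ell D_t^2 v$ by $\frak{B}_0$ times $\|\na\rx^{\ell-1}\|_{W^{1,p}}^2$. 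The only difference is cosmetic: you make explicit the commutator step \eqref{comm:X} replacing $\|\d_X^{\ell-1}\nabla X\|_{L^\infty}$ and $\|\d_X^{\ell-1}\Delta X\|_{L^p}$ by $\|\na\rx^{\ell-1}\|_{W^{1,p}}$, which the paper leaves implicit in its final step.
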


Let the operator $\cE_X$ be given by \eqref{S1eq6a}. Then
in view of  \eqref{divDv1}, we have
\beq\label{tildeDtv}
\frak{a}
\eqdefa D_t v\cdot\nabla X+\cE_X(v\cdot\nabla v) \andf \div D_t \v^1
=\div \frak{a}.
\eeq
Inductively, let us assume that
 $\div D_t \v^{\ell-1}=\div \frak{a}_{\ell-1}
 \hbox{ for }\ell\geq 2. $
We then deduce that
\begin{align*}
\div D_t \v^\ell
=\div(\d_X D_t \v^{\ell-1})
&=\div\bigl(D_t \v^{\ell-1}\cdot\nabla X+X\div D_t \v^{\ell-1}\bigr)\\
&=\div\bigl(
D_t \v^{\ell-1}\cdot\nabla X
+\cE_X \frak{a}_{\ell-1}\bigr) .
\end{align*}
This gives
\beq \label{S7eq1}
\fa_\ell\eqdefa D_t \v^{\ell-1}\cdot\nabla X
+\cE_X \frak{a}_{\ell-1} \andf \dive D_t \v^\ell=\dive\fa_\ell. \eeq
We thus get by induction that  for $\ell\geq 2$,
\begin{equation}\label{tildeDvell}
\begin{split}
\fa_{\ell}
&=\sum_{i=0}^{\ell-1} \cE_X^i (D_t \v^{\ell-1-i}\cdot\nabla X)
+\cE_X^{\ell}(v\cdot\nabla v)
\\
&=\sum_{i=0}^{\ell-1} \sum_{i_1+\cdots+i_r=i}
\d_X^{i_1}(D_t \v^{\ell-1-i}\cdot\nabla X)
\cdot\d_X^{i_2}(-\nabla X)^{i_3}
\cdots\d_X^{i_{r-1}} (-\nabla X)^{i_r}
\\
&\quad
+\sum_{j_1+\cdots+j_l=\ell, j_1\neq \ell}
\d_X^{j_1}(v\cdot\nabla v)
\cdot\d_X^{j_2}(-\nabla X)^{j_3}
\cdots\d_X^{j_{l-1}}(-\nabla X)^{j_l}
+\d_X^\ell (v\cdot\nabla v) .
\end{split}
\end{equation}

The main result concerning the estimate of $\fa_\ell$ is as follows:

\begin{lem}\label{lem:Dvell}
{\sl For $\ell=2,\cdots,k$,  let $\fa_\ell$ be given by \eqref{S7eq1}. Then  there holds
\begin{equation}\label{S9eq1}
\begin{split}
\|\sigma^{\frac{1-s_{\ell}}{2}}
 \fa_{\ell}\|_{L^2_t(L^2)}^2
+&\|\sigma^{1-\frac{s_{\ell}}{2}}
 \nabla \fa_{\ell}\|_{L^2_t(L^2)}^2
+\|\sigma^{\frac{3-s_{\ell}}{2}}
\na\dive\fa_\ell\|_{L^2_t(L^2)}^2
\leq \frak{G}_{\ell,X}(t).\end{split}
\end{equation}
Here and in all that follows, for $\frak{B}_\ell(t)$ determined by \eqref{eq:AB}, we always designate
\beq\label{S9eq8}
\frak{G}_{\ell,X}(t)\eqdefa \cH_{\ell-1}^2 (t)
\Bigl(\cA_\ell+ \int^t_0
\bigl(\frak{B}_{\ell-1}(t')
+\sigma(t')^{- (1-\f{\theta_0}2 )}
\bigr)
\|\na\rx^{\ell-1}(t')\|_{W^{1,p}}^2\,\dt'\Bigr).
\eeq}
\end{lem}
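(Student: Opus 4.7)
The plan is to mimic the structure of Lemma~\ref{S4lem1w} (the $\ell=1$ case) but now organized inductively, decomposing $\fa_\ell$ via the explicit expansion \eqref{tildeDvell} into a single \emph{top-order} piece $D_t\v^{\ell-1}\cdot\nabla X$ together with a remainder $\frak{r}_\ell$ in which every striated index $i_1,\dots,i_r,j_1,\dots,j_l$ is strictly less than $\ell-1$ (with the lone exception of the last summand $\d_X^\ell(v\cdot\nabla v)$, which will be rewritten using \eqref{S1eq8} below). For the remainder $\frak{r}_\ell$, every factor is bounded uniformly in $L^\infty_t$ by Lemma~\ref{lem:R,V} through $R_{\ell-1}$ and $\frak{A}_{\ell-1}$, and the Hölder pairings $L^2\cdot L^\infty$ or $L^p\cdot L^{2p/(p-2)}$ distribute the derivatives; all resulting bounds are dominated by $\cH_{\ell-1}^2(t)\leq \frak{G}_{\ell,X}(t)$. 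For the top piece, write $\|\sigma^{\frac{1-s_\ell}2}D_t\v^{\ell-1}\cdot\nabla X\|_{L^2_t(L^2)}^2\leq \int_0^t\sigma^{-\theta_0}\|\sigma^{\frac{1-s_{\ell-1}}2}D_t\v^{\ell-1}\|_{L^2}^2\|\nabla X\|_{L^\infty}^2\,\dt'$ and absorb $\|\nabla X\|_{L^\infty}\leq C\|\nabla X\|_{W^{1,p}}\leq \cH_{\ell-1}$; the $\d_X^\ell(v\cdot\nabla v)=\v^\ell\cdot\nabla v+v\cdot\nabla\v^\ell+[\d_X^\ell,v\cdot\nabla]v$ term is handled by Corollary~\ref{lem:vellH1} paired with Corollary~\ref{S1col1} for the $v$ factor.

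For the gradient estimate $\|\sigma^{1-\frac{s_\ell}2}\nabla\fa_\ell\|_{L^2_t(L^2)}^2$, apply $\nabla$ termwise; the top-order contributions are $\nabla D_t\v^{\ell-1}\cdot\nabla X$ and $D_t\v^{\ell-1}\cdot\nabla^2 X$, bounded respectively by
\[\int_0^t\sigma^{-(1-\theta_0)}\|\sigma^{1-\frac{s_{\ell-1}}2}\nabla D_t\v^{\ell-1}\|_{L^2}^2\|\nabla X\|_{L^\infty}^2\,\dt'\leq\cH_{\ell-1}^2(t)\int_0^t\frak{B}_{\ell-1}(t')\,\dt',\]
and by $\int_0^t\sigma^{-(1-\theta_0/2)}\|\sigma^{1-s_{\ell-1}/2}D_t\v^{\ell-1}\|_{L^{2p/(p-2)}}^2\|\nabla\rx^{\ell-1}\|_{W^{1,p}}^2\,\dt'$, where the first factor is controlled by Corollary~\ref{lem:vellH1} (applied at level $\ell-1$ via the inductive hypothesis). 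All mixed subtop terms, where $\nabla$ hits a lower-order $\d_X^{i_j}\nabla X$ factor, are bounded by pairing $R_{\ell-1}^2$-type quantities from Lemma~\ref{lem:R,V} against $\frak{A}_{\ell-1}$ or $\frak{B}_{\ell-1}$, all absorbed into $\frak{G}_{\ell,X}(t)$.

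For the divergence estimate, the key simplification is that $\dive\fa_\ell=\dive D_t\v^\ell$ by \eqref{S7eq1}, and iterating $\dive(\d_X f)=\p_\al X\cdot\nabla f^\al+\d_X\dive f$ (valid since $\dive X=0$) together with $\dive D_tv=\p_\al v\cdot\nabla v^\al$ from \eqref{S4eq7p} yields
\[\dive D_t\v^\ell=\sum_{i=0}^{\ell-1}\d_X^i(\p_\al X\cdot\nabla(D_t\v^{\ell-1-i})^\al)+\d_X^\ell(\p_\al v\cdot\nabla v^\al).\]
Applying $\nabla$, the only genuinely top-order contributions are $\nabla^2 X\otimes\nabla D_t\v^{\ell-1}$ and $\nabla X\otimes\nabla^2 D_t\v^{\ell-1}$; both are controlled by $\int_0^t\sigma^{-(1-\theta_0)}\|\sigma^{\frac{3-s_{\ell-1}}2}\nabla^2D_t\v^{\ell-1}\|_{L^2}^2\|\nabla\rx^{\ell-1}\|_{W^{1,p}}^2\,\dt'$, which is exactly $\cH_{\ell-1}^2(t)\int_0^t\frak{B}_{\ell-1}(t')\|\nabla\rx^{\ell-1}\|_{W^{1,p}}^2\,\dt'$. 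The remaining $\d_X^\ell(\nabla v\otimes\nabla v)$ term produces at worst $\nabla\v^\ell\otimes\nabla^2 v$ and $\v^\ell\otimes\nabla^3 v$-type quantities, which are handled by Corollary~\ref{lem:vellH1} together with Corollary~\ref{S2col3}.

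The main obstacle will be the combinatorial bookkeeping: identifying, for each monomial appearing in the Leibniz expansion of $\nabla\fa_\ell$ and $\nabla\dive\fa_\ell$, exactly which single factor carries the ``top-order'' derivative (forcing use of $\frak{B}_{\ell-1}$ together with the loss factor $\|\nabla\rx^{\ell-1}\|_{W^{1,p}}$ that accumulates into $\frak{G}_{\ell,X}$) and verifying that all other factors admit uniform $L^\infty_t$ bounds via Lemma~\ref{lem:R,V}. Once this classification is done, each piece reduces to a Hölder estimate whose weights match by a direct check of the exponents $s_\ell=s_{\ell-1}-\theta_0$, and the proof is concluded by summing over all terms in \eqref{tildeDvell}.
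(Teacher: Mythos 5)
Your plan is, in substance, the paper's own proof: both rest on the explicit expansion \eqref{tildevell}--\eqref{tildeDvell} of $\fa_\ell$, on Lemma \ref{lem:R,V} (through $R_{\ell-1}$, $\frak{A}_{\ell-1}$, $\frak{B}_{\ell-1}$) for all inductively controlled factors, on Proposition \ref{S6prop1} and Corollary \ref{lem:vellH1} for the level-$\ell$ quantities $\v^\ell$, $\nabla\v^\ell$, $\d_X^\ell\nabla v$, $\nabla\d_X^\ell\nabla v$, on the identity $\div\fa_\ell=\sum_{i\le\ell-1}\d_X^i(\nabla D_t\v^{\ell-1-i}:\nabla X)+\d_X^\ell(\nabla v:\nabla v)$, and on accumulating the $\|\na\rx^{\ell-1}\|_{W^{1,p}}^2$-weighted time integrals into $\frak{G}_{\ell,X}$. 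The only organizational difference is which terms you call ``top order'': the paper subtracts $\v^\ell\cdot\nabla v+v\cdot\d_X^\ell\nabla v$ (the genuinely dangerous level-$\ell$ pieces) to form $\frak{z}_\ell$ and treats everything else, including $D_t\v^{\ell-1}\cdot\nabla X$, as inductively controlled remainder, whereas you single out $D_t\v^{\ell-1}\cdot\nabla X$, which is in fact the harmless piece (bounded directly by $\|\nabla X\|_{L^\infty_t(L^\infty)}\|\s^{\f{1-s_{\ell-1}}2}D_t\v^{\ell-1}\|_{L^2_t(L^2)}$ without your superfluous $\s^{-\theta_0}$).

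Two statements in your write-up need correcting before the argument closes. First, it is not true that in the remainder ``every striated index is strictly less than $\ell-1$'' and hence that it is ``dominated by $\cH_{\ell-1}^2(t)$'': the expansion contains factors such as $D_tv\cdot\d_X^{\ell-1}\nabla X$, $\d_X^{\ell-1}\nabla X$ and, after applying $\nabla$, $\nabla\d_X^{\ell-1}\nabla X$, which are \emph{not} controlled by the inductive hypothesis alone; they must be converted to $\nabla\rx^{\ell-1}$ and $\nabla^2\rx^{\ell-1}$ via the commutator estimates \eqref{comm:X} and then carried as $\int_0^t(\frak{B}_{\ell-1}+\s^{-(1-\f{\theta_0}2)})\|\na\rx^{\ell-1}\|_{W^{1,p}}^2\,\dt'$ --- this is precisely why the bound in \eqref{S9eq1} is $\frak{G}_{\ell,X}(t)$ rather than $\cH_{\ell-1}^2(t)\cA_\ell$; your closing paragraph acknowledges this, but it contradicts your opening classification. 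Second, the claim that $\d_X^\ell(\nabla v:\nabla v)$ produces ``$\v^\ell\otimes\nabla^3 v$-type'' terms is a miscount: $\d_X^\ell$ is a striated derivative, so no third spatial derivative of $v$ ever appears (fortunately, since $\nabla^3 v$ is controlled nowhere in the paper); the actual worst contributions of $\nabla\d_X^\ell(\nabla v:\nabla v)$ are $\nabla v\otimes\nabla\d_X^\ell\nabla v$ and $\nabla^2 v\otimes\d_X^\ell\nabla v$, which are exactly the quantities $\|\s^{\f{1-s_\ell}2}\nabla\d_X^\ell\nabla v\|_{L^2_t(L^2)}$ and $\|\s^{\f{1-s_\ell}p}\d_X^\ell\nabla v\|_{L^2_t(L^{\f{2p}{p-2}})}$ furnished by Corollary \ref{lem:vellH1}, paired with Corollaries \ref{S1col1} and \ref{S2col3}, as in the paper.
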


\begin{proof} Let
\beq\label{zell}
\frak{z}_\ell\eqdefa \fa_{\ell}-\v^\ell\cdot\nabla v-v\cdot\d_X^\ell\nabla v.
\eeq
Then we deduce from Lemma \ref{lem:R,V} and \eqref{tildeDvell} that
\beno\begin{split}
\|\frak{z}_{\ell}(t)\|_{L^2}
& \leq
  \bigl(\cH_{\ell-1} (t)+\|\d_X^{\ell-1}\nabla X\|_{L^\infty}\bigr)
   \sigma(t)^{-\left(1-\frac{s_{\ell-1}}{2}\right)}
  \\
&\quad\times \sum_{i +j\leq \ell-1}
 \bigl(\| \sigma^{1-\frac{s_{\ell-1}}{2}}D_t \v^i\|_{L^2}
 +\|\sigma^{\frac 12} \v^i\|_{L^\infty}
 \|\sigma^{\frac{1-s_{\ell-1}}{2}}\d_X^j\nabla v\|_{L^2}\bigr),
\end{split}\eeno
which together with Lemma \ref{lem:R,V} implies
\beq\label{zell:L2}\begin{split}
\|\sigma^{ \frac{1-s_{\ell}}{2} }\frak{z}_{\ell}(t)\|_{L^2} ^2
 &\leq
 \cH_{\ell-1}^2(t)\sigma(t)^{-(1-\theta_0)}
   \bigl(1+\|\na\rx^{\ell-1}\|_{W^{1,p}}^2\bigr) .
\end{split}\eeq

To handle the estimate of $\na\frak{z}_\ell,$
by virtue of \eqref{tildeDvell} and \eqref{zell}, we write
\begin{equation*}
\begin{split}
\na\frak{z}_{\ell}
=\frak{Z}_{\ell,1}
+\frak{Z}_{\ell,2}
+\frak{Z}_{\ell,3}
+\sum_{\substack{i+j=\ell\\ i\neq\ell, j\neq\ell}}\nabla(\v^i\cdot\d_X^j \nabla v),
\end{split}
\end{equation*}
where
\begin{align*}
\frak{Z}_{\ell,1}
&\eqdefa\sum_{i=1}^{\ell-1} \sum_{i_1+\cdots+i_r=i-1}
\Bigl(\d_X^{i_1}\bigl( \nabla X\cdot\nabla\d_X^{i_2}(D_t \v^{\ell-1-i}\cdot\nabla X)\bigr)
\cdots\d_X^{i_{r-1}} (-\nabla X)^{i_r} ,
\\
&\qquad
+\d_X^{i_1}(D_t \v^{\ell-1-i}\cdot\nabla X)\cdots
 \d_X^{i_{m-1}}\bigl(\nabla X\cdot\nabla  \d_X^{i_m}(-\nabla X)  ^{i_{m+1}}\bigr)
\cdots\d_X^{i_{r-1}} (-\nabla X)^{i_r}
\\
&\qquad
+\d_X^{i_1} ( D_t \v^{\ell-1-i} \cdot \nabla X )
\cdot\d_X^{i_2}(-\nabla X) ^{i_3}
\cdots\d_X^{i_m}\nabla (-\nabla X) \cdots\d_X^{i_{r-1}} (-\nabla X)^{i_r} \Bigr),
\end{align*}
and
\begin{align*} \frak{Z}_{\ell,2}
&\eqdefa \sum_{j_1+\cdots+j_l= \ell-1}
\Bigl(\d_X^{j_1}\bigl( \nabla X\cdot\nabla\d_X^{j_2}
(v\cdot\nabla v)\bigr)
\cdots\d_X^{j_{l-1}} (-\nabla X)^{j_l}
\\
&\qquad
+\d_X^{j_1}(v\cdot\nabla v)\cdots
 \d_X^{j_{n-1}}\bigl(\nabla X\cdot\nabla \d_X^{j_n}(-\nabla X)  ^{j_{n+1}}\bigr)
\cdots\d_X^{j_{l-1}} (-\nabla X)^{j_l}
\\
&\qquad
+\d_X^{j_1} ( v\cdot\nabla v )
\cdot\d_X^{j_2}(-\nabla X) ^{j_3}
\cdots\d_X^{j_n}\nabla (-\nabla X) \cdots\d_X^{j_{l-1}} (-\nabla X)^{j_l}\Bigr) ,
\end{align*}
and
\begin{align*}
\frak{Z}_{\ell,3}
&\eqdefa\sum_{i=0}^{\ell-1}\sum_{i_1+\cdots+i_r=i}
\d_X^{i_1} \nabla(D_t \v^{\ell-1-i}\cdot\nabla X)
\cdot\d_X^{i_2}(-\nabla X)^{i_3} \cdots\d_X^{i_{r-1}} (-\nabla X)^{i_r}
\\
&\quad
+ \sum_{j_1+\cdots+j_l=\ell, j_1\neq \ell}
\d_X^{j_1} \nabla(v\cdot\nabla v)
\cdot\d_X^{j_2}(-\nabla X)^{j_3} \cdots\d_X^{j_{l-1}} (-\nabla X)^{j_l}   .
\end{align*}
This gives rise to
\beno\begin{split}
 \|\nabla \frak{z}_{\ell}(t)\|_{L^2}
 \leq &
\cH_{\ell-1} (t)(1+\|\na\rx^{\ell-1}\|_{W^{1,p}})
\sum_{ i+j,l\leq\ell-1}
\Bigl(\| \d_X^i\nabla D_t\v^j\|_{L^2}
+\| D_t\v^i\|_{L^{\frac{2p}{p-2}}}
\\
&
+\sigma(t)^{-\frac{3-s_{\ell-1}}{2}}
\|\sigma^{ 1-\frac{ s_{\ell-1}}{2}}\d_X^i\nabla \v^j\|_{L^\infty}
\bigl(\|\sigma^{ \frac{1-s_{\ell-1}}{2}}\d_X^l\nabla v\|_{  L^2}
+\|\sigma^{ \frac{1 }{2}} \v^l\|_{L^{\frac{2p}{p-2}}}\bigr)
\\
&
+\sigma(t)^{-\frac{3-s_{\ell-1}}{2}}\|\sigma^{ \frac{1}{2}}\v^l\|_{L^\infty}
\|\sigma^{ 1-\frac{ s_{\ell-1}}{2}} \d_X^i\nabla\d_X^j\nabla v\|_{L^2}\Bigr),
\end{split}\eeno
from which and Lemma \ref{lem:R,V},  we infer that
\beq\label{zell':L2}\begin{split}
 \|\sigma^{1-\frac{s_{\ell}}{2}}\nabla \frak{z}_{\ell}(t)\|_{L^2}^2
& \leq
 \cH_{\ell-1}^2 (t)(1+\|\na\rx^{\ell-1}\|_{W^{1,p}}^2)
 \bigl(\frak{B}_{\ell-1}(t)+\sigma(t)^{-(1-\theta_0)}\bigr).
 \end{split}\eeq

Taking into account of \eqref{zell}, we  deduce from \eqref{zell:L2} and \eqref{zell':L2}  that
\beno \begin{split}
 \|\sigma^{\frac{1-s_{\ell}}{2}}&
\fa_{\ell}\|_{L^2_t(L^2)}^2
 \leq
\|\sigma^{\frac{1 }{p}}\d_X^\ell\nabla v\|_{L^2_t(L^{\frac{2p}{p-2}})}^2
\|\sigma^{\frac 12-\frac 1p-\frac{s_0}{2}}v\|_{L^\infty_t(L^p)}^2
\\
&\quad+\| \v^\ell\|_{L^\infty_t(L^2)}^2
\|\sigma^{\frac{1-s_\ell}{2}}\nabla v\|_{L^2_t(L^\infty)}^2
+\cH^2_{\ell-1}(t) \Bigl(1+
\int^t_0\sigma(t')^{-(1-\theta_0)}  \|\na\rx^{\ell-1}(t')\|_{W^{1,p}}^2 \,\dt'\Bigr) ,
\end{split}\eeno
and
\beno\begin{split}
 \|\sigma^{1-\frac{s_{\ell}}{2}}
\nabla \fa_{\ell}\|_{L^2_t(L^2)}^2
&\leq
\|\sigma^{\frac{1- s_\ell }{p}}
(\nabla \v^\ell, \d_X^\ell\nabla v)\|_{L^2_t(L^{\frac{2p}{p-2}})}^2
\|\sigma^{ 1-\frac 1p-\frac{s_0}{2} } \nabla v\|_{L^\infty_t(L^p)}^2
\\
&\quad
+\| \v^\ell\|_{L^4_t(L^4)}^2
\|\sigma^{1-\frac{s_\ell}{2}} \nabla^2 v\|_{L^4_t(L^4)}^2
+\|\sigma^{\frac 12}  v\|_{L^\infty_t(L^\infty)}^2
\|\sigma^{\frac{1-s_\ell}{2}} \nabla\d_X^\ell\nabla v\|_{L^2_t(L^2)}^2
\\
&\quad
+ \cH_{\ell-1}^2(t)\Bigl(1+ \int^t_0 \bigl(\frak{B}_{\ell-1}(t')+\sigma(t')^{-(1-\theta_0)}\bigr)
 \|\na\rx^{\ell-1}(t')\|_{W^{1,p}}^2\,\dt' \Bigr).
\end{split}\eeno
Together with Corollary \ref{S1col1},  Proposition \ref{S6prop1}  and Corollary \ref{lem:vellH1}, we conclude
\beq \label{S9eq9} \begin{split}\|\sigma^{\frac{1-s_{\ell}}{2}}
\fa_{\ell}\|_{L^2_t(L^2)}^2+&\|\sigma^{1-\frac{s_{\ell}}{2}}
\nabla \fa_{\ell}\|_{L^2_t(L^2)}^2\\
  \leq & \cH_{\ell-1}^2 (t)
\Bigl(\cA_\ell+ \int^t_0
\bigl(\frak{B}_{\ell-1}(t')
+\sigma(t')^{-(1-\frac{\theta_0}{2})}\bigr)
\|\na\rx^{\ell-1}(t')\|_{W^{1,p}}^2\,\dt'\Bigr). \end{split} \eeq

Finally let us turn to the estimate of  $\na\div\fa_\ell$.
Indeed due to   $\div \cE_Xf=\d_X\div f$ and \eqref{tildeDvell}, we write
\begin{align*}
\div \fa_{\ell}
&=\sum_{i=0}^{\ell-1} \d_X^i\div(D_t \v^{\ell-1-i}\cdot\nabla X)
+\d_X^\ell\div(v\cdot\nabla v)
\\
&=\sum_{i=0}^{\ell-1} \d_X^i (\nabla D_t \v^{\ell-1-i}:\nabla X)
+\d_X^\ell (\nabla v:\nabla v),
\end{align*}
so that it comes out
\beno
\begin{split}
& \|\sigma ^{ \frac{3-s_\ell}{2}}
\nabla\dive\fa_\ell \|_{L^2_t(L^2)}\leq
 \Bigl(\int_0^t\bigl(\|\sigma ^{ \frac{3-s_\ell}{2}}\na^2D_tv\|_{L^2}^2+\|\sigma ^{ \frac{3-s_\ell}{2}}\na D_tv\|_{L^{\f{2p}{p-2}}}^2\bigr)
\|\d_X^{\ell-1}\na X\|_{W^{1,p}}^2\,dt'\Bigr)^{\f12}
\\
&\qquad+
C R_{\ell-1}(t) \sum_{i+j\leq \ell-1}
\Bigl( \|\sigma ^{ \frac{3-s_\ell}{2}}
\nabla \d_X^i\nabla D_t \v^j \|_{L^2_t(L^2)}
+ \|\sigma^{ \frac{3-s_\ell}{2}}
 \d_X^i\nabla D_t \v^j\|_{L^2_t(L^{\frac{2p}{p-2}})}\Bigr)
 \\
 &\qquad
 +\|\sigma^{\frac 32-\frac 1p-\frac{s_0}{2}}\nabla^2 v\|_{L^\infty_t(L^p)}
 \|\sigma^{\frac 1p}\d_X^\ell\nabla v\|_{L^2_t(L^{\frac{2p}{p-2}})}
 +\|\sigma \nabla v\|_{L^\infty_t(L^\infty)}
 \|\sigma^{\frac{1-s_\ell}{2}}\nabla\d_X^\ell\nabla v\|_{L^2_t(L^2)}
 \\
&\qquad
 +\sum_{i,j\leq\ell-1}
 \|\sigma^{\frac 32-\frac 1p-\frac{s_{\ell-1}}{2}}\nabla\d_X^i\nabla v\|_{L^\infty_t(L^p)}
 \|\sigma^{\frac 12+\frac 1p}\d_X^j\nabla v\|_{L^\infty_t(L^{\frac{2p}{p-2}})}
 \|\sigma^{-\frac{1-\theta_0}{2}}\|_{L^2([0,t])},
\end{split}
\eeno
Then by virtue of \eqref{S9eq9}  and   Corollary \ref{S2col3}, Lemma \ref{lem:R,V},  \eqref{comm:X},
 Proposition \ref{S6prop1}  and Corollary \ref{lem:vellH1}, we
 complete  the proof of \eqref{S9eq1}.
\end{proof}

In view of Lemma \ref{S4lem01},
to close the time-weighted $H^1$ energy estimate of $D_t\v^\ell,$ we also need the estimate of $\dive D_t^2\v^\ell.$
Indeed taking into account of \eqref{divD2v},
we inductively assume that $\div D_t^2 \v^{\ell-1}=\div \frak{b}_{\ell-1}$,
then one has
\beq\label{S9eq3}
\begin{split}
\div D_t^2 {\v^\ell}
=\div(\d_X D_t^2 \v^{\ell-1})
&=\div\bigl(D_t^2 \v^{\ell-1}\cdot\nabla X
+ X\div D_t^2 \v^{\ell-1}\bigr)
\\
&=\div\bigl(D_t^2 \v^{\ell-1}\cdot\nabla X
+\cE_X \frak{b}_{\ell-1}\bigr)
\eqdefa \div \frak{b}_{\ell}.
\end{split}\eeq

\begin{lem}\label{S9lem1}
{\sl Let $\frak{b}_\ell$ be given by \eqref{S9eq3}.  Then for $\ell=2,\cdots,k$, one has
\begin{equation}\label{S9eq4}
\begin{split}
\|&\sigma^{\frac{3-s_{\ell}}{2}}
\frak{b}_\ell\|_{L^2_t(L^2)}^2
\leq  C\e\|\s^{\f{3-s_\ell}2}D_t^2\v^\ell\|_{L^2_t(L^2)}^2+ C_\e\frak{G}_{\ell,X}(t),
\end{split}
\end{equation}
with $\frak{G}_{\ell,X}(t)$ given by \eqref{S9eq8}.
}
\end{lem}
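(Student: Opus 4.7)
The plan is to follow the strategy of Lemma \ref{S4lem3} (the $\ell=1$ case), combining an inductive unfolding of $\frak{b}_\ell$ with the intermediate $\dot H^1$ inequality \eqref{S8eq3} obtained inside the proof of Lemma \ref{S4lem01}, applied to the system satisfied by $(D_t\v^\ell,\nabla D_t{\rm\pi}^\ell)$. First I would iterate the recursion $\frak{b}_\ell=D_t^2\v^{\ell-1}\cdot\nabla X+\cE_X\frak{b}_{\ell-1}$ from \eqref{S9eq3} down to the base case $\frak{b}_0=v\cdot(\nabla v_t+D_t\nabla v)+D_t v\cdot\nabla v$ to obtain
\[
\frak{b}_\ell=\sum_{i=0}^{\ell-1}\cE_X^{i}\bigl(D_t^2\v^{\ell-1-i}\cdot\nabla X\bigr)+\cE_X^{\ell}\frak{b}_0,
\]
and then expand each $\cE_X^{j}f$ as the Leibniz-type sum of products of $\partial_X^{i_m}$ applied to the head factor $f$ and powers of $\partial_X^{i_n}\nabla X$, in the spirit of \eqref{tildeDvell}.

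Next I would classify the resulting terms into two groups. The \emph{lower-order group} collects all contributions whose head factor is either $D_t^2\v^{i}$ with $i\leq\ell-1$, or a term from $\cE_X^{\ell}\frak{b}_0$ in which at least one outer $\partial_X$ has fallen on a $\nabla X$-factor rather than on a $v$-factor. By the inductive assumption \eqref{bound:Jell-1}, Lemma \ref{lem:R,V}, the commutator estimates of Lemma \ref{lem:comm,X}, Corollary \ref{S2col3}, Proposition \ref{S6prop1} and Corollary \ref{lem:vellH1}, each such term is bounded in the weighted $L^2_t(L^2)$ norm by $C\frak{G}_{\ell,X}(t)$, exactly in the spirit of the estimates following \eqref{S4eq5wr} in the $\ell=1$ proof.

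The genuinely \emph{top-order contribution} is the part of $\partial_X^{\ell}\frak{b}_0$ in which all $\ell$ derivatives hit the $v$-factors. Using $[\partial_X,D_t]=0$ and $\partial_X^\ell v_t=D_t\v^\ell-\partial_X^\ell(v\cdot\nabla v)$, this produces, modulo commutators already absorbed in the previous group, the critical expression $v\cdot(\nabla D_t\v^\ell+D_t\nabla\v^\ell)$, whose weighted $L^2_t(L^2)$ norm is dominated by $\|\sigma^{1/2}v\|_{L^\infty_t(L^\infty)}\,\|\sigma^{1-s_\ell/2}\nabla D_t\v^\ell\|_{L^2_t(L^2)}$. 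To close the estimate I would apply the intermediate inequality \eqref{S8eq3} (which, crucially, does \emph{not} involve $\frak{b}$ on its right-hand side, so no circularity arises) with $\rw=\v^\ell$, $\fa=\fa_\ell$, $F=F_{\ell D}$, to get
\[
\|\sigma^{1-s_\ell/2}\nabla D_t\v^\ell\|_{L^2_t(L^2)}^2\leq C\varepsilon\|\sigma^{(3-s_\ell)/2}\sqrt{\rho}\,D_t^2\v^\ell\|_{L^2_t(L^2)}^2+C_\varepsilon\frak{G}_{\ell,X}(t),
\]
where the inputs $\fa_\ell$ and $\nabla\fa_\ell$ are controlled via Lemma \ref{lem:Dvell}, $F_{\ell D}$ via Lemma \ref{lem:FellD}, and $\|\sigma^{(1-s_\ell)/2}\sqrt{\rho}\,D_t\v^\ell\|_{L^2_t(L^2)}$ via the $A_{\ell 1}$-bound of Proposition \ref{S6prop1}. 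The remaining upper pieces such as $\v^\ell\otimes(\nabla v_t+D_t\nabla v)$, $D_t\v^{i}\otimes\nabla\v^{j}$ with $i+j=\ell$, and commutator corrections of the form $(D_t\nabla X)\otimes\nabla v$, are absorbed into $C_\varepsilon\frak{G}_{\ell,X}(t)$ thanks to Corollary \ref{lem:vellH1} and Corollary \ref{S2col3}.

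The main obstacle is the bookkeeping: the expansion of $\cE_X^{\ell}\frak{b}_0$ generates a large number of mixed terms, and in each one it must be checked that the factor receiving the most derivatives is either $D_t^2\v^{i}$ with $i\leq\ell-1$, or the pairing $v\cdot(\nabla D_t\v^\ell+D_t\nabla\v^\ell)$, or an already-estimated lower-order object. Once this combinatorial organization is done carefully, only the single term $\varepsilon\|\sigma^{(3-s_\ell)/2}\sqrt{\rho}\,D_t^2\v^\ell\|_{L^2_t(L^2)}^2$ survives with a small prefactor $\varepsilon$, ready for absorption in the final energy argument leading to the estimate of $A_{\ell 2}(t)$ in Proposition \ref{prop:Jell}, just as in the $\ell=1$ case.
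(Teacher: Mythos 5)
Your proposal is correct and follows essentially the same route as the paper: the same inductive Leibniz expansion of $\frak{b}_\ell$ down to $\cE_X^\ell\frak{b}_0$, the same identification of the critical term (equivalently $v\cdot\p_X^\ell D_t\nabla v$, reduced to $\nabla D_t\v^\ell$ modulo commutators), and the same closing step via the intermediate inequality \eqref{S8eq3} applied to \eqref{eq:Dvell} with $\fa_\ell$, $F_{\ell D}$ controlled by Lemmas \ref{lem:Dvell} and \ref{lem:FellD}, leaving only the $\e$-small $\|\s^{\f{3-s_\ell}2}D_t^2\v^\ell\|_{L^2_t(L^2)}^2$ term. No gaps beyond the bookkeeping you already flag, which the paper handles exactly as you describe.
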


\begin{proof} For $\frak{b}_0$ given by \eqref{S4eq7p}, we thus get, by using induction to \eqref{S9eq3}, that
\begin{align*}
&\frak{b}_{\ell}
=\sum_{i=0}^{\ell-1} \cE_X^i (D_t^2 \v^{\ell-1-i}\cdot\nabla X)
+\cE_X^\ell \frak{b}_0
\\
&=\sum_{i=0}^{\ell-1}\sum_{i_1+\cdots+i_r=i}
 \d_X^{i_1} (D_t^2 \v^{\ell-1-i}\cdot\nabla X)
 \cdots\d_X^{i_{r-1}}(-\nabla X)^{i_r}
 +\sum_{j_1+\cdots+j_l=\ell}
 \d_X^{j_1}\frak{b}_0 \cdots\d_X^{j_{l-1}}(-\nabla X)^{j_l}.
\end{align*}
It is easy to observe from  \eqref{S4eq7p} that
$$
\frak{b}_0=v\cdot(\nabla v_t+D_t \nabla v)+D_t v\cdot\nabla v
= 2v\cdot D_t\nabla v-v\cdot\nabla(v\cdot\nabla v)+D_t v\cdot\nabla v,
$$
 we obtain
\beq\label{S9eq6}
\begin{split}
& \|\sigma^{ \frac{3-s_\ell}{2}} \frak{b}_{\ell} \|_{L^2_t(L^2)}^2
 \leq
 \int_0^t\bigl\|\s^{\f{3-s_\ell}2}(D_t^2v,\frak{b}_0 )\bigr\|_{L^2}^2
 \|\na\rx^{\ell-1}\|_{L^\infty}^2\,\dt'
 \\
 &+
 2\|\s^{\f12}v\|_{L^\infty_t(L^\infty)}^2
\|\sigma^{1-\frac{s_{\ell}}{2}}\d_X^\ell D_t\nabla v\|_{L^2_t(L^2)}^2
+  \|\s \na v\|_{L^\infty_t(L^\infty)}^2
\|\s^{\f{1-s_\ell}2}D_t\v^\ell\|_{L^2_t(L^2)}^2
\\
&+C\|\sigma^{\frac{1}{p}}\v^\ell\|_{L^\infty_t(L^{\frac{2p}{p-2}})}^2
\|\sigma^{\frac 32-\frac 1p-\frac{s_0}{2}}
(D_t\nabla v, \nabla(v\otimes\nabla v))\|_{L^2_t(L^p)}^2
\\
&+ \|\sigma^{\frac 1p}\d_X^\ell \nabla v\|_{L^2_t(L^{\frac{2p}{p-2}})}^2
\|\sigma^{ \frac 32-\frac 1p-\frac{s_0}{2}}
(v\otimes\nabla v, D_t v)\|_{L^\infty_t(L^p)}^2
\\
 &\ +C(1+R_{\ell-1}(t))^2\sum_{i,j\leq\ell-1}\Bigl(
\w{t}\|\sigma^{1-\frac{s_{\ell-1}}{2}}D_t \v^i\|_{L^\infty_t(L^2)}^2
\|\sigma \d_X^j\nabla v\|_{L^\infty_t(L^\infty)}^2\\
&\
+ \|\sigma ^{ \frac{3-s_\ell}{2}}  D_t^2 \v^i  \|_{L^2_t(L^2)}^2
+\|\sigma \v^i\|_{L^\infty_t(L^\infty)}^2
\|\sigma^{1-\frac{s_{\ell}}{2}}(\d_X^j\nabla(v\cdot\nabla v), D_t\d_X^j\nabla v)\|_{L^2_t(L^2)}^2
\Bigr).
\end{split}
\eeq
Noticing that
\begin{align*}
\d_X^\ell D_t\nabla v
&=\d_X^\ell [D_t; \nabla] v
+\sum_{i=0}^{\ell-1}\d_X^i[\d_X; \nabla]D_t \v^{\ell-1-i}
+\nabla D_t  \v^\ell,
\end{align*}
we obtain
\begin{align*}
\|\sigma^{1-\frac{s_\ell}{2}}  \d_X^\ell D_t\nabla v \|_{L^2_t(L^2)}
\leq&
\|\sigma^{1-\frac{s_\ell}{2}}  \nabla D_t  {\v^\ell} \|_{L^2_t(L^2)}
+2\|\sigma^{\frac 1p}  \d_X^\ell \nabla v \|_{L^2_t(L^{\frac{2p}{p-2}})}
\|\sigma^{1-\frac 1p-\frac{s_0}{2}} \nabla v \|_{L^\infty_t(L^p)}\\
&+\Bigl(\int_0^t\|\s^{1-\f{s_\ell}2}\na D_tv\|_{L^2}^2
\|\na\rx^{\ell-1}\|_{L^\infty}^2\,dt'\Bigr)^{\f12}
\\
&
+\sum_{i,j\leq\ell-1}
\|\sigma   \d_X^i  \nabla v \|_{L^\infty_t(L^\infty)}
\|\sigma^{ \frac{1-s_{\ell-1}}{2}}  \d_X^j \nabla v \|_{L^\infty_t(L^2)}
\|\sigma^{-\frac{1-\theta_0}{2}}\|_{L^2([0,t])}
\\
&
+C(R_{\ell-1}(t)+R^2_{\ell-1}(t))\sum_{i+j\leq\ell-1}
\|\sigma^{1-\frac{s_\ell}{2}}  \d_X^i  \nabla D_t \v^j \|_{L^2_t(L^2)},
\end{align*}
from which, Corollary \ref{S1col1}, Lemma \ref{lem:R,V} and Corollary \ref{lem:vellH1},  we infer
\beq\label{S9eq5}
\begin{split}
\|\sigma^{1-\frac{s_\ell}{2}}  \d_X^\ell D_t\nabla v \|_{L^2_t(L^2)}^2
\leq &
 \|\sigma^{1-\frac{s_\ell}{2}}  \nabla D_t  {\v^\ell} \|_{L^2_t(L^2)}^2\\
&+\wt{\frak{G}}_{\ell,X}(t)
+\int_0^t\|\s^{1-\f{s_\ell}2}\na D_tv\|_{L^2}^2\|\na\rx^{\ell-1}\|_{L^\infty}^2\,\dt' .
\end{split}\eeq

 Yet for any $\e>0,$ we get, by applying \eqref{S8eq3} to the equation \eqref{eq:Dvell}, that
 \beno
 \begin{split}
 \|\sigma^{1-\frac{s_\ell}{2}}  \nabla D_t  {\v^\ell} \|_{L^2_t(L^2)}^2
 \leq &
 C\|\s^{\f{1-s_\ell}2}D_t \v^\ell\|_{L^2_t(L^2)}^2
 +\e\|\sigma^{\frac{3-s_\ell}{2}}   D_t ^2 {\v^\ell} \|_{L^2_t(L^2)}^2
 \\
 &+C_\e\left(\|\sigma^{\frac{1-s_\ell}{2}}  \fa_\ell  \|_{L^2_t(L^2)}^2
 +\|\sigma^{1-\frac{s_\ell}{2}}  \nabla \fa_\ell  \|_{L^2_t(L^2)}^2
 +\|\sigma^{\frac{3-s_\ell}{2}} F_{\ell D}   \|_{L^2_t(L^2)}^2\right).
 \end{split}
 \eeno
 Inserting the above estimate into \eqref{S9eq5} and using  Corollary \ref{lem:vellH1}, Lemmas \ref{lem:FellD} and \ref{lem:Dvell}, we achieve
\beq\label{S7ineq:Dv'}
\begin{split}
\|\sigma^{1-\frac{s_\ell}{2}}  \d_X^\ell D_t\nabla v \|_{L^2_t(L^2)}^2
\leq & \e\|\sigma^{\frac{3-s_\ell}{2}}   D_t^2  {\v^\ell} \|_{L^2_t(L^2)}^2
+ C_\e\frak{G}_{\ell,X}(t).
\end{split}
\eeq

Finally, we notice that
\begin{align*}
\sum_{j\leq\ell-1}\|\sigma^{1-\frac{s_\ell}{2}} \d_X^j\nabla (v\cdot\nabla v)\|_{L^2_t(L^2)}
\leq &
C\w{t}^{\f12}
\sum_{i+l\leq\ell-1}
\Bigl(\|\sigma^{\frac 12}\v^i\|_{L^\infty_t(L^\infty)}
\|\sigma^{1-\frac{s_{\ell-1}}{2}} \d_X^l\nabla^2 v\|_{L^\infty_t(L^2)}
\\
&\qquad+\|\sigma^{1 -\frac{s_{\ell-1}}{2}}\d_X^i\nabla v\|_{L^\infty_t(L^\infty)}
\|\sigma^{\frac 12 }\d_X^l\nabla v\|_{L^\infty_t(L^2)}\Bigr).
\end{align*}
Substituting the above inequality and \eqref{S7ineq:Dv'} into \eqref{S9eq6} and using Corollaries \ref{S1col1} and \ref{S2col3}, Lemma \ref{lem:R,V},   Corollary \ref{lem:vellH1}, we conclude the proof of \eqref{S9eq4}.
 \end{proof}

\begin{prop}\label{S9prop1}
{\sl Let $A_{\ell1}(t), A_{\ell2}(t)$ be given by \eqref{Jell}. Then  under the assumptions of Proposition \ref{prop:Jell}, we have
\beq\label{S9eq1h}
A_{\ell1}(t)+A_{\ell2}(t)+\|\rx\|_{L^\infty_t(W^{2,p})}\leq \cH_\ell(t).
\eeq}
\end{prop}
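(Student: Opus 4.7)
The plan is to mimic the structure of the proof of Proposition \ref{S4prop1q} (the case $\ell=1$), but now using the higher-order commutator and source-term estimates established in Lemmas \ref{lem:FellD}, \ref{lem:Dvell}, and \ref{S9lem1}. First I would apply Lemma \ref{S4lem01} to the equation \eqref{eq:Dvell} satisfied by $(D_t\v^\ell,\nabla D_t{\rm{\pi}}^\ell)$, with the divergence data $\div D_t\v^\ell=\div\fa_\ell$ and $\div D_t^2\v^\ell=\div\frak{b}_\ell$ from \eqref{S7eq1} and \eqref{S9eq3}. Substituting the bounds from Lemmas \ref{lem:FellD}, \ref{lem:Dvell}, \ref{S9lem1} on the right-hand side, using Corollary \ref{lem:vellH1} to absorb $\|\sigma^{(1-s_\ell)/2}D_t\v^\ell\|_{L^2_t(L^2)}$, and choosing $\e$ small enough in Lemma \ref{S9lem1} so that the $\e\|\sigma^{(3-s_\ell)/2}D_t^2\v^\ell\|_{L^2_t(L^2)}^2$ term can be absorbed into the left, I would obtain
\beno
\|\sigma^{1-\frac{s_\ell}2}D_t\v^\ell\|_{L^\infty_t(L^2)\cap L^2_t(\dH^1)}^2+\|\sigma^{\frac{3-s_\ell}2}\nabla D_t\v^\ell\|_{L^\infty_t(L^2)}^2+\bigl\|\sigma^{\frac{3-s_\ell}2}\bigl(D_t^2\v^\ell,\nabla^2 D_t\v^\ell,\nabla D_t{\rm{\pi}}^\ell\bigr)\bigr\|_{L^2_t(L^2)}^2\leq \frak{G}_{\ell,X}(t),
\eeno
with $\frak{G}_{\ell,X}(t)$ defined in \eqref{S9eq8}.

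Next I would estimate $\|\rx^{\ell-1}\|_{W^{2,p}}$ by using the fact, recorded in \eqref{S9eq10}, that $\rx^{\ell-1}$ solves the transport equation $D_t\rx^{\ell-1}=\v^\ell$. A standard $W^{2,p}$ energy estimate, analogous to the one leading to \eqref{S4eq15p}, gives
\beno
\frac{d}{dt}\|\rx^{\ell-1}\|_{W^{2,p}}\leq C\bigl(\|\nabla v\|_{L^\infty}\|\rx^{\ell-1}\|_{W^{2,p}}+\|\Delta v\|_{L^p}\|\nabla\rx^{\ell-1}\|_{L^\infty}\bigr)+\|\Delta\v^\ell\|_{L^p}.
\eeno
The term $\|\Delta\v^\ell\|_{L^p}$ is controlled via the Stokes estimate \eqref{Stokes:vell} combined with Lemma \ref{lem:Fell}, which reduces it to $\|D_t\v^\ell\|_{L^p}$ plus $\cH_{\ell-1}(t)\bigl(1+\|\nabla\rx^{\ell-1}\|_{W^{1,p}}\bigr)$. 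The admissibility $p\in\bigl]2,2/(1-s_k)\bigr[$ guarantees $\frac{2p}{p+2}(1-\frac{s_\ell}{2})<1$, so the interpolation
\beno
\|D_t\v^\ell\|_{L^1_t(L^p)}^2\leq \bigl\|\sigma^{1-\frac{s_\ell}{2}}D_t\v^\ell\bigr\|_{L^{2p/(p-2)}_t(L^p)}^2\,\bigl\|\sigma^{-(1-\frac{s_\ell}{2})}\bigr\|_{L^{2p/(p+2)}([0,t])}^2\leq \frak{G}_{\ell,X}(t)
\eeno
is finite; Gronwall's inequality then yields $\|\rx^{\ell-1}\|_{L^\infty_t(W^{2,p})}^2\leq \cH_\ell(t)$.

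Plugging this bound on $\|\nabla\rx^{\ell-1}\|_{W^{1,p}}$ back into $\frak{G}_{\ell,X}(t)$ and invoking the inductive estimate $\int_0^t\frak{B}_{\ell-1}(t')\,\dt'\leq \cH_{\ell-1}(t)$ from Lemma \ref{lem:R,V}, I would arrive at $\frak{G}_{\ell,X}(t)\leq \cH_\ell(t)$. The bounds for $A_{\ell 2}(t)$ then follow from the above estimates, together with one more application of the Stokes inequality \eqref{est:Fell,p} to control $\|\sigma^{1-s_\ell/2}(\nabla^2\v^\ell,\nabla{\rm{\pi}}^\ell)\|_{L^\infty_t(L^2)}$ in terms of $\|\sigma^{1-s_\ell/2}D_t\v^\ell\|_{L^\infty_t(L^2)}$; combined with $A_{\ell 1}(t)\leq \cH_\ell(t)$ from Proposition \ref{S6prop1}, this gives \eqref{S9eq1h}. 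The main obstacle I anticipate is the delicate coupling between $\|\rx^{\ell-1}\|_{W^{2,p}}$, which appears inside $\frak{G}_{\ell,X}(t)$, and $D_t\v^\ell$, which is controlled only through $\frak{G}_{\ell,X}(t)$; closing this loop requires a careful Gronwall argument with sharp time-weights, permitted precisely by the admissibility condition imposed on $p$.
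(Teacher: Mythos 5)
Your proposal follows essentially the same route as the paper's own proof: Lemma \ref{S4lem01} applied to \eqref{eq:Dvell} with Lemmas \ref{lem:FellD}, \ref{lem:Dvell}, \ref{S9lem1} and small-$\e$ absorption to get the $\frak{G}_{\ell,X}(t)$ bound, then the interpolation $\|D_t\v^\ell\|_{L^1_t(L^p)}\leq\frak{G}_{\ell,X}^{1/2}(t)$ (valid since $p<2/(1-s_\ell)$), the transport estimate for $\rx^{\ell-1}$ in $W^{2,p}$ with $\|\Delta\v^\ell\|_{L^p}$ controlled through the Stokes estimate \eqref{est:Fell,p}, and a Gronwall closure using $\int_0^t\frak{B}_{\ell-1}\,\dt'\leq\cH_{\ell-1}(t)$ before substituting back into $\frak{G}_{\ell,X}(t)$. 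This matches the paper's argument in both structure and key estimates (the paper merely organizes the $W^{2,p}$ bound as an $L^p$ estimate plus an estimate for $\Delta\rx^{\ell-1}$, with $\|\v^\ell\|_{L^1_t(L^p)}$ handled via $\v^\ell=X\cdot\nabla\v^{\ell-1}$, a lower-order detail your sketch subsumes).
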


\begin{proof} We first get, by applying Lemma \ref{S4lem01} to the System \eqref{eq:Dvell}, that
\beno
\begin{split}
A_{\ell2}^2(t)\leq &
\cA_0\bigl(\|\s^{\f{1-s_\ell}2}(D_t\v^\ell,\fa_\ell)\|_{L^2_t(L^2)}^2+\|\s^{1-\f{s_\ell}2}\na \fa_\ell\|_{L^2_t(L^2)}^2+\|\s^{\f{3-s_\ell}2}(\na\dive\fa_\ell,\frak{b}_\ell,F_{\ell D})\|_{L^2_t(L^2)}^2\bigr),
\end{split} \eeno
from which, Proposition \ref{S6prop1}, Corollary \ref{lem:vellH1} and Lemmas \ref{lem:FellD}, \ref{lem:Dvell} and \ref{S9lem1}, we infer
\beno
\begin{split} A_{\ell1}^2(t)
+A_{\ell2}^2(t)\leq & C\e \|\s^{\f{3-s_\ell}2}D_t^2\v^\ell\|_{L^2_t(L^2)}^2+C_\e\frak{G}_{\ell,X}(t),
\end{split}
\eeno for $A_{\ell1}(t), A_{\ell 2}(t)$ and $\frak{G}_{\ell,X}(t)$ given by \eqref{Jell} and \eqref{S9eq4} respectively.
Taking $\e$ so small that $C\e\leq \f12$ ensures
\beq\label{S9eq2}
A_{\ell1}^2(t)+A_{\ell2}^2(t)\leq  \frak{G}_{\ell,X}(t).
\eeq

By virtue of \eqref{S9eq2}, we get, by applying \eqref{S2eq6}, that
 for any $r\in [2,\infty[$
 \begin{equation*}\label{bound:Dvell,q}
 \begin{split}
& \| \sigma^{1-\frac{s_\ell}{2}} D_t {\v^\ell} \|_{L^{\frac{2r}{r-2}}_t(L^r)}
 \leq \frak{G}_{\ell,X}^{\frac 12}(t),
 \end{split}
 \end{equation*}
  which together with the fact: $p<2/(1-s_\ell)$, ensures that
\beq\label{Dvell:p}\begin{split}
 \|D_t {\v^\ell}\|_{L^1_t(L^p)}
& = \| \sigma ^{1-\frac{s_\ell}{2}} D_t {\v^\ell} \|_{L^{\frac{2p}{p-2}}_t(L^p)}
 \| \sigma ^{-1+\frac{s_\ell}{2}} \|_{L^{\frac{2p}{p+2}}([0,t])}
 \leq  \frak{G}_{\ell,X}^{\frac 12}(t).
\end{split}\eeq

On the other hand, taking into account of \eqref{S9eq10} and $\v^\ell=X\cdot\na\v^{\ell-1},$
 we get, by using the  $L^p$ type energy estimate and Lemma \ref{lem:R,V},  that
\begin{equation}\label{bound:Xell}
\begin{split}
\|\rx^{\ell-1}(t)\|_{L^p}\leq &
\|\p_{X_0}^{\ell-1}X_0\|_{L^p}+\|{\v^\ell}\|_{L^1_t(L^p)}
\\
\leq
&\|\p_{X_0}^{\ell-1}X_0\|_{L^p}
+\w{t}\|X\|_{L^\infty_t(L^\infty)}
\|\sigma^{1 -\frac{s_{\ell-1}}{2}}\na \v^{\ell-1}\|_{L^\infty_t(L^p)}
\leq \cA_\ell+\cH_{\ell-1}(t).
\end{split}
\end{equation}
Applying $\Delta$ to \eqref{S9eq10} gives
$$
\d_t\Delta \rx^{\ell-1} +v\cdot\nabla \Delta \rx^{\ell-1}
=-\Delta v\cdot\nabla \rx^{\ell-1}  -2\p_\al
v\cdot\nabla\p_\al\rx^{\ell-1}  +\Delta\v^\ell,
$$ from which, we infer
\beno\begin{split}
\|\D\rx^{\ell-1}(t)\|_{L^{p}}
\leq \|\D\p_{X_0}^{\ell-1}X_0\|_{L^p}
+\int^t_0 (\|\Delta v\|_{L^p}+\|\nabla v\|_{L^\infty})\|\na\rx^{\ell-1}\|_{W^{1,p}}\,\dt'
+\|\Delta {\v^\ell}\|_{L^1_t(L^p)}.
\end{split}\eeno
Summing up the above inequality with \eqref{bound:Xell}, and then applying Gronwall's inequality to the resulting inequality and using  Corollary \ref{S1col1}, we achieve
 \beno
\begin{split}
\|\rx^{\ell-1}\|_{L^\infty_t(W^{2,p})} \leq
\exp\bigl(\cA_0t\bigr)\bigl(\cA_\ell+\cH_{\ell-1}(t)+  \|\Delta  {\v^\ell}\|_{L^1_t(L^p)}\bigr),
\end{split} \eeno
which together with the second inequality of \eqref{est:Fell,p}  ensures that
\beno \begin{split}
&\|\rx^{\ell-1}\|_{L^\infty_t(W^{2,p})}
\leq
 \cH_{\ell-1} (t) \Bigl(\cA_\ell+  \|D_t {\v^\ell}\|_{L^1_t(L^p)}
 +\int^t_0\sigma^{-\left(\frac 32-\frac 1p-\frac{s_{\ell}}{2}\right)}
    \|\na \rx^{\ell-1}\|_{W^{1,p}} \,\dt'\Bigr).
 \end{split} \eeno
 Inserting  \eqref{Dvell:p} into the above inequality and using   the fact that (noticing also $\frac 1p>\frac{1-s_\ell}{2}$)
\begin{align*}
&\int^t_0\sigma^{-(\frac 32-\frac 1p-\frac{s_{\ell-1}}{2})}
  \|\na\rx^{\ell-1}\|_{W^{1,p}} \,\dt'
  \leq \w{t}^{\f12}
\Bigl(\int^t_0\sigma^{-(1-\frac{\th_0}{2})}
  \|\na\rx^{\ell-1}\|_{W^{1,p}}^2 \,\dt' \Bigr)^{\frac 12},
\end{align*}
we  infer
\beno \begin{split}
  \|\rx^{\ell-1}\|_{L^\infty_t(W^{2,p})}^2
    &\leq \frak{G}_{\ell,X}(t).
 \end{split} \eeno
Taking into account of the definition of $\frak{G}_{\ell,X}(t)$ given by \eqref{S9eq8}, we get, by applying  Gronwall's inequality and Lemma \ref{lem:R,V}, that
\beq\label{Xell:W2p}
  \|\rx^{\ell-1}\|_{L^\infty_t(W^{2,p})}
 \leq\cH_\ell(t) .
 \eeq
 Substituting \eqref{Xell:W2p} into \eqref{S9eq2} leads to \eqref{S9eq1h}. This completes the proof of Proposition \ref{S9prop1}.
\end{proof}

With Proposition \ref{S9prop1}, to complete the proof of Proposition \ref{prop:Jell}, it remains to prove that
\beq\label{S9eq11}
\|\v^\ell\|_{{L}^\infty_t(\cB^{s_\ell})}+\|\na
\v^\ell\|_{{L}^2_t(\cB^{s_\ell})}\leq \cH_\ell(t).
\eeq
The proof of \eqref{S9eq11} follows exactly the same argument as those in proofs of Propositions
\ref{S2prop1} and    \ref{S4prop3}. It is quite involved but does not contain new ideas. We skip the details here.

\setcounter{equation}{0}
\appendix

\section{Proof of Lemma \ref{lem:comm,X}  }\label{appA}

In this appendix we present the proof of  Lemma \ref{lem:comm,X}, which basically follows the same arguments as the proof of Lemma 4.1    in \cite{LZ}.

Let us first present some identities which will be used in what follows.
Let   $(m,n)$ be nonnegative integer pair such that $m+n\leq \ell-1$, $f$ be a smooth enough function. Then it follows from \eqref{S3eq0} that
\beq\label{appeq0}
\begin{split}
\d_X^{m+1}\nabla\d_X^{\ell-m-1}f
-\nabla\d_X^\ell f
&=
 \sum_{\kappa=0}^m \d_X^\kappa[\d_X; \nabla]\d_X^{\ell-\kappa-1}f
\\
&=
-\sum_{\kappa=0}^m\sum_{l=0}^\kappa
 C_\kappa^l \d_X^l\nabla X_\alpha
 \d_X^{\kappa-l}\d_\alpha \d_X^{\ell-\kappa-1}f;
 \end{split}
 \eeq
 and
\beq\label{appeq1}
\begin{split}
&\d_X^{m+1}\nabla^2\d_X^{\ell-m-1}f
-\nabla^2\d_X^\ell f =
 \sum_{\kappa=0}^m \d_X^\kappa[\d_X; \nabla^2]\d_X^{\ell-\kappa-1}f
\\
&=
-\sum_{\kappa=0}^m\sum_{l=0}^\kappa
 C_\kappa^l\bigl( \d_X^l\nabla^2 X_\alpha
 \d_X^{\kappa-l}\d_\alpha \d_X^{\ell-\kappa-1}f
 +2\d_X^l\nabla X_\alpha \,\d_X^{\kappa-l}\nabla\d_\alpha\d_X^{\ell-\kappa-1}f\bigr);
 \end{split}
 \eeq
and
\beq\label{appeq3}
\begin{split}
\d_X^m\nabla&\d_X^{n+1}\nabla\p_X^{\ell-m-n-1}f-\d_X^m\nabla^2\p_X^{\ell-m}f
=
\sum_{\kappa=0}^n\d_X^m \nabla
\d_X^\kappa[\d_X;\nabla]\p_X^{\ell-m-\kappa-1}f
\\
&=
-\sum_{\kappa=0}^n\sum_{l=0}^\kappa  C_\kappa^l \d_X^m\nabla
\bigl( \d_X^l\nabla X_\al\,
 \d_X^{\kappa-l}\p_\al \p_X^{\ell-m-\kappa-1}f\bigr)
\\
&=
- \sum_{q=0}^m\sum_{\kappa=0}^n\sum_{l=0}^\kappa
C_m^q C_\kappa^l \Bigl(\d_X^q\nabla \d_X^l \nabla X_\al\,
 \d_X^{\kappa-l+m-q}\p_\al \p_X^{\ell-m-\kappa-1}f
 \\
 &\qquad\qquad\qquad\qquad\qquad
+ \d_X^{l+q}\nabla X_\al
 \d_X^{ m-q}\,\na
 \d_X^{\kappa-l}\p_\al\p_X^{\ell-m-\kappa-1}f\Bigr);
\end{split}\eeq
and
\beq\label{appeq4}
\begin{split}
\d_X^{m+1}\nabla\d_X^n\nabla\p_X^{\ell-m-n-1}f
&-\nabla\d_X^{m+n+1}\nabla\p_X^{\ell-m-n-1}f\\
&=\sum_{\kappa=0}^m \d_X^\kappa
 [\d_X; \nabla]\d_X^{m-\kappa+n}\nabla\p_X^{\ell-m-n-1}f\\
 &=\sum_{\kappa=0}^m \sum_{l=0}^\kappa C_\kappa^l
\d_X^l \nabla X_\al\,
\d_X^{\kappa-l}\p_\al\d_X^{m-\kappa+n}
\nabla\p_X^{\ell-m-n-1}f.
\end{split}\eeq


\subsection{Proof of \eqref{comm:X}}\label{subs:comm,X}
We shall first present the estimate to the second term in \eqref{comm:X}.
It is easy to observe that \eqref{comm:X}  holds trivially  when $i+j=0$.
Suppose by induction that \eqref{comm:X} is valid for any pair of nonnegative integers
$(i,j)$ with $i+j\leq \ell -1$,   and it suffices to show \eqref{comm:X} in the following two cases:
$$
 i\leq \ell-1, \,  j\leq \ell\hbox{ such that }i+j\leq \ell\quad
\hbox{ or }\quad i\leq \ell,\, j\leq \ell-1\hbox{ such that }i+j\leq \ell.
$$

\noindent$\bullet$
{Case $  i\leq \ell-1$, $ j\leq \ell$ with $i+j\leq \ell.$}

We first deduce from \eqref{appeq3}
and \eqref{Rell}  that for nonnegative integers $m,n\leq \ell-1$ with $m+n\leq \ell-1$
\begin{align*}
\bigl\|\d_X^m&\nabla\d_X^{n+1}\nabla\rx^{\ell-m-n-1}
-\d_X^m\nabla^2\rx^{\ell-m}\bigr\|_{L^\infty_t(L^p)}
\\
&\leq
C \sum_{q=0}^m\sum_{\kappa=0}^{n}\sum_{l=0}^\kappa\Bigl(
\bigl\|\d_X^q \nabla\d_X^l \nabla X\bigr\|_{L^\infty_t(L^p)}
\bigl\|\d_X^{\kappa-l+m-q}
\nabla \rx^{\ell-m-\kappa-1}\bigr\|_{L^\infty_t(L^\infty)}
\\
&\quad\qquad\qquad\qquad
+
\|  \d_X^{l+q}\nabla X\|_{L^\infty_t(L^{\infty)}}
\bigl\|\d_X^{m-q} \nabla\d_X^{\kappa-l}
\nabla \rx^{\ell-m-\kappa-1}X\bigr\|_{L^\infty_t(L^p)}\Bigr)\\
&\leq
C\sum_{l\leq \ell-1}R_{l+1}(t)R_{\ell-l}(t)
  \leq
C R_{\ell}^2(t) .
\end{align*}
This in turn shows that for any pair of nonnegative integers $(i,j)$ with $i\leq \ell-1$, $j\leq \ell$, $i+j\leq \ell$,
\begin{equation}\label{comm:X1}
\begin{split}
& \bigl\|\d_X^i \nabla  \d_X^j \nabla\rx^{\ell-i-j}
    - \d_X^i\nabla^2\rx^{\ell-i}
    \bigr\|_{L^\infty_t(L^p)}
   \leq C R_\ell^2(t).
    \end{split}
    \end{equation}
   Taking $i=0$ in \eqref{comm:X1} leads to
\begin{equation}\label{comm:X1a}
\begin{split}
& \bigl\|  \nabla  \d_X^j \nabla\rx^{\ell-j}
    -  \nabla^2 \rx^{\ell}
    \bigr\|_{L^\infty_t(L^p)}
   \leq C R_\ell^2(t),
   \quad\forall\ j\leq\ell.
    \end{split}
    \end{equation}

  \noindent$\bullet$
 {Case $  i\leq \ell$, $ j\leq \ell-1$ with $i+j\leq \ell.$}

It follows from \eqref{appeq4} that for nonnegative integer pair $(m,n)$ satisfying $ m+n\leq \ell-1,$
$$\longformule{\bigl\|\d_X^{m+1}\nabla\d_X^n\nabla\rx^{\ell-m-n-1}
-\nabla\d_X^{m+n+1}\nabla\rx^{\ell-m-n-1}\bigr\|_{L^p}
}{{}\leq
\sum_{\kappa=0}^m \sum_{l=0}^\kappa C_\kappa^l
\bigl\|\d_X^l \nabla X\bigr\|_{L^\infty}
\bigl\|\d_X^{\kappa-l}\nabla\d_X^{m-\kappa+n}
\nabla\rx^{\ell-m-n-1}\bigr\|_{L^p}
\leq CR_\ell^2(t).
}$$ This together with \eqref{comm:X1a} ensures that  for $ (i,j)$ satisfying $  i\leq\ell,$ $  j\leq \ell-1,$
    and $ i+j\leq \ell,$
\begin{equation}\label{comm:X2}
\begin{split}
\bigl\| \d_X^i\nabla  \d_X^j \nabla\rx^{\ell-i-j}
    - \nabla^2\rx^\ell
    \bigr\|_{L^\infty_t(L^p)}
    \leq &
     \bigl\| \d_X^i\nabla  \d_X^j \nabla\rx^{\ell-i-j}
    - \nabla\d_X^{i+j}\nabla\rx^{\ell-i-j}
    \bigr\|_{L^\infty_t(L^p)}
    \\
&
    +
    \bigl\|\nabla\d_X^{i+j}\nabla\rx^{\ell-i-j}
    - \nabla^2\rx^\ell
    \bigr\|_{L^\infty_t(L^p)}
 \leq C  R_\ell^{2}(t) .
    \end{split}
    \end{equation}
 Taking $j=0$ in \eqref{comm:X2} gives rise to
\begin{equation}\label{comm:X2a}
\begin{split}
&\bigl\| \d_X^i\nabla^2  \rx^{\ell-i}
    - \nabla^2 \rx^\ell
    \bigr\|_{L^\infty_t(L^p)}
     \leq C  R_\ell^{2}(t),
    \quad\forall \   i\leq\ell.
    \end{split}
    \end{equation}
Combining \eqref{comm:X1} with \eqref{comm:X2a}, we obtain for  $(i,j)$ satisfying
$  i\leq\ell-1,$
$ j\leq \ell,$ and
    $ i+j\leq \ell$ that
\begin{equation*}
\begin{split}
\bigl\| \d_X^i\nabla  \d_X^j \nabla\rx^{\ell-i-j}
    - \nabla^2\rx^\ell
    \bigr\|_{L^\infty_t(L^p)}
    \leq  &     \bigl\| \d_X^i\nabla  \d_X^j \nabla\rx^{\ell-i-j}
    - \d_X^i\nabla^2 \rx^{\ell-i}
    \bigr\|_{L^\infty_t(L^p)}\\
   & +\bigl\|\d_X^i\nabla^2 \rx^{\ell-i}
    - \nabla^2\rx^\ell
    \bigr\|_{L^\infty_t(L^p)}
     \leq C R_\ell^{2}(t) ,
    \end{split}
    \end{equation*}
 which together with \eqref{comm:X2} ensures that \eqref{comm:X2} holds for all nonnegative  integer pair $(i,j)$ satisfying $i+j\leq \ell.$

 \smallbreak

Along the same line, it follows from \eqref{appeq0} that for any integer $0\leq m\leq\ell-1$,
\beno\begin{split}
\bigl\|\p_X^{m+1}\na\rx^{\ell-m-1}-\na\rx^\ell\bigr\|_{L^\infty_t(L^p)}
\leq &C\sum_{\kappa=0}^{m}\sum_{l=0}^\kappa
 \|\p_X^l\na X\|_{L^\infty_t(L^\infty)}\|\p_X^{\kappa-l}\na\rx^{\ell-\kappa-1}\|_{L^\infty_t(L^p)},
\end{split}
\eeno
which together with the definition of $R_\ell(t)$ given by \eqref{Rell}  implies
\beno
\|\p_X^i\na\rx^{\ell-i}-\na\rx^\ell\|_{L^\infty_t(L^p)}\leq CR_\ell^2(t),
\quad\forall\ 0\leq i\leq\ell.
\eeno
This together with \eqref{comm:X2}
shows \eqref{comm:X}.

\subsection{Proof of \eqref{comm:v}}

\subsubsection{The estimates of $(\d_X^{i} \nabla \v^{\ell-i}
   - \nabla \v^\ell)$ and  $(\d_X^{i} \nabla \rm{\pi}^{\ell-i}
   - \nabla \rm{\pi}^\ell)$.}

It is easy to observe from \eqref{appeq0} that for any nonnegative integer $m\leq \ell-1$
and for any $r\in [1,+\infty]$ that
\begin{align*}
& \| \d_X^{m+1} \nabla \v^{\ell-m-1}
   - \nabla \v^\ell \|_{L^r}
   \leq
 C\sum_{\kappa=0}^m\sum_{l=0}^\kappa
 \| \d_X^l \nabla X \|_{L^\infty}
   \|\d_X^{\kappa-l}\nabla \v^{\ell-\kappa-1}\|_{L^r},
   \quad\forall\ m\leq\ell-1.
\end{align*}
Hence we get   for
$r_1\in \left\{2, \f{2p}{p-2}, +\infty\right\}$
and for $1\leq i\leq\ell$,
\begin{equation}\label{comm:v1}
\begin{split}
&  \bigl\|\sigma ^{\left(1-\frac{1}{r_1}-\frac{s_{\ell-1}}{2}\right)}
 (\d_X^{i} \nabla \v^{\ell-i}
   - \nabla \v^\ell) \|_{L^\infty_t(L^{r_1})}
   \\
&   \leq
C\sum_{\kappa=0}^{i-1}\sum_{l=0}^\kappa
 \| \d_X^l \nabla X \|_{L^\infty_t(L^\infty)}
   \bigl\|\sigma ^{\left(1-\frac{1}{r_1}-\frac{s_{\ell-1}}{2}\right) }
   \d_X^{\kappa-l}\nabla \v^{\ell-\kappa-1}\bigr\|_{L^\infty_t(L^{r_1})}
   \leq CR_\ell(t)\frak{A}_{\ell-1}(t).
\end{split}
\end{equation}
Along the same line, we obtain for $r\in \{2,p\}$,
\beq \label{comm:pi1}
 \bigl\|\sigma^{\left(\frac 32-\frac 1r-\frac{s_{\ell-1}}{2}\right)}
 ( \d_X^i \nabla{\rm \pi}^{\ell-i}  -\nabla{\rm \pi}^\ell) \bigr\|_{L^\infty_t(L^r)} \leq CR_\ell(t)\frak{A}_{\ell-1}(t).\eeq

\subsubsection{The estimate of $(\d_X^i\nabla  \d_X^j \nabla\v^{\ell-i-j}
    - \nabla^2 \v^\ell)$}

It follows from \eqref{appeq3} that  for the nonnegative integer pair $(m,n)$ with
$m+n\leq \ell-1$ and for  $r\in \{2,p\}$
\begin{align*}
& \bigl\|\sigma^{\left(\frac 32-\frac 1r-\frac{s_{\ell-1}}{2}\right)}
(\d_X^m\nabla\d_X^{n+1}\nabla\v^{\ell-m-n-1}
-\d_X^m\nabla^2\v^{\ell-m})\|_{L^r}
\\
&\quad\leq
C \sum_{q=0}^m \sum_{\kappa=0}^n\sum_{l=0}^\kappa\Bigl(
 \|\d_X^q \nabla \d_X^l \nabla X \|_{L^p}
 \bigl\|\sigma^{1-\frac{s_{\ell-1}}{2}}
 \d_X^{\kappa-l+m-q}\nabla \v^{\ell-m-\kappa-1}\bigr\|_{L^{\frac{2p}{p-2}}\cap L^\infty}
\\
&\qquad\qquad\qquad\qquad+
 \| \d_X^{l+q}\nabla X \|_{L^{\infty}}
 \bigl\|\sigma^{\left(\frac 32-\frac 1r-\frac{s_{\ell-1}}{2}\right)}
 \d_X^{m-q}\nabla\d_X^{\kappa-l}
 \nabla \v^{\ell-m-\kappa-1}\bigr\|_{L^r}\Bigr),
\end{align*}
which together with \eqref{Rell} and \eqref{Vell} implies that for  $r\in \{2,p\}$ and any nonnegative integer pair $ (i,j)$ satisfying $
    i\leq\ell-1,
     j\leq \ell$ with $i+j\leq \ell$
\begin{equation}\label{comm:v2}
\begin{split}
& \bigl\|\sigma ^{\left(\frac 32-\frac 1r-\frac{s_{\ell-1}}{2}\right)}
 ( \d_X^i \nabla  \d_X^j \nabla\v^{\ell-i-j}
    - \d_X^i\nabla^2 \v^{\ell-i}) \bigr\|_{L^\infty_t(L^r)}
     \leq CR_\ell(t) \frak{A}_{\ell-1}(t).
    \end{split}
    \end{equation}

Similarly for non-negative integer pair $(m,n)$ satisfying $m+n\leq \ell-1,$  we deduce from \eqref{appeq4} that for $r\in \{2,p\}$
\begin{align*}
 \|\d_X^{m+1}\nabla\d_X^n\nabla\v^{\ell-m-n-1}
&-\nabla\d_X^{m+n+1}\nabla\v^{\ell-m-n-1}\|_{L^r}
\\
&\leq C\sum_{\kappa=0}^m \sum_{l=0}^\kappa
\bigl\|\d_X^l \nabla X\bigr\|_{L^\infty}
 \|\d_X^{\kappa-l}\nabla\d_X^{m-\kappa+n}
\nabla \v^{\ell-m-n-1}\|_{L^r},
\end{align*}
which ensures that for $r\in \{2,p\}$ and any nonnegative integer pair $ (i,j)$ satisfying $
    i\leq\ell,
     j\leq \ell-1$ with $i+j\leq \ell$
\begin{equation}\label{comm:v2a}
\begin{split}
& \bigl\|\sigma ^{\left(\frac 32-\frac 1r-\frac{s_{\ell-1}}{2}\right)}
 ( \d_X^i \nabla  \d_X^j \nabla\v^{\ell-i-j}
    - \nabla\p_X^{i+j}\nabla \v^{\ell-i-j}) \bigr\|_{L^\infty_t(L^r)}
     \leq CR_\ell(t) \frak{A}_{\ell-1}(t).
    \end{split}
    \end{equation}
With \eqref{comm:v2} and \eqref{comm:v2a}, we can repeat the proof of \eqref{comm:X} to conclude that
 for any integer pair $(i,j)$ satisfying $i+j\leq \ell$ and $r\in \{2,p\}$
\begin{equation}\label{comm:v3}
\begin{split}
& \bigl\|  \sigma ^{\left(\frac 32-\frac 1r-\frac{s_{\ell-1}}{2}\right)}
 (\d_X^i\nabla  \d_X^j \nabla\v^{\ell-i-j}
    - \nabla^2 \v^\ell )
   \bigr\|_{L^\infty_t(L^r)}
  \leq C R_\ell(t) \frak{A}_{\ell-1}(t).
    \end{split}
    \end{equation}
Along with  \eqref{comm:v1} and \eqref{comm:pi1}, we complete the proof of  \eqref{comm:v}.

\subsection{Proof of \eqref{comm:v''}}
\subsubsection{The estimates of $\bigl(\d_X^i \nabla D_t \v^{\ell-i}
-\nabla D_t \v^\ell,\,
\d_X^i D_t\nabla \v^{\ell-i}- \nabla D_t \v^\ell,
\,\d_X^i D_t\nabla \pi^{\ell-i}- \nabla D_t \pi^\ell\bigr)$}

It is easy to observe from \eqref{appeq0} and $[D_t; \d_X]=0$ that for nonnegative integer $m\leq\ell-1$,
\begin{align*}
 \d_X^{m+1}\nabla D_t \v^{\ell-m-1}-\nabla D_t \v^\ell
&=
-\sum_{\kappa=0}^m\sum_{l=0}^\kappa C_\kappa^l
\d_X^l\nabla X_\al\d_X^{\kappa-l}\p_\al D_t \v^{\ell-\kappa-1},
\end{align*}
and
\begin{align*}
&\d_X^{m+1} D_t \nabla \v^{\ell-m-1}- D_t\nabla \v^\ell
=D_t(\d_X^{m+1}   \nabla \v^{\ell-m-1}-  \nabla \v^\ell)
\\
&=-\sum_{\kappa=0}^m\sum_{l=0}^\kappa  C_\kappa^l
\left(\d_X^l D_t \nabla X_\al
\d_X^{\kappa-l}\p_\al  \v^{\ell-\kappa-1}
+ \d_X^l \nabla X_\al
\d_X^{\kappa-l}D_t \p_\al  \v^{\ell-\kappa-1}\right),
\end{align*}
so that we write
\begin{equation*}
\begin{split}
&\bigl\|\sigma^{\left(\frac 32-\frac{1}{r_3}-\frac{s_{\ell-1}}{2}\right)}
\bigl( \d_X^{m+1}\nabla D_t \v^{\ell-m-1}-\nabla D_t \v^\ell,
\d_X^{m+1} D_t \nabla \v^{\ell-m-1}- D_t\nabla \v^\ell \bigr) \bigr\|_{L^{r_3}}
\\
&\leq
C\sum_{\kappa=0}^m\sum_{l=0}^\kappa
\Bigl( \|\d_X^l\nabla X\|_{L^\infty_t(L^\infty)}
\bigl\|\sigma^{\left(\frac 32-\frac{1}{r_3}-\frac{s_{\ell-1}}{2}\right)}
(\d_X^{\kappa-l}\nabla D_t \v^{\ell-\kappa-1},
\d_X^{\kappa-l}D_t \nabla \v^{\ell-\kappa-1})\bigr\|_{L^{r_3}}
\\
&\qquad
+\sigma(t)^{-  \frac{1-s_{\ell}}{2} }
\|\sigma^{1-\frac{s_\ell}{2}}\d_X^l D_t \nabla X\|_{L^\infty_t(L^\infty)}
\bigl\|\sigma^{\left(1-\frac{1}{r_3}-\frac{s_{\ell-1}}{2}\right)}
\d_X^{\kappa-l}\nabla  \v^{\ell-\kappa-1}\|_{L^\infty_t(L^{r_3})}\Bigr),
\end{split}
\end{equation*}
which together with the fact
\begin{align*}
\bigl\|\sigma^{\left(\frac 32-\frac{1}{r_3}-\frac{s_{\ell-1}}{2}\right)}
(D_t\nabla \v^\ell-\nabla D_t \v^\ell)\bigr\|_{L^{r_3}}
&=
\bigl\|\sigma^{\left(\frac 32-\frac{1}{r_3}-\frac{s_{\ell-1}}{2}\right)}
\nabla v\cdot\nabla \v^\ell\bigr\|_{L^{r_3}}
\\
&\leq
\sigma^{-\frac{1-s_\ell}{2}}
\|\sigma^{1-\frac{s_0}{2}}
\nabla v\|_{L^\infty_t(L^\infty)}
\|\sigma^{1-\frac{1}{r_3}-\frac{s_\ell}{2}}
\nabla \v^\ell\|_{L^\infty_t(L^{r_3})},
\end{align*}
ensures that for nonnegative integer $i\leq\ell$ and   $r_3=2$ or $\frac{2p}{p-2}$
\begin{equation}\label{comm:v''1}
\begin{split}
\bigl\|\sigma^{\left(\frac 32-\frac{1}{r_3}-\frac{s_{\ell-1}}{2}\right)}
\bigl(\d_X^{i}\nabla D_t \v^{\ell-i}-\nabla D_t \v^\ell,&
\d_X^{i} D_t \nabla \v^{\ell-i}- D_t\nabla \v^\ell \bigr) \bigr\|_{ L^{r_3}}^2
\\
&\leq
C R_\ell^2(t)\frak{B}_{\ell-1}(t)
+C \frak{A}_{\ell}^4(t)\sigma^{-(1-s_\ell)}.
\end{split}
\end{equation}
The same argument yields  for $m\leq\ell-1$ that
\begin{align*}
&
\bigl\|\sigma^{\frac{3-s_{\ell-1}}2}
(\d_X^{m+1} D_t \nabla {\rm \pi}^{\ell-m-1}- D_t\nabla {\rm\pi}^\ell)\bigr\|_{L^2}
\\
&\leq
C \sum_{\kappa=0}^m\sum_{l=0}^\kappa
\Bigl( \|\d_X^l\nabla X\|_{L^\infty_t(L^\infty)}
\bigl\|\sigma^{\frac{3-s_{\ell-1}}2}
\d_X^{\kappa-l}D_t \nabla {\rm\pi}^{\ell-\kappa-1}\bigr\|_{L^{2}}
\\
&\qquad
+\sigma(t)^{-  \frac{1-s_{\ell}}{2} }
\|\sigma^{1-\frac{s_\ell}{2}}\d_X^l D_t \nabla X\|_{L^\infty_t(L^\infty)}
\bigl\|\sigma^{1-\frac{s_{\ell-1}}2}
\d_X^{\kappa-l}\nabla  {\rm\pi}^{\ell-\kappa-1}\|_{L^\infty_t(L^{2})}\Bigr),
\end{align*}
which together with the fact
\begin{align*}
\bigl\|\sigma^{ \frac{3-s_{\ell-1}}{2}}
(D_t\nabla {\rm\pi}^\ell-\nabla D_t {\rm\pi}^\ell)\|_{L^2}
&=
\bigl\|\sigma^{ \frac{3-s_{\ell-1}}{2}}
(\nabla v\cdot\nabla {\rm\pi}^\ell)\bigr\|_{L^2}
\\
&\leq
\sigma^{-\frac{1-s_\ell}{2}}
\|\sigma^{1-\frac{s_0}{2}}
\nabla v\|_{L^\infty_t(L^\infty)}
\|\sigma^{1- \frac{s_\ell}{2}}
\nabla {\rm\pi}^\ell\|_{L^\infty_t(L^2)},
\end{align*}
implies that
\beq\label{comm:v''2}
 \bigl\|\sigma(t)^{ \frac{3-s_{\ell-1}}2 }
 (\d_X^i D_t\nabla{\rm\pi}^{\ell-i}- \nabla D_t{\rm\pi}^\ell)(t)  \bigr\|_{L^2 }^2\leq C R_{\ell}^2(t)\frak{B}_{\ell-1}(t)
+C\frak{A}_{\ell}^4(t)\s^{-(1-s_\ell)}.
 \eeq

\subsubsection{The estimate of $(\d_X^{i}D_t\nabla^2 \v^{\ell-i}
-\nabla^2 D_t {\v^\ell})$}
We first deduce from \eqref{appeq1} that for nonnegative integer $m\leq\ell-1$
\begin{equation*}\begin{split}
\|\sigma^{\frac{3-s_{\ell-1}}{2}}
(D_t\d_X^{m+1}&\nabla^2 \v^{\ell-m-1}
-D_t\nabla^2 \v^\ell)\|_{L^2}
\\
\leq
C\sum_{\kappa=0}^m\sum_{l=0}^\kappa
\Bigl( &\sigma^{-\frac{1-s_\ell}{2}}
\|\sigma^{1-\frac{s_\ell}{2}}\d_X^l D_t\nabla^2 X\|_{L^\infty_t(L^2)}
 \|\sigma^{1-\frac{s_{\ell-1}}{2}}
  \d_X^{\kappa-l}\nabla \v^{\ell-\kappa-1}\|_{L^\infty_t(L^\infty)}
  \\
&
 +\|\d_X^l  \nabla^2 X \|_{L^\infty_t(L^p)}
 \|\sigma^{\frac{3-s_{\ell-1}}{2}}
 \d_X^{\kappa-l} D_t \nabla \v^{\ell-\kappa-1}\|_{L^{\frac{2p}{p-2}}}
 \\
&
 +\sigma^{-\frac{1-s_\ell}{2}}
 \|\sigma^{1-\frac{s_\ell}{2}}\d_X^l D_t\nabla X \|_{L^\infty_t(L^\infty)}
\|\sigma^{1-\frac{s_{\ell-1}}{2}}
\d_X^{\kappa-l}\nabla^2\v^{\ell-\kappa-1}\|_{L^\infty_t(L^2)}
\\
&
 +\|\d_X^l \nabla X\|_{L^\infty_t(L^\infty)}
   \|\sigma^{\frac{3-s_{\ell-1}}{2}}
   \d_X^{\kappa-l}D_t\nabla^2\v^{\ell-\kappa-1}\|_{L^2}
 \Bigr),
 \end{split}
\end{equation*}
 which implies that for any nonnegative integer $i\leq\ell$,
\begin{equation*}\begin{split}
&\|\sigma^{\frac{3-s_{\ell-1}}{2}}
(D_t\d_X^{i}\nabla^2 \v^{\ell-i}
-D_t\nabla^2 \v^\ell)\|_{L^2}^2
\leq CR_{\ell}^2(t)\frak{B}_{\ell-1}(t)
+C\frak{A}_{\ell}^4(t)\sigma^{-(1-s_\ell)}.
 \end{split}
\end{equation*}
We notice then
\begin{align*}
& \bigl\|\sigma^{\frac{3-s_{\ell-1}}2}
(D_t\nabla^2 \v^\ell
-\nabla^2 D_t\v^\ell\bigr)\|_{L^2}
= \bigl\|\sigma^{\frac{3-s_{\ell-1}}2}
(\nabla^2 v\cdot\nabla \v^\ell
+2\nabla v\cdot\nabla^2 \v^\ell)\|_{L^2}
\\
&\leq
\sigma(t)^{-\f{1-s_\ell}2}
\Bigl( \|\sigma^{1-\frac{s_{0}}2}
 \nabla^2 v\|_{L^2}
 \|\sigma^{1-\frac{s_\ell}{2}}
 \nabla \v^\ell\|_{L^\infty}
 +2\|\sigma^{1-\frac{s_0}{2}}\nabla v\|_{L^\infty}
 \bigl\| \sigma^{1-\frac{s_\ell}{2}}\nabla^2 \v^\ell\|_{L^2}\Bigr)
\\
&\leq
C  \frak{A}^2_\ell(t)\sigma(t)^{-\frac{1-s_\ell}2},
\end{align*}
from which and the the above inequality,  we infer that for any nonnegative integer $i\leq\ell$,
\begin{equation}\label{comm:v''5}\begin{split}
&\|\sigma^{\frac{3-s_{\ell-1}}{2}}
(\d_X^{i}D_t\nabla^2 \v^{\ell-i}
-\nabla^2D_t \v^\ell)\|_{L^2}^2
\leq CR_{\ell}^2(t)\frak{B}_{\ell-1}(t)
+C\frak{A}_{\ell}^4(t)\sigma^{-(1-s_\ell)}.
 \end{split}
\end{equation}

\subsubsection{The esimate of $(\nabla\d_X^{i}\nabla D_t \v^{\ell-i}-\nabla^2 D_t \v^\ell)$}

Again it is easy to observe from \eqref{appeq0} that  for nonnegative integer $m\leq\ell-1$
\begin{align*}
\|&\nabla\d_X^{m+1}\nabla D_t \v^{\ell-m-1}-\nabla^2 D_t \v^\ell\|_{L^2}
=
\|\nabla(\d_X^{m+1}\nabla D_t \v^{\ell-m-1}-\nabla D_t \v^\ell)\|_{L^2}
\\
&\leq
C \sum_{\kappa=0}^m\sum_{l=0}^\kappa
\Bigl( \|\nabla\d_X^l\nabla X\|_{L^p}
\|\d_X^{\kappa-l}\nabla D_t \v^{\ell-\kappa-1}\|_{L^{\frac{2p}{p-2}}}
+
\|\d_X^l\nabla X\|_{L^\infty}
\|\nabla\d_X^{\kappa-l}\nabla D_t \v^{\ell-\kappa-1}\|_{L^2}\Bigr),
\end{align*}
 which  implies for any integer $0\leq i\leq\ell$ that
\begin{equation}\label{comm:v''3}
\bigl\|\sigma ^{\frac{3-s_{\ell-1}}2 }
(\nabla\d_X^{i}\nabla D_t \v^{\ell-i}-\nabla^2 D_t \v^\ell)(t)\bigr\|_{L^2}^2
\leq
CR_\ell^2(t)\frak{B}_{\ell-1}(t).
\end{equation}
By summing up the Estimates \eqref{comm:v''1}, \eqref{comm:v''2}, \eqref{comm:v''5} and \eqref{comm:v''3}, we achiveve  \eqref{comm:v''}.
 This completes the proof of the lemma.

 \smallbreak

\noindent {\bf Acknowledgments.} We would like to thank Professor
Jean-Yves Chemin and Professor Rapha\"{e}l Danchin for  profitable discussions of this topic. Part of this work was done
when we were visiting Morningside Center of Mathematics (MCM), Chinese Academy of Sciences.
We appreciate the hospitality
and the financial support from MCM.
{}
X. Liao is supported by SFB 1060, Universit\"at Bonn at the end of the work.
P. Zhang is partially supported
by NSF of China under Grant   11371347 and innovation grant from National Center for
Mathematics and Interdisciplinary Sciences.

\end{document}